\newtheorem{thm}{Theorem}[section]
\newtheorem{coro}[thm]{Corollary}
\newtheorem{lemma}[thm]{Lemma}
\newtheorem{prop}[thm]{Proposition}
\theoremstyle{remark}
\newtheorem{remark}[thm]{\textbf{Remark}}
\theoremstyle{definition}
\newtheorem{defn}[thm]{Definition}
\numberwithin{equation}{thm}
\def\Bot{\mbox{\Large $\bot$}}
\newcommand{\cO}{\mathcal{O}}
\newcommand{\bH}{\mathbb{H}}
\newcommand{\N}{\mathbb{N}}
\newcommand{\Z}{\mathbb{Z}}
\newcommand{\Q}{\mathbb{Q}}
\newcommand{\dgf}[1]{\langle\, #1\,\rangle}
\newcommand{\fa}{\mathfrak{a}}
\newcommand{\frs}{\mathfrak{s}}
\newcommand{\fp}{\mathfrak{p}}
\newcommand{\fm}{\mathfrak{m}}
\newcommand{\fn}{\mathfrak{n}}
\newcommand{\ff}{\mathfrak{f}}
\newcommand{\fd}{\mathfrak{d}}
\newcommand{\ord}{\mathrm{ord}}
\newcommand{\al}{\alpha}
\newcommand{\veps}{\varepsilon}
\newcommand{\rep}{\rightharpoonup}
\newcommand{\newpara}{\noindent\refstepcounter{thm}{\bf(\thethm)\;}}
\begin{document}

\title{\textbf{On $k$-universal quadratic lattices over unramified dyadic local fields}}
\author{Zilong HE and Yong HU}
\date{}

\maketitle 

\begin{abstract}
Let $k$ be a positive integer and let $F$ be a finite unramified extension of $\mathbb{Q}_2$ with ring of integers $\mathcal{O}_F$.  An integral (resp. classic) quadratic form over $\mathcal{O}_F$  is called $k$-universal (resp. classic $k$-universal) if it represents all integral (resp. classic) quadratic forms of dimension $k$. In this paper, we provide a complete classification of $k$-universal and classic $k$-universal quadratic forms over $\mathcal{O}_F$. The results are stated in terms of the fundamental invariants associated to Jordan splittings of quadratic lattices.
\end{abstract}




\section{Introduction}

Universal integral quadratic forms have been extensively studied since the early 20th century. When the term  was coined by Dickson in \cite{Dickson29TAMS} (see also \cite{Dickson30}), a universal quadratic form is meant to be a quadratic form with integer coefficients that represents all integers. Ross \cite{Ross32} extended this definition by calling a positive (definite) form over $\Z$ universal if it represents all positive integers. Ramanujan \cite{Ramanujan17} classified all quaternary diagonal positive universal quadratic forms over $\Z$. In a series of papers (\cite{Dickson27BAMSpage63}, \cite{Dickson27AJMpage39},  \cite{Dickson29BAMS}, \cite{Dickson29TAMS}, \cite{Ross32}, \cite{Ross33}), Dickson  and Ross further studied general binary, ternary and quaternary forms  in both the positive definite and the indefinite cases.

%


As a natural generalization of the problem, one can consider the representation of arbitrary integral quadratic forms over general number fields and local fields. Among a huge number of references on the topic, let us mention in particular O'Meara's book \cite{OMeara00}, and the papers \cite{OMeara55AJM}, \cite{OMeara58AJM}, \cite{Riehm64AJM} (on the local theory) and \cite{HsiaKitaokaKneser78crelle}, \cite{HsiaShaoXu98crelle} (on the global theory). For any positive integer $k$, B. M. Kim, M.-H. Kim and Raghavan \cite{KimKimRaghavan97} defined the notion of $k$-universal quadratic forms in the positive definite case over $\Z$. Interesting results for positive definite $k$-universal forms (especially for $k=2$ over $\Z$) can be found in \cite{Mordell30}, \cite{Ko37}, \cite{KimKimRaghavan97}, \cite{KimKimOh99ContempMath249}, \cite{Oh00PAMS}, \cite{Kim04ContempMath344}, etc.

For indefinite forms, the local-global principle, when it holds, is generally recognized as a powerful tool that reduces representability problems over number fields to those over local fields. Xu and Zhang have recently considered the local-global principle for the universal property (i.e. the $k=1$ case of $k$-universality) for quadratic forms over general number fields. They determined in \cite{XuZhang22TAMS} all number fields that admit  locally universal but not globally universal integral forms. The work \cite{HeHuXu22} by Xu and the authors continued the investigation and answered  similar questions about $k$-universality for general $k$. In the proofs of these results, a key step turns out to be a complete determination of  $k$-universal forms over non-dyadic local fields and some partial results in the dyadic case. For $k=1$, Beli's work \cite{Beli20}   complements the analysis over dyadic fields in \cite[\S\,2]{XuZhang22TAMS}, and gives necessary and sufficient conditions for an integral quadratic form over a general dyadic field to be universal. His method builds upon the general theory of bases of norm generators (BONGs), which he developed in his thesis \cite{Beli01thesis} (see also \cite{Beli06}, \cite{Beli10TAMS}, \cite{Beli19}). Using only the more standard theory as presented in \cite{OMeara00}, Earnest and Gunawardana also complete a classification of universal forms over the ring $\Z_p$ of $p$-adic integers for any prime $p$ (\cite{EarnestGunawardana21JNT}).

\medskip

In this paper we work over a general unramified dyadic field $F$, i.e. a finite extension of $\Q_2$ in which $2$ remains a prime element. Our goal is to determine all $k$-universal integral quadratic forms over $F$  for an arbitrary $k$. In the statements and the proofs of our results, we only use the classical invariants associated to Jordan splittings of quadratic lattices (see e.g. \cite[\S\,Chap.\;IX]{OMeara00}). The arguments make heavy use of a representability criterion established in \cite{OMeara58AJM}.

 Without the unramifiedness assumption on $F$, all $k$-universal integral quadratic forms will be classified in \cite{HeHu22pre}. As far as we can see, to treat that general case, we need  a comprehensive knowledge of Beli's theory of BONGs, which in our eyes looks more complicated than the classical theory. Moreover, over ramified dyadic fields we have to state the results in terms of invariants associated to BONGs, and unlike the case $k=1$ discussed in \cite{Beli20}, when $k\ge 2$ it seems  rather complicated to translate those results in the language of Jordan splittings.

\medskip

Following \cite{Dickson30} and \cite{Ross32}, a quadratic form is called \emph{classic} if the Gram matrix of its bilinear form has integral coefficients. The notion of \emph{classic $k$-universal forms} can be defined by restricting to classic forms in the usual definition of $k$-universality (cf. Definition\;\ref{UL1.1}). For arbitrary $k$,  we also classify all classic $k$-universal forms over an unramified dyadic local field. The ramified case will be treated in a forthcoming work of the first named author \cite{He22pre}.

\medskip

\noindent {\bf Notation and terminology.} Throughout this paper,  let $F$ be an \emph{unramified} extension of $\Q_2$ with valuation ring $\cO_F$. Let $F^*=F\setminus\{0\}$ and let $\cO_F^*$ be the group of units in $\cO_F$.  Let $v=v_F: F\to \Z\cup\{+\infty\}$ denote the normalized discrete valuation of $F$. For any (nonzero) fractional ideal $\fa$ in $F$, put
 $\ord(\fa)=\min\{v(\al)\,|\,\al\in \fa\}$.

We fix a unit $\Delta=1+4\rho$ with $\rho\in\cO_F^*$ such that $F(\sqrt{\Delta})\cong F[T]/(T^2+T-\rho)$ is the unique quadratic unramified extension of $F$ (cf. \cite[\S\;93, p.251]{OMeara00}).

As in most research papers in the area, in the study of quadratic forms we adopt the geometric language of quadratic spaces and lattices. Unless otherwise stated, we only consider quadratic spaces over $F$ and quadratic lattices over $\cO_F$ and they are all  assumed to be nonsingular. We usually denote by $Q$ the quadratic form associated to a quadratic space or lattice. Standard notation and terminology about quadratic forms are adopted from \cite{OMeara00}. For example, the quadratic space spanned by a lattice $L$ is denoted by $FL$. For a quadratic space $V$, we often write $d(V)\in F^*$ for a representative of the \emph{discriminant} of $V$ (in the sense of \cite[p.87]{OMeara00}). We also put $d_{\pm}(V):=(-1)^{\frac{(\dim V-1)\dim V}{2}}d(V)$ and call it the \emph{signed discriminant} of $V$.  We write $Q(V):=\{Q(u)\,|\,u\in V\}$ for a quadratic space $V$ and similarly  $Q(L):=\{Q(u)\,|\,u\in L\}$ for a lattice $L$.

The norm and the scale of a lattice $L$ are denoted by $\fn(L)$ and $\frs(L)$ respectively (\cite[(82.E)]{OMeara00}). A modular lattice $L$ is called \emph{proper} if $\fn(L)=\frs(L)$. The norm and the scale of the zero lattice are the zero ideal. Thus, for a nonzero fractional ideal $\fa$ and a lattice $L$, when we write $\frs(L)\subseteq\fa$ or $\fn(L)\subseteq \fa$, the case $L=0$ is allowed. But when $\fa\subseteq\frs(L)$ or $\fa\subseteq\fn(L)$, $L$ must be nonzero.

When a lattice $K$ is represented by another lattice $L$ (in the sense of \cite[p.220]{OMeara00}), we write $K\rep L$. Similarly for quadratic spaces.

 For elements $a_1,\cdots, a_n\in F^*$,  let $\dgf{a_1,\cdots, a_n}$ denote the $n$-ary quadratic space or lattice defined by the diagonal quadratic form $a_1x_1^2+\cdots+a_nx_n^2$. This abuse of notation should not cause confusion since it is always clear from the context whether we mean a space or a lattice.

 For any $\xi\in F^*$ and $\al,\,\beta\in \cO_F$, define the lattice $\xi A(\al,\,\beta)$ as in \cite[p.255, \S\,93B]{OMeara00}.

The binary hyperbolic space is denoted by $\bH$, and for any $m\in\N$, the $2m$-dimensional hyperbolic space is denoted by $\bH^m$.

The symbol $\subset$ is used for a strict inclusion.

\medskip

In the language of lattices, some key definitions can be rephrased as follows:
\begin{defn}\label{UL1.1}
Let  $L$ be a lattice (over $\cO_F$). We say $L$ is \emph{integral} (resp. \emph{classic}) if $Q(L)\subseteq \cO_F$ (resp. $\frs(L)\subseteq \cO_F$).

For $k\in\N^*$, we say that $L$ is $k$-\emph{universal} if it is integral and it represents all integral $k$-dimensional lattices. If $L$ is classic and represents all classic $k$-dimensional lattices, then it is called \emph{classic $k$-universal}. We simply say ``universal''  (resp. ``classic universal'') instead of ``1-universal'' (resp. ``classic 1-universal'').

Clearly,  $L$ is universal if and only if $Q(L)=\cO_F$, and $L$ is classic universal if and only if it is universal and classic.
\end{defn}

As we have mentioned before, Beli  determines all universal lattices over general dyadic fields by using the theory of BONGs in \cite{Beli20}. In the last section of that paper,  a translation of his main theorem in terms of Jordan splittings is given without proof.  After some reviews of preliminary facts in Sections\;\ref{sec2} and \ref{sec3}, especially those from O'Meara's paper \cite{OMeara58AJM}, we reprove the classification of universal lattices over the (unramified dyadic) field $F$ in Section\;\ref{sec4}. Our approach is based on O'Meara's work and independent of the theory of BONGs.

For $k\ge 2$, we obtain new  classification results on $k$-universal and classic $k$-universal forms. The next two theorems, concerning the case with $k$ even, are proved in Section\;\ref{sec5}.

\begin{thm}\label{UL1.3}
 Let $L=L_1\bot \cdots\bot L_t$ be a Jordan splitting of an $\cO_F$-lattice. Let $k$ be an even integer $\ge 2$.

  Then $L$ is $k$-universal if and only if $\fn(L_1)=\cO_F$, $\frs(L_1)=2^{-1}\cO_F$  (hence $L_1$ is improper, even dimensional and $d_{\pm}(FL_1)\in F^{*2}\cup \Delta F^{*2}$) and one of the following conditions holds:

\begin{enumerate}
  \item $\dim L_1\ge k+4$.
  \item $\dim L_1=k+2$, and one of the following cases happens:

\begin{enumerate}
  \item $d_{\pm}(FL_1)\in F^{*2}$, and if $k>2$, then $2\cO_F\subseteq \fn(L_2)$.
  \item $d_{\pm}(FL_1)\in \Delta F^{*2}$ and $2\cO_F\subseteq \fn(L_2)$.
\end{enumerate}
  %
  \item $\dim L_1=k$, $\dim L_2\ge 2$, $\frs(L_2)=\fn(L_2)=\cO_F$ and one of the following cases happens:

  \begin{enumerate}
    \item $\dim L_2\ge 3$.
    \item $\dim L_2=2$,  and $\frs(L_3)=\fn(L_3)=2\cO_F$.
    \item $\dim L_2=2$, $d_{\pm}(FL_2)\notin F^{*2}\cup \Delta F^{*2}$,  and $4\cO_F=\fn(L_3)$.
  \end{enumerate}
  \item $\dim L_1=k$, $\dim L_2=1$, $\frs(L_2)=\fn(L_2)=\cO_F$, $\frs(L_3)=\fn(L_3)=2\cO_F$ and one of the following cases happens:

  \begin{enumerate}
   \item $\dim L_3\ge 2$.
    \item $\dim L_3=1$,  and $8\cO_F\subseteq\fn(L_4)$.
  \end{enumerate}
\end{enumerate}
\end{thm}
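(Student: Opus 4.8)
The plan is to derive both implications from O'Meara's representation criterion \cite{OMeara58AJM} (reviewed in Sections~\ref{sec2}--\ref{sec3}), which characterizes $M\rep L$ by an explicit chain of conditions on the Jordan invariants --- the scales, norms, dimensions, signed discriminants and the fundamental invariants of the Jordan constituents of $M$ and $L$ and of their successive truncations. The first step is to pin down the ``skeleton'' of $L$. Integrality of $L$ gives $\fn(L_1)\subseteq\cO_F$; representing $\dgf{1}^{k-1}\bot\dgf{u}$ for every unit $u$ forces $\fn(L_1)=\cO_F$; and representing the $2^{-1}$-modular improper rank-$k$ lattice $(2^{-1}A(0,0))^{k/2}$ forces $\frs(L_1)\supseteq 2^{-1}\cO_F$, whence $\frs(L_1)=2^{-1}\cO_F$ since $\frs(L_1)\subseteq 2^{-1}\fn(L_1)$. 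The parenthetical consequences ($L_1$ improper, of even dimension, $d_{\pm}(FL_1)\in F^{*2}\cup\Delta F^{*2}$) then follow from the structure theory of modular lattices over unramified dyadic fields recalled in Section~\ref{sec2}; in particular $\dim L_1\ge k$ and $\dim L_1-k$ is a non-negative even integer.

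For necessity, assuming $L$ is $k$-universal with this skeleton, I run a case analysis on $\dim L_1$. If $\dim L_1\ge k+4$ we are in case~(1) and nothing remains. Otherwise $\dim L_1\in\{k,k+2\}$, and I test $L$ against carefully chosen $k$-dimensional integral lattices --- in each instance a $2^{-1}$-modular improper constituent of rank $k-2$ or $k$, orthogonally joined to a lower-scale constituent engineered (via its discriminant, norm or scale) so that it cannot be absorbed by $L_1$ and must overflow into $L_2$, then $L_3$, then $L_4$. Certifying the non-representation of each such test lattice through the criterion, one reads off exactly the stated alternatives: the discriminant conditions 2(a)--2(b) when $\dim L_1=k+2$, and the conditions on $\dim L_2$, $\fn(L_2)$, $d_{\pm}(FL_2)$, $\dim L_3$, $\fn(L_3)$ and $\fn(L_4)$ of cases~(3) and~(4) when $\dim L_1=k$. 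Here the classification of universal lattices over $F$ obtained in Section~\ref{sec4} is the natural tool for recognizing precisely when the tail $L_2\bot L_3\bot\cdots$ is not ``universal enough'' to swallow the residual $\cO_F$-modular-or-finer part of a test lattice.

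For sufficiency, assume one of (1)--(4) holds, and let $M=M_1\bot\cdots\bot M_s$ be the Jordan splitting of an arbitrary integral $k$-dimensional lattice; thus $\fn(M)\subseteq\cO_F$, $\frs(M_1)\subseteq 2^{-1}\cO_F$, and if $\frs(M_1)=2^{-1}\cO_F$ then $M_1$ is improper, of even dimension $\le k$, with $d_{\pm}(FM_1)\in F^{*2}\cup\Delta F^{*2}$. The task is to verify O'Meara's conditions for $M\rep L$ in each case. The underlying mechanism is that $L_1$, being a $2^{-1}$-modular improper lattice of dimension $\ge k$ with $d_{\pm}(FL_1)\in F^{*2}\cup\Delta F^{*2}$, absorbs $M_1$ --- and, when the dimensions are tight, part of $M_2$ as well --- after which what is left of $M$ has scale $\subseteq\cO_F$ and is governed by the norm, scale and dimension hypotheses on $L_2$, $L_3$ and $L_4$, which are precisely what is needed to push the remainder through the tail: case~(1) uses only the slack in $L_1$, while cases~(2), (3), (4) successively enlist $L_2$, $L_3$ and $L_4$, with the supplementary discriminant and norm conditions neutralizing the parity obstructions that O'Meara's criterion encodes in the fundamental invariants.

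The step I expect to be the main obstacle is the sufficiency check in the tight boundary situations, especially $\dim L_1=k$ with $\dim L_2\in\{1,2\}$, where the criterion hinges on the fundamental invariants and the signed discriminants of binary and ternary truncations and one must track how an improper $2^{-1}$-modular constituent $M_1$ interacts with the unimodular and coarser constituents below it. A second delicate point, on the necessity side, is manufacturing for each possible failure of the conditions a genuinely non-represented $k$-dimensional integral lattice: because Jordan constituents are not canonical over a dyadic field, a lattice can be represented ``diagonally'' across several constituents of $L$, so non-representation of each test lattice has to be verified through the criterion rather than by naive constituent-by-constituent matching.
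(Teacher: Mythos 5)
Your proposal follows essentially the same route as the paper's proof: the paper first isolates your ``skeleton'' by characterizing when every $k$-dimensional test lattice has a lower type than $L$ (Proposition\;\ref{UL5.2} and \eqref{UL5.3}), then translates the remaining conditions of O'Meara's criterion (Theorem\;\ref{UL3.4}) into a short list (a)--(g) in \eqref{UL5.4}, and finally runs exactly the case analysis you describe on $\dim L_1\in\{k,\,k+2,\,\ge k+4\}$ and then on $\dim L_2$, $\dim L_3$ (Propositions\;\ref{UL5.5}, \ref{UL5.7}, \ref{UL5.9}), using test lattices whose spaces are $\bH^{m-1}\bot\dgf{2,\,-2\Delta}$, $\bH^{m-2}\bot\dgf{1,\,-\Delta}\bot\dgf{2,\,-2\Delta}$, etc.\ for necessity and dimension/discriminant arguments for sufficiency. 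One caution: the paper never invokes the Section\;\ref{sec4} classification of universal lattices in this proof, and ``the tail $L_2\bot L_3\bot\cdots$ is universal'' is genuinely \emph{stronger} than the conditions of cases (3)--(4) --- for instance, case (3)(a) permits a ternary anisotropic proper unimodular $L_2$ with $\fn(L_3)\subseteq 8\cO_F$, in which situation $L_2\bot L_3\bot\cdots$ fails to be universal by Theorem\;\ref{UL1.2}\,(2) --- so that heuristic may guide the choice of test lattices but cannot replace the direct verification through O'Meara's conditions, which is what you (correctly) propose as the actual mechanism.
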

\begin{proof}
  Combine Propositions\;\ref{UL5.5}, \ref{UL5.7} and \ref{UL5.9}.
\end{proof}

\begin{thm}\label{UL1.4}
With the same notation as in Theorem$\;\ref{UL1.3}$, the lattice $L$ is classic $k$-universal if and only if $\frs(L_1)=\fn(L_1)=\cO_F$ and one of the following conditions holds:

\begin{enumerate}
  \item $\dim L_1\ge k+3$.
  \item $\dim L_1=k+2$,  and $\frs(L_2)=\fn(L_2)=2\cO_F$.
  \item $\dim L_1=k+2$, $d_{\pm}(FL_1)\notin F^{*2}\cup \Delta F^{*2}$, and $\fn(L_2)=4\cO_F$.
  \item $\dim L_1=k+1$, $\dim L_2\ge 2$ and $\frs(L_2)=\fn(L_2)=2\cO_F$.
  \item $\dim L_1=k+1$, $\dim L_2=1$, $\frs(L_2)=\fn(L_2)=2\cO_F$, and $8\cO_F\subseteq \fn(L_3)$.
\end{enumerate}
\end{thm}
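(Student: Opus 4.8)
The plan is to reduce Theorem~\ref{UL1.4} to Theorem~\ref{UL1.3} via a scaling trick, the same way the classic and non-classic theories are linked for $k=1$. The key observation is that a lattice $L$ is classic $k$-universal if and only if the rescaled lattice $L^{1/2}$ (obtained by multiplying the quadratic form by $\tfrac12$, equivalently scaling the bilinear form by $\tfrac12$) represents all lattices of the form $M^{1/2}$ with $M$ classic $k$-dimensional. One should check that $M\mapsto M^{1/2}$ sets up a bijection between classic $k$-dimensional lattices and integral $k$-dimensional lattices $N$ with $\fn(N)\subseteq 2^{-1}\cO_F$ and $\frs(N)\subseteq 2^{-1}\cO_F$ — more precisely, the image consists exactly of those integral $k$-dimensional $N$ for which $2N$ is classic, i.e. $\frs(N)\subseteq 2^{-1}\cO_F$. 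So the classic $k$-universal property of $L$ is equivalent to: $L^{1/2}$ represents every integral $k$-dimensional lattice whose scale is contained in $2^{-1}\cO_F$. This is weaker than $k$-universality of $L^{1/2}$, so Theorem~\ref{UL1.4} is \emph{not} a literal consequence of Theorem~\ref{UL1.3}; rather, I expect to re-run the proofs of Propositions~\ref{UL5.5}, \ref{UL5.7} and \ref{UL5.9} with this restricted class of test lattices, tracking which constraints weaken.

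Concretely, first I would translate the invariant conditions under rescaling. If $L=L_1\bot\cdots\bot L_t$ is a Jordan splitting with $\frs(L_i)=\fn(L_i)=\cO_F$ and the other $\frs(L_j),\fn(L_j)$ as in the statement, then $L^{1/2}=L_1^{1/2}\bot\cdots$ has $\frs(L_1^{1/2})=2^{-1}\cO_F$, $\fn(L_1^{1/2})=2^{-1}\cO_F$, so $L_1^{1/2}$ is improper in exactly the way demanded in Theorem~\ref{UL1.3}; the parenthetical consequences (even-dimensional, $d_\pm\in F^{*2}\cup\Delta F^{*2}$) follow from the structure theory of improper modular lattices recalled in Sections~\ref{sec2}--\ref{sec3}. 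Note $d_\pm(FL_1^{1/2})=d_\pm(FL_1)$ up to squares only after a correction by a power of $2$ and a sign; I would be careful here, since the hypothesis ``$d_\pm(FL_1)\notin F^{*2}\cup\Delta F^{*2}$'' in cases (3) of Theorem~\ref{UL1.4} is meant to force the binary part $FL_1$ into the anisotropic-type class, matching case (3c) of Theorem~\ref{UL1.3} where the relevant binary space also fails to be a norm from the unramified extension. After this dictionary is set up, cases (1)--(5) of Theorem~\ref{UL1.4} should line up with cases (1)--(4) of Theorem~\ref{UL1.3}: $\dim L_1\ge k+3$ versus $\dim L_1^{1/2}\ge k+4$ differ by one because the test lattices here have scale only $2^{-1}\cO_F$ rather than the full range, so one fewer ``buffer dimension'' is needed; similarly $\dim L_1=k+2$ here corresponds to the $k+2$ and $k$ sub-cases there, with the $\fn(L_2)$ conditions translating to the $\fn(L_2^{1/2}),\fn(L_3^{1/2})$ conditions after shifting by $2$.

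The sufficiency direction then runs through O'Meara's representation criterion from \cite{OMeara58AJM}, exactly as in the proofs of \ref{UL5.5}--\ref{UL5.9}, but applied only to test lattices $N$ with $\frs(N)\subseteq 2^{-1}\cO_F$: I would take an arbitrary classic $k$-dimensional $M$, pass to $N=M^{1/2}$, write down a Jordan splitting of $N$, and verify the chain of local conditions (domination of scales/norms, the relevant Hasse-symbol/discriminant compatibilities) that guarantee $N\rep L^{1/2}$. For necessity, given $L$ failing one of conditions (1)--(5), I would exhibit a specific classic $k$-dimensional $M$ not represented by $L$; the natural candidates are built from $\dgf{1,\ldots,1}$, $\dgf{1,\ldots,1,\Delta}$, $2\dgf{1,\ldots}$ and the binary anisotropic unit lattices $A(2,2\rho)$, chosen so that $M^{1/2}$ is one of the obstructing lattices already used in the proofs of Theorems~\ref{UL1.3}. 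The main obstacle, I expect, is bookkeeping rather than conceptual: getting the parity/discriminant correspondence under $\tfrac12$-scaling exactly right (the factor $(-1)^{\binom{n}{2}}$ in $d_\pm$ interacts with the power of $2$ pulled out), and confirming that the ``off-by-one in dimension'' and ``shift-by-$2$ in the norm ideals'' really are the \emph{only} changes — i.e. that no genuinely new obstruction or new sufficient configuration appears when the test class is cut down to scale-$\le 2^{-1}\cO_F$ lattices. If that verification goes through cleanly, Theorem~\ref{UL1.4} follows; if some case resists the reduction, I would prove that case directly by the same O'Meara-criterion computation used for Theorem~\ref{UL1.3}.
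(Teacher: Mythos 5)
There is a genuine gap: the scaling reduction at the heart of your proposal does not do what you claim. First, the containment of test classes goes the wrong way. A lattice $N$ is integral iff $\fn(N)\subseteq\cO_F$, which already forces $\frs(N)\subseteq 2^{-1}\cO_F$; conversely $\frs(N)\subseteq 2^{-1}\cO_F$ only gives $\fn(N)\subseteq 2^{-1}\cO_F$. So the image of the classic $k$-dimensional lattices under $M\mapsto M^{1/2}$ is exactly the set of $k$-dimensional lattices with scale contained in $2^{-1}\cO_F$, which \emph{strictly contains} the integral $k$-dimensional lattices (for instance $\dgf{1}^{1/2}=\dgf{2^{-1}}$ lies in the image but is not integral). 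Hence ``$L^{1/2}$ represents all $M^{1/2}$'' is a \emph{larger} test class than $k$-universality demands, not a smaller one; your assertion that the classic condition is ``weaker than $k$-universality of $L^{1/2}$'' is backwards, and the proposed dictionary in which each case of Theorem~\ref{UL1.4} arises by ``weakening'' a case of Theorem~\ref{UL1.3} has no foundation.

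Second, and more decisively, the scaled lattice never lands in the situation analyzed for Theorem~\ref{UL1.3}, so there is nothing to match cases against. By Proposition~\ref{UL5.2} the lower-type condition in the $k$-universal problem forces $L_1$ to be \emph{improper} $2^{-1}$-modular of even dimension $\ge k$, whereas by Proposition~\ref{UL5.10} the classic problem forces $L_1$ to be \emph{proper} unimodular of dimension $\ge k+1$; after scaling by $\tfrac12$ the latter becomes a proper $2^{-1}$-modular component, possibly of odd dimension $k+1$, which is excluded from every case of Theorem~\ref{UL1.3} (in particular your cases (4)--(5) with $\dim L_1=k+1$ have no counterpart there). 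One therefore has to redo the entire analysis via Theorem~\ref{UL3.4} from scratch, which is exactly what the paper does: Proposition~\ref{UL5.10} pins down the lower-type condition, \eqref{UL5.11} translates O'Meara's conditions for classic basic test lattices $\ell=I_0\bot P_0\bot I_1\bot P_1$ (with no $I_{-1}$ part), and Propositions~\ref{UL5.12} and \ref{UL5.13} settle the cases $\dim L_1\ge k+2$ and $\dim L_1=k+1$. Your closing fallback of ``proving the resistant cases directly'' is in fact the whole proof; the scaling trick contributes nothing here.
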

\begin{proof}
  Combine Propositions\;\ref{UL5.10} and \ref{UL5.11}.
\end{proof}

We have also classified $k$-universal and classic $k$-universal lattices for all odd $k\ge 3$. The results are stated in Theorems\;\ref{UL6.10} and \ref{UL6.16}.


\section{Values of quadratic spaces}\label{sec2}

In this section, we recall some useful facts about  quadratic spaces over local fields. As stated in the introduction, we shall assume $F$ is a finite unramified extension of $\Q_2$, although the results below may hold with fewer restrictions on $F$ (see Remark\;\ref{UL2.7}).

\newpage


\newpara\label{UL2.1} Let $\fa\subseteq F$ be a fractional ideal or $\fa=0$. For a quadratic space $V$ over $F$, we say $V$ \emph{represents} $\fa$ and we write $\fa\rep V$ if there is a vector $w\in V$ such that $\fa=Q(w)\cO_F$. Here the case $w=0$ is allowed, so that we always have $0\rep V$. If $\fa=\al\cO_F$ with $\al\in F^*$, saying that $\fa\rep V$ is the same as saying that $V$ represents an element $\beta\in F^*$ such that $v(\al)\equiv v(\beta)\pmod{2}$. In particular, for any $i\in\Z$, $2^i\cO_F\rep W$ means $\cO_F^*\cap Q(V)\neq\emptyset$ if $i$ is even, or $2\cO_F^*\cap Q(V)\neq\emptyset$ if $i$ is odd.

\begin{prop}[{\cite[p.858, Proposition\;16]{OMeara58AJM}}]\label{UL2.2}
Let $V$ be a quadratic space  of dimension $\ge 3$ over $F$.

Then for every fractional ideal $\fa$ we have $\fa\rep V$.
\end{prop}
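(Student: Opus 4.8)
The statement to prove is Proposition~\ref{UL2.2}: a quadratic space $V$ over $F$ with $\dim V \ge 3$ represents every fractional ideal $\fa$, meaning for each $\fa$ there is $w \in V$ with $Q(w)\cO_F = \fa$. Since $\fa = \al\cO_F$ for some $\al \in F^*$, this amounts to showing $V$ represents some $\beta \in F^*$ with $v(\beta) \equiv v(\al) \pmod 2$; equivalently, $Q(V)$ meets both $\cO_F^*$ (the even case) and $2\cO_F^*$ (the odd case) — and more precisely, after scaling, it suffices to show $V$ represents a unit and $V$ represents $2$ times a unit.

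The plan is to reduce to the ternary case and then use the classification of quadratic forms over local fields. First I would observe that it is enough to treat $\dim V = 3$: any space of larger dimension contains a nonsingular ternary subspace, and a subspace that represents $\fa$ forces the whole space to do so. Next, after scaling $Q$ by a suitable element of $F^*$ (which does not affect the parity condition we must achieve, up to relabelling even/odd), I would reduce to showing that a ternary space represents \emph{some} unit and \emph{some} uniformizer-times-unit. The key input is that over a dyadic (or any non-archimedean) local field, the set $Q(V)\setminus\{0\}$ for a ternary space $V$ is a union of square classes determined by the Hasse invariant and discriminant; in fact a ternary quadratic space over a local field represents every element of $F^*$ except possibly the single square class $-d_\pm(V)$ when $V$ is anisotropic. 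So if $V$ is isotropic, $Q(V) = F$ and we are done immediately; if $V$ is anisotropic, then $Q(V) \supseteq F^* \setminus (-d_\pm(V))F^{*2}$, and since $F^*/F^{*2}$ has order $8$ (for a dyadic field) — in particular more than two square classes, with representatives of both parities of valuation — the complement of a single square class still contains elements of valuation of each parity modulo $2$. Hence $\fa \rep V$ for every $\fa$.

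The main obstacle, and the step requiring the most care, is the anisotropic ternary case: one must know precisely which elements a ternary anisotropic form over $F$ represents. This is classical (it follows from the theory of the quaternion algebra associated to $V$, or from O'Meara's local classification), but I would want to invoke it cleanly — namely that for anisotropic ternary $V$, $\beta \in Q(V)$ iff $V \perp \dgf{-\beta}$ is isotropic, and a quaternary form over a local field is isotropic unless it is (similar to) the norm form of the quaternion division algebra, whose only non-represented value class is $-d_\pm(V)$. One then needs the arithmetic fact that $|F^*/F^{*2}| \ge 4$ with square classes spanning both valuation parities, which holds for every dyadic local field; removing one square class cannot exhaust either parity class. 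I would also double-check the degenerate-looking boundary: the zero ideal is represented by $w = 0$, consistent with the convention in~\newpara~\ref{UL2.1}, so only nonzero $\fa$ needs the above argument.

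Alternatively, and perhaps more in the spirit of the paper, one could argue directly: write $V \cong \dgf{a,b,c}$ over $F$; the binary subform $\dgf{a,b}$ already represents $a$ and $b$, and since $\dgf{a,b,c}$ is nonsingular one can choose the diagonalization so that the three coefficients do not all lie in one square class, giving represented values of both valuation parities; when this fails one falls back on isotropy of $\dgf{a,b,c,-\beta}$ for the needed $\beta$. Either route is short. I expect the write-up to be only a few lines, citing the local classification of quadratic forms (O'Meara, \S 63) for the representation criterion and the structure of $F^*/F^{*2}$.
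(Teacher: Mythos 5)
The paper offers no proof of this proposition: it is quoted verbatim from O'Meara's 1958 paper (Proposition 16 there), so there is no in-paper argument to compare yours against. Your proof is correct and is essentially the standard one: reduce to a nonsingular ternary subspace, use that for nonzero $\beta$ one has $\beta\in Q(V)$ if and only if $V\bot\dgf{-\beta}$ is isotropic, invoke the fact that a quaternary space with nontrivial discriminant is isotropic (O'Meara (63:17), which the paper itself uses in Lemma~\ref{UL2.4}), and conclude that an anisotropic ternary $V$ represents every square class of $F^*$ except one, while $F^*/F^{*2}$ contains square classes of both valuation parities even after one class is deleted. Two harmless slips worth fixing in a write-up: the possibly omitted square class is $d_{\pm}(V)F^{*2}=-d(V)F^{*2}$, not $-d_{\pm}(V)F^{*2}$ (test against the pure quaternions $\dgf{-a,\,-b,\,ab}$, which miss exactly $-F^{*2}$); and $|F^*/F^{*2}|=2^{[F:\Q_2]+2}$, so it equals $8$ only for $F=\Q_2$ itself — but all your argument needs is that each valuation parity contributes at least two square classes, which holds for every non-archimedean local field of characteristic $\neq 2$, consistent with Remark~\ref{UL2.7}. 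Your secondary "diagonalize and inspect coefficients" route is shakier (e.g.\ $\dgf{1,\,\Delta,\,1}$ has all coefficients of even valuation), but you correctly fall back on the isotropy criterion there, so nothing is lost.
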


\begin{lemma}\label{UL2.3}
  Let $V$ be a binary quadratic space over $F$.

Then $Q(V)=F$ if and only if $V$  is isotropic.
\end{lemma}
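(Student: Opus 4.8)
The plan is to prove the two implications separately. For the ``if'' direction, suppose $V$ is isotropic. Since $V$ is assumed nonsingular, a standard fact (see \cite[\S\,42]{OMeara00}) tells us that $V$ is a hyperbolic plane, i.e. $V\cong\bH$. Choosing a hyperbolic pair as a basis, the quadratic form of $V$ becomes $(x,y)\mapsto xy$, which already takes every value of $F$ (and, with $x=y=0$, the value $0$); hence $Q(V)=F$. This half is purely formal.

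For the ``only if'' direction I would argue by contraposition: assuming $V$ anisotropic, I produce an element of $F^*$ not lying in $Q(V)$. Write $V\cong\dgf{a,b}$ with $a,b\in F^*$. Anisotropy means that $ax^2+by^2=0$ has no nontrivial solution, which is equivalent to $-ab\notin F^{*2}$; thus $d:=d_{\pm}(V)=-ab$ is a nonsquare and $E:=F(\sqrt{d})$ is a quadratic field extension of $F$. Scaling $V$ by $a^{-1}$ gives $a^{-1}V\cong\dgf{1,-d}$, whose quadratic form $x^2-dy^2$ is precisely the norm form $N_{E/F}(x+y\sqrt{d})$ of $E/F$. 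Therefore $Q(V)=\{0\}\cup a\,N_{E/F}(E^*)$.

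It then remains to observe that $a\,N_{E/F}(E^*)\neq F^*$. This is where the hypothesis that $F$ is a local field enters: for a quadratic extension of local fields, $N_{E/F}(E^*)$ is a subgroup of index $2$ in $F^*$ (local class field theory; cf. \cite[\S\,63]{OMeara00}), so every one of its cosets is a proper subset of $F^*$, and consequently $Q(V)\subsetneq F$. There is no real obstacle here; the only point to be careful about is invoking the local norm-index statement in the precise form needed. One could also avoid class field theory altogether: over a local field there is, up to scaling, a unique anisotropic ternary quadratic space, and every anisotropic binary space embeds in it after a suitable scaling, so that $V\bot\dgf{-c}$ stays anisotropic for some $c\in F^*$, whence $c\notin Q(V)$; but the norm-form computation above seems cleaner.
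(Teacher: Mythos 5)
Your proposal is correct and follows essentially the same route as the paper: the paper also disposes of the isotropic case trivially and, in the anisotropic case, invokes the fact that (after scaling to represent $1$) the nonzero value set of a binary anisotropic space is an index-$2$ subgroup of $F^*$, citing \cite[(63:15)~(ii)]{OMeara00} for exactly the norm-group statement you derive via the identification with $N_{E/F}$.
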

\begin{proof}
Suppose that $V$ is anisotropic. If $1\in Q(V)$, then by \cite[(63:15) (ii)]{OMeara00}, $Q(V)^*:=Q(V)\cap F^*$ is a subgroup of index 2 in $F^*$. So the lemma follows.
\end{proof}

\begin{lemma}\label{UL2.4}
 Let $V$ be a ternary anisotropic quadratic space over $F$.  Let $\pi\in\cO_F$ be a uniformizer.

\begin{enumerate}
  \item If $V=\dgf{1,\,\veps_1,\,\veps_2}$ with $\veps_i\in\cO_F^*$, then $\pi\cO_F^*\subseteq Q(V)$ and $-\veps_1\veps_2\notin Q(V)$.
   \item If $V=\dgf{1,\,\veps_1,\,\pi\veps_2}$   with $\veps_i\in\cO_F^*$, then $\cO_F^*\subseteq Q(V)$ and $-\veps_1\veps_2\pi\notin Q(V)$.
  \item For any $\gamma\in F^*$, we have $\gamma\in Q(V)$ or $\gamma\Delta\in Q(V)$.
\end{enumerate}
\end{lemma}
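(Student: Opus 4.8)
The plan is to reduce all three statements to a single description of the set of nonzero values of an anisotropic ternary space over $F$: I claim that if $V$ is an anisotropic ternary quadratic space over $F$, then
\[
Q(V)\cap F^*=F^*\setminus d_\pm(V)F^{*2},
\]
i.e.\ $V$ represents a nonzero $x$ if and only if $x\not\equiv d_\pm(V)$ modulo squares. Granting this, all three parts follow by bookkeeping with discriminants and valuations. In (1), $V=\dgf{1,\veps_1,\veps_2}$ has $d_\pm(V)=-\veps_1\veps_2\in\cO_F^*$, so $-\veps_1\veps_2$ is not a value; moreover every element of $\pi\cO_F^*$ has odd valuation whereas $d_\pm(V)F^{*2}$ consists of elements of even valuation, so $\pi\cO_F^*\cap d_\pm(V)F^{*2}=\emptyset$, whence $\pi\cO_F^*\subseteq Q(V)$. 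Part (2) is the mirror image: $V=\dgf{1,\veps_1,\pi\veps_2}$ has $d_\pm(V)=-\veps_1\veps_2\pi$ of odd valuation, so $-\veps_1\veps_2\pi$ is not a value, while $\cO_F^*$ (even valuation) is disjoint from $d_\pm(V)F^{*2}$ (odd valuation), whence $\cO_F^*\subseteq Q(V)$. For (3), if $\gamma\notin Q(V)$ then $\gamma\in d_\pm(V)F^{*2}$, hence $\gamma\Delta\in d_\pm(V)\Delta F^{*2}$; since $\Delta$ generates the unramified quadratic extension we have $\Delta\notin F^{*2}$, so $\gamma\Delta\notin d_\pm(V)F^{*2}$ and therefore $\gamma\Delta\in Q(V)$.

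To prove the claim, fix $x\in F^*$ and note that $x$ is a value of $V$ if and only if the quaternary space $W:=V\bot\dgf{-x}$ is isotropic. I will use two standard facts about quadratic spaces over the local field $F$ (see \cite[\S\,63]{OMeara00}): up to isometry there is a unique anisotropic quaternary space, namely the norm form $D$ of the quaternion division algebra over $F$, and $d(D)\in F^{*2}$; consequently every $4$-dimensional space over $F$ is either isotropic or isometric to $D$. I also use the elementary observation that $\bH^2$ contains no anisotropic $3$-dimensional subspace: a $3$-dimensional subspace meets a fixed $2$-dimensional totally isotropic subspace in dimension at least $1$, hence contains a nonzero isotropic vector. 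Now $W$ contains the anisotropic ternary subspace $V$. If $d(W)=-x\,d(V)\notin F^{*2}$, then $W\not\cong D$, so $W$ is isotropic and $x$ is a value. If $d(W)\in F^{*2}$, assume $W$ were isotropic; then $W\cong\bH\bot W'$ for some binary $W'$, and $d(W)=-d(W')\in F^{*2}$ forces $W'\cong\bH$, so $W\cong\bH^2$, contradicting that $W$ has the anisotropic ternary subspace $V$. Hence $W$ is anisotropic and $x$ is not a value. Since for $\dim V=3$ one has $d_\pm(V)=-d(V)$, the condition $d(W)\in F^{*2}$ is exactly $x\in d_\pm(V)F^{*2}$, and the claim follows.

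The only delicate points are to correctly invoke the classification of anisotropic spaces over $F$ and to keep the sign conventions for $d_\pm$ straight; once the displayed description of $Q(V)\cap F^*$ is established, parts (1)--(3) are immediate, and in particular no dyadic specifics of $F$ enter beyond $\Delta\notin F^{*2}$.
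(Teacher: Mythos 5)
Your proof is correct and follows essentially the same route as the paper: both arguments reduce representability of $x$ by the anisotropic ternary $V$ to isotropy of $V\bot\dgf{-x}$ and decide this by the determinant, using that the unique anisotropic quaternary space over $F$ has square determinant. Your packaging of this as the identity $Q(V)\cap F^*=F^*\setminus d_{\pm}(V)F^{*2}$ is a clean unification from which (1)--(3) follow exactly as you say.
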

\begin{proof}
  (1) and (2) follow from the following two facts:

  (i) Any quaternary quadratic space with nontrivial determinant over $F$ is isotropic (\cite[(63:17)]{OMeara00}).

  (ii) For any $a,\,b\in F^*$, the ternary quadratic space $\dgf{1,\,-a,\,-b}$ is isotropic if and only if the quaternary space $\dgf{1,\,-a,\,-b,\,ab}$ is isotropic.

  To see (3), just note that $V\bot\dgf{-\gamma}$ or $V\bot\dgf{-\gamma \Delta}$ must have nontrivial determinant, hence is isotropic.
\end{proof}

\begin{lemma}\label{UL2.5}
  Let $c\in \cO_F^*\setminus (\cO_F^{*2}\cup\Delta\cO_F^{*2})$, and let $\pi\in\cO_F$ be a uniformizer.

 Then there exist $\eta,\,\theta,\,\veps\in \cO_F^*$ such that the space $\dgf{1,\,-c,\,-\theta\pi}$ is isotropic and the spaces $\dgf{1,\,-c,\,-\veps\pi}$, $\dgf{1,\,-c,\,-\eta}$ and $\dgf{\Delta,\,-c\,,\,-\eta}$ are anisotropic.
\end{lemma}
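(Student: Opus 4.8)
\emph{Proof strategy.}
The plan is to translate the four (an)isotropy requirements into membership statements about the norm group $N:=Q(\dgf{1,-c})^{*}=N_{F(\sqrt c)/F}(F(\sqrt c)^{*})$, and then to read off $\eta,\theta$ and $\veps$. Since $c\in\cO_{F}^{*}\setminus(\cO_{F}^{*2}\cup\Delta\cO_{F}^{*2})$, a valuation count gives $c\notin F^{*2}\cup\Delta F^{*2}$ (and likewise $\Delta c\notin F^{*2}$), so $\dgf{1,-c}$ is anisotropic and $N$ has index $2$ in $F^{*}$ (cf. the proof of Lemma~\ref{UL2.3}); moreover, for any $x\in F^{*}$ the ternary space $\dgf{1,-c,-x}$ is isotropic if and only if the binary space $\dgf{1,-c}$ represents $x$, i.e. if and only if $x\in N$ (if the last coordinate of an isotropic vector vanished, we would obtain a nonzero zero of the anisotropic form $\dgf{1,-c}$). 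For the space carrying $\Delta$ I would rescale, $\dgf{\Delta,-c,-\eta}\cong\Delta\dgf{1,-\Delta c,-\Delta\eta}$, so that it is isotropic if and only if $\Delta\eta\in N_{F(\sqrt{\Delta c})/F}(F(\sqrt{\Delta c})^{*})$; combining the Hilbert-symbol symmetry $a\in N_{F(\sqrt b)/F}(F(\sqrt b)^{*})\Leftrightarrow b\in N_{F(\sqrt a)/F}(F(\sqrt a)^{*})$ with the standard fact that every unit of $\cO_{F}$ is a norm from the unramified quadratic extension $F(\sqrt\Delta)/F$, one checks that $\Delta$ lies in $N$, in $N_{F(\sqrt{\Delta c})/F}(F(\sqrt{\Delta c})^{*})$, and in $N_{F(\sqrt u)/F}(F(\sqrt u)^{*})$ for every unit $u$, whence for every unit $\eta$ one has $\eta\in N_{F(\sqrt{\Delta c})/F}(F(\sqrt{\Delta c})^{*})\Leftrightarrow\eta\in N$. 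Thus for a unit $\eta$ the spaces $\dgf{1,-c,-\eta}$ and $\dgf{\Delta,-c,-\eta}$ are simultaneously anisotropic, namely exactly when $\eta\notin N$.

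Next I would use the hypothesis once more, to describe $N$. As $c\notin F^{*2}\cup\Delta F^{*2}$, the extension $F(\sqrt c)/F$ is ramified (the unramified quadratic extension being $F(\sqrt\Delta)/F$), so $N$ contains the norm of a prime element of $F(\sqrt c)$, an element of odd valuation. Since $[F^{*}:N]=2$, this forces $\cO_{F}^{*}\not\subseteq N$: otherwise $N$ would contain $\cO_{F}^{*}$ together with an element of odd valuation, hence all of $F^{*}$. Therefore $N\cap\cO_{F}^{*}$ has index $2$ in $\cO_{F}^{*}$, so both $\cO_{F}^{*}\to F^{*}/N$ and $\cO_{F}^{*}\to F^{*}/N,\ u\mapsto\overline{u\pi}$, are surjective.

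The choices are now immediate. Pick any $\eta\in\cO_{F}^{*}\setminus N$; by the first paragraph $\dgf{1,-c,-\eta}$ and $\dgf{\Delta,-c,-\eta}$ are anisotropic. Using the surjectivity of $u\mapsto\overline{u\pi}$ onto $F^{*}/N\cong\{\pm1\}$, pick $\theta\in\cO_{F}^{*}$ with $\theta\pi\in N$ and $\veps\in\cO_{F}^{*}$ with $\veps\pi\notin N$; then $\dgf{1,-c,-\theta\pi}$ is isotropic and $\dgf{1,-c,-\veps\pi}$ is anisotropic. This produces all the required spaces.

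The only step needing any care is the first one, namely passing from (an)isotropy of the ternary spaces to membership in $N$ — in particular the bookkeeping with norm groups showing that the $\Delta$-twisted space behaves on units like the untwisted one; after that the argument is just a count in the order-$2$ group $F^{*}/N$. I do not anticipate a real obstacle: this is a preparatory existence statement, and all of its content lies in the interplay between the ramified extension $F(\sqrt c)/F$ and the unramified extension $F(\sqrt\Delta)/F$.
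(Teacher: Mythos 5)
Your proof is correct and rests on the same computation as the paper's: everything reduces to Hilbert symbols, i.e.\ to membership in the index-two norm groups of $F(\sqrt{c})$ and $F(\sqrt{\Delta c})$, together with the facts that $F(\sqrt{c})/F$ is ramified and that every unit is a norm from the unramified extension $F(\sqrt{\Delta})/F$. The only differences are presentational: where the paper cites Hsia's lemma for the existence of a unit $\eta$ with $(c,\eta)_F=-1$, you derive it directly from the index-two count (using that $N$ contains the norm of a prime of the ramified extension, hence an element of odd valuation, so it cannot contain all of $\cO_F^*$), and you handle $\dgf{\Delta,-c,-\eta}$ by comparing the norm groups of $F(\sqrt{c})$ and $F(\sqrt{\Delta c})$ on units rather than via the paper's anisotropic quaternary space $\dgf{1,c\eta,-\eta\Delta,-c\Delta}$; both variants are sound.
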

\begin{proof}
  By \cite[p.202, Lemma\;3]{Hsia75Pacific} there exists $\eta\in\cO_F^*$ such that the Hilbert symbol $(c,\,\eta)_F$ is $-1$. The extension $F(\sqrt{c})/F$ is totally ramified.
So  $\theta\pi$ is a norm from $F(\sqrt{c})$ for some $\theta\in\cO_F^*$. Thus, $(c,\,\theta\pi)_F=1$. Taking $\veps=\theta\eta$ we get $(c,\,\veps\pi)_F=-1$.

   As $(-c\eta,\,\Delta)_F=1=(-\eta,\,\eta)_F$, we  have $(-c\eta,\,\eta\Delta)_F=(c,\,\eta)_F=-1$. Thus, the quaternary space $\dgf{1,\,c\eta\,,\,-\eta\Delta\,,\,-c\Delta}$ is anisotropic. It follows that the ternary space $\dgf{1,\,-c\Delta,\,-\eta\Delta}$ is isotropic. By scaling we see that $\dgf{\Delta,\,-c,\,-\eta}$ is anisotropic. This completes the proof.
\end{proof}

\begin{coro}\label{UL2.6}
  If $V$ is a binary space over $F$ such that $d_{\pm}(V)\in \cO_F^*\setminus \Delta \cO_F^{*2}$, then for every fractional ideal $\fa$ we have $\fa\rep V$.
 \end{coro}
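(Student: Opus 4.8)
The plan is to reduce everything to Lemma\;\ref{UL2.5} after a short case analysis on the square class of $d_\pm(V)$. Write $V=\dgf{a,b}$ with $a,b\in F^*$. Since the signed discriminant of a binary space satisfies $d_\pm(V)\equiv-ab\pmod{F^{*2}}$, we get $b\equiv-a\,d_\pm(V)\pmod{F^{*2}}$ and hence the identification $V\cong a\dgf{1,\,-d_\pm(V)}$, which I would record first. Moreover, for a binary space $d_\pm(V)\in F^{*2}$ holds precisely when $V$ is isotropic, i.e. $V\cong\bH$. Because $\cO_F^{*2}\subseteq F^{*2}$, the hypothesis $d_\pm(V)\in\cO_F^*\setminus\Delta\cO_F^{*2}$ splits into exactly two cases: either $d_\pm(V)\in\cO_F^{*2}$, and then $V\cong\bH$, so $Q(V)=F$ by Lemma\;\ref{UL2.3} and the assertion is trivial; or $c:=d_\pm(V)\in\cO_F^*\setminus(\cO_F^{*2}\cup\Delta\cO_F^{*2})$, which is exactly the hypothesis of Lemma\;\ref{UL2.5}.

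For this second case I would argue as follows. Since $F$ is unramified, $\pi=2$ is a uniformizer, and Lemma\;\ref{UL2.5} supplies $\theta\in\cO_F^*$ with $\dgf{1,\,-c,\,-\theta\pi}$ isotropic. As $c$ is not a square in $F^*$, any (nonzero) isotropic vector of $\dgf{1,-c,-\theta\pi}$ must have nonzero last coordinate, so $\theta\pi\in Q(\dgf{1,-c})$. Together with $1\in Q(\dgf{1,-c})$, this shows that $Q(\dgf{1,-c})$ contains an element of valuation $0$ and an element of valuation $1$; multiplying by $a$ and using $V\cong a\dgf{1,-c}$, the set $Q(V)$ contains elements of both valuation parities.

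To finish I would invoke the elementary dictionary recorded in \ref{UL2.1}: because $Q(\lambda w)=\lambda^2Q(w)$, the set $Q(V)$ is a union of $F^{*2}$-cosets, so as soon as it contains one element of odd (resp. even) valuation it contains elements of all odd (resp. even) valuations; equivalently, $Q(V)\cap\cO_F^*\neq\emptyset$ and $Q(V)\cap2\cO_F^*\neq\emptyset$. Since every fractional ideal of $F$ is of the form $2^i\cO_F$, the previous paragraph yields $\fa\rep V$ for every fractional ideal $\fa$. I do not expect any genuine obstacle here: the only points that need care are the reduction $V\cong a\dgf{1,-c}$ (immediate from the formula for $d_\pm$ of a binary form) and the square-class bookkeeping in the last step, which is precisely what \ref{UL2.1} provides.
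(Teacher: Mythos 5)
Your proof is correct and follows essentially the same route as the paper's: both rest on Lemma\;\ref{UL2.5} producing $\theta\in\cO_F^*$ with $\dgf{1,\,-c,\,-\theta\pi}$ isotropic, which is exactly the statement that $\dgf{1,\,-c}$ represents an element of odd valuation, and then both conclude by scaling. The only differences are organizational: you make the trivial case $d_{\pm}(V)\in\cO_F^{*2}$ (where $V\cong\bH$) explicit, and you phrase the conclusion as ``both valuation parities occur in $Q(V)$'' rather than reducing to $V=\dgf{\pi,\,-c\pi}$ as the paper does.
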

\begin{proof}
By scaling we may assume $\fa=\cO_F$. Then we need to show $\cO_F^*\cap Q(V)\neq\emptyset$.   Let $d_{\pm}(V)=c\in \cO_F^*$.  We may assume $V=\dgf{\pi,\,-c\pi}$ for some uniformizer $\pi$. By Lemma\;\ref{UL2.5}, there exists $\theta\in \cO_F^*$ such that $\dgf{1,\,-c,\,-\theta\pi}$ is isotropic. Then $\dgf{\pi,\,-c\pi \,,\,-\theta}$ is isotropic, whence $\theta\in Q(V)$.
\end{proof}

\begin{remark}\label{UL2.7}
The reader may check that in Proposition\;\ref{UL2.2} and Lemmas\;\ref{UL2.3} and \ref{UL2.4}, the field $F$ can be replaced with any non-archimedean local field of characteristic $\neq 2$. In Lemma\;\ref{UL2.5} and Corollary\;\ref{UL2.6}, it suffices to assume that $F$ is a finite (not necessarily unramified) extension of $\Q_2$.
\end{remark}
%
%
%
%
%


\section{A representability criterion}\label{sec3}

We now recall some definitions and results from  O'Meara's paper \cite{OMeara58AJM}, which are key ingredients in the proofs of our main results.

\medskip

\newpara\label{UL3.1}  Let  $L=L_1\bot\cdots\bot L_t$ be a Jordan splitting of a lattice $L$. For each $i\in\Z$, put
\[
\begin{split}
  [\le i]_L:&=\{1\le r\le t\,|\, \ord(\frs(L_r))\le i\}=\{r\,|\, 2^i\cO_F\subseteq \frs(L_r)\}\,,\\
   (i)_L:&=\{1\le r\le t\,|\, \ord(\fn(L_r))\le i\}=\{r\,|\, 2^i\cO_F\subseteq \fn(L_r)\}\,,\\
   [i]_L:&=\{1\le r\le t\,|\, 2^i\cO_F\subseteq \frs(L_r) \text{ or } 2^{i+1}\cO_F=\frs(L_r)\neq \fn(L_r)\}\,.
\end{split}
\]
Define
\[
L_{\le i}:=\underset{{r\in [\le i]_L}}{\Bot}L_r\,,\;\; L_{(i)}:=\underset{{r\in (i)_L}}{\Bot}L_r\,,\;\; L_{[i]}:=\underset{{r\in [i]_L}}{\Bot}L_r\,.
\]

The sublattices $L_{\le i}$, $L_{(i)}$ and $L_{[i]}$ may depend on the choice of the Jordan splitting, but the sets $[\le i]_L$, $(i)_L$ and $[i]_L$ are independent of that choice. (In \cite{OMeara58AJM}, the lattices $L_{\le i},\,L_{(i)}$ and $L_{[i]}$ are denoted by $\mathfrak{L}_i,\,\mathfrak{L}_{(i)}$ and $\mathfrak{L}_{[i]}$ respectively.)

We also define
\[
\begin{split}
\fd_i(L):&=\begin{cases}
 d(L_{\le i})\cO_F= \prod_{r\in [\le i]_L}\frs(L_r)^{\dim L_r}\quad &\text{ if } L_{\le i}\neq 0\,,\\
 0 \quad & \text{ if } L_{\le i}=0\,,\\
\end{cases}\\
\Delta_i(L):&=\begin{cases}
  2^{i+1}\cO_F\quad & \text{ if } L \text{ has a proper } 2^{i+1}\text{-modular component},\\
  2^{i+2}\cO_F\quad & \text{ if } L \text{ has no proper } 2^{i+1}\text{-modular component}\\
  & \text{ but a proper } 2^{i+2}\text{-modular component},\\
 0\quad & \text{ otherwise}.
\end{cases}
\end{split}
\](In \cite{OMeara58AJM}, $\fd_i(L)$ and $\Delta_i(L)$ are denoted by $D_i$ and $\Delta_i$ respectively.)

Since the dimensions $\dim L_r$ and the scales $\frs(L_r)$ for $1\le r\le t$ are independent of the Jordan splitting, the fractional or zero ideals $\fd_i(L)$ and $\Delta_i(L)$ are uniquely determined by the lattice $L$.

\

The following is a reformulation of \cite[p. 858, Definition]{OMeara58AJM}:

\begin{defn}\label{UL3.2}
  Given two lattices $\ell$ and $L$, we say that $\ell$ has a \textbf{\emph{lower type}} than $L$ if the following conditions hold for all $i\in\Z$:

  \begin{enumerate}
     \item $\dim\ell_{\le i}\le \dim L_{\le i}$.
  \item If $\dim \ell_{\le i}=\dim L_{\le i}>0$, then $\ord(\fd_i(\ell)\fd_i(L))$ is even.
  \item If $\dim \ell_{\le i}=\dim L_{\le i}$, then the following conditions hold:
  \begin{enumerate}
    \item $L$ has a proper $2^{i+1}$-modular component if $\ell$ does;
    \item $\ell$ has a proper $2^i$-modular component if $L$ does.
  \end{enumerate}
  \item If $\dim \ell_{\le i}=\dim L_{\le i}-1>0$, then
  \begin{enumerate}
    \item in  case $\ord(\fd_i(\ell)\fd_i(L))\equiv i+1\pmod{2}$, $\ell$ has a proper $2^{i}$-modular component if $L$ does;
    \item in case $\ord(\fd_i(\ell)\fd_i(L))\equiv i\pmod{2}$, $L$ has a proper $2^{i+1}$-modular component if $\ell$ does.
  \end{enumerate}
  \end{enumerate}
\end{defn}

\newpara\label{UL3.3} For quadratic spaces $U$ and $W$ such that there exists a representation $\sigma: U\to W$, the orthogonal complement of $\sigma(U)$ in $W$, as a quadratic space, is uniquely determined by $U$ and $W$ up to isomorphism. We denote it by $W/^\bot U$.

\begin{thm}[{\cite[p.864, Theorem\;3]{OMeara58AJM}}]\label{UL3.4}
  Let $\ell$ and $L$ be lattices with given Jordan splittings.

  Then $\ell\rep L$ if and only if $\ell$ has a lower type than $L$ and the following conditions hold for all $i\in\Z$:

  \begin{enumerate}
      \item $F\ell_{[i]}\rep FL_{(i+2)}$, and the orthogonal complement $FL_{(i+2)}/^\bot F\ell_{[i]}$ represents the ideals $\Delta_i(\ell)$ and $\Delta_i(L)$.
    \item If $FL_{(i+2)}/^\bot F\ell_{[i]}\cong \bH$, then $\Delta_i(\ell)\Delta_i(L)\subseteq \Delta_i(\ell)^2$.
    \item  $F\ell_{\le i}\rep FL_{(i+1)}\bot\dgf{2^i}$ and  $\big(FL_{(i+1)}\bot\dgf{2^i}\big)/^\bot F\ell_{\le i}$ represents $2^i$ or $2^{i}\Delta$.
    \item $F\ell_{[i]}\rep FL_{\le i+1}\bot \dgf{2^i}$ and $\big(FL_{\le i+1}\bot \dgf{2^i}\big)/^\bot F\ell_{[i]}$ represents $2^i$ or $2^i\Delta$.
  \end{enumerate}
\end{thm}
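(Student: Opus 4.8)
This is O'Meara's local representation criterion, and my plan is to recover it from scratch following the strategy of \cite{OMeara58AJM}, treating necessity and sufficiency separately.

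For necessity, I would start from an isometric embedding $\sigma\colon\ell\to L$ and extract the ``lower type'' conditions as formal consequences of the behaviour of the invariants under passage to a sublattice. For each $i$ the sublattice $\sigma(\ell_{\le i})$ has scale contained in $2^i\cO_F$, so comparing it with a Jordan splitting of $L$ forces $\dim\ell_{\le i}\le\dim L_{\le i}$; when these dimensions agree, $d(L_{\le i})$ differs from $d(\sigma(\ell_{\le i}))$ only by the discriminant of an orthogonal complement whose scale is controlled, which forces $\ord(\fd_i(\ell)\fd_i(L))$ to be even (and dictates its residue mod $2$ in the dimension-drop case), and the presence of a proper modular component of prescribed scale propagates in the asserted directions. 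The space-level conditions $(1)$, $(3)$, $(4)$ I would then get by tensoring the relevant sublattice inclusions with $F$: the image of the small-scale, small-norm part of $\ell$ lands, after a controlled modification, in the corresponding sublattice of $L$, so that $F\ell_{[i]}\rep FL_{(i+2)}$, $F\ell_{\le i}\rep FL_{(i+1)}\bot\dgf{2^i}$ and $F\ell_{[i]}\rep FL_{\le i+1}\bot\dgf{2^i}$, while the orthogonal complements still represent the norm ideals $\Delta_i(\ell)$, $\Delta_i(L)$ (and $2^i$ or $2^i\Delta$) because those ideals originate from proper modular components of $\ell$ and $L$ not fully absorbed by $\sigma(\ell)$. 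Condition $(2)$ I would deduce from the extra rigidity of a hyperbolic complement, which represents every ideal and, being split, forces $\Delta_i(L)\subseteq\Delta_i(\ell)$.

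For sufficiency --- the substantial direction --- I would argue by induction on the number of Jordan components of $\ell$. Write $\ell=\ell_1\bot\ell'$ with $\ell_1$ the component of smallest scale. Using conditions $(3)$ and $(4)$ at indices near $\ord(\frs(\ell_1))$, together with the classical representation theory of modular lattices over dyadic fields (norm groups, quadratic defects, and the lattices $\xi A(\al,\beta)$, cf.\ \cite[Ch.\,IX]{OMeara00}), I would produce a sublattice $M\cong\ell_1$ inside $L$ whose orthogonal complement $L'=M^{\perp}$ is a lattice on $FL/^\bot F\ell_1$; then I would verify that the pair $(\ell',L')$ again satisfies every hypothesis of the theorem, so that the inductive hypothesis yields $\ell'\rep L'$ and hence $\ell=\ell_1\bot\ell'\rep M\bot L'\subseteq L$. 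The base case, $\ell$ a single modular lattice, is exactly the classical modular representation theory just cited.

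The main obstacle is concentrated in the inductive step and is twofold. First, $\ell_1$ cannot be embedded arbitrarily: only a careful choice of $M$ --- controlling the isometry class of $M^{\perp}$, its pattern of scales and norms, and its discriminant --- leaves $L'$ with enough room, and making this choice is precisely where condition $(2)$ and the case distinctions in $(3)$ and $(4)$ are consumed. Second, after peeling off $\ell_1$ one must express every invariant of $L'$ --- $\dim L'_{\le i}$, $\fd_i(L')$, $\Delta_i(L')$, and its pattern of proper modular components --- in terms of those of $L$ and $\ell_1$, and then re-check ``lower type'' and conditions $(1)$--$(4)$ for $(\ell',L')$. This invariant bookkeeping, riddled with dyadic case splits (norm versus scale, parity of $\ord\fd_i$, hyperbolic versus anisotropic complements), is the genuinely long and delicate part of the argument; an alternative, uniform treatment is available through Beli's theory of bases of norm generators, but it requires machinery that the present paper is designed to avoid, so here the result is invoked in O'Meara's classical form.
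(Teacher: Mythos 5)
This theorem is not proved in the paper at all: it is O'Meara's Theorem~3 from \cite{OMeara58AJM}, imported verbatim (up to notation and the typo correction recorded in Remark~\ref{UL3.5}), and your proposal likewise ends by invoking the result in O'Meara's classical form, so on the point that actually matters you and the paper are in agreement. The intermediate sketch you give --- necessity by comparing invariants of $\sigma(\ell_{\le i})$ with a Jordan splitting of $L$, sufficiency by induction on the number of Jordan components, splitting off the component $\ell_1$ of smallest scale and passing to the orthogonal complement --- is a reasonable high-level account of how O'Meara's argument is organized, but be aware that it would not stand on its own: the two steps you yourself flag (choosing the embedding $M\cong\ell_1$ so that $M^{\perp}$ retains the right scales, norms and discriminants, and re-verifying the lower-type conditions and (1)--(4) for the pair $(\ell',L')$) are precisely where the entire difficulty of O'Meara's paper lives, and nothing in the sketch certifies that the induction closes. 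In particular the claim in your necessity paragraph that a hyperbolic complement ``forces $\Delta_i(L)\subseteq\Delta_i(\ell)$'' is stated without justification and is exactly the delicate point behind condition~(2). Since the paper treats the theorem as a black box and you ultimately do the same, there is no gap relative to the paper; just do not mistake the sketch for a proof.
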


\begin{remark}\label{UL3.5}
   In the statement of Condition (V) in \cite[p.864, Theorem\;3]{OMeara58AJM} there is a typo: ``$\mathfrak{L}_{(i+1)}$'' should read ``$\mathfrak{L}_{i+1}$''. In our notation, that condition is rephrased as Condition (4) of Theorem\;\ref{UL3.4}.
\end{remark}

\section{Classification of universal lattices}\label{sec4}

In this section, let $L=L_1\bot\cdots\bot L_t$ be a Jordan splitting of a nonzero integral lattice. Our goal is to prove Theorem\;\ref{UL1.2}, which gives necessary and sufficient conditions for $L$ to be universal.

\medskip

Note that since $F$ is unramified over $\Q_2$, we have $\fn(L)=\fn(L_1)$.

\begin{lemma}\label{UL4.1}
  Let $\veps\in \cO_F^*$. Then the following conditions are equivalent:

  \begin{enumerate}
    \item The two lattices $\ell=\dgf{\veps}$ and $\ell'=\dgf{2\veps}$ both have a lower type than $L$.
    \item $\fn(L_1)=\cO_F$, and if $\dim L_1=1$, then $\frs(L_2)=2\cO_F$.
  \end{enumerate}
\end{lemma}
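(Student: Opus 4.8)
The plan is to apply Definition~\ref{UL3.2} directly to the two rank-one lattices $\ell=\dgf{\veps}$ and $\ell'=\dgf{2\veps}$ and unwind what the conditions there say in this extremely simple situation. Since $\ell$ is $2^0$-modular and $\ell'$ is $2^1$-modular, for $\ell$ we have $\dim\ell_{\le i}=0$ for $i<0$ and $\dim\ell_{\le i}=1$ for $i\ge 0$, while for $\ell'$ we have $\dim\ell'_{\le i}=0$ for $i<1$ and $=1$ for $i\ge 1$; moreover $\ell$ has a proper $2^0$-modular component and no other proper modular component, and similarly $\ell'$ has a proper $2^1$-modular component and no other. The fractional ideals are $\fd_i(\ell)=\cO_F$ for $i\ge 0$ (and $0$ otherwise), $\fd_i(\ell')=2\cO_F$ for $i\ge 1$ (and $0$ otherwise). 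So the only potentially restrictive instances of Definition~\ref{UL3.2} are the low-index ones: $i=0$ for $\ell$, and $i=0,1$ for $\ell'$. First I would record these reductions carefully, noting that for all large $i$ condition~(1) of Definition~\ref{UL3.2} holds trivially because $\dim\ell_{\le i},\dim\ell'_{\le i}\le 1$ while eventually $\dim L_{\le i}=\dim L$, and conditions (2)--(4) become vacuous or automatic once $\dim L_{\le i}\ge 2$.

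Next I would analyze the remaining low-index cases. For $\ell=\dgf\veps$ at $i=0$: condition (1) says $\dim L_{\le 0}\ge 1$, i.e.\ $L_{\le 0}\ne 0$, which holds automatically since $L$ is nonzero integral and $L_1$ is the leading Jordan component with $\frs(L_1)\supseteq\frs(L)\supseteq\cO_F$ \emph{only if} $\fn(L_1)=\cO_F$ --- here I need to be careful and instead observe that $\dim\ell_{\le 0}=1\le\dim L_{\le 0}$ is equivalent to $L$ having a Jordan component of scale $\supseteq\cO_F$, which always happens, so (1) is automatic. The real content is condition (3)(b): $\ell$ has a proper $2^0$-modular component (it does), so $L$ must have a proper $2^0$-modular component, i.e.\ $\fn(L_1)=\frs(L_1)=\cO_F$ --- this forces $\fn(L_1)=\cO_F$. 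Conditions (2) and (3)(a) and (4) either are automatic or only bite when $\dim\ell_{\le 0}$ equals or is one less than $\dim L_{\le 0}$ and impose nothing new. For $\ell'=\dgf{2\veps}$, the cases $i=0$ and $i=1$ must be examined: at $i=1$ the analysis is parallel and yields that $L$ has a proper $2^1$-modular component \emph{if} $\dim\ell'_{\le 1}=\dim L_{\le 1}$, i.e.\ if $\dim L_{\le 1}=1$; at $i=0$, $\dim\ell'_{\le 0}=0$ and condition (4) with $\dim\ell'_{\le 0}=\dim L_{\le 0}-1$ can impose a constraint precisely when $\dim L_{\le 0}=1$, that is when $\dim L_1=1$.

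The key observation that ties everything together is the case split on $\dim L_1$. If $\dim L_1\ge 2$, then once $\fn(L_1)=\cO_F$ holds, we have $\dim L_{\le i}\ge 2 >1\ge \dim\ell_{\le i},\dim\ell'_{\le i}$ for every $i\ge 0$, so conditions (2)--(4) of Definition~\ref{UL3.2} are all vacuous for both $\ell$ and $\ell'$ (they require equality or near-equality of the truncated dimensions), and (1) is automatic; hence ``(1) $\Leftrightarrow$ $\fn(L_1)=\cO_F$'' in that case, consistent with (2). If $\dim L_1=1$, then $\dim L_{\le 0}=1=\dim\ell_{\le 0}$, so condition (3)(b) for $\ell$ still gives $\fn(L_1)=\cO_F$ as before, but now additionally $\dim\ell'_{\le 0}=0=\dim L_{\le 0}-1$, so condition (4) of Definition~\ref{UL3.2} for $\ell'$ at $i=0$ becomes active: part (a) or (b) there, depending on the parity of $\ord(\fd_0(\ell')\fd_0(L))$, requires a proper $2^0$- or $2^1$-modular component of the appropriate lattice. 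Working out that $\fd_0(\ell')=0$ makes the hypothesis $\dim\ell'_{\le 0}=\dim L_{\le 0}-1>0$ of (4) fail (since $\dim\ell'_{\le 0}=0$), so (4) is actually vacuous at $i=0$; the genuine constraint comes at $i=1$, where $\dim\ell'_{\le 1}=1$ and we need $\dim L_{\le 1}\ge 1$ (automatic) and, in the equality case $\dim L_{\le 1}=1$ --- which, given $\dim L_1=1$, means $L$ has no component of scale $\supseteq 2^{-1}\cO_F$ beyond $L_1$ and $L_2$ has scale $\subseteq 2\cO_F$...

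Here I realize the cleanest route is to unwind directly: with $\dim L_1=1$ and $\fn(L_1)=\cO_F$, condition (3)(b) of Definition~\ref{UL3.2} applied to $\ell'$ at $i=1$ (using that $\ell'$ has a proper $2^1$-modular component) forces, \emph{in the case $\dim\ell'_{\le 1}=\dim L_{\le 1}=1$}, that $L$ has a proper $2^1$-modular component; and the case $\dim L_{\le 1}=1$ happens exactly when $\frs(L_2)\subseteq 2\cO_F$, while $\frs(L_2)=2\cO_F$ combined with ``proper'' gives $\fn(L_2)=2\cO_F$. Conversely if $\frs(L_2)$ is strictly smaller, $\dim L_{\le 1}\ge 2$ and everything is vacuous. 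The main obstacle I anticipate is precisely this bookkeeping at the boundary indices $i=0,1$: one must check each of the four clauses of Definition~\ref{UL3.2} and verify that, apart from the two flagged constraints ($\fn(L_1)=\cO_F$ always, and $\frs(L_2)=2\cO_F$ when $\dim L_1=1$), every clause is either automatically satisfied for a nonzero integral $L$ or has a hypothesis that fails (so is vacuous). Once that case-by-case verification is done, the equivalence of (1) and (2) follows by assembling the pieces. I would organize the write-up as: (i) compute $\dim\ell_{\le i},\dim\ell'_{\le i},\fd_i,\Delta_i$ for both lattices; (ii) show all clauses of Definition~\ref{UL3.2} are automatic for $i$ outside $\{0,1\}$; (iii) handle $i=0,1$, splitting on $\dim L_1=1$ versus $\ge 2$; (iv) conclude.
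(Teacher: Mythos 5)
Your overall strategy -- unwinding Definition~\ref{UL3.2} directly for the rank-one lattices $\dgf{\veps}$ and $\dgf{2\veps}$ -- is exactly the paper's, but the execution misattributes which clauses of the definition produce which constraints, and as written the argument would prove a \emph{different} (and false) equivalence. First, condition \eqref{UL3.2}~(1) at $i=0$ is \emph{not} automatic for a nonzero integral lattice: integrality only gives $\fn(L_1)\subseteq\cO_F$, so e.g.\ $L=\dgf{4}$ is integral with $\dim L_{\le 0}=0$. This clause is precisely where the constraint $\cO_F\subseteq\frs(L_1)$ (equivalently: $\frs(L_1)=\cO_F$, or $\frs(L_1)=2^{-1}\cO_F$ with $\fn(L_1)=\cO_F$) comes from; discarding it as automatic loses the starting point of the whole argument. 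Second, your claim that condition~(3) forces $L$ to have a proper unimodular component, hence $\frs(L_1)=\fn(L_1)=\cO_F$, cannot be right, because it contradicts the lemma itself: condition~(2) of the lemma only asserts $\fn(L_1)=\cO_F$ and is satisfied by, say, $L_1\cong 2^{-1}A(0,0)$, which is improper with $\frs(L_1)=2^{-1}\cO_F$. The relevant clause is \eqref{UL3.2}~(3)(a) at $i=-1$, and it only applies when $\dim\ell_{\le -1}=\dim L_{\le -1}$, i.e.\ when $\dim L_{\le -1}=0$; so it yields only the \emph{conditional} statement ``if $\frs(L_1)=\cO_F$ then $\fn(L_1)=\cO_F$'', which combined with the constraint from clause~(1) gives $\fn(L_1)=\cO_F$.

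Third, and most seriously, the second half of condition~(2) (``if $\dim L_1=1$ then $\frs(L_2)=2\cO_F$'') does not come from condition~(3) at all. Your route would produce ``$L$ has a \emph{proper} $2$-modular component'', which is strictly stronger than $\frs(L_2)=2\cO_F$ and is not what the lemma asserts (an improper $2$-modular $L_2$, e.g.\ $2A(0,0)$, satisfies the lemma's condition but not yours); moreover clause~(3)(a) for $\ell'$ at $i=0$ is vacuous once $\dim L_{\le 0}\ge 1$, and at $i=1$ it concerns $4$-modular components, so the mechanism you describe is not available. The actual source is the parity condition \eqref{UL3.2}~(2): applied to $\ell$ at $i=0$ it says that if $\dim L_{\le 0}=1$ then $\ord(\fd_0(L))$ is even, while applied to $\ell'$ at $i=1$ (where $\fd_1(\ell')=2\cO_F$) it says that if $\dim L_{\le 1}=1$ then $\ord(\fd_1(L))$ is \emph{odd}; when $\dim L_1=1$ and $\frs(L_2)\subseteq 4\cO_F$ both hypotheses hold with $\fd_0(L)=\fd_1(L)=\cO_F$, giving a contradiction, whence $\frs(L_2)=2\cO_F$. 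You never invoke clause~(2) of Definition~\ref{UL3.2}, so this constraint is unreachable by your argument. Finally, in the converse direction the verification of \eqref{UL3.2}~(4) for $\ell$ at $i>0$ when $\dim L_1=1$ is not vacuous and needs the short case analysis on a possible $1$-dimensional $2^m$-modular component that the paper carries out; your sketch skips it.
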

\begin{proof}
  For each $i\in\Z$, we have
\[
  \ell_{\le i}=\ell_{[i]}=\begin{cases}
  0 \quad & \text{ if } i<0\,,\\
  \ell \quad & \text{ if } i\ge 0\,,
\end{cases}\quad \text{and}\quad \fd_i(\ell)=\begin{cases}
  0 \quad & \text{ if } i<0\,,\\
  \cO_F \quad & \text{ if } i\ge 0\,.
\end{cases}
\]Likewise, we can determine the lattice $\ell'_{\le i}=\ell'_{[i]}$ and the ideal $\fd_i(\ell')$ for $\ell'$.

Let us first assume $\ell$ and $\ell'$ have a lower type than $L$ and show that condition (2) holds.

To start with, note that condition \eqref{UL3.2} (1) yields $\cO_F\subseteq \frs(L_1)$. Since we have assumed $L$ integral, this is equivalent to

(a) $\frs(L_1)=\cO_F$ or $\fn(L_1)=\cO_F=2\frs(L_1)$.

In the case $i=0$, \eqref{UL3.2} (2) for $\ell$ says that if $\dim  L_{\le 0}=1$, then $\ord(\fd_0(L))$ is even. We already have $\cO_F\subseteq\frs(L_1)$ by (a). So this means that if $\dim L_1=1$ and $\frs(L_2)\subseteq 2\cO_F$, then $\ord(\frs(L_1))$ is even. On the other hand, when $i=1$, \eqref{UL3.2} (2) for $\ell'$ demands that if $\dim  L_{\le 1}=1$, then $\ord(\fd_1(L))$ is odd. That is, if $\dim L_1=1$ and $\frs(L_2)\subseteq 4\cO_F\subseteq 2\cO_F$, then $\ord(\frs(L_1))$ is odd. Combined with the condition we have just observed, this implies

(b) if $\dim L_1=1$, then $2\cO_F\subseteq\frs(L_2)$.

By considering the case $i=-1$ in \eqref{UL3.2} (3) for $\ell$, we find that if $\dim  L_{\le -1}=0$ (which together with condition (a) above is equivalent to $\frs(L_1)=\cO_F$), then $L$ has a proper unimodular component (which must be $L_1$). In other words, we have

(c) if $\frs(L_1)=\cO_F$, then $\fn(L_1)=\cO_F$.

Combining (a), (b) and (c), we see that $\fn(L_1)=\cO_F$ and that if $\dim L_1=1$ (so that $\frs(L_1)=\fn(L_1)$), then $\frs(L_2)=2\cO_F$. This proves (1)$\Rightarrow$(2).

Conversely, let us now suppose condition (2) holds. We want to show that both $\ell$ and $\ell'$ have a lower type than $L$.

Since $\cO_F=\fn(L_1)\subseteq \frs(L_1)$, we have $\dim  L_{\le 1}\ge \dim  L_{\le 0}\ge 1$. Hence \eqref{UL3.2} (1) holds for both $\ell$ and $\ell'$.

If $\dim\ell_{\le i}=\dim L_{\le i}>0$, we must have $i\ge 0$ and $\dim L_{\le i}=1=\dim L_1$. Then $\frs(L_1)=\fn(L_1)=\cO_F$ and $\ord(\fd_i(\ell)\fd_i(L))=\ord(\frs(L_1))=0$. So \eqref{UL3.2} (2) is true for $\ell$. If $\dim \ell'_{\le i}>0$, we have $i\ge 1$, $\dim \ell'_{\le i}=1$ and $\dim L_{\le i}\ge \dim L_1+\dim L_2\ge 2$. Therefore, there is no need to check \eqref{UL3.2} (2) for $\ell'$.

If $i<-1$, condition \eqref{UL3.2} (3) holds automatically for $\ell$ and $\ell'$. If $i=-1$, for $\ell'$ that condition is trivial and for $\ell$ it requires that if $\dim  L_{\le -1}=0$, then $L$ has  a proper unimodular component, This statement holds since under the assumption $\fn(L_1)=\cO_F$, the equality $\dim  L_{\le -1}=0$ means $\frs(L_1)=\fn(L_1)=\cO_F$. If $i=0$, then \eqref{UL3.2} (3) is trivially verified for $\ell$, and there is no need to check it for $\ell'$ since $\dim  L_{\le 0}\ge 1>0=\dim \ell'_{\le 0}$. Now suppose $i>0$. Then we don't need to check \eqref{UL3.2} (3) for $\ell'$ since $\dim L_{\le i}\ge 2>1=\dim \ell'_{\le i}$. For $\ell$ the condition means that if $\dim L_{\le i}=1$, then $L$ has no proper $2^i$-modular component. This is true because $\dim L_{\le i}=1$ implies $L_{\le i}=L_1= L_{\le 0}$.

If $i\le 0$, \eqref{UL3.2} (4) holds trivially for $\ell$ and $\ell'$.

 If $i>0$, that condition for $\ell$ requires that if $\dim L_{\le i}=2$ and $\ord(\fd_i(L))\equiv i+1\pmod{2}$, then $L$ has no proper $2^i$-modular component. To check this we may assume $\dim  L_{\le 0}\le 1$. Then the equality $\dim L_{\le i}=2$ holds if and only if $\dim  L_{\le 0}=\dim L_1=1$ and $L$ has a 1-dimensional $2^m$-modular component with $0<m\le i$ but no other modular component with scale $\supseteq 2^i\cO_F$. If $m\neq i$, then $L$ has no $2^i$-modular component. If $m=i$, then $\ord(\fd_i(L))=\ord(\frs(L_1))+i=i\not\equiv i+1\pmod{2}$, noticing that $\frs(L_1)=\fn(L_1)=\cO_F$ since $\dim L_1=1$. Thus we have verified \eqref{UL3.2} (4) for $\ell$.

If $i=1$, \eqref{UL3.2} (4) is trivial for $\ell'$. If $i>1$, that condition for $\ell'$ says that if $\dim L_{\le i}=2$ and $\ord(\fd_i(L))\equiv i\pmod{2}$, then $L$ has no proper $2^i$-modular component. But this is clearly true, since $\dim  L_{\le 1}\ge 1$ under the assumptions in (2).

We have thus finished the proof.
\end{proof}

\begin{lemma}\label{UL4.2}
  The lattice $L$ is universal if and only if all the following conditions hold:

  \begin{enumerate}
    \item $\fn(L_1)=\cO_F$, and if $\dim L_1=1$, then ($t\ge 2$ and) $\frs(L_2)=2\cO_F$.

    \item If $FL_{(1)}\cong\bH$, then $L$ has no proper unimodular component.
    \item $\cO_F^*\subseteq Q(FL_{(2)})\cap Q(FL_{(1)}\bot \dgf{1})$.

    $2\cO_F^*\subseteq Q(FL_{(3)})\cap Q(FL_{(2)}\bot \dgf{2})$.

    \item For all $\veps\in \cO_F^*$, $\big(FL_{(1)}\bot\dgf{1}\big)/^\bot \dgf{\veps}$ represents $1$ or $\Delta$, and  $\big(FL_{(2)}\bot\dgf{2}\big)/^\bot \dgf{2\veps}$ represents $2$ or $2\Delta$.
    \item $\cO_F^*\cap Q(FL_{(0)})\neq\emptyset$ and $2\cO_F^*\cap Q(FL_{(1)})\neq\emptyset$.
    \item For all $\veps\in \cO_F^*$ the following conditions are satisfied:
    \begin{enumerate}
          \item If $L$ has a proper $2$-modular component, then
    $2\cO_F^*\cap Q(FL_{(2)}/^\bot\dgf{\veps})\neq\emptyset$.
      \item If $L$ has a proper $4$-modular component, then
   $\cO_F^*\cap Q(FL_{(3)}/^\bot\dgf{2\veps})\neq\emptyset$.
      \item If $L$ has both a proper $2$-modular component and a proper $4$-modular component, then
      $ \cO_F^*\cap Q(FL_{(3)}/^\bot\dgf{\veps})\neq\emptyset$.
      \item If $L$ has no proper $2$-modular component but has a proper $4$-modular component, then
     $\cO_F^*\cap Q(FL_{(2)}/^\bot\dgf{\veps})\neq\emptyset$.

      \item If $L$ has both a proper $4$-modular component and a proper $8$-modular component, then
      $2\cO_F^*\cap Q(FL_{(4)}/^\bot\dgf{2\veps})\neq\emptyset$.
      \item If $L$ has no proper $4$-modular but has a proper $8$-modular component, then
      $2\cO_F^*\cap Q(FL_{(3)}/^\bot\dgf{2\veps})\neq\emptyset$.
    \end{enumerate}
  \end{enumerate}
\end{lemma}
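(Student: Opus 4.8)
The plan is to apply the representability criterion of Theorem \ref{UL3.4} with $L$ fixed and $\ell$ running over all integral one-dimensional lattices, i.e. $\ell=\dgf{\veps}$ and $\ell=\dgf{2\veps}$ with $\veps\in\cO_F^*$ (since $F$ is unramified, an integral unary lattice $\dgf{a}$ has $v(a)\ge 0$, and scaling by squares reduces to these two families). Thus $L$ is universal if and only if for every $\veps\in\cO_F^*$ both $\dgf{\veps}\rep L$ and $\dgf{2\veps}\rep L$. I would then unwind Theorem \ref{UL3.4} for each of these $\ell$, observing that for $\ell=\dgf{\veps}$ (unimodular, so $\fn(\ell)=\frs(\ell)=\cO_F$) one has $\ell_{\le i}=\ell_{[i]}=0$ for $i<0$ and $=\ell$ for $i\ge0$, while $\ell$ has a proper $2^0$-modular (unimodular) component and no proper $2^j$-modular component for $j\ne0$; similarly for $\ell=\dgf{2\veps}$ with index shifted by one. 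Condition (1) of the lemma is exactly the ``lower type'' part, which is the content of Lemma \ref{UL4.1} applied to $\dgf{\veps}$ and $\dgf{2\veps}$ simultaneously — so that part is already done.

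Next I would translate the four numbered conditions (I)--(IV) of Theorem \ref{UL3.4} for each family. For $\ell=\dgf{\veps}$: condition (III) at $i=0$ gives $F\ell_{\le0}=\dgf{\veps}\rep FL_{(1)}\bot\dgf{1}$ and the complement represents $1$ or $\Delta$; condition (IV) at $i=0$ gives $\dgf{\veps}\rep FL_{\le1}\bot\dgf{1}$ with the complement representing $1$ or $\Delta$; condition (I) at $i=0$ involves $FL_{(2)}$ and the ideals $\Delta_0(\dgf{\veps})=\cO_F$, $\Delta_0(L)$. Since $\dgf{\veps}$ represents the unit ideal, the requirement ``$F\ell_{[i]}\rep FL_{(i+2)}$ and the complement represents $\Delta_i(\ell)$ and $\Delta_i(L)$'' at $i=0$ unpacks to $\veps\in Q(FL_{(2)})$ together with the conditions on $2$-modular and $4$-modular components in (6)(a)--(d); the cases $i=1,2,\dots$ and $i=-1,-2,\dots$ need to be checked to be either vacuous or already implied (here one uses that $FL_{(i+2)}$ becomes large, Proposition \ref{UL2.2}, for $i$ large, and empty for $i$ very negative). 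The parallel analysis for $\ell=\dgf{2\veps}$ produces the ``$2\cO_F^*$'' half of conditions (3), (5), and the $8$-modular clauses (6)(e)--(f); condition (4) of the lemma collects the complement-representation requirements from (III)/(IV) for the two families, and condition (2) of the lemma comes from clause (II) of Theorem \ref{UL3.4} at $i=-1$ applied to $\dgf{\veps}$ (where $FL_{(i+2)}/^\bot F\ell_{[i]}=FL_{(1)}/^\bot 0=FL_{(1)}$).

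The main bookkeeping obstacle is showing that only the indices $i\in\{-1,0\}$ (for $\dgf\veps$) and $i\in\{0,1\}$ (for $\dgf{2\veps}$) produce non-redundant conditions, and that the ``$F\ell_{\le i}\rep\cdots$'' and ``$F\ell_{[i]}\rep\cdots$'' clauses for the relevant $i$ are equivalent to the clean statements about $Q(FL_{(j)})$ and orthogonal complements appearing in (3)--(6). For the first point, for $i$ large all the ambient spaces $FL_{(i+2)}$, $FL_{(i+1)}\bot\dgf{2^i}$, $FL_{\le i+1}\bot\dgf{2^i}$ have dimension $\ge 3$ as soon as $\dim FL\ge 2$ (and the $\dim FL=1$ case is handled separately, forcing $L=\dgf\veps$ which is not universal, consistent with condition (1) failing), so Proposition \ref{UL2.2} makes representability and $\Delta_i$-representation automatic; for $i$ very negative the $\ell$-side lattices vanish and everything is trivial. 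For the second point, one rewrites ``$\dgf{a}\rep W$'' as ``$a\in Q(W)$'' and ``the complement $W/^\bot\dgf a$ represents $b$'' via the standard fact that $W/^\bot\dgf a$ has dimension $\dim W-1$ and one dimension down; when $\dim W-1\ge3$ Proposition \ref{UL2.2} again trivializes it, so only the low-dimensional complements (dimension $\le2$) survive, which is precisely where Lemma \ref{UL2.3} and Corollary \ref{UL2.6} come in to justify the ``$1$ or $\Delta$'' / ``$2$ or $2\Delta$'' formulations. I would organize the write-up as: (i) reduce to the two unary families; (ii) invoke Lemma \ref{UL4.1} for ``lower type'' $\Leftrightarrow$ (1); (iii) for each family, go through (I)--(IV) of Theorem \ref{UL3.4} index by index, discarding the vacuous/redundant ones and matching the rest to (2)--(6).
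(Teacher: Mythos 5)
Your plan is essentially identical to the paper's proof: reduce universality to representing the two unary families $\dgf{\veps}$ and $\dgf{2\veps}$, invoke Lemma\;\ref{UL4.1} to identify the lower-type requirement with condition (1), and then translate conditions (1)--(4) of Theorem\;\ref{UL3.4} index by index, using Proposition\;\ref{UL2.2} and the dimension/discriminant facts to discard the vacuous cases. Two small bookkeeping slips in your illustrative remarks (which your promised index-by-index check would catch): $\Delta_0(\dgf{\veps})=0$ rather than $\cO_F$, and condition (2) of the lemma arises from clause (2) of Theorem\;\ref{UL3.4} at $i=-1$ applied to $\dgf{2\veps}$ (where $\Delta_{-1}(\ell')=2\cO_F$ makes the inclusion nontrivial), not to $\dgf{\veps}$, for which $\Delta_{-1}(\ell)=\cO_F$ renders that clause automatic.
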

\begin{proof}
Since having a lower type than $L$ is a necessary condition for being represented by $L$ (Theorem\;\ref{UL3.4}), by Lemma\;\ref{UL4.1}, we may assume condition (1) holds. We need to show that $L$ is universal if and only if it also satisfies conditions (2)--(6). It is easy to see that $L$ is universal if and only if it represents the lattices $\dgf{\veps}$ and $\dgf{2\veps}$ for all $\veps\in \cO_F^*$. So it suffices to translate conditions (1)--(4) in Theorem\;\ref{UL3.4} for the lattices $\ell=\dgf{\veps}$ and $\ell'=\dgf{2\veps}$.

Note that
\[
\Delta_i(\ell)=\begin{cases}
  0 \quad & \text{ if } i\notin \{-2,\,-1\}\,,\\
  \cO_F \quad & \text{ if } i\in \{-2,\,-1\}\,,
\end{cases}\quad \text{and}\quad \Delta_i(\ell')=\begin{cases}
  0 \quad & \text{ if } i\notin \{-1,\,0\}\,,\\
  2\cO_F \quad & \text{ if } i\in \{-1,\,0\}\,.
\end{cases}
\]Thus, condition (2) comes from \eqref{UL3.4} (2) for $\ell'=\dgf{2\veps}$ and $i=-1$. Conditions (3) and (4) follow from \eqref{UL3.4} (1) and \eqref{UL3.4} (3) in the case $i=0$ for $\ell$, and \eqref{UL3.4} (1) and \eqref{UL3.4} (4) in the case $i=1$ for $\ell'$. Condition (5) is derived from the case $i=-2$  for $\ell$ and the case $i=-1$ for $\ell'$ in \eqref{UL3.4} (1). Similarly, condition (6) is obtained from several special cases of \eqref{UL3.4} (1).

Careful verifications show that when conditions (1)--(4) hold, all the conditions in Theorem\;\ref{UL3.4} are satisfied by $\ell$ and $\ell'$. This proves the lemma.
\end{proof}

\begin{prop}\label{UL4.3}
  If  $\dim L_1\ge 4$ and $\fn(L_1)=\cO_F$, then $L$ is universal.
\end{prop}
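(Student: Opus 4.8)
The plan is to verify the six conditions of Lemma~\ref{UL4.2} under the hypotheses $\dim L_1\ge 4$ and $\fn(L_1)=\cO_F$. The key structural observation is that $L_1\subseteq L_{(0)}\subseteq L_{(1)}\subseteq L_{(2)}\subseteq\cdots$, so every quadratic space appearing in Lemma~\ref{UL4.2} that is of the form $FL_{(j)}$ with $j\ge 0$ contains $FL_1$ as a subspace, and $FL_1$ is a space of dimension $\ge 4$ with $\fn(L_1)=\cO_F=\frs(L_1)$ (the latter because $L$ is integral and $\fn(L_1)=\cO_F$ forces $\frs(L_1)=\cO_F$). In particular $L_1$ is a unimodular proper lattice, so $d(FL_1)\in\cO_F^*$ and $FL_1$ contains a unit value; one can then pin down $FL_1$ up to isometry using the classification of unimodular lattices over unramified dyadic fields (e.g. \cite[\S\,93]{OMeara00}): a proper unimodular lattice of dimension $\ge 4$ represents a Jordan splitting whose first component is $A(0,0)\bot A(0,0)$ or $A(2,2\rho)\bot A(0,0)$ up to suitable pieces, and in any case $FL_1$ is isotropic (a quadratic space of dimension $\ge 5$ over a local field is always isotropic; in dimension $4$ a proper unimodular lattice over an unramified dyadic field spans an isotropic space since its discriminant is a unit of the right shape).

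First I would record that $FL_1$ isotropic of dimension $\ge 4$ implies $FL_1\cong\bH\bot W$ with $\dim W\ge 2$, hence $Q(FL_1)=F$ by Lemma~\ref{UL2.3} applied to the $\bH$ summand; a fortiori $\cO_F^*\subseteq Q(FL_{(0)})$, $\cO_F^*\subseteq Q(FL_{(1)})\subseteq Q(FL_{(2)})$, $2\cO_F^*\subseteq Q(FL_{(1)})\subseteq Q(FL_{(2)})\subseteq Q(FL_{(3)})$, and $\cO_F^*,2\cO_F^*\subseteq Q(FL_{(j)}\bot\dgf{2^i})$ for all the relevant $i,j$. This immediately gives conditions (3) and (5) of Lemma~\ref{UL4.2}. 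For condition (2): if $FL_{(1)}\cong\bH$ then $\dim FL_{(1)}=2<4\le\dim FL_1\le\dim FL_{(1)}$, a contradiction, so the hypothesis of (2) is vacuous. For condition (4): since $\dim FL_{(1)}\ge 4$, the complement $\big(FL_{(1)}\bot\dgf{1}\big)/^\bot\dgf{\veps}$ has dimension $\ge 4$, and by Proposition~\ref{UL2.2} a space of dimension $\ge 3$ represents every fractional ideal, so it represents $\cO_F$, i.e.\ it represents $1$ or $\Delta$ (in fact much more); similarly for the $\dgf{2\veps}$ clause.

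Condition (6) is the one requiring care, and I expect it to be the main obstacle, because it involves orthogonal complements $FL_{(j)}/^\bot\dgf{\veps}$ rather than $FL_{(j)}$ itself, and these complements drop the dimension by one. The point is that whenever $L$ has a proper $2^m$-modular component with $m\ge 1$, the space $FL_{(m+1)}$ (and the larger ones) contains $FL_1$ of dimension $\ge 4$ as a proper subspace disjoint from that modular component's space, so $FL_{(m+1)}\supseteq FL_1\bot(\text{nonzero})$ has dimension $\ge 5$; then $FL_{(m+1)}/^\bot\dgf{\veps}$ has dimension $\ge 4\ge 3$, and Proposition~\ref{UL2.2} again shows it represents every ideal, in particular $2\cO_F$ or $\cO_F$ as needed. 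The same dimension count handles (6a)--(6f) uniformly: in each case the relevant $FL_{(j)}$ has $j$ large enough that it contains $FL_1$, so it has dimension $\ge 4$, its complement by $\dgf{\veps}$ or $\dgf{2\veps}$ has dimension $\ge 3$, and represents everything.

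Thus the real work is just the bookkeeping: checking that in every clause of Lemma~\ref{UL4.2}(6) the index $j$ on $FL_{(j)}$ is $\ge 0$ so that $FL_1\subseteq FL_{(j)}$ — which holds because $\fn(L_1)=\cO_F$ means $1\in(j)_L$ for all $j\ge 0$ — and then invoking Proposition~\ref{UL2.2}. Condition (1) is immediate from the hypothesis: $\fn(L_1)=\cO_F$ is given, and $\dim L_1\ge 4\ne 1$ so the conditional clause is vacuous. I would organize the proof as: (i) deduce $\frs(L_1)=\cO_F$ and $FL_1$ isotropic with $Q(FL_1)=F$; (ii) dispatch (1), (2), (3), (5) using $Q(FL_1)=F$ and the inclusions $FL_1\subseteq FL_{(j)}$; (iii) dispatch (4) and (6) using $\dim FL_{(j)}\ge 4$ together with Proposition~\ref{UL2.2}. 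The only subtlety to be careful about is making sure $FL_1$ is genuinely isotropic when $\dim L_1=4$; here I would cite the explicit structure of unimodular $\cO_F$-lattices in \cite[\S\,93]{OMeara00}, which shows a proper unimodular lattice of rank $\ge 4$ over an unramified dyadic field always splits off a hyperbolic plane (equivalently, its signed discriminant and Hasse invariant force isotropy in dimension~$4$).
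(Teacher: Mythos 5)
Your overall strategy---verifying Lemma~\ref{UL4.2} clause by clause using $L_1\subseteq L_{(0)}\subseteq L_{(1)}\subseteq\cdots$, the universality of quadratic spaces of dimension $\ge 4$, and Proposition~\ref{UL2.2} for the orthogonal complements in conditions (4) and (6)---is exactly the paper's argument, and your dimension counts are correct. However, two of your intermediate assertions are false. First, integrality of $L$ means $\fn(L)\subseteq\cO_F$, \emph{not} $\frs(L)\subseteq\cO_F$, so $\fn(L_1)=\cO_F$ does not force $\frs(L_1)=\cO_F$: the lattice $L_1=2^{-1}A(0,0)\bot 2^{-1}A(0,0)$ is integral with $\fn(L_1)=\cO_F$ and $\frs(L_1)=2^{-1}\cO_F$, and this improper case is essential elsewhere in the paper (it is the only case surviving in Theorem~\ref{UL1.3}). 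So ``$L_1$ is a unimodular proper lattice'' is unjustified. Second, even when $L_1$ is proper unimodular, your claim that a rank-$4$ proper unimodular lattice over an unramified dyadic field spans an isotropic space is false: over $\Z_2$ the lattice $\dgf{1,1,1,1}$ is proper unimodular of rank $4$ and spans the anisotropic quaternary space, since $\dgf{1,1,1}$ is anisotropic over $\Q_2$ and by Lemma~\ref{UL2.4}(1) does not represent $-1$. (Analogous examples $\dgf{1,-c,-\eta,c\eta}$ with units $c,\eta$ satisfying $(c,\eta)_F=-1$ exist over every unramified $F$, by the lemma of Hsia cited in the proof of Lemma~\ref{UL2.5}.) Hence the chain ``isotropic $\Rightarrow FL_1\cong\bH\bot W\Rightarrow Q(FL_1)=F$ via Lemma~\ref{UL2.3}'' breaks down.

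The conclusion you actually use, namely $F^*\subseteq Q(FL_1)$, is nevertheless true for \emph{every} nonsingular quadratic space of dimension $\ge 4$ over a local field, isotropic or not: for $\gamma\in F^*$ the space $FL_1\bot\dgf{-\gamma}$ has dimension $\ge 5$ and is therefore isotropic, whence $\gamma\in Q(FL_1)$. This is precisely the fact the paper invokes. Replacing your isotropy detour by this one-line observation (and deleting the claim $\frs(L_1)=\cO_F$, which is nowhere needed) repairs the proof: the vacuity of conditions (1) and (2), the inclusions in (3) and (5), and the dimension-$\ge 3$ complements in (4) and (6) handled by Proposition~\ref{UL2.2} then all go through as you wrote them.
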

\begin{proof}
 Clearly, conditions (1) and (2) of Lemma\;\ref{UL4.2} are satisfied. To see that the other conditions of that lemma also hold, notice that $L_1\subseteq L_{(0)}$. Then use Proposition\;\ref{UL2.2} and the fact that  $Q(V)=F$ for every nonsingular quadratic space $V$ of dimension $\ge 4$.
\end{proof}

\begin{prop}\label{UL4.4}
  Suppose $\dim L_1=3$ and  $\fn(L_1)=\cO_F$. Then the following are equivalent:

  \begin{enumerate}
    \item $L$ is universal.
    \item Either $FL_1$ is isotropic or $L_1\subset L_{(2)}$.
    \item Either $FL_1$ is isotropic or $4\cO_F\subseteq \fn(L_2)$.
  \end{enumerate}
\end{prop}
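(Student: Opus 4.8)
The strategy is to apply Lemma~\ref{UL4.2}, whose six conditions are equivalent to universality once $\dim L_1 = 3$ and $\fn(L_1)=\cO_F$ are assumed. Conditions (1) and (2) of that lemma are immediate: $\fn(L_1)=\cO_F$ and $\dim L_1 = 3 > 1$ give (1), while $FL_{(1)}\supseteq FL_1$ is a ternary space, hence cannot be isomorphic to $\bH$, so (2) is vacuous. The heart of the matter is therefore to show that conditions (3)--(6) of Lemma~\ref{UL4.2} hold if and only if $FL_1$ is isotropic or $L_1\subset L_{(2)}$ (equivalently $4\cO_F\subseteq\fn(L_2)$, which is just the unwinding of $L_1\subset L_{(2)}$ using that $L_1$ is unimodular with $\fn(L_1)=\cO_F$ and $L$ is integral).

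\textbf{The easy direction.} First I would treat the case $FL_1$ isotropic. Then $FL_1$ is an isotropic ternary space, so $Q(FL_1)=F$; since $FL_1\subseteq FL_{(i)}$ for $i\ge 0$, all the value-set conditions in Lemma~\ref{UL4.2}(3), (5), (6) are satisfied, and the orthogonal-complement conditions in (4), (6) follow because representing $\langle\veps\rangle$ inside an isotropic space leaves a complement that is itself isotropic (or at least still large enough) — here I would invoke Lemma~\ref{UL2.3} and Proposition~\ref{UL2.2} as needed, together with the fact that $Q(V)=F$ for $\dim V\ge 4$. Next, the case $L_1\subset L_{(2)}$, i.e.\ $L_1\subseteq L_{(2)}$ with $\dim L_{(2)}\ge 3$: then $FL_{(2)}$ (and a fortiori $FL_{(3)}$, $FL_{(4)}$) has dimension $\ge 3$, while $FL_{(1)}\bot\langle 1\rangle$, $FL_{(2)}\bot\langle 2\rangle$ have dimension $\ge 4$; Proposition~\ref{UL2.2} then yields all the ``$\cO_F^*\subseteq Q(\cdot)$'' and ``$2\cO_F^*\subseteq Q(\cdot)$'' statements, and cutting down by one dimension via $/^\bot\langle\veps\rangle$ still leaves a space of dimension $\ge 2$ (in cases (6)(a)--(f)) or $\ge 3$ (in (3),(5)), where Proposition~\ref{UL2.2}, Lemma~\ref{UL2.3}, and Lemma~\ref{UL2.4}(3) handle the remaining representability claims. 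The condition (5) of Lemma~\ref{UL4.2}, namely $\cO_F^*\cap Q(FL_{(0)})\ne\emptyset$, holds because $\fn(L_1)=\cO_F$ forces $L_1$ to represent a unit; the $2\cO_F^*\cap Q(FL_{(1)})$ part is where $L_1\subset L_{(2)}$ is not quite enough and one must also note $FL_{(1)}=FL_1$ is ternary anisotropic (if $FL_1$ were isotropic we are in the other case) so by Lemma~\ref{UL2.4}(1)--(2) it represents a full coset $\pi\cO_F^*$ or $\cO_F^*$ — in the unimodular, $\fn=\cO_F$ situation $FL_1\cong\langle 1,\veps_1,\veps_2\rangle$ with units, hence $2\cO_F^*\subseteq Q(FL_1)$.

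\textbf{The hard direction.} Conversely, suppose $FL_1$ is anisotropic and $L_1\not\subset L_{(2)}$, i.e.\ $FL_{(1)}=FL_1$ is ternary anisotropic and $L_{(2)}$ contains nothing beyond $L_1$ in the relevant range, so $FL_{(2)}=FL_1$ as well (since $4\cO_F\not\subseteq\fn(L_2)$ means no Jordan component other than $L_1$ lies in $L_{(2)}$). I would then exhibit a unit $\veps$ violating one of the conditions (3), (4), or (6). The natural choice: take the ternary anisotropic space $FL_1$; by Lemma~\ref{UL2.4}(1) (after scaling so $FL_1=\langle 1,\veps_1,\veps_2\rangle$) the element $-\veps_1\veps_2$ is \emph{not} represented by $FL_1$, so with $\veps=-\veps_1\veps_2$ the condition $\cO_F^*\subseteq Q(FL_{(2)})=Q(FL_1)$ in Lemma~\ref{UL4.2}(3) fails. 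The one subtlety is that I must check $FL_{(2)}=FL_1$ in this case and not something strictly larger — this is exactly the content of $L_1\not\subset L_{(2)}$ unwound: $L_{(2)}=\Bot_{r:\,4\cO_F\subseteq\fn(L_r)}L_r$, and for $r\ge 2$ we have $\frs(L_r)\subseteq 2\cO_F$ so $\fn(L_r)\subseteq 4\cO_F$ forces $\fn(L_r)= 4\cO_F$ or smaller; thus $r\in(2)_L$ for some $r\ge2$ is equivalent to $4\cO_F\subseteq\fn(L_2)$, and its negation gives $(2)_L=\{1\}$, i.e.\ $FL_{(2)}=FL_1$. I expect the main obstacle to be bookkeeping: making sure that in the ``hard direction'' the failure of $L_1\subset L_{(2)}$ genuinely forces $FL_{(2)}$ (and not merely $FL_{(1)}$) to equal $FL_1$, and confirming that the anisotropy of $FL_1$ is preserved through this identification, since if some higher-scale component sneaked into $L_{(2)}$ the argument would collapse. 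Once the equivalence (1)$\Leftrightarrow$(2) is established, (2)$\Leftrightarrow$(3) is the routine translation noted above.
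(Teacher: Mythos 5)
Your overall route is the same as the paper's: reduce to the six conditions of Lemma~\ref{UL4.2}, observe that everything except the inclusion $\cO_F^*\subseteq Q(FL_{(2)})$ in condition (3) is automatic once $\dim L_1=3$ and $\fn(L_1)=\cO_F$ (so $L_1=\dgf{\veps_1,\,\veps_2,\,\veps_3}$ with units), and then use Lemma~\ref{UL2.4}\,(1) to show that this last inclusion holds precisely when $FL_1$ is isotropic or $L_1\subset L_{(2)}$. Your hard direction, including the choice of the excluded unit $-\veps_1\veps_2$ (after scaling) and the verification that $FL_{(2)}=FL_1$ when $4\cO_F\not\subseteq\fn(L_2)$, is exactly the paper's argument, as is the routine translation (2)$\Leftrightarrow$(3).

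One step in your easy direction is wrong as written and needs repair. You gloss $L_1\subset L_{(2)}$ as ``$L_1\subseteq L_{(2)}$ with $\dim L_{(2)}\ge 3$'' and then claim that Proposition~\ref{UL2.2} ``yields all the $\cO_F^*\subseteq Q(\cdot)$ statements.'' Both halves are off: $\subset$ is strict, so $L_1\subset L_{(2)}$ gives $\dim L_{(2)}\ge 4$, and Proposition~\ref{UL2.2} only gives $\fa\rep V$ for $\dim V\ge 3$, i.e.\ $\cO_F^*\cap Q(V)\neq\emptyset$, not $\cO_F^*\subseteq Q(V)$. This distinction is exactly the crux of the proposition --- a ternary anisotropic space misses one unit square class, which is the very fact your hard direction exploits --- so if $\dim FL_{(2)}=3$ were allowed here the easy direction would be false. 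The fix is immediate: strictness gives $\dim FL_{(2)}\ge 4$, and then the universality of nonsingular quaternary spaces (which you do invoke for the isotropic case) gives $\cO_F^*\subseteq Q(FL_{(2)})$. With that correction your proof goes through and coincides with the paper's.
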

\begin{proof}
  We first prove that when $\dim L_1=3$ and $\fn(L_1)=\cO_F$, $L$ satisfies all the conditions of Lemma\;\ref{UL4.2} except possibly for the inclusion $\cO_F^*\subseteq Q( FL_{(2)})$ in \eqref{UL4.2} (3).

  In fact, since $\dim L_1$ is odd, we have $\frs(L_1)=\fn(L_1)=\cO_F$ and hence $L_1=\dgf{\veps_1,\,\veps_2,\,\veps_3}$ for some $\veps_i\in\cO_F^*$. Clearly, conditions (1) and (2) of Lemma\;\ref{UL4.2} are satisfied.

  Since $ FL_{(1)}\bot\dgf{1}$ and $ FL_{(2)}\bot \dgf{2}$ both have dimension $\ge \dim L_1+1=4$, we have
  \[
  \cO_F^*\subseteq Q( FL_{(1)}\bot \dgf{1})\quad\text{and}\quad 2\cO_F^*\subseteq Q( FL_{(2)}\bot\dgf{2})\,.
  \] Also, note that $2\cO_F^*\subseteq Q(FL_1)\subseteq Q( FL_{(3)})$ by Lemma\;\ref{UL2.4} (1). Condition \eqref{UL4.2} (4) is guaranteed by Lemma\;\ref{UL2.4} (3), and \eqref{UL4.2} (5) and (6) can be verified by using Proposition\;\ref{UL2.2}. For example, to check \eqref{UL4.2} (6.b) it suffices to observe that if $L$ has a proper 2-modular component, then $\dim  FL_{(2)}\ge \dim L_1+1=4$ and hence $\dim \big( FL_{(2)}/^\bot\dgf{\veps}\big)\ge 3$.

  It remains to show that $\cO_F^*\subseteq Q( FL_{(2)})$ holds if and only if $FL_1$ is isotropic or $L_1\subset L_{(2)}$. This again follows easily from Lemma\;\ref{UL2.4} (1).
\end{proof}

\begin{prop}\label{UL4.5}
  Suppose $\dim L_1=2$, $\fn(L_1)=\cO_F$ and $FL_1$ is isotropic.

  Then the following are equivalent:

  \begin{enumerate}
    \item $L$ is universal.
    \item Either  $L_1\cong 2^{-1}A(0,\,0)$ or $L_1\subset L_{(1)}$.
    \item Either $L_1\cong 2^{-1}A(0,\,0)$, or $L_1$ is proper and $\frs(L_2)=\fn(L_2)=2\cO_F$.
  \end{enumerate}
\end{prop}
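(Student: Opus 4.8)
\emph{Proof plan.} The plan is to apply the representation criterion of Lemma~\ref{UL4.2}, exploiting that here $FL_1$ is an isotropic binary space, so $FL_1\cong\bH$ and $Q(FL_1)=F$ (Lemma~\ref{UL2.3}). Since $\fn(L_1)=\cO_F$ forces $1\in(i)_L$ for every $i\ge0$, the subspace $FL_1\cong\bH$ is an orthogonal summand of $FL_{(i)}$ for all $i\ge0$; this makes almost every condition in Lemma~\ref{UL4.2} hold automatically. I would first settle (2)$\Leftrightarrow$(3). As $L$ is integral, $2\frs(L_1)\subseteq\fn(L_1)=\cO_F\subseteq\frs(L_1)$, so either $L_1$ is proper unimodular, or $\frs(L_1)=2^{-1}\cO_F$ with $\fn(L_1)=2\frs(L_1)$; in the latter case the isotropy of $FL_1$ gives $d_{\pm}(FL_1)\in F^{*2}$, and the classification of binary modular lattices over the unramified dyadic field $F$ forces $L_1\cong 2^{-1}A(0,\,0)$ (see, e.g., \cite[\S\,93]{OMeara00}). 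If instead $L_1$ is proper unimodular, then for every $r\ge2$ we have $\frs(L_r)\subseteq\frs(L_2)\subsetneq\cO_F$, hence $\fn(L_r)\subseteq2\cO_F$, so $r\in(1)_L$ exactly when $\fn(L_r)=2\cO_F$, which forces $\frs(L_r)=2\cO_F$ and therefore $r=2$ with $\frs(L_2)=\fn(L_2)=2\cO_F$; conversely that condition gives $2\in(1)_L$. Thus, when $L_1$ is proper unimodular, $L_1\subset L_{(1)}$ holds if and only if $\frs(L_2)=\fn(L_2)=2\cO_F$, and (2)$\Leftrightarrow$(3) follows.

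Next I would prove (1)$\Leftrightarrow$(2) via Lemma~\ref{UL4.2}, the key point being that under the standing hypotheses every condition of that lemma other than \eqref{UL4.2} (2) holds automatically. Indeed, \eqref{UL4.2} (1) is clear since $\fn(L_1)=\cO_F$ and $\dim L_1=2$; and \eqref{UL4.2} (3), \eqref{UL4.2} (5) hold because $FL_{(0)}$, $FL_{(1)}$, $FL_{(2)}$, $FL_{(3)}$ all contain $FL_1\cong\bH$ and $Q(\bH)=F$. For \eqref{UL4.2} (4) and \eqref{UL4.2} (6), write $FL_{(i)}=\bH\bot M_i$ and realize $\dgf{\veps}$ (resp.\ $\dgf{2\veps}$) inside $\bH\cong\dgf{\veps,-\veps}$ (resp.\ $\dgf{2\veps,-2\veps}$), so that the orthogonal complement in question contains $\dgf{-\veps}\bot M_i$ (resp.\ $\dgf{-2\veps}\bot M_i$), together with the extra summand $\dgf1$ or $\dgf2$ occurring in \eqref{UL4.2} (4). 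The summand $\dgf1$ or $\dgf2$ takes care of \eqref{UL4.2} (4); the summand $\dgf{-\veps}$ takes care of \eqref{UL4.2} (6.c) and (6.d), and $\dgf{-2\veps}$ of \eqref{UL4.2} (6.e) and (6.f); and for \eqref{UL4.2} (6.a) and (6.b), a proper $2^{j}$-modular component ($j\in\{1,2\}$) of $L$ is some $L_r$ with $r\ge2$ and $r\in(j+1)_L$, so $FL_r$, an orthogonal summand of $M_{j+1}$, contains a vector $x$ with $v(Q(x))=j$ (as $\fn(L_r)=2^{j}\cO_F$); hence $M_{j+1}$ represents elements of every valuation $\equiv j\pmod2$, which is exactly what \eqref{UL4.2} (6.a)--(6.b) require. (Alternatively one may invoke Proposition~\ref{UL2.2} whenever the relevant complement has dimension $\ge3$.)

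It remains to determine when \eqref{UL4.2} (2) holds. Since $FL_1\cong\bH\subseteq FL_{(1)}$ is isotropic, $FL_{(1)}\cong\bH$ precisely when $\dim L_{(1)}=2$, i.e.\ when $(1)_L=\{1\}$; and if $(1)_L=\{1\}$, then $L$ has a proper unimodular component if and only if $L_1$ itself is one --- for if $L_1\cong 2^{-1}A(0,\,0)$, a proper unimodular component $L_r$ with $r\ge2$ would satisfy $\fn(L_r)=\cO_F$, forcing $L_r\in(1)_L$ and $(1)_L\ne\{1\}$. Hence \eqref{UL4.2} (2) fails exactly when $L_1$ is proper unimodular and the equality $\frs(L_2)=\fn(L_2)=2\cO_F$ does \emph{not} hold, i.e.\ by the first paragraph exactly when (2) --- equivalently (3) --- fails. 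Since all the other conditions of Lemma~\ref{UL4.2} are automatic, $L$ is universal if and only if \eqref{UL4.2} (2) holds, and this yields (1)$\Leftrightarrow$(2)$\Leftrightarrow$(3).

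The work here is organizational rather than conceptual: one has to keep careful track, for each relevant $i$, of which Jordan components lie in $(i)_L$ --- above all, to decide whether $L_{(1)}=L_1$ --- and then verify all the sub-conditions of \eqref{UL4.2} (4) and \eqref{UL4.2} (6). The geometric input is slight once one observes that the hyperbolic summand $FL_1$ absorbs essentially all the representation requirements.
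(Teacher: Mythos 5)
Your proof is correct and follows essentially the same route as the paper: reduce to Lemma~\ref{UL4.2}, observe that the hyperbolic summand $FL_1\cong\bH$ inside every $FL_{(i)}$ ($i\ge 0$) makes all conditions except \eqref{UL4.2}~(2) automatic, and then identify \eqref{UL4.2}~(2) with condition (2) of the proposition. Your verifications of \eqref{UL4.2}~(4) and (6) and of the equivalence (2)$\Leftrightarrow$(3) are somewhat more explicit than the paper's, but the argument is the same.
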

\begin{proof}
  One can easily check that (2) and (3) are equivalent. We first show that when $\dim L_1=2$, $\fn(L_1)=\cO_F$ and $FL_1\cong\bH$, conditions (1) and (3)--(6) of Lemma\;\ref{UL4.2}. are all satisfied. This is obvious for \eqref{UL4.2} (1). Since $\bH=FL_1\subseteq  FL_{(1)}$, we see easily that conditions \eqref{UL4.2} (3)--(5) hold. If $L$ has a proper 2-modular component $K$, then $ FL_{(2)}\supseteq FL_1\bot FK\cong\bH\bot FK$ and $2\cO_F^*\cap Q(FK)\neq\emptyset$. From this condition \eqref{UL4.2} (6.a) follows immediately. If $L$ has a proper 4-modular component $M$, then $ FL_{(3)}\supseteq FL_1\bot FM=\bH\bot FM$ and $\cO_F^*\cap Q(FM)\neq\emptyset$. So \eqref{UL4.2} (6.b) holds. By similar arguments, we can check  (6.b)--(6.f) of Lemma\;\ref{UL4.2}.

  It is now sufficient to observe that our condition (2) here is equivalent to \eqref{UL4.2} (2), since when $L_1$ is improper with $\fn(L_1)=\cO_F$ and $FL_1$ isotropic, we must have $L_1\cong 2^{-1}A(0,\,0)$.
\end{proof}

\begin{prop}\label{UL4.6}
  Suppose $\dim L_1=2$, $\frs(L_1)=\fn(L_1)=\cO_F$ and $FL_1$ is anisotropic.

  Then  $L$ is universal if and only if $\frs(L_2)=\fn(L_2)=2\cO_F$ and one of the following holds:

\begin{enumerate}
  \item $\dim L_2\ge 2$.
  \item $FL_1\bot FL_2$ is isotropic.
  \item  $8\cO_F\subseteq \fn(L_3)$.
\end{enumerate}
\end{prop}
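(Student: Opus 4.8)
The plan is to verify, under the standing hypotheses $\dim L_1=2$, $\frs(L_1)=\fn(L_1)=\cO_F$ and $FL_1$ anisotropic, precisely which of the six conditions of Lemma\;\ref{UL4.2} hold automatically and which impose the stated constraints. Since $\dim L_1=2>1$, condition \eqref{UL4.2} (1) reduces to $\fn(L_1)=\cO_F$, which holds by hypothesis, and condition \eqref{UL4.2} (2) is vacuous: if $L$ had a proper unimodular component, that would be $L_1$, but $FL_1$ being anisotropic forces $FL_{(1)}\not\cong\bH$ only after we know $FL_{(1)}=FL_1$ when $\dim L_2<2$ or $\frs(L_2)\supsetneq\cO_F$ fails — more carefully, $L_1$ being a proper unimodular component, $FL_{(1)}\cong\bH$ would need $FL_1$ isotropic, contradiction, so (2) holds. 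First I would therefore establish that the remaining force of Lemma\;\ref{UL4.2} lies in conditions (3), (5) and (6), all of which concern whether various spaces $FL_{(j)}$ and their orthogonal complements represent units or $2\cdot$(units).

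Next I would dispose of the ``easy'' inclusions in \eqref{UL4.2} (3)--(5). Because $FL_1$ is a binary anisotropic space with $\frs(L_1)=\fn(L_1)=\cO_F$, it represents a unit, so $\cO_F^*\cap Q(FL_{(0)})\neq\emptyset$, giving the first half of \eqref{UL4.2} (5); the spaces $FL_{(1)}\bot\dgf{1}$ and $FL_{(2)}\bot\dgf{2}$ have dimension $\ge 3$, hence by Proposition\;\ref{UL2.2} they represent all fractional ideals, covering the ``$\bot\dgf{1}$'' and ``$\bot\dgf{2}$'' parts of \eqref{UL4.2} (3) and all of \eqref{UL4.2} (4) (here one uses that a ternary or larger space minus a line is binary or larger, so Lemma\;\ref{UL2.3}/\ref{UL2.4}/Corollary\;\ref{UL2.6} or Proposition\;\ref{UL2.2} apply to the complement). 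What genuinely remains from \eqref{UL4.2} (3) and (5) is: $\cO_F^*\subseteq Q(FL_{(2)})$, and $2\cO_F^*\cap Q(FL_{(1)})\neq\emptyset$, and $2\cO_F^*\subseteq Q(FL_{(3)})$. Since $FL_1$ is binary anisotropic unimodular, $2\cO_F^*\cap Q(FL_1)=\emptyset$ in general (a proper unimodular binary anisotropic lattice represents units but its values in $2\cO_F^*$ depend on the discriminant), so the conditions $2\cO_F^*\cap Q(FL_{(1)})\neq\emptyset$ and $\cO_F^*\subseteq Q(FL_{(2)})$ and $2\cO_F^*\subseteq Q(FL_{(3)})$ each essentially demand that $L_{(2)}$, resp.\ $L_{(1)}$, resp.\ $L_{(3)}$, be strictly larger than $L_1$. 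I expect the analysis to show that $2\cO_F^*\cap Q(FL_{(1)})\neq\emptyset$ together with integrality and condition (1) forces $\frs(L_2)=\fn(L_2)=2\cO_F$ (a proper $2$-modular component is needed, and it cannot have scale $\subseteq 4\cO_F$ or be improper), which is the first asserted necessary condition.

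Having pinned down $\frs(L_2)=\fn(L_2)=2\cO_F$, the core of the argument is then to analyze $\cO_F^*\subseteq Q(FL_{(2)})=Q(FL_1\bot FL_2\bot\cdots)$ and the orthogonal-complement conditions in \eqref{UL4.2} (6), and to show these are jointly equivalent to the trichotomy (1) $\dim L_2\ge 2$, (2) $FL_1\bot FL_2$ isotropic, (3) $8\cO_F\subseteq\fn(L_3)$. The mechanism: $FL_1\bot FL_2$ is a space of dimension $2+\dim L_2$; if $\dim L_2\ge 2$ it has dimension $\ge 4$, so $Q=F$ and everything in sight is represented, while the complement conditions in \eqref{UL4.2} (6) follow from Proposition\;\ref{UL2.2} because the relevant spaces stay $\ge 3$-dimensional after removing a line. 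If $\dim L_2=1$, then $FL_1\bot FL_2$ is ternary; it represents all units iff it is isotropic (Lemma\;\ref{UL2.4} (1): an anisotropic $\dgf{1,\veps_1,\veps_2}$ misses $-\veps_1\veps_2$), which is condition (2); and if it is anisotropic one must go further out to $FL_{(3)}$, i.e.\ incorporate $L_3$, and the $2\cO_F^*\subseteq Q(FL_{(3)})$ and \eqref{UL4.2} (6.b), (6.c), (6.f) conditions then translate — via Lemma\;\ref{UL2.4} applied to the ternary $FL_1\bot FL_2$ and its behaviour under adjoining $L_3$ — into $8\cO_F\subseteq\fn(L_3)$, i.e.\ condition (3). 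The main obstacle will be this last bookkeeping: carefully matching each of the six sub-conditions of \eqref{UL4.2} (6) against the three cases, in particular verifying that in the anisotropic ternary case none of the complement conditions can be salvaged unless $L_3$ is present with norm down to $8\cO_F$, and conversely that $8\cO_F\subseteq\fn(L_3)$ suffices; this requires repeated use of Lemmas\;\ref{UL2.3}, \ref{UL2.4} and Corollary\;\ref{UL2.6} to control values of binary and ternary complements, and attention to the parity/scale conventions built into the definitions of $(i)_L$ and $[i]_L$.
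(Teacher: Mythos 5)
There is a genuine gap, and it sits exactly where the real work of this proposition lies. You assert that, because $FL_{(1)}\bot\dgf{1}$ and $FL_{(2)}\bot\dgf{2}$ have dimension $\ge 3$, Proposition\;\ref{UL2.2} makes the ``$\bot\dgf{1}$'' part of \eqref{UL4.2}\,(3) and all of \eqref{UL4.2}\,(4) automatic. This conflates two different notions: Proposition\;\ref{UL2.2} says a space of dimension $\ge 3$ represents every fractional \emph{ideal} (i.e.\ some element of each valuation parity, in the sense of \eqref{UL2.1}), whereas \eqref{UL4.2}\,(3) demands $\cO_F^*\subseteq Q(FL_{(1)}\bot\dgf{1})$, i.e.\ that \emph{every} unit be represented. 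A ternary anisotropic space fails this (Lemma\;\ref{UL2.4}\,(1)), and when $L_{(1)}=L_1$ the space $FL_{(1)}\bot\dgf{1}=\dgf{\veps_1,\veps_2,1}$ is exactly such a ternary space; likewise $(FL_{(1)}\bot\dgf{1})/^\bot\dgf{\veps}$ is then binary and need not represent $1$ or $\Delta$, so \eqref{UL4.2}\,(4) is not ``dimensional'' either. These two conditions are precisely the engine of the paper's necessity argument, and discarding them breaks your proof.

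Concretely, your substitute mechanism for forcing $\frs(L_2)=\fn(L_2)=2\cO_F$ --- namely that $2\cO_F^*\cap Q(FL_{(1)})\neq\emptyset$ requires a proper $2$-modular component --- is false when $d_{\pm}(FL_1)\notin F^{*2}\cup\Delta F^{*2}$: by Corollary\;\ref{UL2.6} the binary anisotropic space $FL_1$ then already represents elements of $2\cO_F^*$ on its own, so condition \eqref{UL4.2}\,(5) imposes nothing. Even the weaker observation that $\cO_F^*\subseteq Q(FL_{(2)})$ and $2\cO_F^*\cap Q(FL_{(1)})\neq\emptyset$ cannot both hold for a binary anisotropic space only yields $4\cO_F\subseteq\fn(L_2)$, not the asserted $\frs(L_2)=\fn(L_2)=2\cO_F$. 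The paper's proof closes this case by a three-way split: if $L_{(1)}=L_1$, then $\cO_F^*\subseteq Q(FL_{(1)}\bot\dgf{1})$ forces $FL_1\cong\dgf{-1,-\veps_1\veps_2}$; if $-\veps_1\veps_2\in\Delta F^{*2}$ then \eqref{UL4.2}\,(5) fails by \cite[(63:15)(i)]{OMeara00}; and if $-\veps_1\veps_2\notin F^{*2}\cup\Delta F^{*2}$ then Lemma\;\ref{UL2.5} produces $\veps\in\cO_F^*$ with $(FL_{(1)}\bot\dgf{1})/^\bot\dgf{\veps}\cong\dgf{-\veps_1\veps_2,-\veps}$ representing neither $1$ nor $\Delta$, violating \eqref{UL4.2}\,(4). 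Your outline has no counterpart to this last step. The remainder of your plan (the trichotomy once $\frs(L_2)=\fn(L_2)=2\cO_F$ is in hand, via Lemma\;\ref{UL2.4}\,(2) and the isotropy of the ternary $FL_1\bot FL_2$) does match the paper, but the necessity of the hypothesis it rests on is not established.
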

\begin{proof}
  The assumptions on $L_1$ guarantee conditions (1) and (2) of Lemma\;\ref{UL4.2}. Moreover, we have $L_1=\dgf{\veps_1,\,\veps_2}$ for some $\veps_1,\,\veps_2\in \cO_F^*$. Since $FL_1$ is anisotropic, we have $-\veps_1\veps_2\notin F^{*2}$.

Now we claim that a necessary condition for $L$ to be universal is $L_1\subset L_{(1)}$, i.e. $\frs(L_2)=\fn(L_2)=2\cO_F$. Indeed, if $L_1=L_{(1)}$, then  by  Lemma\;\ref{UL2.4} (1), the relation $\cO_F^*\subseteq Q( FL_{(1)}\bot \dgf{1})$ in \eqref{UL4.2} (3) forces $FL_{(1)}\bot\dgf{1}$ to be isotropic, whence $FL_{(1)}\cong  \dgf{-1,\,-\veps_1\veps_2}$. If $-\veps_1\veps_2\in \Delta F^{*2}$, then the condition $2\cO_F^*\cap Q(FL_{(1)})\neq\emptyset$ in \eqref{UL4.2} (5) fails, by \cite[(63:15) (i)]{OMeara00}. So we have $-\veps_1\veps_2\notin F^{*2}\cup \Delta F^{*2}$. Thus, by Lemma\;\ref{UL2.5}, there exists $\veps\in \cO_F^*$ such that the binary space $\dgf{-\veps_1\veps_2,\,-\veps}$ represents neither 1 nor $\Delta$. But for this $\veps$ we have $( FL_{(1)}\bot\dgf{1})/^\bot\dgf{\veps}\cong \dgf{-\veps_1\veps_2,\,-\veps}$, whence a contradiction to the first condition in \eqref{UL4.2} (4).  The claim is thus proved.

Now we may assume $\frs(L_2)=\fn(L_2)=2\cO_F$. Thus $L_{(1)}=L_1\bot L_2$ and $\dim ( FL_{(2)}\bot \dgf{2})\ge 4$. The inclusion $2\cO_F^*\subseteq Q( FL_{(2)}\bot \dgf{2})$ is thus guaranteed.

Next we determine when the other two inclusion relations in \eqref{UL4.2} hold, i.e.,
\[
\cO_F^*\subseteq Q( FL_{(2)})\quad\text{and}\quad 2\cO_F^*\subseteq Q( FL_{(3)})\,.
\]
If $\dim L_2\ge 2$, then $\dim  FL_{(1)}\ge 4$ and these relations clearly hold. So we shall assume $\dim L_2=1$. Then $L_2=\dgf{2\veps_3}$ for some $\veps_3\in\cO_F^*$. By Lemma\;\ref{UL2.4} (2), we have $\cO_F^*\subseteq Q( FL_{(1)})\subseteq Q( FL_{(2)})$. If $FL_1\bot FL_2=FL_{(1)}$ is isotropic, then certainly $2\cO_F^*\subseteq Q( FL_{(3)})$. If $FL_1\bot FL_2=\dgf{\veps_1,\,\veps_2,\,\veps_3}$ is anisotropic, then $ FL_{(1)}$ does not represent all the elements in $2\cO_F^*$ by Lemma\;\ref{UL2.4} (2). In this case, $2\cO_F^*\subseteq Q( FL_{(3)})$ holds if and only if $L_1\bot L_2\subset L_{(3)}$, that is, $8\cO_F\subseteq L_3$.

It remains to show that when  $\frs(L_2)=\fn(L_2)=2\cO_F$ and one of the three cases stated in the lemma occurs, conditions (4)--(6) of Lemma\;\ref{UL4.2} are satisfied. In fact, the condition $\cO_F^*\cap Q( FL_{(0)})\neq\emptyset$ in \eqref{UL4.2} (5) holds because in the present situation we have $ FL_{(0)}=FL_1=\dgf{\veps_1,\,\veps_2}$. All the other conditions can be easily deduced by using Lemma\;\ref{UL2.4} (3).
\end{proof}

\begin{prop}\label{UL4.7}
  Suppose $\dim L_1=2$, $\frs(L_1)=2^{-1}\cO_F=2^{-1}\fn(L_1)$ and $FL_1$ is anisotropic.

  Then  $L$ is universal if and only if one of the following holds:

  \begin{enumerate}
    \item $\dim L_2\ge 2$ and $2\cO_F\subseteq \fn(L_2)$.
    \item $\dim L_2=1$, $\frs(L_2)=\fn(L_2)=\cO_F$.
    \item $\dim L_2=1$, $\frs(L_2)=\fn(L_2)=2\cO_F$, and $8\cO_F\subseteq \fn(L_3)$.
    \end{enumerate}
\end{prop}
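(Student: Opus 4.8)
The plan is to apply Lemma\;\ref{UL4.2}, as in the proof of Proposition\;\ref{UL4.6}: examine, condition by condition, which of \eqref{UL4.2} (1)--(6) hold automatically under the present hypotheses and which encode the trichotomy in the statement. The structural input about $L_1$ is as follows. Since $L_1$ is an improper binary $2^{-1}$-modular lattice with $\fn(L_1)=\cO_F$, the space $FL_1$ has $d_\pm(FL_1)\in F^{*2}\cup\Delta F^{*2}$ (a standard fact for such lattices; cf. the parenthetical assertion in Theorem\;\ref{UL1.3}); being anisotropic, $FL_1$ cannot have $d_\pm(FL_1)\in F^{*2}$, so $FL_1\cong\eta\dgf{1,-\Delta}$ for some $\eta\in\cO_F^*$. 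Because $F(\sqrt\Delta)/F$ is unramified, $\dgf{1,-\Delta}$ represents exactly the elements of even valuation, so
\[
Q(FL_1)=\{y\in F^*\,:\,v(y)\text{ even}\}\cup\{0\}.
\]
In particular $\cO_F^*\subseteq Q(FL_1)$ while $2\cO_F^*\cap Q(FL_1)=\emptyset$; moreover $\ord(\fn(L_1))=0$, so $L_1\subseteq L_{(i)}$ for all $i\ge 0$, and $\dgf{1,-\Delta,c}$ is isotropic if and only if $v(c)$ is even.

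Granting this, \eqref{UL4.2} (1) and \eqref{UL4.2} (2) are immediate (the latter because $FL_{(1)}\supseteq FL_1$, being an anisotropic binary space or of dimension $\ge 3$, is never $\cong\bH$). Every part of \eqref{UL4.2} (3), \eqref{UL4.2} (4) and \eqref{UL4.2} (6) that merely asks for a unit to be represented, or that concerns a space containing $FL_1$ of dimension $\ge 4$, follows at once from $\cO_F^*\subseteq Q(FL_1)$ and Proposition\;\ref{UL2.2}. The ``represents $1$ or $\Delta$'' part of \eqref{UL4.2} (4) is automatic as well: by a dimension count when $\dim FL_{(1)}\ge 3$, and when $FL_{(1)}=FL_1$ because $FL_1\bot\dgf1\cong\bH\bot\dgf\Delta$, so that $\big(FL_1\bot\dgf1\big)/^\bot\dgf\veps$ is $\bH$ or $\dgf{\Delta,-\veps}$ and thus represents $\Delta$. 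The real content therefore lies in the two $2\cO_F^*$-assertions $2\cO_F^*\cap Q(FL_{(1)})\ne\emptyset$ (from \eqref{UL4.2} (5)) and $2\cO_F^*\subseteq Q(FL_{(3)})$ (from \eqref{UL4.2} (3)), together with the low-dimensional remnants of \eqref{UL4.2} (6).

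Since $Q(FL_1)$ contains no element of odd valuation, $2\cO_F^*\cap Q(FL_{(1)})\ne\emptyset$ forces $FL_1\subsetneq FL_{(1)}$, i.e. $L_2\in(1)_L$, i.e. $2\cO_F\subseteq\fn(L_2)$; conversely this inclusion suffices, as then $FL_1\bot FL_2\subseteq FL_{(1)}$ represents some element of $2\cO_F^*$ (if $\dim L_2\ge 2$ it is $\ge 4$-dimensional; if $\dim L_2=1$ with $\frs(L_2)=\fn(L_2)=2\cO_F$ it represents the entry of $L_2$; if $\dim L_2=1$ with $\fn(L_2)=\cO_F$ it is isotropic). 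Granting $2\cO_F\subseteq\fn(L_2)$ we also get $\dim FL_{(2)}\ge 3$, so the outstanding part of \eqref{UL4.2} (3) is just $2\cO_F^*\subseteq Q(FL_{(3)})$. Since $FL_1\bot FL_2\subseteq FL_{(3)}$, the isotropy criterion above yields exactly the stated trichotomy: in cases (1) and (2) the space $FL_{(3)}$ is already large enough ($\ge 4$-dimensional, respectively isotropic of dimension $\ge 3$); in the remaining case $\dim L_2=1$, $\frs(L_2)=\fn(L_2)=2\cO_F$, the space $FL_1\bot FL_2=\dgf{\eta,-\eta\Delta,2\veps_2}$ is an anisotropic ternary which, by Lemma\;\ref{UL2.4} (2), does not represent $2\Delta\veps_2\in 2\cO_F^*$, forcing $FL_1\bot FL_2\subsetneq FL_{(3)}$, i.e. $L_3\in(3)_L$, i.e. $8\cO_F\subseteq\fn(L_3)$ --- which then makes $\dim FL_{(3)}\ge 4$.

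It remains to confirm that in each of the three cases every sub-condition of \eqref{UL4.2} (6) is satisfied, and this bookkeeping is the place I expect most (routine) effort. The mechanism is uniform: the space $FL_{(j)}$ occurring there is either $\ge 4$-dimensional --- in which case $\dgf{2\veps}$ is represented and the orthogonal complement of $\dgf\veps$ or $\dgf{2\veps}$ has dimension $\ge 3$, hence represents every ideal by Proposition\;\ref{UL2.2} --- or it equals the $3$-dimensional space $FL_1\bot FL_2$. In the latter situation, if $FL_1\bot FL_2$ is isotropic (the subcase $\dim L_2=1$, $\fn(L_2)=\cO_F$) one checks that $L$ then has no proper modular component of the scale called for, so the sub-condition is vacuous; if $FL_1\bot FL_2=\dgf{\eta,-\eta\Delta,2\veps_2}$ is anisotropic (which happens only in case (3)) one uses that the orthogonal complement of $\dgf\veps$ is a binary space of odd discriminant-valuation, hence represents an element of $2\cO_F^*$, whereas that of $\dgf{2\veps}$ is binary of even discriminant-valuation, hence represents a unit. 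Together with the observation from the analysis of \eqref{UL4.2} (5) that $2\cO_F\not\subseteq\fn(L_2)$ is impossible, this establishes the equivalence.
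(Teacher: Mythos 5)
Your proposal is correct and follows essentially the same route as the paper: reduce to Lemma\;\ref{UL4.2}, use $FL_1\cong\dgf{1,-\Delta}$ (so $Q(FL_1)$ consists of the elements of even valuation), extract the necessity of $2\cO_F\subseteq\fn(L_2)$ from the condition $2\cO_F^*\cap Q(FL_{(1)})\neq\emptyset$ and of $8\cO_F\subseteq\fn(L_3)$ from $2\cO_F^*\subseteq Q(FL_{(3)})$ in the subcase $L_2=\dgf{2\veps_2}$, and dispose of the remaining conditions by dimension and discriminant-parity arguments. The only differences are presentational (you invoke the norm group of the unramified quadratic extension where the paper cites \cite[(93:11), (63:15)]{OMeara00} and Lemma\;\ref{UL2.4}\,(3)), and the bookkeeping for \eqref{UL4.2}\,(6) is organized as you anticipate.
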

\begin{proof}
  It is clear that conditions (1) and (2) of Lemma\;\ref{UL4.2} are satisfied. Moreover, we have $L_1\cong 2^{-1}A(2,\,2\rho)$ by \cite[(93:11)]{OMeara00}. Therefore, $FL_1\cong \dgf{1,\,-\Delta}$. In particular, $\cO_F^*\cap Q( FL_{(0)})\neq\emptyset$.

  If $L_{(1)}=L_1$, then $2\cO_F^*\cap Q( FL_{(1)})=\emptyset$ by \cite[(63:15) (i)]{OMeara00}. This shows that $L_1\subset L_{(1)}$ is a necessary condition for $L$ to be universal. In the rest of the proof we assume this holds, that is, $2\cO_F\subseteq \fn(L_2)$. In particular, $\dim L_{(1)}\ge\dim L_1+\dim L_2\ge 3$.

  By Lemma\;\ref{UL2.4} (3), we have $2\cO_F^*\cap Q( FL_{(1)})\neq\emptyset$, so that \eqref{UL4.2} (5) holds. Similarly, \eqref{UL4.2} (4) is satisfied. As the spaces $ FL_{(1)}\bot \dgf{1}$ and $ FL_{(2)}\bot \dgf{2}$ have dimension $\ge 4$, we have
  \[
  \cO_F^*\subseteq Q( FL_{(1)}\bot \dgf{1})\quad\text{and}\quad 2\cO_F^*\subseteq Q( FL_{(2)}\bot\dgf{2})\,.
  \]
  If $\dim L_2\ge 2$, then $\dim  FL_{(2)}\ge 4$ and it is easily seen that all the other conditions in Lemma\;\ref{UL4.2} are satisfied. So $L$ is universal in this case.

  Next we consider the case with $\dim L_2=1$.  Since $\frs(L_1)\subseteq \frs(L_1)=2^{-1}\cO_F$ and $2\cO_F\subseteq\fn(L_2)$, we have either $L_2=\dgf{\veps_2}$ or $L_2=\dgf{2\veps_2}$ for some $\veps_2\in\cO_F^*$.

  First assume $L_2=\dgf{\veps_2}$. In this case $FL_1\bot FL_2=\dgf{1,\,-\Delta,\,\veps_2}$ is isotropic by \cite[(63:15) (i)]{OMeara00}, and hence \eqref{UL4.2} (3) holds. Condition \eqref{UL4.2} (6) follows easily from Lemma\;\ref{UL2.4} (3).

  Finally, assume $L_2=\dgf{2\veps_2}$. In this case $L_{(1)}=L_1\bot L_2$ and $ FL_{(1)}=\dgf{1,\,-\Delta,\,2\veps_2}$. So we have $\cO_F^*\subseteq Q( FL_{(1)})\subseteq Q( FL_{(2)})$. Applying Lemma\;\ref{UL2.4} (3) we find easily that conditions (6.b)--(6.f) of Lemma\;\ref{UL4.2} hold. For any $\veps\in \cO_F^*$, the space $W:= FL_{(1)}/^\bot\dgf{\veps}$ has discriminant $d(W)\in 2\cO_F^*F^{*2}$. So by \cite[(63:11)]{OMeara00}, we have $2\in Q(W)$ or $2\Delta\in Q(W)$. This shows that \eqref{UL4.2} (6.a) holds. It remains to check the condition $2\cO_F^*\subseteq Q( FL_{(3)})$ in \eqref{UL4.2} (3). Note that $FL_1\bot FL_2=\dgf{1,\,-\Delta,\,2\veps_2}$ is anisotropic. So the desired inclusion holds if and only if $L_{(3)}$ is strictly larger than $L_1\bot L_2$, that is, $8\cO_F\subseteq \fn(L_3)$. This completes the proof.
  \end{proof}

\begin{prop}\label{UL4.8}
  Suppose $\dim L_1=1$ and $\fn(L_1)=\cO_F$.

  Then $L$ is universal if and only if $\frs(L_2)=\fn(L_2)=2\cO_F$, and one of the the following conditions holds:

  \begin{enumerate}
    \item $\dim L_2\ge 3$.
    \item $\dim L_2=2$,  and $\frs(L_3)=\fn(L_3)=4\cO_F$.
    \item $\dim L_2=1$,  $\dim L_3\ge 2$ and $\frs(L_3)=\fn(L_3)=4\cO_F$.
    \item $\dim L_2=\dim L_3=1$, $\frs(L_3)=\fn(L_3)=4\cO_F$, and $\frs(L_4)=\fn(L_4)=8\cO_F$.
  \end{enumerate}
\end{prop}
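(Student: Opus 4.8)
The plan is to apply the universality criterion of Lemma~\ref{UL4.2} under the hypotheses $\dim L_1 = 1$ and $\fn(L_1) = \cO_F$; then $L_1 = \dgf{\veps_1}$ with $\veps_1 \in \cO_F^*$ and $\frs(L_1) = \fn(L_1) = \cO_F$ automatically. First I would settle the preliminary conditions. Condition \ref{UL4.2}\,(1) immediately forces $\frs(L_2) = 2\cO_F$ and $t \ge 2$. That $\fn(L_2) = 2\cO_F$ is also necessary follows from \ref{UL4.2}\,(5): since $\frs(L_r) \subseteq 4\cO_F$ for all $r \ge 3$, if $L_2$ were improper then $L_1$ would be the only Jordan component with norm $\supseteq 2\cO_F$, so $FL_{(1)} = \dgf{\veps_1}$ would represent no element of valuation $1$. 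Hence one assumes $\frs(L_2) = \fn(L_2) = 2\cO_F$ from now on, and the task is to show that conditions \ref{UL4.2}\,(2)--(6) hold exactly when one of (1)--(4) of the proposition does.

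Next I would record which parts of Lemma~\ref{UL4.2} are automatic under these standing hypotheses: \ref{UL4.2}\,(2), because $FL_{(1)} = \dgf{\veps_1} \bot FL_2$ has dimension $\ge 3$ and is never $\cong \bH$; the second clause of \ref{UL4.2}\,(5), because $\fn(L_2) = 2\cO_F$ furnishes a vector of norm in $2\cO_F^*$; and the ``large'' inclusions $\cO_F^* \subseteq Q(FL_{(1)}\bot\dgf{1})$ and $2\cO_F^* \subseteq Q(FL_{(2)}\bot\dgf{2})$ in \ref{UL4.2}\,(3), by dimension counting when $\dim L_2 \ge 2$ and by Lemma~\ref{UL2.4} in the borderline ternary case $\dim L_2 = 1$ --- here using that an anisotropic ternary space over $F$ omits exactly the square class of its signed discriminant, whose valuation parity one reads off from the scales of the components involved. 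What remains is to analyse the inclusions $\cO_F^* \subseteq Q(FL_{(2)})$ and $2\cO_F^* \subseteq Q(FL_{(3)})$ of \ref{UL4.2}\,(3), the complement conditions \ref{UL4.2}\,(4), and the modular-complement conditions \ref{UL4.2}\,(6), and this is where the four cases arise.

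The analysis is governed by two observations. First, $FL_{(2)} = FL_1 \bot FL_2$ unless $L_3$ is a proper $4$-modular component (in which case $FL_3$ joins it), and likewise $FL_{(3)}$ is enlarged precisely when $L_3$ is a $4$-modular component or $L_4$ is a proper $8$-modular component. Second --- and this is the delicate point --- an anisotropic \emph{binary} space over $F$ has value set a coset of a norm subgroup of index $2$ in $F^*$: it may be the even-valuation subgroup (and then omits $2\cO_F^*$), or it may omit both the $1$-class and the $\Delta$-class (when the pertinent Hasse invariant is $-1$); this is a genuinely finer phenomenon than the ``all but one square class'' behaviour of ternary spaces, and it is exactly what makes the extra modularity hypotheses on $L_3, L_4$ necessary. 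Running through the cases: if $\dim L_2 \ge 3$, then $FL_{(2)} \supseteq FL_1 \bot FL_2$ already has dimension $\ge 4$, so $Q(FL_{(2)}) = F$ and every orthogonal complement appearing in \ref{UL4.2}\,(4),(6) has dimension $\ge 3$; Proposition~\ref{UL2.2} then closes everything (case (1)). If $\dim L_2 \le 2$, one shows $\frs(L_3) = \fn(L_3) = 4\cO_F$ is forced: otherwise $FL_{(2)} = FL_1 \bot FL_2$ has even-valuation discriminant and either is anisotropic --- already violating $\cO_F^* \subseteq Q(FL_{(2)})$ --- or is isotropic, in which case \ref{UL4.2}\,(6)(a) fails for a unit $\veps$ with $\veps_1\veps \in \Delta F^{*2}$ (concretely, $L = \dgf{\veps_1}\bot\dgf{2,-2}$ is not universal). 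Once $L_3$ is proper $4$-modular, $\dim L_2 = 2$ or $\dim L_3 \ge 2$ already forces $\dim FL_{(2)} \ge 4$, so Proposition~\ref{UL2.2} again finishes (cases (2),(3)); and if $\dim L_2 = \dim L_3 = 1$ then $FL_{(2)} = FL_1 \bot FL_2 \bot FL_3$ has dimension $3$ with odd-valuation discriminant (so $\cO_F^* \subseteq Q(FL_{(2)})$ survives), while $2\cO_F^* \subseteq Q(FL_{(3)})$, the clause of \ref{UL4.2}\,(4) asking a complement to represent $1$ or $\Delta$, and the $8$-modular clauses \ref{UL4.2}\,(6)(e),(f) all fail unless $\frs(L_4) = \fn(L_4) = 8\cO_F$ raises $FL_{(3)}$ to dimension $\ge 4$ --- case (4). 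Finally one checks, in each of (1)--(4), that \emph{all} conditions of Lemma~\ref{UL4.2} do hold, the only non-formal points being the ``represents $1$ or $\Delta$'' clauses (handled by Lemma~\ref{UL2.4}\,(3)) and the binary complements in \ref{UL4.2}\,(6)(a) in case (4) (handled by the norm-group description, using that the extensions occurring there are ramified).

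The step I expect to be the main obstacle is the low-dimensional case analysis of conditions \ref{UL4.2}\,(4) and \ref{UL4.2}\,(6): one must track exactly which proper $2$-, $4$- and $8$-modular components occur, match each alternative with the correct sub-items (a)--(f), and replace the naive ``omits one square class'' picture for anisotropic binary spaces by the correct ``coset of an index-$2$ norm subgroup'' description, carefully distinguishing the even-valuation subgroup (which omits $2\cO_F^*$) from the subgroups cut out by a nontrivial Hasse invariant (which may omit both the $1$- and $\Delta$-classes). This distinction is precisely what forces the modularity constraints on $L_3$ and $L_4$ in cases (2)--(4), and it is the point at which a first, too-hasty argument would wrongly certify non-universal lattices as universal.
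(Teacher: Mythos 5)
Your route is the same as the paper's: reduce everything to Lemma\;\ref{UL4.2}, force $\frs(L_2)=\fn(L_2)=2\cO_F$, dispose of most clauses by dimension or discriminant counts, and let the inclusions $\cO_F^*\subseteq Q(FL_{(2)})$, $2\cO_F^*\subseteq Q(FL_{(3)})$ together with the binary complements in \eqref{UL4.2}\,(6) generate the four cases. Your derivation of $\fn(L_2)=2\cO_F$ from \eqref{UL4.2}\,(5) is fine (the paper uses \eqref{UL4.2}\,(4) instead), and the sufficiency direction and the forcing of $\frs(L_3)=\fn(L_3)=4\cO_F$ are essentially correct.

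There is, however, a genuine gap in the necessity argument for case (4). Take $L=\dgf{\veps_1}\bot\dgf{2\veps_2}\bot\dgf{4\veps_3}$ with $V:=\dgf{\veps_1,\,2\veps_2,\,4\veps_3}$ \emph{isotropic} and no further components. Every condition you list as ``failing unless $\frs(L_4)=\fn(L_4)=8\cO_F$'' actually holds for this $L$: the space $FL_{(3)}=V$ is an isotropic ternary space, hence universal, so $2\cO_F^*\subseteq Q(FL_{(3)})$; both clauses of \eqref{UL4.2}\,(4) hold automatically (the relevant complements are binary with discriminant of odd valuation, resp.\ ternary, so \cite[(63:11)]{OMeara00} or Lemma\;\ref{UL2.4}\,(3) applies); and \eqref{UL4.2}\,(6.e), (6.f) are vacuous since $L$ has no proper $8$-modular component. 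Yet $L$ is not universal. The condition that actually fails is \eqref{UL4.2}\,(6.b), which you never invoke: writing $V\cong\bH\bot\dgf{-2\veps_1\veps_2\veps_3}$ and taking $\veps=-\veps_1\veps_2\veps_3\Delta$, the complement $FL_{(3)}/^\bot\dgf{2\veps}\cong\dgf{2\veps_1\veps_2\veps_3\Delta,\,-2\veps_1\veps_2\veps_3}\cong 2\veps_1\veps_2\veps_3\Delta\dgf{1,\,-\Delta}$ represents only elements of odd valuation, so $\cO_F^*\cap Q(FL_{(3)}/^\bot\dgf{2\veps})=\emptyset$. Without this step your argument certifies such lattices as universal, which is false. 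A smaller instance of the same imprecision occurs in the $\dim L_2=2$ isotropic sub-case: the witness for the failure of \eqref{UL4.2}\,(6.a) must be $\veps\equiv-\veps_1\veps_2\veps_3\Delta$ (so that the binary complement of $\dgf\veps$ in $\bH\bot\dgf{-\veps_1\veps_2\veps_3}$ becomes a unit multiple of $\dgf{1,\,-\Delta}$), not a unit with $\veps_1\veps\in\Delta F^{*2}$; your concrete example $\dgf{\veps_1}\bot\dgf{2,\,-2}$ works only because there $-\veps_2\veps_3\in F^{*2}$.
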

\begin{proof}
 We have $L_1=\dgf{\veps}$ for some $\veps_1\in\cO_F^*$. So obviously $\cO_F^*\cap Q( FL_{(0)})\neq\emptyset$. By \eqref{UL4.2} (1) we may assume $\frs(L_2)=2\cO_F$.

 We claim that $ FL_{(1)}\not\cong\bH$ and that if $L$ is universal, then $\dim L_{(1)}\ge 2$ (or equivalently, $\fn(L_2)=2\cO_F=\frs(L_2)$). Indeed, if $\dim L_{(1)}=2$, then we must have $L_2=\dgf{2\veps_2}$ for some $\veps_2\in \cO_F^*$. But then $ FL_{(1)}=\dgf{\veps_1,\,2\veps_2}$ is anisotropic, whence $ FL_{(1)}\not\cong\bH$. If $\dim L_{(1)}=1$, then $L_{(1)}=L_1=\dgf{\veps_1}$. Choosing an element $c\in \cO_F^*\setminus (\cO_F^*\cup \Delta\cO_F^{*2})$ (which is possible since $|\cO_F^*/\cO_F^{*2}|>2$, by \cite[(63:9)]{OMeara00}), either
 \[
 c\veps_1\notin Q(FL_1\bot\dgf{1})=Q( FL_{(1)}\bot\dgf{1})=Q(\dgf{\veps_1,\,1})
 \]or the space $( FL_{(1)}\bot\dgf{1})/^\bot\dgf{c\veps_1}$ is isomorphic to $\dgf{c}$ and hence represents neither 1 nor $\Delta$. Thus \eqref{UL4.2} (4) fails and $L$ cannot be universal in this case. Our claim is thus proved.

 In the rest of the proof we assume $\fn(L_2)=\frs(L_2)=2\cO_F$, so that $L_{(1)}=L_1\bot L_2$ and $L_2=\dgf{2\veps_2,\,\cdots,\,2\veps_r}$ for some $\veps_2,\cdots,\veps_r\in\cO_F^*$. In particular, $2\cO_F^*\cap Q( FL_{(1)})\neq\emptyset$ and therefore \eqref{UL4.2} (5) holds. By our claim above, we don't need to check conditions (1) and (2) of Lemma\;\ref{UL4.2}. Since $ FL_{(1)}\bot\dgf{1}$ contains $\dgf{\veps_1,\,2\veps_2,\,1}$, we have
 $\cO_F^*\subseteq Q( FL_{(1)}\bot\dgf{1})$ by Lemma\;\ref{UL2.4} (2). Similarly, $2\cO_F^*\subseteq Q( FL_{(2)}\bot\dgf{2})$.

 For any $\veps\in \cO_F^*$, the space $\dgf{\veps_1,\,2\veps_2,\,1}/^\bot\dgf{\veps}$ has determinant $2\cO_F^*F^{*2}$, so it represents 1 or $\Delta$ by \cite[(63:11)]{OMeara00}. Similarly, the other condition in \eqref{UL4.2} (4) is satisfied.

 If $\dim L_2\ge 3$, then $\dim  FL_{(2)}\ge 4$, so that conditions (3), (5) and (6) of Lemma\;\ref{UL4.2} can be easily deduced from the universality of quaternary spaces or Lemma\;\ref{UL2.4} (3).

 Now suppose $\dim L_2=2$. Then $L_{(1)}=L_1\bot L_2=\dgf{\veps_1,\,2\veps_2,\,2\veps_3}$. So
 \[
 2\cO_F^*\subseteq Q(\dgf{\veps_1,\,2\veps_2,\,2\veps_3})=Q( FL_{(1)})\subseteq Q( FL_{(3)})\,.
 \]Moreover, by using Lemma\;\ref{UL2.4} (3) we can check that conditions (6.b)--(6.f) of Lemma\;\ref{UL4.2} hold. It remains to check
 \begin{equation}\label{UL4.8.1}
   \cO_F^*\subseteq Q( FL_{(2)})
 \end{equation}
 and
 \begin{equation}\label{UL4.8.2}
   \forall\;\veps\in \cO_F^*\;,\quad 2\cO_F^*\cap Q( FL_{(2)}/^\bot\dgf{\veps})\neq\emptyset\;.
 \end{equation}
 Suppose that $L_{(2)}=L_{(1)}$ and \eqref{UL4.8.1} holds. Then
 \[
 2\cO_F^*\subseteq Q(\dgf{2\veps_1,\,4\veps_2,\,4\veps_3})=Q(\dgf{\veps_2,\,\veps_3,\,2\veps_1})\,.
 \]By Lemma\;\ref{UL2.4} (2), this can happen only if $ FL_{(2)}=\dgf{\veps_1,\,2\veps_2,\,2\veps_3}$ is isotropic. In that case $ FL_{(2)}\cong\bH\bot\dgf{-\veps_1\veps_2\veps_3}$. But then taking $\veps=-\veps_1\veps_2\veps_3\Delta$ we find that $W:= FL_{(2)}/^\bot\dgf{\veps}$ is isomorphic to $\dgf{-\veps,\,-\veps_1\veps_2\veps_3}=\dgf{\veps_1\veps_2\veps_3\Delta,\,-\veps_1\veps_2\veps_3}$. Hence $2\cO_F^*\cap Q(W)=\emptyset$ by \cite[(63:15) (i)]{OMeara00}. This shows that \eqref{UL4.8.1} and \eqref{UL4.8.2} cannot be satisfied simultaneously if $L_{(1)}=L_{(2)}$ (and $\dim L_2=2$). So we must have $L_{(1)}\subset L_{(2)}$, i.e. $\frs(L_3)=\fn(L_3)=4\cO_F$. Under this assumption, we have $\dim  FL_{(2)}\ge 4$, so \eqref{UL4.8.1} and \eqref{UL4.8.2} are both true.

 Finally, let us assume $\dim L_2=1$. Then $L_{(1)}=L_1\bot L_2=\dgf{\veps_1,\,2\veps_2}$. By \cite[(63:11)]{OMeara00}, $ FL_{(1)}$ cannot represent both 1 and $\Delta$. So $L_{(1)}\subset L_{(2)}$ is a necessary condition for \eqref{UL4.8.1}. Thus, we will assume further that $\frs(L_3)=\fn(L_3)=4\cO_F$.

 If $\dim L_3\ge 2$, then $\dim  FL_{(2)}\ge 4$ and we are done.

 Let us suppose $\dim L_3=1$, so that $L_3=\dgf{4\veps_3}$ and $L_{(2)}=L_1\bot L_2\bot L_3=\dgf{\veps_1,\,2\veps_2,\,4\veps_3}$. Thus, $\cO_F^*\subseteq Q( FL_{(2)})$ by Lemma\;\ref{UL2.4} (2).

 We claim that if $L$ is universal, then $L_{(2)}\subset L_{(3)}$. To see this, suppose $L_{(2)}=L_{(3)}$. Then by Lemma\;\ref{UL2.4} (2), the inclusion
 $2\cO_F^*\subseteq Q( FL_{(3)})=Q(\dgf{\veps_1,\,2\veps_2,\,\veps_3})$ (required in \eqref{UL4.2} (3)) holds only if $ FL_{(3)}$ is isotropic, that is, if $ FL_{(3)}\cong\bH\bot\dgf{-2\veps_1\veps_2\veps_3}$. Taking $\veps=-\veps_1\veps_2\veps_3\Delta$, we find that $W:= FL_{(2)}/^\bot\dgf{2\veps}$ is isomorphic to
 $\dgf{-2\veps,\,-2\veps_1\veps_2\veps_3}=\dgf{2\veps_1\veps_2\veps_3\Delta\,,\,-2\veps_1\veps_2\veps_3}$. By \cite[(63:15) (i)]{OMeara00} we have $\cO_F^*\cap Q(W)=\emptyset$, so that \eqref{UL4.2} (6.b) fails and $L$ cannot be universal. This proves our claim.

 We may thus assume $L_{(2)}\subset L_{(3)}$, that is, $\fn(L_4)=\frs(L_4)=8\cO_F$. Under this assumption, $\dim  FL_{(3)}\ge 4$, so that
 $2\cO_F^*\subseteq Q( FL_{(3)})$ and condition \eqref{UL4.2} (3) is verified. Condition \eqref{UL4.2} (6.a) holds by \cite[(63:11)]{OMeara00}, \eqref{UL4.2} (6.d) is vacuous here, and all the remaining conditions in Lemma\;\ref{UL4.2} can be verified by using Lemma\;\ref{UL2.4} (3). This completes the proof.
\end{proof}

\begin{thm}\label{UL1.2}
Let $F$ be a finite unramified extension of $\Q_2$ and let  $L=L_1\bot \cdots\bot L_t$ be a Jordan splitting of an $\cO_F$-lattice.

  Then $L$ is universal if and only if $\fn(L_1)=\cO_F$ and one of the following conditions holds:

  \begin{enumerate}
    \item $\dim L_1\ge 4$.
    \item $\dim L_1=3$, and either $FL_1$ is isotropic or  $4\cO_F\subseteq \fn(L_2)$.
    \item $\dim L_1=2$, and one of the following cases happens:
    \begin{enumerate}
      \item $\frs(L_1)=\cO_F$, $\frs(L_2)=\fn(L_2)=2\cO_F$, and one of the following holds:
      \begin{enumerate}
      \item $\dim L_2\ge 2$.
        \item $\dim L_2=1$ and $FL_1\bot FL_2$ is isotropic.
        \item $\dim L_2=1$ and  $8\cO_F\subseteq \fn(L_3)$.
      \end{enumerate}

      \item $\frs(L_1)=2^{-1}\cO_F$, and  one of the following holds:
      \begin{enumerate}
        \item $L_1\cong 2^{-1}A(0,\,0)$, i.e., $FL_1$ is isotropic.
        \item $\dim L_2\ge 2$ and $2\cO_F\subseteq \fn(L_2)$.
        \item $\dim L_2=1$, $\frs(L_2)=\fn(L_2)=\cO_F$.
        \item $\dim L_2=1$, $\frs(L_2)=\fn(L_2)=2\cO_F$,  and $8\cO_F\subseteq \fn(L_3)$.
      \end{enumerate}
      \end{enumerate}

    \item $\dim L_1=1$, $\frs(L_2)=\fn(L_2)=2\cO_F$, and one of the following holds:
    \begin{enumerate}
     \item $\dim L_2\ge 3$.
      \item $\dim L_2=2$, and $\fn(L_3)=\frs(L_3)=4\cO_F$.
      \item $\dim L_2=1$, $\dim L_3\ge 2$ and $\frs(L_3)=\fn(L_3)=4\cO_F$.
      \item $\dim L_2=\dim L_3=1$, $\frs(L_3)=\fn(L_3)=4\cO_F$,  and $\frs(L_4)=\fn(L_4)=8\cO_F$.
    \end{enumerate}
  \end{enumerate}
\end{thm}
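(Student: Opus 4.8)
The plan is to derive Theorem~\ref{UL1.2} by assembling Propositions~\ref{UL4.3}--\ref{UL4.8} through a case analysis driven by $\dim L_1$ and $\frs(L_1)$, so the whole argument is one of bookkeeping rather than new computation. First I would isolate the common necessary conditions. Since $L$ is universal if and only if $Q(L)=\cO_F$ (Definition~\ref{UL1.1}) and $\fn(L)=\fn(L_1)$ because $F/\Q_2$ is unramified, universality forces $\fn(L_1)=\cO_F$; and this equality is a standing hypothesis in every one of Propositions~\ref{UL4.3}--\ref{UL4.8}, so for the converse direction it is available as soon as one of the listed conditions is assumed. Next I would pin down $\frs(L_1)$: from the general inclusions $2\frs(L_1)\subseteq\fn(L_1)\subseteq\frs(L_1)$ (see \cite[\S\,82]{OMeara00}) together with $\fn(L_1)=\cO_F$ one gets $\cO_F\subseteq\frs(L_1)\subseteq 2^{-1}\cO_F$, hence $\frs(L_1)\in\{\cO_F,\,2^{-1}\cO_F\}$; moreover, if $\frs(L_1)=2^{-1}\cO_F\ne\fn(L_1)$ then $L_1$ is improper, so it has no unary orthogonal summand and is therefore a sum of scaled $A(\cdot,\cdot)$-blocks (\cite[\S\,93]{OMeara00}), in particular $\dim L_1$ is even and $\ge 2$. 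This reduces the theorem to a finite, mutually exclusive list of cases indexed by $(\dim L_1,\frs(L_1))$ and, when $\dim L_1=2$, by the isotropy type of $FL_1$.

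Then I would go through the cases. For $\dim L_1\ge 4$, Proposition~\ref{UL4.3} gives universality unconditionally, which is item~(1). For $\dim L_1=3$ (so $\frs(L_1)=\fn(L_1)=\cO_F$), Proposition~\ref{UL4.4} gives item~(2), after restating its geometric condition $L_1\subset L_{(2)}$ as the norm condition $4\cO_F\subseteq\fn(L_2)$. For $\dim L_1=2$ with $\frs(L_1)=\cO_F$, combine Proposition~\ref{UL4.5} (the isotropic subcase) and Proposition~\ref{UL4.6} (the anisotropic subcase); these glue into item~3(a) once one checks that when $FL_1$ is isotropic and $\frs(L_2)=\fn(L_2)=2\cO_F$ holds, subcase 3(a)(i) (if $\dim L_2\ge 2$) or 3(a)(ii) (the condition ``$FL_1\bot FL_2$ isotropic'', automatic here if $\dim L_2=1$) is satisfied. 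For $\dim L_1=2$ with $\frs(L_1)=2^{-1}\cO_F$ (necessarily improper, as above), split $L_1$ into its two isometry classes: $L_1\cong 2^{-1}A(0,0)$, where $FL_1\cong\bH$ and Proposition~\ref{UL4.5} applies, giving 3(b)(i); or $L_1\cong 2^{-1}A(2,2\rho)$, where $FL_1$ is anisotropic and Proposition~\ref{UL4.7} applies, giving 3(b)(ii)--(iv). Finally $\dim L_1=1$ forces $\frs(L_1)=\fn(L_1)=\cO_F$, and Proposition~\ref{UL4.8} yields item~(4) directly.

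The substance of the proof lies entirely in Propositions~\ref{UL4.3}--\ref{UL4.8}; the only real care in assembling them — and the main place an error could creep in — is confirming that the enumeration above is exhaustive and that each proposition's hypotheses and conclusion match the theorem's itemization verbatim. Concretely, I would be careful about two points: the translation ``$K\subset L_{(j)}$'' $\Longleftrightarrow$ ``$2^{j}\cO_F\subseteq\fn(L_{j'})$'' for the appropriate component $L_{j'}$, which is what converts Propositions~\ref{UL4.4}, \ref{UL4.6}, \ref{UL4.7} and \ref{UL4.8} into the norm-theoretic statements of items (2)--(4); and the fact that the improper case $\frs(L_1)=2^{-1}\cO_F$, $\dim L_1=2$ really has exactly the two isometry classes $2^{-1}A(0,0)$ and $2^{-1}A(2,2\rho)$ (\cite[(93:11)]{OMeara00}), so that Propositions~\ref{UL4.5} and \ref{UL4.7} together exhaust it. No further input is needed.
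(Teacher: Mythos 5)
Your proposal is correct and follows the paper's own route exactly: the paper's proof of Theorem~\ref{UL1.2} is precisely ``combine Propositions~\ref{UL4.3}--\ref{UL4.8}'', and your case analysis on $(\dim L_1,\frs(L_1))$ and the isotropy of $FL_1$, including the check that Propositions~\ref{UL4.5} and \ref{UL4.6} glue into item~3(a) and that the improper $2^{-1}$-modular case splits into $2^{-1}A(0,0)$ and $2^{-1}A(2,2\rho)$, is the bookkeeping the paper leaves implicit.
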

\begin{proof}
  Combine Propositions\;\ref{UL4.3}--\ref{UL4.8}.
\end{proof}

Since a lattice is classic universal if and only if it is universal and classic, we easily get the following corollary of Theorem\;\ref{UL1.2}.

\begin{coro}\label{UL4.10}
 Let $F$ be a finite unramified extension of $\Q_2$ and let  $L=L_1\bot \cdots\bot L_t$ be a Jordan splitting of an $\cO_F$-lattice.

  Then $L$ is classic universal if and only if $\frs(L_1)=\fn(L_1)=\cO_F$ and one of the following conditions holds:

  \begin{enumerate}
    \item $\dim L_1\ge 4$.
    \item $\dim L_1=3$, and either $FL_1$ is isotropic or  $4\cO_F\subseteq \fn(L_2)$.
    \item $\dim L_1=2$,  $\frs(L_2)=\fn(L_2)=2\cO_F$, and one of the following cases happens:
    \begin{enumerate}
      \item $\dim L_2\ge 2$.
        \item $\dim L_2=1$ and $FL_1\bot FL_2$ is isotropic.
        \item $\dim L_2=1$ and  $8\cO_F\subseteq \fn(L_3)$.
     \end{enumerate}

    \item $\dim L_1=1$, $\frs(L_2)=\fn(L_2)=2\cO_F$, and one of the following holds:
    \begin{enumerate}
     \item $\dim L_2\ge 3$.
      \item $\dim L_2=2$, and $\fn(L_3)=\frs(L_3)=4\cO_F$.
      \item $\dim L_2=1$, $\dim L_3\ge 2$ and $\frs(L_3)=\fn(L_3)=4\cO_F$.
      \item $\dim L_2=\dim L_3=1$, $\frs(L_3)=\fn(L_3)=4\cO_F$,  and $\frs(L_4)=\fn(L_4)=8\cO_F$.
    \end{enumerate}
  \end{enumerate}
\end{coro}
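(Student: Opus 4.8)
The statement is a direct consequence of Theorem~\ref{UL1.2}, obtained by intersecting the universality criterion with the extra requirement that $L$ be classic; the proof is pure bookkeeping and involves no new analytic input. First I would recall from Definition~\ref{UL1.1} that $L$ is classic universal if and only if it is universal and classic, and that being classic means $\frs(L)\subseteq\cO_F$. Since $L=L_1\bot\cdots\bot L_t$ is a Jordan splitting, the scales satisfy $\frs(L_1)\supsetneq\frs(L_2)\supsetneq\cdots$, so $\frs(L)=\frs(L_1)$ and ``$L$ classic'' is equivalent to $\frs(L_1)\subseteq\cO_F$. Thus the task reduces to: characterize those $L$ that are universal \emph{and} satisfy $\frs(L_1)\subseteq\cO_F$.

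Next I would invoke Theorem~\ref{UL1.2}. Any universal $L$ has $\fn(L_1)=\cO_F$; combined with $\fn(L_1)\subseteq\frs(L_1)$ (always true) and $\frs(L_1)\subseteq\cO_F$, this forces $\frs(L_1)=\fn(L_1)=\cO_F$, which is exactly the standing hypothesis of the corollary. It then remains to pass through the four cases of Theorem~\ref{UL1.2} and determine which survive under the additional constraint $\frs(L_1)=\cO_F$. In case (1) one simply appends $\frs(L_1)=\cO_F$ to the condition $\fn(L_1)=\cO_F$, giving case (1) of the corollary; in cases (2) and (4) the leading component $L_1$ is odd-dimensional (of dimension $3$, resp.\ $1$), so $\frs(L_1)=\fn(L_1)$ holds automatically and these reproduce verbatim cases (2) and (4) of the corollary. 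In case (3), where $\dim L_1=2$, the sub-case (3)(a) of Theorem~\ref{UL1.2} already assumes $\frs(L_1)=\cO_F$, hence is classic and becomes case (3) of the corollary, whereas the sub-case (3)(b) has $\frs(L_1)=2^{-1}\cO_F\not\subseteq\cO_F$ and therefore is never classic, so it drops out entirely. Collecting these observations yields the asserted list.

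The main (and only) point requiring a little care is the case analysis itself: one must note the identity $\frs(L)=\frs(L_1)$ for a Jordan splitting, use that $\frs(L_1)=\fn(L_1)$ is automatic in odd dimension, and recognize that sub-case (3)(b) of Theorem~\ref{UL1.2} is precisely the non-classic branch of the binary-leading-component situation and is the only branch eliminated by imposing classicness. No genuine obstacle arises beyond this routine matching.
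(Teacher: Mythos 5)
Your proposal is correct and follows exactly the paper's own (one-line) argument: classic universal $=$ universal $+$ classic, so one intersects Theorem~\ref{UL1.2} with the condition $\frs(L)=\frs(L_1)\subseteq\cO_F$, which upgrades $\fn(L_1)=\cO_F$ to $\frs(L_1)=\fn(L_1)=\cO_F$ and eliminates precisely the improper branches (notably sub-case (3)(b)). Your more detailed bookkeeping of the case-matching is accurate and adds nothing beyond what the paper leaves to the reader.
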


\begin{remark}
  As we have mentioned in the introduction, Theorem\;\ref{UL1.2} is a special case of \cite[Theorem\;3.1]{Beli20}. We have included a detailed proof because our approach does not need the theory of BONGs and  \cite[Theorem\;3.1]{Beli20} is stated without proof. Let us now explain that our theorem agrees with Beli's for the unramified dyadic field $F$.

  With notation as in Theorem\;\ref{UL1.2}, put $r_i=\ord(\frs(L_i))$, $u_i=\ord(\fn(L^{\frs(L_i)}))$ and $\mathfrak{w}_i=\mathfrak{w}(L^{\frs(L_i)})$  for each $1\le i\le t$.
 Note that we have  $u_i=\ord(\fn(L_i))$ and $\mathfrak{w}_i=2\frs(L_i)$  by \cite[\S\,93G]{OMeara00}.
 To compare Theorem\;\ref{UL1.2} with \cite[Theorem\;3.1]{Beli20}, we may assume that $\fn(L)=\fn(L_1)=\cO_F$.

  Since $\mathfrak{w}_1=2\frs(L_1)\supseteq 2\fn(L_1)=2\cO_F=\fp$, condition (1) of Theorem\;\ref{UL1.2} is the same as condition (1) of \cite[Theorem\;3.1]{Beli20}.

  Suppose that $\dim L_1=3$. Then $L_1$ must be proper by  \cite[(93:15)]{OMeara00}, hence $\mathfrak{w}_1=2\frs(L_1)=2\cO_F=\fp$. The inequality $u_2\le 2e=2$ means precisely that $4\cO_F\subseteq \fn(L_2)$. This shows that condition (2) of Theorem\;\ref{UL1.2} coincides with that of \cite[Theorem\;3.1]{Beli20}.

  Suppose that $\dim L_1=2$ and $\frs(L_1)=2^{-1}\cO_F$. If $u_2=0$, we get $\frs(L_2)=\cO_F=\fn(L_2)$. If $u_2=1$ and $\dim L_2=1$, then $\frs(L_2)=2\cO_F=\fn(L_2)$. The inequality $u_3\le 2e+1$ is equivalent to $8\cO_F\subseteq \fn(L_3)$. So we see that the conditions (ii)--(iv) in Theorem\;\ref{UL1.2} (3.b) all together are equivalent to the conditions (3.1.1) and (3.1.2) in \cite[Theorem\;3.1]{Beli20}. Thus, condition (3.b) of Theorem\;\ref{UL1.2} is equivalent to condition (3.1) of Beli's theorem.

   Recall that by definition,  $\mathfrak{w}(L_1)=2\fm(L_1)+2\frs(L_1)$ and $\fm(L_1)\subseteq \fn(L_1)=\cO_F$. Since $\frs(L_1)\supseteq \fn(L_1)=\cO_F$, it follows easily that the equality $\mathfrak{w}(L_1)=\fp$  is equivalent to $\frs(L_1)=\cO_F$.

    Suppose that $\dim L_1=2$ and $\frs(L_1)=\cO_F$. Then we have $u_2\le 1$ if and only if $\frs(L_2)=\fn(L_2)=2\cO_F$. To show that Theorem\;\ref{UL1.2} (3.a) is equivalent to (3.2) of \cite[Theorem\;3.1]{Beli20}, we claim that the cases (3.2.1) and (3.2.2) can never happen in the unramified case.  Suppose $u_2>1$, i.e. $\fn(L_2)\subset 2\cO_F$. Then $\fn(L_2)\subseteq 4\cO_F$ and $\mathfrak{w}_2=2\frs(L_2)\subseteq 4\cO_F$. Here $r_1=0$. If $u_2$ is even, then $4\fp^{r_1+u_2-2[u_2/2]}=4\cO_F$; so the condition $\mathfrak{w}_2\supset $ does not hold. If $u_2$ is odd, then the condition $\mathfrak{w}_2\supset $ forces $\mathfrak{w}_2=4\cO_F$. But this implies $\frs(L_2)=2\cO_F$ and $\fn(L_2)=4\cO_F$, so that $u_2=2$, which is a contradiction. Therefore, the case (3.2.1) in \cite[Theorem\;3.1]{Beli20} cannot happen.

    Now suppose (3.2.2) of \cite[Theorem\;3.1]{Beli20} holds. Then $r_2=u_2>1$ and $t\ge 3$.  If $u_2+u_3$ is odd, then $\ff_2=\frs(L_2)^{-2}\fn(L_2)\fn(L_3)$ by \cite[\S\,93E, p.264]{OMeara00}. The condition $\ff_2\supset 4\fp^{r_1-2[u_2/2]}$ implies $\ff_2\supset\cO_F$ since $[u_2/2]\ge 1$. But
    \[
    \frs(L_2)^2\supset \frs(L_2)\frs(L_3)\supseteq \fn(L_2)\fn(L_3)\,.
    \]So we obtain $\ff_2=\frs(L_2)^{-2}\fn(L_2)\fn(L_3)\subset \cO_F$, a contradiction.

     Now suppose $u_2+u_3$ is even. Note that $u_3\ge r_3>r_2=u_2>1$. Hence
     \[
         1+r_3-u_2>1+u_2-u_2>2-u_2 \quad\text{and}\quad     \frac{1}{2}(u_3+u_2)+1-u_2>2-u_2\,.
      \]     According to \cite[\S\,93E, p.264]{OMeara00} (and \cite[\S\,93G]{OMeara00}), we have
    \[
    \begin{split}
    \ff_2&=\frs(L_2)^{-2}\big(\fd(2^{u_2+u_3})+2^{u_2}\mathfrak{w}_3+2^{u_3}\mathfrak{w}_2+2\fp^{\frac{1}{2}(u_2+u_3)+r_2}\big)\\
    &=\frs(L_2)^{-2}\big(0+2^{u_2}2\frs(L_3)+2^{u_3}2\frs(L_2)+2\fp^{\frac{1}{2}(u_2+u_3)+r_2}\big)\\
    &=2^{-2r_2}\big(2^{u_2+1+r_3}\cO_F+2^{u_3+1+r_2}\cO_F+2^{\frac{1}{2}(u_2+u_3)+1+r_2}\cO_F\big)\\
     &=2^{-2r_2}\big(2^{u_2+1+r_3}\cO_F+2^{\frac{1}{2}(u_2+u_3)+1+r_2}\cO_F\big)\\
    &=2^{1+r_3-u_2}\cO_F+2^{\frac{1}{2}(u_3+u_2)+1-u_2}\cO_F\subset 2^{2-u_2}\cO_F\,.
    \end{split}
    \]Note that $4\fp^{r_1-2[u_2/2]}\supseteq 2^{3-u_2}\cO_F$. So the condition $\ff_2\supset 4\fp^{r_1-2[u_2/2]}$ does not hold, whence a contradiction.

    Finally, let us show that condition (4) of Theorem\;\ref{UL1.2} is equivalent to that of \cite[Theorem\;3.1]{Beli20}. We may assume $\dim L_1=1$, $\frs(L_1)=\fn(L_1)=\cO_F$ and $\frs(L_2)=\fn(L_2)=2\cO_F$. Then $u_3\le 2e=2$ if and only if $\frs(L_3)=\fn(L_3)=4\cO_F$. Thus, conditions (a) and (b) in Theorem\;\ref{UL1.2} (4) are equivalent to conditions (4.1) and (4.2) in Beli's theorem. To finish the proof, we may further assume $\dim L_2=1$.

  The condition $\mathfrak{w}_3\supset 4\fp^{u_3-2[(u_3-1)/2]}$ means $1+r_3<2+u_3-2[(u_3-1)/2]$. Since $1+r_3\ge u_3\ge r_3\ge 2$, it is easily checked that the above condition holds if and only if $\frs(L_3)=\fn(L_3)=4\cO_F$. This proves that condition (c) in Theorem\;\ref{UL1.2} (4) is equivalent to (4.3.1) in Beli's theorem.

  Now we further assume $\dim L_3=1$ and $t\ge 4$. Then $u_4\ge r_4>u_3=r_3\ge 2$. If $u_3+u_4$ is even, then $1+r_3\ge 4$ and $\frac{1}{2}(u_3+u_4)+1\ge \frac{6}{2}+1=4$.
  But
  \[
  \ff_3=2^{1+r_4-u_3}\cO_F+2^{\frac{1}{2}(u_3+u_4)+1-u_3}\cO_F\quad \text{and}\quad 4\fp^{-2[(u_3-1)/2]}\supseteq 2^{4-u_3}\cO_F\,.
  \]So the condition $\ff_3\supset 4\fp^{-2[(u_3-1)/2]}$ cannot hold in this case.

  If $u_3+u_4$ is odd, we have $\ff_3=2^{u_3+u_4-2r_3}\cO_F=2^{u_4-u_3}\cO_F$. An elementary verification shows that $\ff_3\supset 4\fp^{-2[(u_3-1)/2]}$ if and only if $u_3=2$ and $r_4=u_4=3$. We have thus proved that  condition (d) in Theorem\;\ref{UL1.2} (4) is equivalent to (4.3.2) in Beli's theorem.
\end{remark}

\section{Classification of $k$-universal lattices for $k$ even}\label{sec5}

As in the previous section, let $L=L_1\bot\cdots\bot L_t$ be a Jordan splitting of a nonzero integral lattice. Let $k\ge 2$ be an integer. We prove Theorem\;\ref{UL1.3} in this section.

\medskip

\newpara\label{UL5.1}
  Let $\ell$ be a lattice of dimension $k$. We say that $\ell$ is \textbf{\emph{basic}} if it has a splitting of the form
  \[
  \ell=I_{-1}\bot I_0\bot P_0\bot P_1\bot I_1
  \]where for each $j$, $I_j$ is improper $2^{j}$-modular or 0 and $P_j$ is proper $2^j$-modular or 0.

  Clearly, such a lattice is classic if and only if $I_{-1}=0$.

  If $\ell$ is basic with $I_1=0$, we say that $\ell$ is \textbf{\emph{dominant}}.

  It is easy to see that $L$ is $k$-universal if and only if it represents all dominant $k$-dimensional lattices. If $L$ is classic, it is classic $k$-universal if and only if it represents all classic basic lattices of dimension $k$.   

\begin{prop}\label{UL5.2}
  Let $k\ge 2$ be an even integer.  Then the following statements are equivalent:

  \begin{enumerate}
    \item Every dominant lattice of dimension $k$ has a lower type than $L$.
     \item $\frs(L_1)=2^{-1}\cO_F=2^{-1}\fn(L_1)$,  $\dim L_1\ge k$, and if $\dim L_1=k$, then $2\cO_F\subseteq \frs(L_2)$.
  \end{enumerate}
\end{prop}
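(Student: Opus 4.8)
The plan is to unwind Definition~\ref{UL3.2} for the class of dominant $k$-dimensional lattices and show that the finitely many constraints it imposes on $L$ collapse to the three conditions in (2). First I would fix notation: a dominant lattice has the form $\ell=I_{-1}\bot I_0\bot P_0\bot P_1$ with $I_{-1},I_0$ improper (scale $2^{-1}\cO_F$ and $\cO_F$ respectively, or zero) and $P_0,P_1$ proper ($\cO_F$- and $2\cO_F$-modular, or zero), so that $\dim\ell=k$. For such $\ell$ one computes directly that $\ell_{\le i}=0$ for $i\le -2$, $\ell_{\le -1}=I_{-1}$, $\ell_{\le 0}=I_{-1}\bot I_0\bot P_0$, and $\ell_{\le i}=\ell$ for $i\ge 1$; similarly $\fd_i(\ell)$ and the proper-modular-component data are read off from the blocks. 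Because a dominant lattice can have arbitrarily large improper $2^{-1}$-modular part (take $I_{-1}$ of dimension $k$, all other blocks zero), condition \eqref{UL3.2}(1) at $i=-1$ immediately forces $\dim L_{\le -1}\ge k$, i.e. $2^{-1}\cO_F\subseteq\frs(L_1)$ and $\dim L_1\ge k$ (here I use that $F$ is unramified, so $\frs(L_1)\in\{2^{-1}\cO_F,\cO_F,\dots\}$ and an integral lattice has $\frs(L_1)\subseteq 2^{-1}\cO_F$; combined with integrality this gives $\frs(L_1)=2^{-1}\cO_F$ once we know it is not $\subseteq\cO_F$). Then \eqref{UL3.2}(1) at $i=-2$ is vacuous and at $i\ge 0$ is automatic once $\dim L_{\le -1}\ge k=\dim\ell$.

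Next I would extract the remaining two assertions of (2). That $L_1$ is improper with $\fn(L_1)=2\frs(L_1)=\cO_F$: take $\ell$ with $I_{-1}$ of dimension $k$; apply \eqref{UL3.2}(3)(a) at $i=-1$ (or \eqref{UL3.2}(3) with the proper/improper dichotomy at $i=-1$) to the comparison $\dim\ell_{\le -1}=k=\dim L_{\le -1}$, which forces $L_1$ to have \emph{no} proper $2^{-1}$-modular component, hence (being $2^{-1}$-modular of the right dimension) to be improper; since $F$ is unramified this is exactly $\fn(L_1)=\cO_F$, $\frs(L_1)=2^{-1}\cO_F$. Finally, for the last clause: if $\dim L_1=k$, test with a dominant $\ell$ having $I_{-1}$ of dimension $k-2$ and $I_0$ of dimension $2$ (so $\dim\ell_{\le -1}=k-2=\dim L_1-2$ would violate \eqref{UL3.2}(1) unless... ) — more precisely I would use \eqref{UL3.2}(1) and (4) at $i=-1$ with $\ell$ chosen so that $\dim\ell_{\le -1}=\dim L_{\le -1}-1$ cannot be arranged, forcing $\dim L_{\le -1}\ge k+$something or $2^{-1}\cO_F\subseteq\frs(L_2)$, equivalently (by integrality of $L_2$, i.e. $\frs(L_2)\subseteq\cO_F$, which together with the Jordan ordering $\frs(L_2)\subseteq\frs(L_1)\cdot(\text{power of }2)$) $2\cO_F\subseteq\frs(L_2)$. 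The converse direction (2)$\Rightarrow$(1) is then a routine check: assuming $\frs(L_1)=2^{-1}\cO_F=2^{-1}\fn(L_1)$, $\dim L_1\ge k$, and the auxiliary clause, one verifies \eqref{UL3.2}(1)--(4) hold for every dominant $\ell$, case-splitting on whether $\dim L_1=k$ or $\dim L_1\ge k+1$ and on which of $\ell$'s blocks are nonzero; each inequality and each proper-component condition is immediate because $L_{\le -1}\supseteq L_1$ already has dimension $\ge k\ge\dim\ell_{\le -1}$ and is improper.

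I expect the main obstacle to be the bookkeeping in the clause ``if $\dim L_1=k$ then $2\cO_F\subseteq\frs(L_2)$'', both directions. In the forward direction the subtlety is choosing the right witness dominant lattice: when $\dim L_1=k$ exactly, \eqref{UL3.2}(1) at $i=-1$ is satisfied with equality by the all-$I_{-1}$ lattice, so no contradiction arises there, and one must instead exploit \eqref{UL3.2}(4) (the ``dimension drops by exactly one'' clause) or \eqref{UL3.2}(2) (the parity of $\ord(\fd_{-1}(\ell)\fd_{-1}(L))$) with a lattice whose $I_{-1}$ has dimension $k-1$ — but $k$ is even, so $k-1$ is odd, and an improper $2^{-1}$-modular lattice must be even-dimensional; hence $\dim I_{-1}=k-1$ is impossible, which is precisely why one is forced into $I_0$ or $P_0$ being nonzero and why the condition on $\frs(L_2)$ (rather than on $L_1$ alone) emerges. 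Getting this parity argument clean, and making sure the unramifiedness of $F$ is invoked correctly each time I pass between ``modular of scale $2^j\cO_F$'', ``even-dimensional'', and ``$d_\pm\in F^{*2}\cup\Delta F^{*2}$'' (as in the parenthetical remark of Theorem~\ref{UL1.3}), is where the care is needed; the rest is mechanical substitution into Definition~\ref{UL3.2}.
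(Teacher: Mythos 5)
Your setup and the first two clauses of (2) match the paper's proof: the computation of $\ell_{\le i}$ for a dominant $\ell=I_{-1}\bot I_0\bot P_0\bot P_1$, the use of the all-$I_{-1}$ test lattice to force $\dim L_{\le -1}\ge k$ via Definition~\ref{UL3.2}(1), and the conclusion $\frs(L_1)=2^{-1}\cO_F$, $\fn(L_1)=\cO_F$ are all correct. (Two small points: the clause of Definition~\ref{UL3.2}(3) you want at $i=-1$ is (3)(b), not (3)(a); and in fact no appeal to (3) is needed, since $2\frs(L_1)\subseteq\fn(L_1)\subseteq\cO_F$ together with $\frs(L_1)=2^{-1}\cO_F$ already gives $\fn(L_1)=\cO_F$, which is how the paper gets it.)

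The genuine gap is in the clause you yourself flag as the main obstacle: deriving ``if $\dim L_1=k$ then $2\cO_F\subseteq\frs(L_2)$''. The witnesses and indices you propose cannot produce it. At $i=-1$ one has $\ell_{\le -1}=I_{-1}$ and $\ord(\fd_{-1}(\ell))=-\dim I_{-1}$, which is always even, as is $\ord(\fd_{-1}(L))=-\dim L_1$, so Definition~\ref{UL3.2}(2) is automatically satisfied there; and the hypothesis $\dim\ell_{\le -1}=\dim L_{\le -1}-1$ of Definition~\ref{UL3.2}(4) can indeed never be met for parity reasons, exactly as you observe --- but that only makes (4) vacuous at $i=-1$; it yields no constraint on $\frs(L_2)$. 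The constraint actually comes from Definition~\ref{UL3.2}(2) at $i\ge 1$, where $\ell_{\le i}=\ell$ and $\ord(\fd_i(\ell))\equiv\dim P_1\pmod 2$. Take a dominant $\ell$ with $\dim P_1$ odd (e.g.\ $I_{-1}$ of dimension $k-2$ and $P_0$, $P_1$ each of dimension $1$). If $\dim L_{\le 1}=k$, then $\dim\ell_{\le 1}=\dim L_{\le 1}>0$ while $\ord(\fd_1(\ell)\fd_1(L))\equiv \dim P_1-k\equiv 1\pmod 2$, violating (2); hence $\dim L_{\le 1}>k$, which is precisely the last clause of (2). So the idea you are missing is that the decisive witness is the proper $2$-modular block $P_1$ of odd dimension, tested at an index where \emph{all} of $\ell$ lies in $\ell_{\le i}$ --- not the blocks $I_0,P_0$ at $i\in\{-1,0\}$. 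Relatedly, the converse direction is not the purely ``immediate'' verification you assert: checking Definition~\ref{UL3.2}(4) for $i\ge 0$ requires the same kind of parity bookkeeping with $\fd_i$ and a case analysis on which $2^m$-modular component of $L$ accounts for the dimension gap; this occupies roughly half of the paper's proof and should be carried out rather than waved through.
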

\begin{proof}
  Let $\ell=I_{-1}\bot I_0\bot P_0\bot P_1$ be an arbitrary dominant lattice of dimension $k$. We have
  \begin{equation}\label{UL5.2.1}
    \ell_{\le i}=\begin{cases}
    0 \quad & \text{ if } i<-1\,,\\
    I_{-1} \quad & \text{ if } i=-1\,,\\
    I_{-1}\bot I_0\bot P_0 \quad & \text{ if } i=0\,,\\
    \ell\quad & \text{ if } i\ge 1\,.\\
  \end{cases}
  \end{equation}
  Since $I_{-1}$ and $I_0$ are even dimensional and $\frs(P_0)=\cO_F$ when $P_0\neq 0$, we have
  \begin{equation}\label{UL5.2.2}
    \ord(\fd_i(\ell))\equiv \dim P_1\pmod{2}\quad\text{ if } \ell_{\le i}=\ell\,.
  \end{equation}

We need to determine when all the conditions in Definition\;\ref{UL3.2} hold for all $\ell$.

First, we claim that both conditions \eqref{UL3.2} (1) and \eqref{UL3.2} (2) hold for all $\ell$ if and only if condition (2) in this proposition holds.

To prove the claim, note that condition \eqref{UL3.2} (1) (if required for all $\ell$) is equivalent to $\dim  L_{\le -1}\ge k$. Let us assume this condition holds, i.e., $\frs(L_1)=2^{-1}\cO_F=2^{-1}\fn(L_1)$ (since we have assumed $L$ integral) and $\dim L_1\ge k$.

If $\dim L_{\le i}>0$, then we must have $i\ge -1$ and $\dim L_{\le i}\ge \dim L_1\ge k$. But $\dim \ell_{\le i}\le k$. So the assumption $\dim \ell_{\le i}=\dim L_{\le i}>0$ will imply that $\dim \ell_{\le i}=k$ and that $\ord(\fd_i(L))=\ord(\frs(L_1)^k)\equiv k\equiv 0\pmod{2}$. In view of \eqref{UL5.2.2}, we can translate condition \eqref{UL3.2} (2) into the following:

\emph{If $\dim P_1$ is odd, then the condition $\dim\ell_{\le i}=\dim L_{\le i}>0$ never holds.}

For $i=-1,\,0$, by \eqref{UL5.2.1}, $\dim \ell_{\le i}=k$ implies $P_1=0$. For $i\ge 1$, the above condition means that if $\dim P_1$ is odd, then $\dim  L_{\le 1}>k$. Therefore, if \eqref{UL3.2} (2) holds for all $\ell$, we must have $\dim  L_{\le 1}>k$. Our claim is thus proved.

Now let us assume condition (2) holds. It suffices to show that under this assumption, conditions (3) and (4) in Definition\;\ref{UL3.2} are satisfied for all $\ell$.

If $i<-1$, $\ell$ has no proper $2^{i+1}$-modular component and $L$ has no proper $2^i$-modular component. For $i\ge -1$, if $\ell_{\le i}$ and $L_{\le i}$ have the same dimension, then their dimension must be $k$ (which is necessarily $\dim L_1$), and hence $\ell$ has no $2^{i+1}$-modular component and $L$ has no proper $2^i$-modular component. This verifies condition \eqref{UL3.2} (3).

There is no need to check \eqref{UL3.2} (4) if $i<-1$. Since $ L_{\le -1}=L_1$ and  $\ell_{\le -1}=\dim I_{-1}$ are both even dimensional, the equality $\dim  L_{\le -1}=\dim \ell_{\le -1}+1$ never happens. So  we don't need to check \eqref{UL3.2} (4) for $i=-1$ either.

For $i=0$, if $\dim  L_{\le 0}=\dim \ell_{\le 0}+1$, we must have $\dim L_1=\dim  L_{\le -1}=k$, $\dim L_2=1$, $\frs(L_2)=\cO_F$ and $\dim \ell_{\le 0}=\dim\ell=k$. Thus $\ord(\fd_0(L))\equiv 0\pmod{2}$, $P_1=0$, $\ord(\fd_0(\ell))\equiv 0\pmod{2}$ and $\ell$ has no proper $2$-modular component. So we see that \eqref{UL3.2} (4) holds for $i=0$.

Now suppose $i\ge 1$. Then $\ell$ has no $2^{i+1}$-modular component and \eqref{UL3.2} (4) requires the following:

\emph{If $\dim L_{\le i}=\dim \ell_{\le i}+1$, $\ord(\fd_i(\ell)\fd_i(L))\equiv i+1\pmod{2}$ and $L$ has  a proper $2^i$-modular component, then $\ell$ has  a proper $2^i$-modular component.}

Let us assume $\dim L_{\le i}=\dim \ell_{\le i}+1$. Then $\dim L_1=k$, and $L$ has a 1-dimensional $2^m$-modular component with $0\le m\le i$ and no other modular component with scale $\supseteq 2^i\cO_F$. If $m<i$, then $L$ has no $2^i$-modular component and there is nothing to check. So let us suppose $m=i$,  i.e., $L$ has a 1-dimensional $2^i$-modular component but no other modular component with scale $\supseteq 2^i\cO_F$. As we have assumed $\frs(L_2)\supseteq 2\cO_F$ (in our condition (2)), in this case we must have $i=1$. Then $\ord(\fd_1(\ell)\fd_1(L))\equiv 1+1\pmod{2}$ if and only if
$\dim P_1$ is odd, which implies in particular $P_1\neq 0$. Thus condition \eqref{UL3.2} (4) is verified for all $i\ge 1$.

The proposition is thus proved.
\end{proof}

\emph{From now until the end of this section, we assume $k\ge 2$ is  even and put $m=k/2$.}

\medskip

\newpara\label{UL5.3} Suppose that $L$ satisfies the equivalent conditions in Proposition\;\ref{UL5.2}. In particular, we have $L_1= L_{\le -1}\subseteq L_{(0)}$ and $\dim L_1\ge k$. By \cite[(93:11), (93:15) and (93:18) (ii)]{OMeara00}, we also have
\begin{equation}\label{UL5.3.1}
  \begin{split}
 \text{either}\quad   L_1&\cong 2^{-1}A(0,\,0)\bot\cdots\bot 2^{-1}A(0,\,0)\bot 2^{-1}A(0,\,0)\,,\\
     \text{or}\quad L_1&\cong 2^{-1}A(0,\,0)\bot\cdots\bot 2^{-1}A(0,\,0)\bot 2^{-1}A(2,\,2\rho)\,.
  \end{split}
\end{equation}In particular, $FL_1$ contains a hyperbolic space of dimension $\ge \dim L_1-2$.

Let $\ell=I_{-1}\bot I_1\bot P_0\bot P_1$ be an arbitrary dominant lattice of dimension $k$. We use Theorem\;\ref{UL3.4} to examine the representability of $\ell$ by $L$.

For $i<-2$, we have $\ell_{\le i}=\ell_{[i]}=0$ and $\Delta_i(\ell)=\Delta_i(L)=0$. In this case all the conditions in Theorem\;\ref{UL3.4} hold trivially.

Now consider the case $i=-2$. We have $\ell_{\le -2}=0$, $\ell_{[-2]}=I_{-1}$,
\[
\Delta_{-2}(\ell)=\begin{cases}
  \cO_F \quad & \text{ if } P_0\neq 0\,\\
  0 \quad & \text{ otherwise}\,
\end{cases}\quad\text{and}\quad \Delta_{-2}(L)=\begin{cases}
  \cO_F \quad & \text{ if } \fn(L_2)=\cO_F\,,\\
  0 \quad & \text{ otherwise}\,.
\end{cases}
\]Thus, it is easily seen that conditions (2) and (3) in Theorem\;\ref{UL3.4} hold (for $i=-2$). Condition \eqref{UL3.4} (1) can be translated into the following three conditions:

\begin{enumerate}
  \item[(a)] $FI_{-1}\rep  FL_{(0)}$;
  \item[(b)] if $P_0\neq 0$, then $\cO_F\rep  FL_{(0)}/^\bot FI_{-1}$;
  \item[(c)] if $\fn(L_2)=\cO_F$, then $\cO_F\rep  FL_{(0)}/^\bot FI_{-1}$.
\end{enumerate}
Condition \eqref{UL3.4} (4) can be rephrased as follows:
\begin{enumerate}
  \item[(d)] $FI_{-1}\rep FL_1\bot \dgf{1}$ and $(FL_1\bot \dgf{1})/^{\bot} FI_{-1}$ represents $1$ or $\Delta$.
\end{enumerate}

If $\dim L_1\ge k+2$, then all the above four conditions hold for dimensional reasons. Indeed, $\dim  FL_{(0)}-\dim FI_{-1}\ge 3$, so we can apply \cite[(63:21)]{OMeara00} to get (a), and (b) and (c) follow from Proposition\;\ref{UL2.2}. For (d) we also use Lemma\;\ref{UL2.4} (3).

Now we claim that when $\dim L_1=k$, in order that all the four conditions be satisfied by all (dominant $k$-dimensional) $\ell$ it is sufficient and necessary that $\fn(L_2)=\cO_F$. (Since $\frs(L_2)\subset \frs(L_1)=2^{-1}\cO_F$, this is equivalent to $\cO_F\subseteq \fn(L_2)$ and it implies $\frs(L_2)=\cO_F$.)

To prove the claim, note that by considering the $k$-dimensional lattices
\[
2^{-1}A(0,\,0)\bot \cdots\bot 2^{-1}A(0,\,0)\quad\text{ and }\quad2^{-1}A(0,\,0)\bot \cdots\bot 2^{-1}A(0,\,0)\bot2^{-1}A(2,\,2\rho)\,,
 \]we conclude from (a) that $ FL_{(0)}$ represents both $\bH^m$ and $\bH^{m-1}\bot \dgf{1,\,-\Delta}$. Therefore, $\dim L_{(0)}>k=\dim L_1$, that is, $\cO_F\subseteq \fn(L_2)$.

Conversely, let us suppose $\frs(L_2)=\fn(L_2)=\cO_F$ and prove that (a)--(d) are satisfied. By \eqref{UL5.3.1}, $FL_1$ is either $\bH^m$ or $\bH^{m-1}\bot \dgf{1,\,-\Delta}$.
Similarly, $FI_{-1}$ is always contained in and may be equal to one of the two spaces $\bH^m,\,\bH^{m-1}\bot \dgf{1,\,-\Delta}$. For any $\veps\in \cO_F^*$ we have
$\dgf{1,\,-\Delta}\bot \dgf{\veps}\cong \bH\bot \dgf{\Delta\veps}$. So in any case $FL_1\bot\dgf{\veps}$ contains $FI_{-1}$ and $(FL_1\bot \dgf{\veps})/^{\bot}FI_{-1}$ represents
$\veps$ or $\Delta\veps$. From this we see that (a)--(d) are all satisfied. Our claim is thus proved.

By the above discussions, we may assume
\begin{equation}\label{UL5.3.2}
\begin{split}
  &\frs(L_1)=2^{-1}\cO_F=2^{-1}\fn(L_1)\,,\;\dim L_1\ge k\,,\\
   \text{ and if }&\dim L_1=k\,,\;\text{ then }\;\frs(L_2)=\fn(L_2)=\cO_F
\end{split}
\end{equation}
in the study of the $k$-universality property.

\

\newpara\label{UL5.4} Suppose that $L$ satisfies \eqref{UL5.3.2}, and let $\ell=I_{-1}\bot I_0\bot P_0\bot P_1$ be an arbitrary dominant $k$-dimensional lattice.
We have seen that for all $i\le -2$ the conditions in Theorem\;\ref{UL3.4} are all satisfied.

Notice that $\ell_{\le -1}=I_{-1}$, $\ell_{[-1]}=I_{-1}\bot I_0$ and
\[
\Delta_{-1}(\ell)=\begin{cases}
  \cO_F\quad & \text{ if } P_0\neq 0\,,\\
  2\cO_F\quad & \text{ if } P_0=0\neq P_1\,,\\
  0 \quad & \text{ other cases}\,.
\end{cases}
\]
So for $i=-1$, conditions \eqref{UL3.4} (1) and (2)  assert the following:
\begin{enumerate}
  \item[(a)] $FI_{-1}\bot FI_0\rep  FL_{(1)}$, $\Delta_{-1}(\ell)\rep  FL_{(1)}/^\bot (FI_{-1}\bot FI_0)$ and $\Delta_{-1}(L)\rep  FL_{(1)}/^\bot (FI_{-1}\bot FI_0)$;
  \item[(b)] if $ FL_{(1)}/^\bot (FI_{-1}\bot FI_0)\cong\bH$, $P_0=0$  and $P_1\neq 0$, then $L$ has no proper unimodular component.
\end{enumerate}
Condition \eqref{UL3.4} (3) now requires that $FI_{-1}\rep  FL_{(0)}\bot \dgf{2}$ and that its orthogonal complement represents $2$ or $2\Delta$. This holds automatically because we have seen that $FI_{-1}\rep  FL_{(0)}$ in \eqref{UL5.3}.

Condition \eqref{UL3.4} (4) means that $FI_{-1}\bot FI_0\rep  FL_{\le 0}\bot \dgf{2}$ and that its orthogonal complement represents represents $2$ or $2\Delta$. This is also guaranteed by \eqref{UL5.3.2}. Indeed, if $\dim  FL_{\le 0}\bot \dgf{2}-\dim FI_{-1}\bot FI_0<3$, then $\dim I_{-1}\bot I_0=\dim \ell=\dim L_1=k$ and $ FL_{\le 0}\bot \dgf{2}=FL_1\bot \dgf{\veps,\,2}$ for some $\veps\in \cO_F^*$ according to \eqref{UL5.3.2}. Moreover, the determinants of the spaces  $FI_{-1}\bot FI_0$ and $FL_1$ are units. So $FL_1\bot \dgf{\veps,\,2}$ represents $FI_{-1}\bot FI_0$ by \cite[(63:21)]{OMeara00} and the space $\big(FL_1\bot \dgf{\veps,\,2}\big)/^\bot (FI_{-1}\bot FI_0)$ represents $2$ or $2\Delta$ by \cite[(63:11)]{OMeara00}.

We have $\ell_{\le 0}=\ell_{[0]}=I_{-1}\bot I_0\bot P_0$ and $\Delta_0(\ell)=2\cO_F$ if $P_1\neq 0$, or $\Delta_0(\ell)=0$ if $P_1=0$. So in the case $i=0$ condition \eqref{UL3.4} (2) is trivially verified and \eqref{UL3.4} (1) reads
\begin{enumerate}
  \item[(c)] $FI_{-1}\bot FI_0\bot FP_0$ is represented by $ FL_{(2)}$, and the orthogonal complement represents the ideals $\Delta_{0}(\ell)$ and $\Delta_{0}(L)$.
\end{enumerate}
Condition \eqref{UL3.4} (4) is implied by \eqref{UL3.4} (3), and the latter can be translated into the following condition:
\begin{enumerate}
  \item[(d)] $FI_{-1}\bot FI_0\bot FP_0$ is represented by $  FL_{(1)}\bot \dgf{1}$ and the orthogonal complement represents $1$ or $\Delta$.
\end{enumerate}

For $i\ge 1$ we have $\ell_{\le i}=\ell_{[i]}=\ell$ and $\Delta_i(\ell)=0$. Condition \eqref{UL3.4} (2) holds trivially in this case and \eqref{UL3.4} (3) implies \eqref{UL3.4} (4). The only remaining conditions to check can be rephrased as follows:
\begin{enumerate}
  \item[(e)] $F\ell\rep  FL_{(3)}$;
  \item[(f)] for every $i\ge 1$, $\Delta_i(L)\rep  FL_{(i+2)}/^\bot F\ell$;
  \item[(g)] $F\ell$ is represented by $ FL_{(2)}\bot \dgf{2}$ and the orthogonal complement represents $2$ or $2\Delta$.
\end{enumerate}

\begin{prop}\label{UL5.5}
Suppose that $\dim L_1\ge k+2$, $\frs(L_1)=2^{-1}\cO_F$ and $\fn(L_1)=\cO_F$.

Then $L$ is $k$-universal if and only if one of the following holds:

    \begin{enumerate}
      \item $\dim L_1\ge k+4$;
      \item $\dim L_1=k+2$, $d_{\pm}(FL_1)\in F^{*2}$, and if $k>2$, then $2\cO_F\subseteq \fn(L_2)$;
      \item $\dim L_1=k+2$, $d_{\pm}(FL_1)\in \Delta F^{*2}$, and $2\cO_F\subseteq \fn(L_2)$.
    \end{enumerate}
\end{prop}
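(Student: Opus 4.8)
The plan is to run O'Meara's representability criterion (Theorem~\ref{UL3.4}) exactly as it was already digested in Proposition~\ref{UL5.2} and in the paragraphs \eqref{UL5.3}--\eqref{UL5.4}. Since $\dim L_1\ge k+2>k$, Proposition~\ref{UL5.2} shows that every dominant $k$-dimensional lattice has a lower type than $L$, and \eqref{UL5.3} shows that the conditions of Theorem~\ref{UL3.4} for $i\le -2$ hold automatically; so $L$ is $k$-universal iff, for every dominant $\ell=I_{-1}\bot I_0\bot P_0\bot P_1$ of dimension $k$, the conditions (a)--(g) of \eqref{UL5.4} all hold. I will record the structural facts to be used throughout: by \eqref{UL5.3.1}, $FL_1$ contains a hyperbolic subspace of dimension $\ge\dim L_1-2$, so when $\dim L_1=k+2$ (with $m=k/2$) one has $FL_1\cong\bH^{m+1}$ when $d_{\pm}(FL_1)\in F^{*2}$ and $FL_1\cong\bH^{m}\bot\dgf{1,-\Delta}$ when $d_{\pm}(FL_1)\in\Delta F^{*2}$; moreover $FI_{-1}$ is $\bH^{*}$ or $\bH^{*-1}\bot\dgf{1,-\Delta}$ (as $F(2^{-1}A(2,2\rho))\cong\dgf{1,-\Delta}$), $FI_0$ is $\bH^{*}$ or $\bH^{*-1}\bot\dgf{2,-2\Delta}$ (as $F(A(2,2\rho))\cong\dgf{2,-2\Delta}$), and $FP_0,FP_1$ are diagonal unimodular, resp.\ $2$-modular, spaces.

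\textbf{Sufficiency.} If $\dim L_1\ge k+4$, then $\dim FL_{(j)}\ge\dim FL_1\ge\dim F\ell+4$ for $j=1,2,3$, and $FL_{(1)}\bot\dgf{1}$, $FL_{(2)}\bot\dgf{2}$ are of dimension $\ge\dim F\ell+5$; hence all the representations in (a), (c), (d), (e), (f), (g) hold by \cite[(63:21)]{OMeara00}, every orthogonal complement occurring has dimension $\ge 4$ and so represents every fractional ideal (Proposition~\ref{UL2.2} together with the universality of quadratic spaces of dimension $\ge 4$), and (b) is vacuous because the relevant complement has dimension $>2$. If $\dim L_1=k+2$ and $2\cO_F\subseteq\fn(L_2)$, then $\fn(L_2)\subseteq\frs(L_2)\subseteq\cO_F$ forces $L_2\subseteq L_{(1)}$, so $\dim FL_{(j)}\ge k+3$ for $j=1,2,3$, and the same dimension count makes (a)--(g) hold for both possible values of $d_\pm(FL_1)$. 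Finally, if $\dim L_1=k+2$, $k=2$ and $d_\pm(FL_1)\in F^{*2}$, then $FL_1\cong\bH^2$ represents every binary space (as $\bH^2\cong U\bot(-U)$ for any binary $U$), so $F\ell\rep FL_1\subseteq FL_{(j)}$ for all relevant $j$; one then checks that every complement condition in (a)--(g) is met, using that a complement of $F\ell$ in a space containing $\bH^2$ has the shape $(-F\ell)\bot(\text{rest})$ and that the only ideals $\Delta_i(\ell),\Delta_i(L)$ that can occur are accounted for by the Jordan structure (in particular $L$ has no proper unimodular and no proper $2$-modular component whenever $L_{(1)}=L_1$).

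\textbf{Necessity.} Suppose $\dim L_1=k+2$ and $2\cO_F\subseteq\fn(L_2)$ fails, so $L_{(1)}=L_1$. In each remaining case I exhibit a dominant $k$-dimensional lattice $\ell=I_{-1}\bot I_0$ (i.e.\ $P_0=P_1=0$, so $\ell_{[-1]}=\ell$ since $I_0$ is improper unimodular) with $F\ell\not\rep FL_1$, which violates condition (a) of \eqref{UL5.4}, i.e.\ $F\ell_{[-1]}\rep FL_{(1)}$. When $d_\pm(FL_1)\in\Delta F^{*2}$ take $I_{-1}=2^{-1}A(0,0)^{\bot(m-1)}$ and $I_0=A(2,2\rho)$, so $F\ell\cong\bH^{m-1}\bot\dgf{2,-2\Delta}$; when $d_\pm(FL_1)\in F^{*2}$ and $k>2$ take $I_{-1}=2^{-1}A(0,0)^{\bot(m-2)}\bot 2^{-1}A(2,2\rho)$ and $I_0=A(2,2\rho)$, so $F\ell\cong\bH^{m-2}\bot\dgf{1,-\Delta}\bot\dgf{2,-2\Delta}$, which is $\bH^{m-2}$ orthogonal to the norm form of the quaternion division algebra over $F$ (the unique anisotropic quaternary space). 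Using the criterion ``$U\rep V$ iff the anisotropic kernel of $(-U)\bot V$ has dimension $\le 2$'', the failure reduces to the computation that $\dgf{-2,2\Delta}\bot\dgf{1,-\Delta}$ is anisotropic (it represents disjoint valuation classes, namely $\{v\text{ odd}\}$ and $\{v\text{ even}\}$), so that the anisotropic kernel of $(-F\ell)\bot FL_1$ has dimension $4$; this rules out $k$-universality in all subcases with $L_{(1)}=L_1$, regardless of whether $\fn(L_2)$ is $4\cO_F$ or smaller.

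\textbf{Main obstacle.} The heart of the argument is the necessity direction: correctly singling out condition (a) of \eqref{UL5.4} (rather than, say, (e)) as the one that fails, and carrying out the anisotropy computation $\dgf{1,-\Delta}\bot\dgf{2,-2\Delta}\cong n_D$ over the given unramified dyadic $F$ --- that is, the Witt-theoretic bookkeeping tied to the unramified quadratic extension $F(\sqrt{\Delta})/F$ and its norm group, together with the (easy but essential) observation that $F(A(2,2\rho))\cong\dgf{2,-2\Delta}$ provides the $2$-modular ``$\dgf{2,-2\Delta}$'' piece needed to assemble an unrepresentable $\ell$.
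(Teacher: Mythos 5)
Your proposal is correct and follows essentially the same route as the paper: sufficiency by the dimension count when $\dim L_{(1)}\ge k+3$ plus the special analysis of $FL_1\cong\bH^2$ when $k=2$, and necessity by showing that when $L_{(1)}=L_1$ the spaces $\bH^{m-1}\bot\dgf{2,-2\Delta}$ and $\bH^{m-2}\bot\dgf{1,-\Delta}\bot\dgf{2,-2\Delta}$ violate condition (a) of \eqref{UL5.4}. The only (harmless) difference is that you exhibit explicit lattices $I_{-1},I_0$ realizing these spaces, where the paper just lists the possible isometry classes of $FI_{-1}\bot FI_0$.
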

\begin{proof}
If $\dim L_1\ge k+4$ or if $\dim L_1=k+2$ and $2\cO_F\subseteq \fn(L_2)$, then we have $\dim L_{(1)}\ge k+3$ and hence all the conditions (a)--(g) in \eqref{UL5.4} can be checked easily by a dimension argument (using \cite[(63:21)]{OMeara00}, Proposition\;\ref{UL2.2}, Lemma\;\ref{UL2.4} (3), etc.).

In the rest of the proof let us assume $\dim L_1=k+2$ and $\fn(L_2)\subset 2\cO_F$ (thus $L_{(1)}=L_1$). Under this assumption we need to show that $L$ is not $k$-universal if $k>2$, and that for $k=2$, $L$ is $k$-universal if and only if $d_{\pm}(FL_1)= d(FL_1)\in F^{*2}$.

By \eqref{UL5.3.1} we have either $FL_1\cong \bH^{m+1}$ (in which case $d_{\pm}(FL_1)\in F^{*2}$), or $FL_1\cong \bH^m\bot \dgf{1,\,-\Delta}$ (in which case $d_{\pm}(FL_1)\in \Delta F^{*2}$). Similarly,  the space $FI_{-1}\bot FI_0$ is always contained in and may be equal to one of the following spaces:
\[
\begin{split}
  U_1:=\bH^m\,, \;U_2&:=\bH^{m-1}\bot \dgf{1,\,-\Delta}\,,\;U_3:= \bH^{m-1}\bot \dgf{2,\,-2\Delta}\;,\\
\text{ and if } m>1\,,\; U_4&:=\bH^{m-2}\bot \dgf{1,\,-\Delta}\bot\dgf{2,\,-2\Delta}\,.
\end{split}
\]The space $\bH^m\bot \dgf{1,\,-\Delta}$ does not represent $U_3$; the space $\bH^{m+1}$  represents all the three spaces $U_1,\,U_2,\,U_3$, but does not represent $U_4$ if $m>1$ (i.e. $k>2$). Therefore, by condition \eqref{UL5.4} (a), if $L$ is $k$-universal, we must have $FL_1\cong \bH^{m+1}$ and $k=2$.

Conversely, suppose  $k=2$ and $FL_1\cong \bH^{m+1}=\bH^2$. Note that $\Delta_{-1}(L)=0$ and that if $\Delta_{-1}(\ell)\neq 0$, then $\dim FI_{-1}\bot FI_0\le k-1=\dim FL_1-3$. So conditions \eqref{UL5.4} (a) and (b) hold. By a dimension argument we can check \eqref{UL5.4} (d), (f) and (g). Clearly, the space $FL_1=\bH^2$ represents all binary spaces. In particular, \eqref{UL5.4} (e) holds and the space $U:=FI_{-1}\bot FI_0\bot FP_0$ is represented by $FL_1$. If $\Delta_0(\ell)\neq 0$, then $P_1\neq 0$ and $\dim U\le 1=\dim FL_1-3$; if $\Delta_0(L)\neq 0$, then $\dim L_{(2)}\ge \dim L_1+1\ge \dim U+3$. Hence \eqref{UL5.4} (c) also holds. The proposition is thus proved.
\end{proof}

\begin{lemma}\label{UL5.6}
Suppose that $L_1$ is  improper $2^{-1}$-modular of dimension $k$ and that $L_2$ is binary proper unimodular.

  \begin{enumerate}
    \item Conditions (b), (d) and (g) of $\eqref{UL5.4}$ hold for all dominant $k$-dimensional lattices $\ell$.
    \item Condition $\eqref{UL5.4}$ (a) holds for all dominant $k$-dimensional lattices $\ell$ if and only if one of the following holds:

    \begin{enumerate}
      \item[(i)] $\frs(L_3)=\fn(L_3)=2\cO_F$;
      \item[(ii)] $d_{\pm}(FL_2)=-d(FL_2)\notin F^{*2}\cup \Delta F^{*2}$.
    \end{enumerate}
  \end{enumerate}
\end{lemma}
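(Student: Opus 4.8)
My plan is to run through the reformulations (a)--(g) of Theorem~\ref{UL3.4} recorded in \eqref{UL5.4} for a general dominant lattice $\ell=I_{-1}\bot I_0\bot P_0\bot P_1$ of dimension $k$, keeping two facts in mind. Since $\fn(L_1)=2^{-1}\cO_F$ and $\fn(L_2)=\cO_F$ we have $L_1\bot L_2\subseteq L_{(1)}$, so $\dim FL_{(1)}\ge k+2$, with equality (and $FL_{(1)}=FL_1\bot FL_2$) exactly when condition (i) of the lemma fails, and $\dim FL_{(1)}\ge k+4$ when (i) holds. Also, $L_2$ being proper unimodular forces $\Delta_{-1}(L)=\cO_F$, whereas $\Delta_{-1}(\ell)=0$ and $\dim(FI_{-1}\bot FI_0)=k$ when $P_0=P_1=0$, and $\dim(FI_{-1}\bot FI_0)\le k-2$ otherwise. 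For part~(1) I would argue that conditions (b), (d), (g) impose no real constraint, purely by dimension: if $P_0=0$ and $P_1\neq 0$ then $\dim(FI_{-1}\bot FI_0)\le k-2$, so its orthogonal complement in $FL_{(1)}$ has dimension $\ge 4$ and cannot be $\bH$, and (b) is vacuous; for (d) and (g), the spaces $FI_{-1}\bot FI_0\bot FP_0$ and $F\ell$ have dimension $\le k$ while $FL_{(1)}\bot\dgf1$ and $FL_{(2)}\bot\dgf2$ have dimension $\ge k+3$, so the representations hold by \cite[(63:21)]{OMeara00} and the complements (dimension $\ge 3$) represent every fractional ideal by Proposition\;\ref{UL2.2}.

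For part~(2) I would first dispose of condition (i): if it holds then $\dim FL_{(1)}\ge k+4$ and the same dimension count gives \eqref{UL5.4}~(a) for every $\ell$. So assume (i) fails, so $FL_{(1)}=FL_1\bot FL_2$ has dimension $k+2$; for $\ell$ with $P_0\neq 0$ or $P_1\neq 0$, \eqref{UL5.4}~(a) still holds by a dimension count, and everything reduces to $\ell$ with $P_0=P_1=0$, for which $FI_{-1}\bot FI_0$ runs exactly over $U_1,U_2,U_3,U_4$ of \eqref{UL5.5} (with $U_4$ present only when $m>1$). Since $\Delta_{-1}(\ell)=0$ and $\Delta_{-1}(L)=\cO_F$, condition \eqref{UL5.4}~(a) for these $\ell$ says precisely that $FL_{(1)}$ represents each $U_j$ with a binary orthogonal complement that represents a unit. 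Here I use that $FL_1\cong\bH^m$ or $\bH^{m-1}\bot\dgf{1,-\Delta}$ by \eqref{UL5.3.1}, and that $FL_2\cong\dgf{\veps_1,\veps_2}$ for units $\veps_i$ with $d_\pm(FL_2)=-\veps_1\veps_2$, since a proper unimodular binary lattice is diagonalizable.

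For the ``only if'' half: if (i) and (ii) both fail, then $-\veps_1\veps_2\in F^{*2}\cup\Delta F^{*2}$, so $FL_2\cong\bH$ or $\dgf{1,-\Delta}$; combining with the two shapes of $FL_1$ and using $\dgf{1,-\Delta}\bot\dgf{1,-\Delta}\cong\bH^2$ (valid because the norm form $\dgf{1,-\Delta}$ of the unramified quadratic extension represents every unit), one gets $FL_{(1)}\cong\bH^{m+1}$ or $\bH^m\bot\dgf{1,-\Delta}$. Testing $\ell$ with $FI_{-1}\bot FI_0=U_3=\bH^{m-1}\bot\dgf{2,-2\Delta}$ then makes \eqref{UL5.4}~(a) fail: if $FL_{(1)}\cong\bH^{m+1}$ the complement is $\cong\dgf{-2,2\Delta}$, which represents no unit (all its nonzero values have odd valuation); if $FL_{(1)}\cong\bH^m\bot\dgf{1,-\Delta}$, then $U_3$ is not represented at all, since the binary part $\dgf{1,-\Delta}$ of the isotropic space $\bH\bot\dgf{1,-\Delta}$ is not $\cong\dgf{2,-2\Delta}$ (any would-be complement would have discriminant $\equiv -1$, i.e.\ would be $\bH$, contradicting anisotropy).

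For the ``if'' half: assume (ii), so $FL_2\cong\dgf{1,-c}$ with $c\in\cO_F^*\setminus(\cO_F^{*2}\cup\Delta\cO_F^{*2})$, and put $W=\bH\bot FL_2$ or $\dgf{1,-\Delta}\bot FL_2$ according to $FL_1$, so that $FL_{(1)}=\bH^{m-1}\bot W$. By Witt cancellation, $FL_{(1)}$ represents $U_1,U_2,U_3$ (resp.\ $U_4$) with binary complement $C$ iff $W$ (resp.\ $\bH\bot W$) represents the core $\bH$, $\dgf{1,-\Delta}$, $\dgf{2,-2\Delta}$ (resp.\ the anisotropic quaternary $\dgf{1,-\Delta}\bot\dgf{2,-2\Delta}$) with complement $C$. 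The crucial consequence of (ii) is that $d(\bH\bot W)=-d(W)\not\equiv -1\bmod F^{*2}$, so $\bH\bot W$ is not the split six-dimensional space; granting this, a finite case analysis using the classification of quadratic $F$-spaces by dimension, discriminant and Hasse symbol (via \cite[(63:9), (63:11), (63:15), (63:21)]{OMeara00}) confirms each representation and that the resulting binary complement---whose discriminant is pinned down by $d(W)$ and which is anisotropic by (ii)---represents a unit. I expect this last step to be the main obstacle: excluding the two degenerate shapes $\bH^{m+1}$ and $\bH^m\bot\dgf{1,-\Delta}$ of $FL_{(1)}$ is painless, but verifying all the positive representations and unit-valued complements over the (at most) two shapes of $FL_1$ and the four spaces $U_j$ requires careful Hilbert-symbol bookkeeping.
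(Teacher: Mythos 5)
Part (1) and the reduction in part (2) are correct and follow the paper's route: $L_{(0)}=L_1\bot L_2$ has dimension $k+2$, so (b), (d), (g) are dimension counts; condition (i) gives $\dim L_{(1)}\ge k+3$ (not $k+4$ as you claim --- $L_3$ may be one-dimensional --- but $k+3$ already suffices for the codimension-$3$ argument); and when (i) fails everything reduces to the test spaces $U_1,\dots,U_4$. Your necessity argument is complete and in fact slightly cleaner than the paper's: you kill both failure modes of (ii) with the single space $U_3$ (non-representation when $d_{\pm}(FL_2)\in\Delta F^{*2}$, and a complement isometric to $\dgf{2,\,-2\Delta}$, all of whose nonzero values have odd valuation, when $d_{\pm}(FL_2)\in F^{*2}$), whereas the paper invokes $U_4$ for the second case when $k>2$.

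The gap is in the sufficiency of (ii), which you explicitly defer to ``a finite case analysis'' and flag as the main obstacle; as written, this half of the equivalence is not proved. Two things are missing. First, the positive representations $U_3\rep FL_{(1)}$ and (for $m>1$) $U_4\rep FL_{(1)}$: the paper gets these from \cite[(63:21)]{OMeara00}, the point being that under (ii) the quaternary space $\dgf{\eta_1,\,\eta_2,\,-2,\,2\Delta}$ has nontrivial discriminant $-\eta_1\eta_2\Delta$, hence is isotropic, which yields the required Witt index. Second, and more seriously, your justification that the binary complement $C$ represents a unit is only that it is anisotropic with discriminant determined by $d(W)$; anisotropy is not enough, since $\dgf{2,\,-2\Delta}$ is anisotropic and represents no unit. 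What is actually needed is the computation that $-d(C)\in\cO_F^*\setminus\Delta\cO_F^{*2}$ under (ii), combined with Corollary\;\ref{UL2.6}, which says precisely that a binary space with this property represents every fractional ideal. Without identifying that corollary (or reproducing its Hilbert-symbol argument from Lemma\;\ref{UL2.5}), the bookkeeping you postpone is exactly the content of the missing step.
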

\begin{proof}
First note that $L_{(0)}=L_1\bot L_2$ and hence $\dim L_{(0)}=k+2$. So \eqref{UL5.4} (b), (d) and (g) hold for dimensional reasons. If $\fn(L_3)=2\cO_F$, then $\dim L_{(1)}\ge k+3$ and \eqref{UL5.4} (a) also holds by dimensional reasons.

Now let us assume $\fn(L_3)\subset 2\cO_F$, i.e., $L_{(1)}=L_1\bot L_2$. Since $\dgf{1,\,-\Delta}$ represents all units, from \eqref{UL5.3.1} we see that $ FL_{(1)}=FL_1\bot FL_2=\bH^m\bot \dgf{\eta_1,\,\eta_2}$ for some $\eta_i\in\cO_F^*$. Let the spaces $U_1,\cdots, U_4$ be as in the proof of Proposition\;\ref{UL5.5}. The space $\bH^m\bot \dgf{\eta_1,\,\eta_2}$ represents both $U_1$ and $U_2$ (noticing that $\dgf{1,\,-\Delta}\cong \dgf{\eta_1,\,-\Delta\eta_1}$). By \cite[(63:21)]{OMeara00}, it represents $U_3$ if and only if $- d(FL_2)=-\eta_1\eta_2\notin \Delta F^{*2}$, and when $k>2$, it represents $U_4$ if and only if $- d(FL_2)=-\eta_1\eta_2\notin  F^{*2}$.

Let us show that we must have $- d(FL_2)\notin F^{*2}$ also in the case $k=2$. To see this we may choose $\ell$ such that $FI_{-1}\bot FI_0=U_3$. If $- d(FL_2)\in F^{*2}$, then the space $V:= FL_{(1)}/^\bot (FI_{-1}\bot FI_0)$ would be $\dgf{2,\,-2\Delta}$. But then the condition $\cO_F=\Delta_{-1}(L)\rep V$ in \eqref{UL5.4} (a) fails. So we have shown  that (ii) is a necessary condition for \eqref{UL5.4} (a).

Conversely, assuming (ii) we have seen that $FI_{-1}\bot FI_0$ is represented by $ FL_{(1)}$, and that the space $V:= FL_{(1)}/^\bot (FI_{-1}\bot FI_0)$ is  either of dimension $\ge 3$ or binary with $- d(V)\in \cO_F^*\setminus \Delta\cO_F^{*2}$. We have $\Delta_{-1}(\ell)\rep V$ for dimensional reasons, and $\cO_F=\Delta_{-1}(L)\rep V$ by Proposition\;\ref{UL2.2} or Corollary\;\ref{UL2.6}.
\end{proof}

\begin{prop}\label{UL5.7}
 Suppose that $L_1$ is  improper $2^{-1}$-modular of dimension $k$ and that $L_2$ is proper unimodular of dimension $\ge 2$.

Then $L$ is $k$-universal if and only if one of the following holds:

    \begin{enumerate}
      \item $\dim L_2\ge 3$;
       \item $\dim L_2=2$, and $\frs(L_3)=\fn(L_3)=2\cO_F$;
      \item $\dim L_2=2$, $d_{\pm}(FL_2)\notin F^{*2}\cup \Delta F^{*2}$ and $\fn(L_3)=4\cO_F$.
    \end{enumerate}
\end{prop}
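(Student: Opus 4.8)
The plan is to run the reduction carried out in \eqref{UL5.3} and \eqref{UL5.4}. Under the hypotheses of the proposition the lattice $L$ satisfies \eqref{UL5.3.2} and the conditions of Proposition\;\ref{UL5.2}, so $L$ is $k$-universal if and only if conditions (a)--(g) of \eqref{UL5.4} hold for every dominant $k$-dimensional lattice $\ell$. Thus the proof amounts to reading off exactly when these conditions fail.

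First, if $\dim L_2\ge 3$, then $L_1\bot L_2\subseteq L_{(1)}$ forces $\dim L_{(1)}\ge k+3$, and exactly as in the proof of Proposition\;\ref{UL5.5} all of (a)--(g) follow from a dimension count (using \cite[(63:21)]{OMeara00}, Proposition\;\ref{UL2.2} and Lemma\;\ref{UL2.4}\,(3)); so $L$ is $k$-universal, giving case (1). From now on assume $\dim L_2=2$, so $L_2$ is binary proper unimodular and Lemma\;\ref{UL5.6} applies: conditions (b), (d), (g) hold for all $\ell$, and so does (f) — because $FL_1\bot FL_2\subseteq FL_{(i+2)}$ always, and whenever $\Delta_i(L)\neq 0$ for some $i\ge 1$ there is a proper $2^{i+1}$- or $2^{i+2}$-modular component $L_s$ with $s\ge 3$ inside $L_{(i+2)}$, whence $\dim FL_{(i+2)}\ge k+3$. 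By Lemma\;\ref{UL5.6}\,(2), condition (a) holds for all dominant $\ell$ if and only if either (i) $\frs(L_3)=\fn(L_3)=2\cO_F$, or (ii) $d_{\pm}(FL_2)\notin F^{*2}\cup\Delta F^{*2}$. Since $\frs(L_2)=\cO_F$, the Jordan splitting gives $\fn(L_3)\subseteq 2\cO_F$, and (i) is equivalent to $\fn(L_3)=2\cO_F$. Hence only conditions (c) and (e) still need to be analyzed, and their validity depends on $\fn(L_3)$.

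If $\fn(L_3)\in\{2\cO_F,\,4\cO_F\}$, then $L_1\bot L_2\bot L_3$ lies in both $L_{(2)}$ and $L_{(3)}$, so $\dim L_{(2)},\,\dim L_{(3)}\ge k+3$ and (c), (e) hold by a dimension count; therefore $L$ is $k$-universal if and only if (i) or (ii) holds, which yields case (2) when $\fn(L_3)=2\cO_F$ and case (3) when $\fn(L_3)=4\cO_F$. It remains to show that if $\fn(L_3)\subseteq 8\cO_F$ then $L$ is not $k$-universal; we may assume (ii). In this case $FL_{(2)}=FL_1\bot FL_2=:V$, a $(k+2)$-dimensional space, and from \eqref{UL5.3.1} together with (ii) one checks that the anisotropic part $V_0$ of $V$ is binary with $-d(V_0)\notin F^{*2}\cup\Delta F^{*2}$, so $V\cong \bH^m\bot V_0$. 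By a Lemma\;\ref{UL2.5}-type argument there is a proper unimodular binary lattice $P_0=\dgf{\nu_1,\nu_2}$ with $\nu_1,\nu_2\in\cO_F^*$, $d(\dgf{\nu_1,\nu_2})\in d(V_0)F^{*2}$, and $V_0\bot\dgf{-\nu_1,-\nu_2}$ equal to the anisotropic quaternary space over $F$. Take $\ell=(2^{-1}A(0,0))^{\bot(m-1)}\bot P_0$ (just $P_0$ when $m=1$): this is a dominant $k$-dimensional lattice with $F\ell\cong\bH^{m-1}\bot\dgf{\nu_1,\nu_2}$, $P_1=0$, and $\Delta_0(\ell)=\Delta_0(L)=0$ (the latter since $L$ has no proper $2$- or $4$-modular component when $\fn(L_3)\subseteq 8\cO_F$). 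For this $\ell$, condition (c) reduces to $F\ell\rep V$; but any such representation would give a binary space $X$ with $F\ell\bot X\cong V$, forcing $X$ into the Witt class of $V_0\bot\dgf{-\nu_1,-\nu_2}$, which has no representative of dimension $\le 2$ — a contradiction. Hence $L$ does not represent $\ell$. Combining all cases gives exactly conditions (1), (2), (3).

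The main obstacle is this last step. It requires, first, pinning down the Witt index of $V=FL_1\bot FL_2$ under hypothesis (ii) — a short discriminant computation based on \eqref{UL5.3.1} and $d_{\pm}(FL_2)\notin F^{*2}\cup\Delta F^{*2}$ — and, second, exhibiting a proper unimodular binary lattice $P_0$ for which $V$ fails to represent $\bH^{m-1}\bot FP_0$; the latter rests on Lemma\;\ref{UL2.5}-style control of Hilbert symbols of units together with the classification of binary and quaternary quadratic spaces over $F$.
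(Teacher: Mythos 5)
Your proposal is correct and follows essentially the same route as the paper: reduce to conditions (a)--(g) of \eqref{UL5.4}, dispose of (b), (d), (g) (and (f)) via Lemma\;\ref{UL5.6} and dimension counts, characterize (a) by Lemma\;\ref{UL5.6}\,(2), and for the necessity of $4\cO_F\subseteq\fn(L_3)$ exhibit a dominant $\ell$ with $F\ell\cong\bH^{m-1}\bot\dgf{\nu_1,\nu_2}$ not represented by $FL_{(2)}=\bH^m\bot V_0$. Your Witt-class packaging of that counterexample (choosing $\dgf{\nu_1,\nu_2}$ with the same discriminant as $V_0$ but in the other isometry class, so that $V_0\bot\dgf{-\nu_1,-\nu_2}$ is the anisotropic quaternary space) is just a reformulation of the paper's explicit construction via \cite[p.202, Lemma\;3]{Hsia75Pacific} and Lemma\;\ref{UL2.4}\,(1), and the required unit $\nu_1\notin Q(V_0)$ does exist by exactly the Lemma\;\ref{UL2.5}-type Hilbert-symbol argument you invoke.
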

\begin{proof}
  If $\dim L_2\ge 3$, or if $\dim L_2=2$ and $\fn(L_3)=2\cO_F$, then $\dim L_{(1)}\ge k+3$ and all the conditions (a)--(g) in \eqref{UL5.4} are verified by a dimension argument. Hence, we may assume $\dim L_2=2$ and $\fn(L_3)\subset 2\cO_F$, so that $L_{(1)}=L_1\bot L_2$. By Lemma\;\ref{UL5.6}, we may further assume $d_{\pm}(FL_2)=- d(FL_2)\notin F^{*2}\cup \Delta F^{*2}$. Under these assumptions we need to prove that $L$ satisfies conditions \eqref{UL5.4} (c), (e) and (f) if and only if $\fn(L_3)\supseteq 4\cO_F$.

  The ``if'' part follows easily from the usual dimension argument. To prove the ``only if'' part, first note that  $FL_1\bot FL_2=\bH^m\bot \dgf{\eta_1,\,\eta_2}$ with $\eta_i\in\cO_F^*$. Since $-\eta_1\eta_2=- d(FL_2)\notin F^{*2}\cup \Delta F^{*2}$, by \cite[p.202, Lemma\;3]{Hsia75Pacific} we can find $\veps_1\in \cO_F^*$ such that $\veps_1\notin Q(\dgf{\eta_1,\,\eta_2})=Q(\eta_1\dgf{1,\,\eta_1\eta_2})$. Then $\dgf{-\veps_1,\,\eta_1,\,\eta_2}$ is anisotropic and by Lemma\;\ref{UL2.4} (1), there exists $\veps_2\in \cO_F^*$ such that $\veps_2\notin Q(\dgf{-\veps,\,\eta_1,\,\eta_2})$. Thus $\dgf{\veps_1,\,\veps_2}$ is not represented by $\bH\bot\dgf{\eta_1,\,\eta_2}=\dgf{\veps_1,\,-\veps_1}\bot\dgf{\eta_1,\,\eta_2}$. Then the space $U_5:=\bH^{m-1}\bot \dgf{\veps_1,\,\veps_2}$ is not represented by $FL_1\bot FL_2$. Taking  $\ell$ such that $FI_{-1}\bot FI_0\bot FP_0=U_5$, we deduce from \eqref{UL5.4} (c) that $L_1\bot L_2\subset L_{(2)}$, that is, $\fn(L_3)\supseteq 4\cO_F$.

  The proposition is thus proved.
\end{proof}

\begin{lemma}\label{UL5.8}
 Suppose that $L_1$ is  improper $2^{-1}$-modular of dimension $k$ and that $L_2$ is unimodular of dimension $1$.

Then the following statements are equivalent:

\begin{enumerate}
  \item Condition $\eqref{UL5.4}$ (a) holds for all dominant $k$-dimensional lattices $\ell$.
  \item $\frs(L_3)=\fn(L_3)=2\cO_F$.
  \item Conditions $\eqref{UL5.4}$ (a), (b), (d) and (g) hold for all dominant $k$-dimensional lattices $\ell$.
\end{enumerate}
\end{lemma}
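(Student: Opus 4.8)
The plan is to establish the cyclic chain of implications $(1)\Rightarrow(2)\Rightarrow(3)\Rightarrow(1)$. Since $(3)$ literally contains $(1)$, the implication $(3)\Rightarrow(1)$ is free, so the real work is $(1)\Rightarrow(2)$ and $(2)\Rightarrow(3)$. Throughout one keeps the standing hypotheses \eqref{UL5.3.2}: here $\dim L_1=k$, $\frs(L_2)=\fn(L_2)=\cO_F$, $L_2\cong\dgf{\eta}$ for some unit $\eta$, and by \eqref{UL5.3.1} $FL_1\cong\bH^m$ or $FL_1\cong\bH^{m-1}\bot\dgf{1,-\Delta}$; moreover $L_{(0)}=L_1\bot L_2$, and $L_{(1)}=L_1\bot L_2$ exactly when $\fn(L_3)\subset 2\cO_F$. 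Note also $\Delta_{-1}(L)=\cO_F$, since $L_2$ is a proper unimodular component.

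For $(1)\Rightarrow(2)$ I would argue by contraposition. Suppose $\fn(L_3)\subset 2\cO_F$, so $FL_{(1)}=FL_1\bot\dgf{\eta}$ is odd-dimensional; write it as $\bH^{m-1}\bot W$ where $W:=(FL_1/^\bot\bH^{m-1})\bot\dgf{\eta}$ is ternary. By \eqref{UL5.3.1}, $W\cong\bH\bot\dgf{\eta}$ or $W\cong\dgf{1,-\Delta,\eta}$; in either case $v(d W)$ is even and $W$ is isotropic (for the second, because $-\eta$ has even value and so lies in the norm group of the unramified quadratic extension $F(\sqrt{\Delta})/F$). I would then take the dominant $k$-dimensional lattice $\ell$ with $P_0=P_1=0$ and $F(I_{-1}\bot I_0)=U_3:=\bH^{m-1}\bot\dgf{2,-2\Delta}$ (realised, e.g., by letting $I_{-1}$ be an orthogonal sum of $m-1$ copies of $2^{-1}A(0,\,0)$ and $I_0=A(2,\,2\rho)$; that $U_3$ occurs as such a space is already recorded in the proof of Proposition\;\ref{UL5.5}). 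If \eqref{UL5.4}\,(a) held for this $\ell$, then $U_3\rep FL_{(1)}$, whence by Witt cancellation $\dgf{2,-2\Delta}\rep W$, i.e.\ $W\cong\dgf{2,-2\Delta,c}$ for some $c$ with $v(c)$ necessarily even; but $\dgf{2,-2\Delta,c}$ is anisotropic whenever $v(c)$ is even, since $\dgf{-2,2\Delta}$ represents only elements of odd value. This contradicts the isotropy of $W$. Hence $\fn(L_3)=2\cO_F$, and as $\frs(L_3)\subset\frs(L_2)=\cO_F$ forces $\frs(L_3)\subseteq 2\cO_F$, we get $\frs(L_3)=\fn(L_3)=2\cO_F$, which is $(2)$.

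For $(2)\Rightarrow(3)$, assume $\frs(L_3)=\fn(L_3)=2\cO_F$, so $\dim L_{(1)}\ge k+2$ and $FL_{(1)}\supseteq\bH^{m-1}$ with complement of dimension $\ge 4$. Conditions \eqref{UL5.4}\,(b), (d), (g) then follow from dimension counting: for (b), if $P_0=0\ne P_1$ then $\dim(FI_{-1}\bot FI_0)\le k-1$, so $FL_{(1)}/^\bot(FI_{-1}\bot FI_0)$ has dimension $\ge 3$ and is not $\bH$; for (d) and (g) the relevant orthogonal complements have dimension $\ge 3$, so representability comes from \cite[(63:21)]{OMeara00}, a complement of dimension $\ge 4$ represents every ideal by Proposition\;\ref{UL2.2}, and a ternary complement represents all but at most one square class, hence represents $1$ or $\Delta$ (resp.\ $2$ or $2\Delta$). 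For \eqref{UL5.4}\,(a): if $\dim(FI_{-1}\bot FI_0)<k$ the complement in $FL_{(1)}$ has dimension $\ge 3$ and Proposition\;\ref{UL2.2} applies; if $\dim(FI_{-1}\bot FI_0)=k$ then $\Delta_{-1}(\ell)=0$, the space $FI_{-1}\bot FI_0$ is one of $U_1,\dots,U_4$, and using $FL_{(1)}\supseteq\bH^{m-1}$ (resp.\ $\bH^{m-2}$) with a large complement one checks $FI_{-1}\bot FI_0\rep FL_{(1)}$; the residual complement $V$ has dimension $\ge 3$ when $\dim L_3\ge 2$, while if $\dim L_3=1$ it is binary with $v(d V)$ odd (since $v(d(FL_{(1)}))$ is odd whereas each $v(d(U_j))$ is even), hence $V\cong\dgf{u,\ast}$ with $u$ a unit, so $V$ represents $\cO_F=\Delta_{-1}(L)$.

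The crux of the argument — and the step I expect to demand the most care — is the observation in $(1)\Rightarrow(2)$ that an odd-dimensional space of the form $FL_1\bot\dgf{\eta}$ can never represent $U_3=\bH^{m-1}\bot\dgf{2,-2\Delta}$; this rests on the small norm computation recording that $\dgf{2,-2\Delta,c}$ is anisotropic for every unit $c$ while $W$ is forced to be isotropic. Everything else is dimension counting combined with Proposition\;\ref{UL2.2}, Corollary\;\ref{UL2.6}, and the standard local facts of Sections\;\ref{sec2}--\ref{sec3}. One should also double-check that $U_3$ genuinely arises as $F(I_{-1}\bot I_0)$ of a legitimate dominant $k$-dimensional lattice, but this is already contained in the proof of Proposition\;\ref{UL5.5}.
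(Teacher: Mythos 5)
Your proof is correct and follows essentially the same route as the paper: the key step in both is that the space $U_3=\bH^{m-1}\bot\dgf{2,-2\Delta}$ (realised as $F(I_{-1}\bot I_0)$ of a dominant lattice) is not represented by $FL_1\bot\dgf{\eta}$, which forces $L_{(0)}\subset L_{(1)}$, while the converse direction is dimension and discriminant counting. You are in fact slightly more thorough than the paper, which asserts the non-representation of $U_3$ without the norm computation and checks only conditions (b), (d), (g) explicitly in the converse direction, leaving the verification of (a) under hypothesis (2) implicit.
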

\begin{proof}
  We have $L_2=\dgf{\eta}$ for some $\eta\in \cO_F^*$ and hence $ FL_{(0)}=FL_1\bot FL_2=\bH^m\bot \dgf{\eta'}$, where $\eta'\in \{\eta\,,\,\Delta\eta\}$. The space $U_3=\bH^{m-1}\bot\dgf{2,\,-2\Delta}$ is not represented by $ FL_{(0)}$. Choosing $\ell$ such that $FI_{-1}\bot FI_0=U_3$, we see that \eqref{UL5.4} (a) forces $L_{(0)}\subset L_{(1)}$, i.e., $\frs(L_3)=\fn(L_3)=2\cO_F$.

  Conversely, if $\frs(L_3)=\fn(L_3)=2\cO_F$, then conditions (b), (d) and (g) of \eqref{UL5.4} hold for dimensional reasons.
\end{proof}

\begin{prop}\label{UL5.9}
 Suppose that $L_1$ is  improper $2^{-1}$-modular of dimension $k$ and that $L_2$ is unimodular of dimension $1$.

Then $L$ is $k$-universal if and only if $\frs(L_3)=\fn(L_3)=2\cO_F$ and one of the following holds:

  \begin{enumerate}
   \item $\dim L_3\ge 2$.
    \item $\dim L_3=1$ and $8\cO_F\subseteq\fn(L_4)$.
  \end{enumerate}
\end{prop}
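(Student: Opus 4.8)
The plan is to carry out the reductions of \eqref{UL5.3} and \eqref{UL5.4} exactly as in the proofs of Propositions \ref{UL5.5}--\ref{UL5.7}. Under the present hypotheses $L$ satisfies \eqref{UL5.3.2} (an improper $2^{-1}$-modular component has norm $\cO_F$, and a one-dimensional unimodular component is proper), so by Proposition \ref{UL5.2} every dominant $k$-dimensional $\ell$ has a lower type than $L$, and the conditions of Theorem \ref{UL3.4} for $i\le -2$ hold automatically. By Lemma \ref{UL5.8}, a necessary condition for $L$ to represent every dominant $\ell$ is $\frs(L_3)=\fn(L_3)=2\cO_F$, and once this holds, conditions \eqref{UL5.4}(a), (b), (d), (g) are satisfied for all dominant $\ell$. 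So we may assume $\frs(L_3)=\fn(L_3)=2\cO_F$ (hence $L_{(1)}=L_1\bot L_2\bot L_3$), and it remains to determine when \eqref{UL5.4}(c), (e), (f) hold for every dominant $\ell$ of dimension $k$.

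If $\dim L_3\ge 2$, then $\dim L_{(1)}=k+1+\dim L_3\ge k+3$, so $\dim L_{(i)}\ge k+3$ for all $i\ge 1$; since $\dim F\ell=k$, conditions \eqref{UL5.4}(c), (e), (f) follow by a dimension count, using \cite[(63:21)]{OMeara00} for the representations $F\ell_{\bullet}\rep FL_{(\bullet)}$ and Proposition \ref{UL2.2} for the (at least ternary) orthogonal complements, which then represent all the ideals $\Delta_i(\ell),\Delta_i(L)$. This gives case (1). Now suppose $\dim L_3=1$, say $L_2=\dgf{\eta}$, $L_3=\dgf{2\theta}$ with $\eta,\theta\in\cO_F^*$. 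By \eqref{UL5.3.1} and $\dgf{1,-\Delta}\bot\dgf{u}\cong\bH\bot\dgf{\Delta u}$ we get $FL_{(1)}=\bH^m\bot\dgf{\eta',2\theta}$ for some $\eta'\in\{\eta,\Delta\eta\}$, where $\dgf{\eta',2\theta}$ is anisotropic and $d(FL_{(1)})$ has odd valuation. For the necessity of $8\cO_F\subseteq\fn(L_4)$, assume instead $L_{(3)}=L_{(1)}$, so $FL_{(3)}=\bH^m\bot\dgf{\eta',2\theta}$ has dimension $k+2$. Since $-2\eta'\theta$ has odd valuation it is a non-square, so there are exactly two binary spaces of discriminant $2\eta'\theta$, of opposite Hasse invariant; pick the one, $\dgf{a,b}$ with $v(a)=0$, $v(b)=1$, for which $\dgf{a,b}\bot\bH\not\cong\dgf{1,-1,\eta',2\theta}$. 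Then $\dgf{a,b}\not\rep\bH\bot\dgf{\eta',2\theta}$, hence (Witt cancellation of $\bH^{m-1}$) $\bH^{m-1}\bot\dgf{a,b}\not\rep FL_{(3)}$; realizing $\bH^{m-1}$ by a proper unimodular lattice, the dominant lattice $\ell$ with $F\ell=\bH^{m-1}\bot\dgf{a,b}$ fails \eqref{UL5.4}(e), contradicting $k$-universality. So $8\cO_F\subseteq\fn(L_4)$.

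For the converse when $\dim L_3=1$, assume $8\cO_F\subseteq\fn(L_4)$; then $L_4\neq0$ and $\dim L_{(3)}\ge k+3$, which settles \eqref{UL5.4}(e) and (f) as before. For \eqref{UL5.4}(c), put $W:=FI_{-1}\bot FI_0\bot FP_0$, a space of dimension $k-\dim P_1\le k$ and of unit discriminant. If $\dim W<k$, or if $\fn(L_4)\supseteq 4\cO_F$ (so $\dim FL_{(2)}\ge k+3$), this follows from \cite[(63:21)]{OMeara00} and Proposition \ref{UL2.2}. The remaining case — and the main obstacle in the whole argument — is $\dim W=k$ together with $\fn(L_4)=8\cO_F$, so that $FL_{(2)}=FL_{(1)}=\bH^m\bot\dgf{\eta',2\theta}$ has dimension exactly $k+2$: here $d(FL_{(2)})$ has odd valuation while $d(W)$ is a unit, so $-d(W)\,d(FL_{(2)})\notin F^{*2}$; a straightforward argument with discriminants and Hasse symbols then shows $W\rep FL_{(2)}$, and any orthogonal complement is an anisotropic binary space of odd-valuation discriminant, in particular $\not\cong\bH$ and representing $2\cO_F$. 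Since in this case $\Delta_0(\ell)=0$ and $\Delta_0(L)=2\cO_F$ ($L_3$ being a proper $2$-modular component), conditions \eqref{UL5.4}(c) and \eqref{UL3.4}(2) for $i=0$ hold.

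Putting everything together: in both cases (1) and (2) all conditions of Theorem \ref{UL3.4} are met for every dominant $k$-dimensional $\ell$, so $L$ represents all such $\ell$ and is therefore $k$-universal; conversely, $k$-universality forces $\frs(L_3)=\fn(L_3)=2\cO_F$ by Lemma \ref{UL5.8}, and then either $\dim L_3\ge2$ or (if $\dim L_3=1$) $8\cO_F\subseteq\fn(L_4)$ by the construction above. I expect the routine parts to be the many dimension counts; the one place requiring genuine care is the borderline codimension-two representation $W\rep FL_{(2)}$, which is saved precisely by the odd valuation of $d(FL_{(2)})$.
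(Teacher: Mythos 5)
Your proposal is correct and follows essentially the same route as the paper's proof: reduce to conditions (c), (e), (f) of \eqref{UL5.4} via Lemma\;\ref{UL5.8}, dispose of $\dim L_3\ge 2$ by dimension counts, verify (c) in the codimension-two case using the odd valuation of $d(FL_{(1)})$ to kill the obstruction in \cite[(63:21)]{OMeara00}, and derive the necessity of $8\cO_F\subseteq\fn(L_4)$ from the failure of $F\ell\rep FL_{(3)}$ for the binary space of discriminant $2\eta'\theta$ with the opposite Hasse invariant. That test space is exactly the paper's $\bH^{m-1}\bot\dgf{\Delta\eta',\,2\Delta\veps}$, so the two arguments coincide up to presentation.
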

\begin{proof}
By Lemma\;\ref{UL5.8} we may assume $\frs(L_3)=\fn(L_3)=2\cO_F$. We need to show that conditions (c), (e) and (f) of \eqref{UL5.4} hold for all dominant $k$-dimensional lattices $\ell$ if and only if one of the two conditions in the proposition is satisfied.

If $\dim L_3\ge 2$, then we have $\dim L_{(1)}\ge k+3$ and hence \eqref{UL5.4} (c), (e) and (f) follow by a dimension argument. So let us assume $\dim L_3=1$.

We can write $L_2=\dgf{\eta}$ and $L_3=\dgf{2\veps}$ for some $\eta,\,\veps\in\cO_F^*$. Then $ FL_{(1)}=FL_1\bot FL_2\bot FL_3=\bH^m\bot \dgf{\eta',\,2\veps}$, where $\eta'\in \{\eta,\,\Delta\eta\}$. The space $U:=FI_{-1}\bot FI_0\bot FP_0$ has dimension $\le k$ and $ d(U)\in \cO_F^*$. So $U$ is represented by $ FL_{(1)}$ by \cite[(63:21)]{OMeara00} and the orthogonal complement $V:= FL_{(1)}/^{\bot}U$ is at least two dimensional and its determinant is a uniformizer (up to squares). By \cite[(63:11)]{OMeara00} (or Proposition\;\ref{UL2.2}), $V$ represents $\Delta_0(\ell)$ and $\Delta_0(L)$. Thus, \eqref{UL5.4} (c) is verified.

It remains to show that \eqref{UL5.4} (e) and (f) hold for all $\ell$ if and only if $8\cO_F\subseteq \fn(L_4)$.

Indeed, the space $ FL_{(1)}=FL_1\bot FL_2\bot FL_3=\bH^m\bot\dgf{\eta',\,2\veps}$ does not represent $\bH^{m-1}\bot\dgf{\Delta\eta',\,2\Delta\veps}$, which may be $F\ell$. Therefore, \eqref{UL5.4} (e) implies $L_{(1)}\subseteq L_{(3)}$, i.e. $8\cO_F\subseteq \fn(L_4)$. Conversely, when $8\cO_F\subseteq \fn(L_4)$ we have $\dim  FL_{(3)}\ge k+3$, so that \eqref{UL5.4} (e) and (f) are satisfied for dimensional reasons. The proposition is thus proved.
\end{proof}

Combining Propositions\;\ref{UL5.5}, \ref{UL5.7} and \ref{UL5.9} proves Theorem\;\ref{UL1.3}.

\begin{prop}\label{UL5.10}
Suppose $L$ is classic. Then the following conditions are equivalent:

  \begin{enumerate}
    \item Every classic basic lattice of dimension $k$ has a lower type than $L$.
    \item $L_1$ is proper unimodular of dimension $\ge k+1$.
  \end{enumerate}
\end{prop}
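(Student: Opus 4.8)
The plan is to follow the pattern of the proof of Proposition~\ref{UL5.2}: unwind Definition~\ref{UL3.2} for an arbitrary classic basic lattice $\ell=I_0\bot P_0\bot P_1\bot I_1$ of dimension $k$ and compare it with the Jordan data of $L$. The preliminary bookkeeping is straightforward: every Jordan component of such an $\ell$ is $2^0$- or $2^1$-modular, so $\ell_{\le i}=0$ for $i<0$, $\ell_{\le 0}=I_0\bot P_0$ and $\ell_{\le i}=\ell$ for $i\ge 1$; while on the $L$-side, since $L$ is classic and nonzero, all Jordan scales of $L$ lie in $\cO_F$ and decrease strictly, so that $L_{\le i}=0$ for $i<0$ and the unimodular Jordan component of $L$, if it exists, can only be $L_1$.

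For the implication (2)$\Rightarrow$(1) I would assume $L_1$ is proper unimodular with $\dim L_1\ge k+1$. Then $L_{\le i}\supseteq L_1$ for all $i\ge 0$, so $\dim L_{\le i}\ge k+1>k\ge\dim\ell_{\le i}$ whenever $L_{\le i}\ne 0$; this gives \eqref{UL3.2}~(1) immediately and makes \eqref{UL3.2}~(2) vacuous. In \eqref{UL3.2}~(3) the hypothesis $\dim\ell_{\le i}=\dim L_{\le i}$ can hold only for $i<0$, where both sides vanish; there (3a) holds because $L_1$ is a proper unimodular component, and (3b) is vacuous since $L$ has no $2^i$-modular component for $i<0$. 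In \eqref{UL3.2}~(4) the hypothesis $\dim\ell_{\le i}=\dim L_{\le i}-1>0$ forces $\dim L_1=k+1$, $L_{\le i}=L_1$ and $\dim\ell_{\le i}=k$, hence $\ell_{\le i}=\ell$; in this situation $\ell$ has no proper $2^{i+1}$-modular component, for $i\ge 1$ moreover $L$ has no proper $2^i$-modular component, and for $i=0$ the ideals $\fd_0(\ell),\fd_0(L)$ are generated by units, so (4a) and (4b) are in all cases vacuously true. Hence every classic basic $k$-dimensional lattice has a lower type than $L$; this direction is in essence a dimension count.

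For (1)$\Rightarrow$(2) I would test the hypothesis against two explicit lattices. First, take $\ell=\dgf{1,\cdots,1}$ ($k$ copies), a proper unimodular lattice of dimension $k$. Condition \eqref{UL3.2}~(1) at $i=0$ yields $\dim L_{\le 0}\ge\dim\ell_{\le 0}=k>0$, so $L$ has a Jordan component of scale $\supseteq\cO_F$; as $L$ is classic with strictly decreasing Jordan scales, this component is $L_1$, whence $\frs(L_1)=\cO_F$, $L_{\le 0}=L_1$ and $\dim L_1\ge k$. Condition \eqref{UL3.2}~(3a) at $i=-1$ (where $\dim\ell_{\le -1}=0=\dim L_{\le -1}$), applied to this $\ell$, which has the proper unimodular component $P_0$, shows $L$ has a proper unimodular component, i.e.\ $L_1$ is proper. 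Second, take $\ell=I_0$ to be an improper unimodular lattice of dimension $k$, which exists because $k$ is even; it satisfies $\ell_{\le 0}=\ell$ and has no proper $2^0$-modular component. If $\dim L_1$ were equal to $k$, then $\dim\ell_{\le 0}=k=\dim L_1=\dim L_{\le 0}$, so \eqref{UL3.2}~(3b) would apply and, since $L$ has the proper unimodular component $L_1$, would force $\ell$ to have a proper unimodular component, a contradiction. Therefore $\dim L_1\ge k+1$, and (2) holds.

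The only point requiring care is in the implication (1)$\Rightarrow$(2): one must check that the test lattices $\dgf{1,\cdots,1}$ and the improper unimodular lattice $I_0$ of dimension $k$ are bona fide classic basic lattices of dimension $k$, and one must use correctly that a classic lattice has all its Jordan scales in $\cO_F$ and strictly decreasing, so that $L_1$ is the \emph{only} possible unimodular Jordan component of $L$ and $L_{\le i}=0$ for $i<0$. Everything else is a direct reading of Definition~\ref{UL3.2}.
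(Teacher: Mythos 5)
Your proof is correct and follows essentially the same route as the paper: the (1)$\Rightarrow$(2) direction uses exactly the paper's two test cases (a proper unimodular $\ell$ together with condition \eqref{UL3.2}~(3a) at $i=-1$, then an improper unimodular $\ell$ with \eqref{UL3.2}~(3b) at $i=0$). The paper omits (2)$\Rightarrow$(1) as being analogous to Proposition~\ref{UL5.2}; your dimension-count verification of that direction is accurate and fills in what the paper leaves implicit.
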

\begin{proof}
  The key ideas are similar to those in the proof of Proposition\;\ref{UL5.2}. We only prove (1)$\Rightarrow$(2) here.

  Let $\ell=I_0\bot P_0\bot I_1\bot P_0$ be an arbitrary classic basic lattice of dimension $k$. Then
  \[
  \ell_{\le i}=\begin{cases}
    0 \quad & \text{ if } i<0\,,\\
    I_0\bot P_0 \quad & \text{ if } i=0\,,\\
    \ell \quad & \text{ if } i\ge 1\,.
  \end{cases}
  \]
  If condition (1) of \eqref{UL3.2} holds for all $\ell$, we must have $\dim  L_{\le 0}\ge k$. Since $L$ is assumed to be classic, this means that $L_1$ is unimodular of dimension $\ge k$. Assuming this, we see that the case $i=-1$ of condition \eqref{UL3.2} (3) requires $L$ to have a proper unimodular component. So we may assume that $L_1$ is proper unimodular of dimension $\ge k$.

  Next consider the case where $\ell$ itself is improper unimodular. (This is possible since $k$ is even.) Then the case $i=0$ of \eqref{UL3.2} (3) forces $\dim L_1\ge k+1$. This finishes the proof.
\end{proof}

\newpara\label{UL5.11} In the rest of this section, we assume that $L$ satisfies the equivalent conditions in Proposition\;\ref{UL5.10}. That is, $L_1$ is proper unimodular of dimension $\ge k+1$. So $L_1=L_{(0)}= L_{\le 0}$. Let $\ell=I_0\bot P_0\bot I_1\bot P_1$ be an arbitrary classic basic lattice of dimension $k$. We use Theorem\;\ref{UL3.4} to determine when $\ell$ is represented by $L$.

For $i<-2$, we have $\ell_{[i]}=\ell_{\le i}=0$ and $\Delta_i(\ell)=\Delta_i(L)=0$. For $i=-2$, $\ell_{[i]}=\ell_{\le i}=0$, $\Delta_i(L)=\cO_F$ and $\Delta_i(\ell)$ is 0 or $\cO_F$. In these cases all the conditions in Theorem\;\ref{UL3.4} are trivially verified.

For $i=-1$, we have $\ell_{[-1]}=I_0$, $\ell_{\le -1}=0$ and $\Delta_{-1}(L)=\cO_F$. In particular, \eqref{UL3.4} (3) is trivially checked. If $\Delta_{-1}(\ell)\neq 0$, then $\dim I_0<k$ and for a parity reason, $\dim I_0\le k-2\le \dim L_1-3$. This shows that \eqref{UL3.4} (2) holds automatically in this case, and that \eqref{UL3.4} (1) is equivalent to the following two conditions:

\begin{enumerate}
  \item[(a)] $FI_0\rep  FL_{(1)}$;
  \item[(b)] $\cO_F\rep  FL_{(1)}/^\bot FI_0$.
\end{enumerate}

Condition \eqref{UL3.4} (4) requires that $FI_0\rep FL_1\bot\dgf{2}$ and the orthogonal complement represents $2$ or $2\Delta$. But this holds automatically. Indeed, if $\dim FL_1\bot\dgf{2}-\dim FI_0<3$, then $\dim FL_1\bot\dgf{2}-\dim FI_0=2$, and by \cite[(63:11) and (63:21)]{OMeara00}, we get the result from a determinant argument.

Next consider the case $i=0$. We have $\ell_{[0]}=I_0\bot P_0\bot I_1$, $\ell_{\le 0}=I_0\bot P_0$, $\Delta_0(\ell)=2\cO_F$ if $P_1\neq 0$ or $\Delta_0(\ell)=0$ otherwise; and $\Delta_0(L)\subseteq 2\cO_F$. Thus \eqref{UL3.4} (2) holds automatically. The other three conditions in \eqref{UL3.4} now assert the following

\begin{enumerate}
  \item[(c)] $FI_0\bot FP_0\bot FI_1\rep  FL_{(2)}$ and the orthogonal complement represents $\Delta_0(\ell)$ and $\Delta_0(L)$;
  \item[(d)] $FI_0\bot FP_0\rep  FL_{(1)}\bot\dgf{1}$ and the orthogonal complement represents $1$ or $\Delta$;
  \item[(e)] $FI_0\bot FP_0\bot FI_1\rep  FL_{\le 1}\bot\dgf{1}$ and the orthogonal complement represents $1$ or $\Delta$.
\end{enumerate}

Finally, suppose $i\ge 1$. Then $\ell_{[i]}=\ell_{\le i}=\ell$ and $\Delta_i(\ell)=0$. So there is no need to check \eqref{UL3.4} (2). Condition \eqref{UL3.4} (1) can be translated as the following two conditions:

\begin{enumerate}
  \item[(f)] $F\ell\rep  FL_{(3)}$;
  \item[(g)] for all $i\ge 1$, $ FL_{(i+2)}/^\bot F\ell$ represents $\Delta_i(L)$.
\end{enumerate}

Condition \eqref{UL3.4} (3) implies \eqref{UL3.4} (4). For even $i\ge 1$, \eqref{UL3.4} (3) is a consequence of our condition (f) above. For odd $i\ge 1$, \eqref{UL3.4} (3) is equivalent to the following condition:

\begin{enumerate}
  \item[(h)] $FI_0\rep  FL_{(2)}\bot \dgf{2}$ and the orthogonal complement represents $2$ or $2\Delta$.
\end{enumerate}

\begin{prop}\label{UL5.12}
  Suppose that $L_1$ is  proper unimodular of dimension $\ge k+2$.

Then $L$ is classic $k$-universal if and only if one of the following holds:

    \begin{enumerate}
      \item $\dim L_1\ge k+3$;
       \item $\dim L_1=k+2$, and $\frs(L_2)=\fn(L_2)=2\cO_F$;
      \item $\dim L_1=k+2$, $d_{\pm}(FL_1)\notin F^{*2}\cup \Delta F^{*2}$ and $\fn(L_2)=4\cO_F$.
    \end{enumerate}
\end{prop}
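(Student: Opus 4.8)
The plan is to use the translation carried out in \eqref{UL5.11}: under the standing hypothesis, $L$ is classic $k$-universal if and only if conditions (a)--(h) of \eqref{UL5.11} hold for every classic basic lattice $\ell=I_0\bot P_0\bot I_1\bot P_1$ of dimension $k$, and I would organize the verification according to $\dim L_1$ and $\fn(L_2)$. First, if $\dim L_1\ge k+3$, or if $\dim L_1=k+2$ and $\frs(L_2)=\fn(L_2)=2\cO_F$, then $\dim L_{(1)}\ge k+3$ and every space occurring on the right of (a)--(h) (namely $FL_{(1)}$, $FL_{\le 1}$, $FL_{(i+2)}$, and these with a copy of $\dgf{1}$ or $\dgf{2}$ adjoined) has dimension $\ge\dim F\ell+3$; as in the proofs of Propositions\;\ref{UL5.5} and \ref{UL5.7}, all of (a)--(h) then follow from \cite[(63:21)]{OMeara00} (for the representations of spaces) together with Proposition\;\ref{UL2.2}, \cite[(63:11)]{OMeara00} and Lemma\;\ref{UL2.4} (3) (for the ideals represented by the orthogonal complements). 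This settles the sufficiency of conditions (1) and (2) and reduces everything to the case $\dim L_1=k+2$, $\fn(L_2)\subseteq 4\cO_F$, in which $L_1=L_{(0)}=L_{\le 0}=L_{(1)}$ has dimension $k+2$.

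In this remaining case I would first note, again by dimension count, that (d) and (e) are automatic, and that (c), (f), (g), (h) are automatic as soon as $\fn(L_2)=4\cO_F$; so when $\fn(L_2)=4\cO_F$ the only binding conditions are (a) and (b) (condition \eqref{UL3.4} (1) for $i=-1$), and these only for $\ell$ an improper unimodular lattice of dimension $k$ (for a smaller $I_0$ the orthogonal complement is already $\ge 4$-dimensional). When $\fn(L_2)\subseteq 8\cO_F$ one has in addition $FL_{(2)}=FL_1$ (and possibly $FL_{(3)}=FL_1$), so the space-representations $F\ell\rep FL_{(2)}$ in (c) and $F\ell\rep FL_{(3)}$ in (f) also carry content. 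The spaces at issue are easily listed: since $L_1$ is proper unimodular of dimension $k+2=2m+2$, $FL_1$ is $\bH^{m+1}$, or $\bH^m\bot W$ with $W$ anisotropic binary, or $\bH^{m-1}$ orthogonal to the anisotropic quaternary space, the second case occurring exactly when $d_\pm(FL_1)=d_\pm(W)\notin F^{*2}$; an improper unimodular lattice of dimension $2a$ has space $\bH^a$ or $\bH^{a-1}\bot\dgf{2,-2\Delta}$.

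For the ``if'' direction, assume $d_\pm(FL_1)\notin F^{*2}\cup\Delta F^{*2}$ and $\fn(L_2)=4\cO_F$; writing $FL_1\cong\bH^m\bot W$ with $d_\pm(W)\in\cO_F^*\setminus(\cO_F^{*2}\cup\Delta\cO_F^{*2})$, I would check, via the Witt-index estimate for spaces over a local field, that each improper unimodular $FI_0$ of dimension $k$ is represented by $FL_1$ and that the resulting binary orthogonal complement again has signed discriminant in $\cO_F^*\setminus\Delta\cO_F^{*2}$, so that by Corollary\;\ref{UL2.6} it represents $\cO_F$ (in fact every fractional ideal); hence (a), (b) hold and $L$ is classic $k$-universal. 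For the ``only if'' direction I would rule out every other configuration by producing an unrepresented $\ell$: if $FL_1\cong\bH^{m-1}$ orthogonal to the anisotropic quaternary space, take $\ell=A(0,0)^m$ (whose space $\bH^m$ is not represented, as a hyperbolic plane is not); if $FL_1\cong\bH^{m+1}$, or $FL_1\cong\bH^m\bot W$ with $d_\pm(W)\in\Delta F^{*2}$ and $W$ representing a unit, take $\ell=A(0,0)^{m-1}\bot A(2,2\rho)$ (so that $FI_0=\bH^{m-1}\bot\dgf{2,-2\Delta}$ fails to be represented, or the binary complement $\cong\dgf{-2,2\Delta}$ fails to represent $\cO_F=\Delta_{-1}(L)$); if $FL_1\cong\bH^m\bot W$ with $d_\pm(W)\in\Delta F^{*2}$ and $W$ representing no unit, take $\ell=A(0,0)^m$ (so the complement $W$ fails to represent $\cO_F$); and finally, if $d_\pm(FL_1)\notin F^{*2}\cup\Delta F^{*2}$ but $\fn(L_2)\subseteq 8\cO_F$, take $\ell=A(0,0)^{m-1}\bot\dgf{\veps_1,\veps_2}$ with $\dgf{\veps_1,\veps_2}$ the anisotropic binary space with unit entries and signed discriminant $d_\pm(FL_1)$ that is \emph{not} isometric to the anisotropic core of $FL_1$, so that $F\ell$ together with $-FL_1$ spans enough anisotropic space to force $F\ell\not\rep FL_1$ and thereby break the space-representation in (c) (or (f)). Putting these cases together proves the proposition.

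In my view the main obstacle is not a single hard step but the bookkeeping: one must isolate, among the eight conditions of \eqref{UL5.11} and the many classic basic lattices $\ell$, exactly the handful of binding instances, and then analyse the two-dimensional orthogonal complements $FL_1/^\bot F\ell$. The point is that such a binary complement represents $\cO_F$ precisely when its signed discriminant does not lie in $\Delta\cO_F^{*2}$ --- this is where the hypothesis $d_\pm(FL_1)\notin F^{*2}\cup\Delta F^{*2}$ enters, through Corollary\;\ref{UL2.6} --- and there is the further subtlety that, even with a ``good'' $d_\pm(FL_1)$, a space-representation condition for lattices $\ell$ carrying a $2$-modular part forces $\fn(L_2)=4\cO_F$.
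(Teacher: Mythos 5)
Your proposal is correct and follows essentially the same route as the paper: reduce to conditions (a)--(h) of \eqref{UL5.11}, dispose of cases (1) and (2) and of conditions (d), (e), (h) by dimension counts, force $d_{\pm}(FL_1)\notin F^{*2}\cup\Delta F^{*2}$ from (a) and (b) using the two possible improper unimodular spaces $\bH^m$ and $\bH^{m-1}\bot\dgf{2,-2\Delta}$ together with Corollary\;\ref{UL2.6}, and force $4\cO_F\subseteq\fn(L_2)$ from (c) by exhibiting a $k$-dimensional space $\bH^{m-1}\bot\dgf{\veps_1,\veps_2}$ not represented by $FL_1$. The only cosmetic differences are that you index the cases by the Witt decomposition of $FL_1$ rather than by $d_{\pm}(FL_1)$, and that you assert (correctly, but without proof) the existence of the second anisotropic binary space of signed discriminant $d_{\pm}(FL_1)$ admitting unit entries, which the paper extracts from Lemma\;\ref{UL2.5} and \cite[p.202, Lemma\;3]{Hsia75Pacific}.
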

\begin{proof}
  If $\dim L_1\ge k+3$ or if $\dim L_1=k+2$ and $\fn(L_2)=2\cO_F$, then all the conditions (a)--(h) in \eqref{UL5.11} hold for dimensional reasons. So we shall assume $\dim L_1=k+2$ and $\fn(L_2)\subseteq 4\cO_F$. (Thus $L_1=L_{(0)}=L_{(1)}$.) Under these assumptions we need to prove that $L$ is classic $k$-universal if and only if $d_{\pm}(FL_1)\notin F^{*2}\cup \Delta F^{*2}$ and $4\cO_F\subseteq\fn(L_2)$. Since conditions (d), (e) and (h) of \eqref{UL5.11} are easily verified by a dimension argument, it remains to consider conditions (a)--(c), (f) and (g) in \eqref{UL5.11}.

  As $\ell$ varies over all classic basic lattices of dimension $k$, $FI_0$ is always contained in and may be equal to $\bH^m$ or $\bH^{m-1}\bot \dgf{2,\,-2\Delta}$. So \eqref{UL5.11} (a) means precisely that $FL_1= FL_{\le 1}$ should contain both $\bH^m$ and $\bH^{m-1}\bot \dgf{2,\,-2\Delta}$. This is the case if $d_{\pm}(FL_1)\notin F^{*2}\cup \Delta F^{*2}$, by \cite[(63:21)]{OMeara00}.

  Suppose that $d_{\pm}(FL_1)\in F^{*2}$. Then the condition $\bH^m\rep FL_1$ implies that $FL_1\cong \bH^{m+1}$. But if we choose $\ell$ such that $FI_0=\bH^{m-1}\bot \dgf{2,\,-2\Delta}$, then \eqref{UL5.11} (b) fails because in this case $FL_1/^\bot FI_0\cong \dgf{2,\,-2\Delta}$.

  Similarly, if $d_{\pm}(FL_1)\in \Delta F^{*2}$, then the inclusion $\bH^{m-1}\bot\dgf{2,\,-2\Delta}\subseteq FL_1$ forces $FL_1\cong \bH^m\bot \dgf{2,\,-2\Delta}$, and thus \eqref{UL5.11} (b) fails when $FI_0=\bH^m$.

  We may thus conclude that if \eqref{UL5.11} (a) and (b) both hold for all $\ell$, then we must have $d_{\pm}(FL_1)\notin F^{*2}\cup\Delta F^{*2}$. So we further assume this holds in the rest of the proof.

  Now the space $V:= FL_{(1)}/^\bot FI_0$ is either of dimension 3 or is binary with $-d(V)\in \cO_F^*\setminus (\cO_F^{*2}\cup \Delta \cO_F^{*2})$. So \eqref{UL5.11} (b) is satisfied by Proposition\;\ref{UL2.2} or Corollary\;\ref{UL2.6}.

  It remains to show (under all the previous assumptions) that conditions \eqref{UL5.11} (c), (f) and (g) hold for all $\ell$ if and only if $4\cO_F\subseteq \fn(L_2)$, i.e., $L_1\subset L_{(2)}$.

  Indeed, if $L_1\subset L_{(2)}$, then $\dim L_{(2)}\ge k+3$ and the result follows by a dimension argument. Conversely, let us assume $L_1=L_{(2)}$. We need only to show that in this case \eqref{UL5.11} cannot hold for all $\ell$. To this end, first notice that, as it may happen that $FP_0=\bH^m$, a necessary condition for \eqref{UL5.11} (c) to hold for all $\ell$ is that $\bH^m\subseteq FL_1$. By a discriminant analysis, we find that $FL_1=\bH^m\bot \dgf{a,\,-ac}$, where $c=d_{\pm}(FL_1)\in \cO_F^*\setminus (\cO_F^{*2}\cup \Delta \cO_F^{*2})$ and $a\in \cO^*_F\cup 2\cO_F^*$. But it may also happen that $FP_0=\bH^{m-1}\bot\dgf{\veps_1,\,\veps_2}$ for all $\veps_1,\,\veps_2\in \cO_F^*$. From \eqref{UL5.11} (c) we also get $\dgf{\veps_1,\,\veps_2}\rep \bH\bot \dgf{a,\,-ac}$, which is equivalent to saying that
  $\dgf{a,\,-ac,\,\veps_1,\,\veps_2}$ is isotropic, for all $\veps_1,\,\veps_2\in \cO_F^*$. But we can always find $\veps\in \cO_F^*$ such that $\dgf{1,\,-c,\,-a\veps,\,ac\veps}$ is anisotropic. If $a\in 2\cO_F^*$, this follows from Lemma\;\ref{UL2.5};  if $a\in \cO_F^*$, we can apply \cite[p.202, Lemma\;3]{Hsia75Pacific}. Taking $\veps_1=\veps$ and $\veps_2=-c\veps$ yields a contradiction.  
\end{proof}

\begin{prop}\label{UL5.13}
  Suppose $L_1$ is proper unimodular of dimension $k+1$.

  Then $L$ is $k$-universal if and only if $\frs(L_2)=\fn(L_2)=2\cO_F$ and one of the following holds:

  \begin{enumerate}
    \item $\dim L_2\ge 2$;
    \item $\dim L_2=1$ and $8\cO_F\subseteq \fn(L_3)$.
  \end{enumerate}
\end{prop}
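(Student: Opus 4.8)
The plan is to stay in the setting of \eqref{UL5.11}, now with $\dim L_1=k+1$, and to determine when conditions (a)--(h) of \eqref{UL5.11} hold for every classic basic lattice $\ell=I_0\bot P_0\bot I_1\bot P_1$ of dimension $k$; by Proposition\;\ref{UL5.10} these, together with ``$\ell$ has a lower type than $L$'' (automatic here), characterize classic $k$-universality of $L$. I will use freely that, $k+1$ being odd and $L_1$ proper unimodular, $FL_1\cong\dgf{\veps_1,\cdots,\veps_{k+1}}$ with all $\veps_i\in\cO_F^*$ (see \cite[(93:15)]{OMeara00}), that $FI_0$ is contained in and may equal $\bH^m$ or $\bH^{m-1}\bot\dgf{2,\,-2\Delta}$ as $\ell$ varies, and that $F\ell$ realizes every space $\bH^{m-1}\bot W$ with $W$ an anisotropic binary whose discriminant has odd valuation.

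First I would prove that $\frs(L_2)=\fn(L_2)=2\cO_F$ is necessary. Otherwise $L_{(1)}=L_1$ and $FL_{(1)}=FL_1$ is $(k+1)$-dimensional with anisotropic part of dimension $1$ or $3$. If that part has dimension $3$, then $FL_1$ has Witt index $m-1$, so it does not represent $\bH^m$, which occurs as $F\ell$ for $\ell$ improper unimodular of dimension $k$; this contradicts \eqref{UL5.11} (f). If the part has dimension $1$, say $FL_1\cong\bH^m\bot\dgf a$, then for a suitable $e$ with $\ord(e)=1$ the ternary space $\bH\bot\dgf a$ represents only one of the two anisotropic binaries $W$ of discriminant $e$; the other gives $\bH^{m-1}\bot W=F\ell$ not represented by $FL_1$, again contradicting \eqref{UL5.11} (f). Hence $\dim L_{(1)}\ge k+2$, and since $L$ is classic this forces $L_2$ to be proper $2$-modular.

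Assume now $\frs(L_2)=\fn(L_2)=2\cO_F$. If $\dim L_2\ge 2$, then $\dim L_{(1)}\ge k+3$ and all of (a)--(h) in \eqref{UL5.11} follow by the usual dimension count, using \cite[(63:21)]{OMeara00}, Proposition\;\ref{UL2.2}, the fact that $Q(V)=F$ once $\dim V\ge 4$, and Lemma\;\ref{UL2.4} (3); this is case (1). If $\dim L_2=1$, write $L_2=\dgf{2\eta}$ with $\eta\in\cO_F^*$, so $FL_{(1)}=FL_1\bot\dgf{2\eta}$ has dimension $k+2$; since $FL_1$ is diagonal with unit entries and $2\eta$ has odd valuation, $FL_{(1)}\cong\bH^m\bot W'$ with $W'$ an anisotropic binary of valuation-$1$ discriminant. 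Using this explicit form, together with Lemma\;\ref{UL2.4} and Corollary\;\ref{UL2.6}, I would check that conditions (a)--(e) and (h) of \eqref{UL5.11}, the full condition (c), and condition (g) for $i\ge 2$ all hold automatically, so that the only remaining point is condition (f), i.e.\ $F\ell\rep FL_{(3)}$ for all classic basic $\ell$ of dimension $k$ (which in turn covers the $i=1$ part of (g)).

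It then remains to analyze (f). If $8\cO_F\subseteq\fn(L_3)$, then $L_3$ lies in $L_{(3)}$, so $\dim L_{(3)}\ge k+3$ and (f) together with the rest of (g) holds for dimensional reasons; this is case (2). Conversely, if $8\cO_F\not\subseteq\fn(L_3)$, then $L_{(3)}=L_1\bot L_2$ and $FL_{(3)}\cong\bH^m\bot W'$, and a short Hilbert-symbol computation --- using $(\Delta,\,u)_F=1$ for units $u$ and $(\Delta,\,2)_F=-1$ --- shows that $\Delta W'$ has Hasse invariant opposite to that of $W'$, hence $\Delta W'\not\cong W'$ and so $\bH^m\bot W'$ does not represent $\bH^{m-1}\bot\Delta W'$; as $\Delta W'$ is an anisotropic binary of odd-valuation discriminant, $\bH^{m-1}\bot\Delta W'$ is realized as $F\ell$ for a classic basic $\ell$ of dimension $k$, so (f) fails and $L$ is not classic $k$-universal. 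I expect the main obstacle to be exactly this $\dim L_2=1$ case: pinning down the test lattice $\ell$ that defeats (f) unless $8\cO_F\subseteq\fn(L_3)$, and carrying out the routine but lengthy verification that, once $FL_{(1)}\cong\bH^m\bot W'$, every other condition of \eqref{UL5.11} is automatically satisfied.
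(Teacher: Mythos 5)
Your overall architecture follows the paper's: reduce to conditions (a)--(h) of \eqref{UL5.11}, dispose of $\dim L_2\ge 2$ by a dimension count, and in the case $\dim L_2=1$ kill condition (f) with the test space $\bH^{m-1}\bot\Delta W'$ when $L_{(3)}=L_1\bot L_2$; that last computation is exactly the paper's and is correct. The gap is in your very first step, the necessity of $\frs(L_2)=\fn(L_2)=2\cO_F$. There you assume the contrary, observe that $L_{(1)}=L_1$, and then claim a contradiction with condition \eqref{UL5.11} (f) by exhibiting a $k$-dimensional space ($\bH^m$, or $\bH^{m-1}\bot W$ with $W$ an anisotropic binary of odd-valuation discriminant) not represented by $FL_1$. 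But (f) asserts $F\ell\rep FL_{(3)}$, not $F\ell\rep FL_{(1)}$, and the hypothesis $\fn(L_2)\subseteq 4\cO_F$ does not force $L_{(3)}=L_1$. For instance, if $L_2$ is proper $4$-modular of dimension $\ge 5$, then $L_2\subseteq L_{(3)}$, so $\dim FL_{(3)}\ge k+6$ and (f) holds for every $\ell$ by a dimension count; your argument produces no contradiction, yet such an $L$ is not classic $k$-universal.

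The failure for such $L$ is detected by condition \eqref{UL5.11} (a), which genuinely concerns $FL_{(1)}$: taking $\ell$ improper unimodular of dimension $k$, the space $FI_0$ can equal either $U_1=\bH^m$ or $U_3=\bH^{m-1}\bot\dgf{2,\,-2\Delta}$, so (a) forces $FL_{(1)}=FL_1$ to represent both; a $(k+1)$-dimensional space of unit discriminant cannot do so. (If $\bH^m\rep FL_1$, then $FL_1\cong\bH^m\bot\dgf{a}$ with $a\in\cO_F^*$, and $U_3\rep FL_1$ would give $\bH\bot\dgf{a}\cong\dgf{2,\,-2\Delta,\,a\Delta}$; the latter is anisotropic because $\dgf{2,\,-2\Delta}$ represents only elements of odd valuation.) Note that merely replacing ``(f)'' by ``(a)'' does not repair your second subcase: your test space $\bH^{m-1}\bot\dgf{\veps_1,\,2\veps_2}$ is a full $F\ell$ whose $2$-modular part comes from $P_1$, so it is constrained only by (f); the correct test space for (a) is $U_3$, whose anisotropic binary part has \emph{unit} discriminant. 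With the necessity step rerouted through (a) and the test spaces $U_1,\,U_3$, the remainder of your argument is sound.
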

\begin{proof}
  As in the proof of Proposition\;\ref{UL5.12}, in order to have \eqref{UL5.11} (a) satisfied for all $k$-dimensional classic basic lattices $\ell$, it is sufficient and necessary that $ FL_{(1)}$ represent both $U_1=\bH^m$ and $U_3=\bH^{m-1}\bot \dgf{2,\,-2\Delta}$. Since $d(FL_1)\in\cO_F^*$, the space $FL_1$ cannot represent both $U_1$ and $U_3$. So we must have $L_1\subset L_{(1)}$, i.e., $\frs(L_2)=\fn(L_2)=\cO_F$. If moreover $\dim L_2\ge 2$, then $\dim L_{(1)}\ge k+3$ and we see immediately that all the conditions (a)--(h) in \eqref{UL5.11} hold.

  In the rest of the proof, we assume that $\dim L_2=1$ with $\fn(L_2)=2\cO_F$. Thus $L_{(1)}=L_1\bot L_2= L_{\le 1}$ has dimension $k+2$ and $d( FL_{(1)})$ is a uniformizer up to squares. Conditions \eqref{UL5.11} (d), (e) and (h) hold for dimensional reasons, and \eqref{UL5.11} (a) and (b) follow from \cite[(63:21)]{OMeara00} combined with \cite[(63:11)]{OMeara00} (or \eqref{UL2.2}). If $\dim L_{(2)}-\dim I_0\bot P_0\bot I_1<3$, then $L_{(2)}=L_1\bot L_2$ and $\ell=I_0\bot P_0\bot I_1$, hence $d( FL_{(2)})/d(FI_0\bot FP_0\bot FI_1)$ is a uniformizer up to squares. So \eqref{UL5.11} (c) also follows from \cite[(63:11), (63:21)]{OMeara00} and Proposition\;\ref{UL2.2}.

  It remains to prove that (under the previous assumptions) \eqref{UL5.11} (f) and (g) hold for all $\ell$ if and only if $8\cO_F\subseteq \fn(L_3)$, i.e., $L_1\bot L_2\subset L_{(3)}$. The sufficiency follows by dimensional arguments. To prove the necessity, let us assume $L_{(3)}=L_1\bot L_2$. Then we need to show that \eqref{UL5.11} (f) does not hold for all $\ell$.

  Indeed, since $\dim FL_1\bot FL_2=k+2$ and $d(FL_1\bot FL_2)$ is a uniformizer up to squares, we have $ FL_{(3)}=FL_1\bot FL_2=\bH^m\bot \dgf{\eta,\,2\veps}$ for some $\eta,\,\veps\in \cO_F^*$. The space $F\ell$ can take the form $\bH^{m-1}\bot\dgf{\veps_1,\,\veps_2}$ for all $\veps_1,\,\veps_2\in \cO_F^*$. If \eqref{UL5.11} (f) holds for all $\ell$, we have $\dgf{\veps_1,\,\veps_2}\rep \bH\bot \dgf{\eta,\,2\veps}$ for all $\veps_1,\,\veps_2\in \cO_F^*$. But $\bH\bot \dgf{\eta,\,2\veps}$ does not represent $\dgf{\Delta\eta\,,\,2\Delta\veps}$ since otherwise we would have $\dgf{\eta,\,2\Delta\veps}$ by \cite[(63:21)]{OMeara00}. But this contradicts \cite[(63:11)]{OMeara00}, which claims that $\Delta\eta$ is not represented by $\dgf{\eta,\,2\veps}$.

  We have thus finished the proof of the proposition.
\end{proof}

Combining Propositions\;\ref{UL5.12} and \ref{UL5.13} proves Theorem\;\ref{UL1.4}.

\section{Classification of $k$-universal lattices for $k$ odd}\label{sec6}

In this section, let $k=2m+1\ge 3$ be an odd integer and let $L=L_1\bot \cdots\bot L_t$ be a Jordan splitting of an integral lattice.

\begin{prop}\label{UL6.1}
  The following conditions are equivalent:

  \begin{enumerate}
    \item Every dominant lattice of dimension $k$ has a lower type than $L$.
    \item $L_1$ is improper $2^{-1}$-modular, and one of the following holds:
    \begin{enumerate}
      \item $\dim L_1\ge k+1$;
      \item $\dim L_1=k-1$, $\dim L_2\ge 2$ and $\frs(L_2)=\fn(L_2)=\cO_F$;
      \item $\dim L_1=k-1$, $\dim L_2=1$, $\frs(L_2)=\fn(L_2)=\cO_F$ and $\frs(L_3)=\fn(L_3)=2\cO_F$.
    \end{enumerate}
  \end{enumerate}
\end{prop}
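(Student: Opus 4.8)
The plan is to follow the pattern of the proof of Proposition~\ref{UL5.2}, working purely with the combinatorial data entering Definition~\ref{UL3.2}: dimensions, scales, parities, and the presence or absence of proper modular components. Fix an arbitrary dominant lattice $\ell=I_{-1}\bot I_0\bot P_0\bot P_1$ of dimension $k$. Since $I_{-1}$ and $I_0$ are even-dimensional while $k$ is odd, $\dim(P_0\bot P_1)$ is odd, so $P_0\bot P_1\neq0$. As in \eqref{UL5.2.1}, $\ell_{\le i}=\ell_{[i]}$ equals $0$, $I_{-1}$, $I_{-1}\bot I_0\bot P_0$, $\ell$ according as $i<-1$, $i=-1$, $i=0$, $i\ge1$; moreover, whenever $\ell_{\le i}\neq0$, $\ord(\fd_i(\ell))$ is even for $i\le0$ and $\ord(\fd_i(\ell))\equiv\dim P_1\pmod{2}$ for $i\ge1$. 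Throughout I would use that an improper modular $\cO_F$-lattice is even-dimensional (\cite[(93:15)]{OMeara00}), so that $\dim L_1$ is even once $L_1$ is improper.

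For the implication $(1)\Rightarrow(2)$ I would feed a few explicit dominant lattices into Definition~\ref{UL3.2}. Let $\ell_0$ be the orthogonal sum of $m$ copies of $2^{-1}A(0,\,0)$ together with $\dgf{\veps}$ for a unit $\veps$, so that its $I_{-1}$ has dimension $k-1$ and $P_0=\dgf{\veps}$. Applying \eqref{UL3.2}\,(1) at $i=-1$ to $\ell_0$ gives $\dim L_{\le-1}\ge k-1>0$, whence (as $L$ is integral) $L_1$ is improper $2^{-1}$-modular, $L_{\le-1}=L_1$, and $\dim L_1\ge k-1$; by parity either $\dim L_1\ge k+1$ (case (a)), or $\dim L_1=k-1$. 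Assume $\dim L_1=k-1$. Then \eqref{UL3.2}\,(1) at $i=0$, tested on $\dgf{\veps_1,\dots,\veps_k}$, gives $\dim L_{\le0}\ge k$, so there is a component of scale $\cO_F$, necessarily $L_2$; and since $\ell_0$ has a proper unimodular component while $\dim(\ell_0)_{\le-1}=k-1=\dim L_{\le-1}$, clause \eqref{UL3.2}\,(3)(a) at $i=-1$ forces $L$ to have a proper unimodular component, which can only be $L_2$; hence $\frs(L_2)=\fn(L_2)=\cO_F$. If $\dim L_2\ge2$ this is case (b). If $\dim L_2=1$, then $\dim L_{\le0}=k$, and testing the lattice $\ell_1$ obtained from $m$ copies of $2^{-1}A(0,\,0)$ and a summand $\dgf{2\veps}$ against \eqref{UL3.2}\,(4) at $i=0$ --- where $\dim(\ell_1)_{\le0}=k-1=\dim L_{\le0}-1>0$, $\ord(\fd_0(\ell_1)\fd_0(L))$ is even so clause (4)(b) applies (since $i=0$), and $\ell_1$ does have a proper $2$-modular component --- forces $L$ to have a proper $2$-modular component, which can only be $L_3$; hence $\frs(L_3)=\fn(L_3)=2\cO_F$, giving case (c).

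For the converse $(2)\Rightarrow(1)$ I would check the four conditions of Definition~\ref{UL3.2} for an arbitrary dominant $\ell$ in each of the cases (a), (b), (c), exactly as in the converse half of the proof of Proposition~\ref{UL5.2}. The dimension table above together with the bound $\dim\ell_{\le i}\le k$ cuts this down to a short list of boundary values of $i$: in case (a), $\dim L_{\le i}\ge k+1$ for all $i\ge-1$, so \eqref{UL3.2}\,(1)--(3) are automatic and the only instances of \eqref{UL3.2}\,(4) that get triggered (for some $i\ge1$ when $\dim L_1=k+1$) are trivially satisfied; in cases (b) and (c), the boundary values are $i=-1$, $i=0$, and (when $L$ has very few components) some $i\ge1$, at each of which the parity of $\ord(\fd_i(\ell)\fd_i(L))$ is determined by the table, and the proper modular component of $L$ that clauses \eqref{UL3.2}\,(3)--(4) may require (proper unimodular for $i=-1$, proper $2$-modular for $i=0$, and nothing new for $i\ge2$ since $\ell$ carries no proper modular component of scale $\subseteq4\cO_F$) is precisely the one supplied by the hypotheses $\frs(L_2)=\fn(L_2)=\cO_F$ and $\frs(L_3)=\fn(L_3)=2\cO_F$.

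The genuinely delicate point lies in $(1)\Rightarrow(2)$: one must realise that the test lattice forcing $L_2$ to be proper is $\ell_0$ (with its one-dimensional proper unimodular piece), not an ``all-improper'' lattice of dimension $k$, which cannot exist since $k$ is odd; and one must track the parity of $\dim P_1$ --- equivalently of $\ord(\fd_i(\ell)\fd_i(L))$ for $i\ge1$ --- carefully through the applications of \eqref{UL3.2}\,(2) and (4). The converse direction, while somewhat lengthy, is a mechanical case analysis with no real obstacle.
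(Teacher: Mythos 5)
Your proposal is correct and takes essentially the same route as the paper: it extracts the necessary conditions by feeding specific dominant test lattices into Definition~\ref{UL3.2} (using \eqref{UL3.2}\,(1) at $i=-1,0$, then \eqref{UL3.2}\,(3)(a) at $i=-1$ and \eqref{UL3.2}\,(4)(b) at $i=0$), and verifies the converse by the same boundary-value case analysis on $i$. Your derivation of the properness of $L_2$ via \eqref{UL3.2}\,(3)(a) is in fact more explicit than the paper's wording, and the only slip is harmless: in case (a) condition \eqref{UL3.2}\,(4) can also be triggered at $i=0$ (when $\dim L_1=k+1$, $L_{\le 0}=L_1$ and $P_1=0$), not only at $i\ge 1$, but that instance is vacuous for the same reason.
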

\begin{proof}
  (1)$\Rightarrow$(2). Let $\ell=I_{-1}\bot I_0\bot P_0\bot P_1$ be an arbitrary dominant lattice of dimension $k$. We assume that all such lattices $\ell$ have a lower type than $L$. Taking $\ell$ with $\dim I_{-1}=k-1$ and $\dim P_0=1$, we see from \eqref{UL3.2} (1) that $\dim  L_{\le -1}\ge k-1$ and $\dim  L_{\le 0}\ge k$. As we have assumed $L$ integral, this means that $L_1$ is improper $2^{-1}$-modular of (even) dimension $\ge k-1$ and that if $\dim L_1=k-1$, then $\frs(L_2)=\fn(L_2)=\cO_F$.

  Let us suppose $\dim L_1=k-1$ and $L_2$ is unimodular of dimension 1. Then we need to show that $\frs(L_3)=\fn(L_3)=2\cO_F$. In fact, we can choose $\ell$ such that $\dim I_{-1}=k-1$ and $\dim P_1=1$. Then $\dim \ell_{\le 0}=\dim  L_{\le 0}-1=k-1>0$. Therefore, by the case $i=0$ of \eqref{UL3.2} (4), $L$ should have a proper 2-modular component, i.e., $\frs(L_3)=\fn(L_3)=2\cO_F$ as desired.

  (2)$\Rightarrow$(1). First assume $\dim L_1=\dim  L_{\le -1}\ge k+1$. If $\dim\ell_{\le i}=\dim L_{\le i}$, then $i<-1$. Thus conditions (1)--(3) of \eqref{UL3.2} are trivially verified. If $\dim\ell_{\le i}=\dim L_{\le i}-1>0$, we must have $i\ge -1$, $\ell=\ell_{\le i}$ and $L_{\le i}= L_{\le -1}=L_1$ and hence $\ord(\fd_i(L))\equiv 0\pmod{2}$, $\ord(\fd_i(\ell))\equiv \dim P_1\pmod{2}$. If $i=-1$, then $\ord(\fd_i(\ell)\fd_i(L))\equiv 0=i+1\pmod{2}$ and $L$ has no proper $2^i$-modular component. If $i=0$, then $P_1=0$,
  $\ord(\fd_i(\ell)\fd_i(L))\equiv 0=i\pmod{2}$, and $\ell$ has no proper 2-modular component. If $i\ge 1$, then $\ell$ has no proper $2^{i+1}$-modular component and the condition $L_{\le i}=L_1$ implies that $L$ has no $2^i$-modular component. So we see that \eqref{UL3.4} (4) is also satisfied for all $\ell$.

  Next let us suppose that $L_1$ is improper $2^{-1}$-modular of dimension $k-1$ and that $L_2$ is proper unimodular of dimension $\ge 2$. Then $ L_{\le -1}=L_1$ and $ L_{\le 0}=L_1\bot L_2$. Clearly, condition \eqref{UL3.2} (1) is true for all $\ell$. If $\dim \ell_{\le i}=\dim L_{\le i}$, then either $i<-1$ and $\dim \ell_{\le i}=\dim L_{\le i}=0$, or $i=-1$ and $\dim \ell_{\le -1}=\dim I_{-1}=k-1$. In the former case, there is no need to check \eqref{UL3.2} (2), and \eqref{UL3.2} (3) is easily verified. If $i=-1$, then we find that $P_0=P_1=0$, and $\ord(\fd_i(\ell))\equiv \ord(\fd_i(L))\equiv 0\pmod{2}$. So \eqref{UL3.2} (2) and (3) hold again. If $\dim \ell_{\le i}=\dim L_{\le i}-1>0$, then $i\ge 0$ (noticing that $\dim \ell_{\le -1}=\dim I_{-1}$ and $\dim  L_{\le -1}=k-1$ are both even), $\dim \ell_{\le i}=k$ and $\dim L_{\le i}=k+1$. If $i=0$, we have
  $\ord(\fd_i(\ell)\fd_i(L))\equiv i=0\pmod{2}$ and the condition $\dim\ell_{\le i}=k$ shows that $\ell$ has no $2^{i+1}$-modular component. If $i>0$, then $\ell$ has no $2^{i+1}$-modular component and the condition $\dim L_{\le i}=k+1=\dim  L_{\le 0}$ implies that $L$ has no $2^i$-modular component. Thus we have verified \eqref{UL3.2} (4) for all $\ell$.

  Finally, let us consider the case where $\dim L_1=k-1$, $L_2$ is unimodular of dimension 1 and $L_3$ is proper 2-modular. In this case $\dim  L_{\le -1}=\dim L_1=k-1$, $\dim  L_{\le 0}=\dim L_1\bot L_2=k$ and $\dim  L_{\le 1}=\dim L_1\bot L_2\bot L_3\ge k+1$. Again \eqref{UL3.2} (1) is easily seen to hold. For $i<-1$, there is no need to check \eqref{UL3.2} (2) and (4), and \eqref{UL3.2} (3) holds trivially. Note that $\ord(\fd_{-1}(\ell)\fd_{-1}(L))$ is always even, $L$ has no proper $2^{-1}$-modular component but has a proper unimodular component. Therefore, \eqref{UL3.2} (2)--(4) are satisfied for $i=-1$. Similarly, conditions \eqref{UL3.2} (2) and (4) hold for $i=0$. If $\dim \ell_{\le 0}=\dim  L_{\le 0}=k$, then $P_1=0$ and $P_0\neq 0$ (since $k$ is odd but $\dim I_{-1}\bot I_0$ is even). So \eqref{UL3.2} (3) is also satisfied for $i=0$.

  If $i\ge 1$, then $\dim\ell_{\le i}=k<k+1\le \dim L_{\le i}$, so there is no need to check \eqref{UL3.2} (2) and (3). Suppose that $\dim\ell_{\le i}=k=\dim L_{\le i}-1$ for some $i\ge 1$. Then $L_{\le i}= L_{\le 1}$ has dimension $k+1$, $\ell$ has no $2^{i+1}$-modular component, and $L$ has no $2^i$-modular component unless $i=1$. Thus, to check \eqref{UL3.2} (4) is to check that if $\ord(\fd_1(\ell)\fd_1(L))\equiv 0\pmod{2}$, then $\ell$ has a proper $2$-modular component (i.e. $P_1\neq 0$). This is true because $\ord(\fd_1(L))\equiv \dim L_3=1\pmod{2}$ and $\ord(\fd_1(\ell))\equiv \dim P_1\pmod{2}$. This completes the proof.
\end{proof}

\newpara\label{UL6.3} Suppose that $L$ satisfies the equivalent conditions in Proposition\;\ref{UL6.1}. Let $\ell=I_{-1}\bot I_0\bot P_0\bot P_1$ be an arbitrary dominant lattice of dimension $k$.

For $i<-2$ we have $\ell_{\le i}=\ell_{[i]}=0$, $\Delta_i(\ell)=\Delta_i(L)=0$. So all the conditions in Theorem\;\ref{UL3.4} are trivially satisfied.

For $i=-2$, we have $\ell_{\le -2}=0$, $\ell_{[-2]}=I_{-1}$, and $\Delta_{-2}(\ell)$ is 0 or $\cO_F$ depending on whether $P_0=0$ or not. Conditions \eqref{UL3.4} (2) and (3) hold trivially, while \eqref{UL3.4} (1) asserts the following:
\begin{enumerate}
  \item[(a)] $FI_{-1}\rep  FL_{(0)}$ and the orthogonal complement represents $\Delta_{-2}(\ell)$ and $\Delta_{-2}(L)$.
\end{enumerate}
Condition \eqref{UL3.4} (4) requires that $FI_{-1}\rep  FL_{\le -1}\bot\dgf{1}$ and that the orthogonal complement represents $1$ or $\Delta$. Note that $FI_{-1}$ is always contained in $U_1=\bH^m$ or $U_2=\bH^{m-1}\bot \dgf{1,\,-\Delta}$, and $FL_1$ always contains $U_1$ and $U_2$. Moreover, $U_1\bot\dgf{1}\cong U_2\bot\dgf{\Delta}$ and $U_2\bot\dgf{1}\cong U_1\bot \dgf{\Delta}$. So \eqref{UL3.4} (4) always holds when $i=-2$.

For $i=-1$, $\ell_{\le -1}=I_{-1}$, $\ell_{[-1]}=I_{-1}\bot I_0$, and
\[
\Delta_{-1}(\ell)=\begin{cases}
  \cO_F \quad & \text{ if } P_0\neq 0\,,\\
  2\cO_F\quad & \text{ if } P_0=0\neq P_1\,,\\
  0 \quad & \text{ otherwise}\,.
\end{cases}
\]Now \eqref{UL3.4} (1) and (2) can be stated as  the following two conditions:
\begin{enumerate}
  \item[(b)] $FI_{-1}\bot FI_0\rep  FL_{(1)}$ and the orthogonal complement represents $\Delta_{-1}(\ell)$ and $\Delta_{-1}(L)$.
  \item[(c)] If $ FL_{(1)}/^\bot FI_{-1}\bot FI_0\cong\bH$ and $P_0=0\neq P_1$, then $L$ has no proper unimodular component.
\end{enumerate}
Condition \eqref{UL3.4} (3) is implied by condition (a) above. If the difference between the dimensions of $ FL_{\le 0}\bot \dgf{2}$ and $FI_{-1}\bot FI_0$ is smaller than $3$,  then the difference must be 2 and $d( FL_{\le 0}\bot \dgf{2})/d(FI_{-1}\bot FI_0)$ is a uniformizer up to squares. So \eqref{UL3.4} (4) holds automatically by a discriminant comparison (see \cite[(63:11) and (63:21)]{OMeara00}).

For $i=0$, $\ell_{\le 0}=\ell_{[0]}=I_{-1}\bot I_0\bot P_0$, $\Delta_0(\ell)$ is 0 or $2\cO_F$ depending on whether $P_1=0$ or not, and $\Delta_0(L)$ is always contained in $2\cO_F$. Thus \eqref{UL3.4} (2) holds automatically, and \eqref{UL3.4} (3) implies \eqref{UL3.4} (4). The conditions to check are the following:
\begin{enumerate}
  \item[(d)] $FI_{-1}\bot FI_0\bot FP_0\rep  FL_{(2)}$ and the orthogonal complement represents $\Delta_{0}(\ell)$ and $\Delta_{0}(L)$.
  \item[(e)] $FI_{-1}\bot FI_0\bot FP_0\rep  FL_{(1)}\bot \dgf{1}$ and the orthogonal complement represents $1$ or $\Delta$.
\end{enumerate}

For $i\ge 1$, we have $\ell_{\le i}=\ell_{[i]}=\ell$ and $\Delta_i(\ell)=0$. So \eqref{UL3.4} (2) is trivially verified, and it remains to check the following three conditions:
\begin{enumerate}
  \item[(f)] $F\ell\rep  FL_{(3)}$.
  \item[(g)] For every $i\ge 1$, $ FL_{(i+2)}/^\bot F\ell$ represents $\Delta_i(L)$.
  \item[(h)] $F\ell\rep  FL_{(2)}\bot \dgf{2}$ and the orthogonal complement represents $2$ or $2\Delta$.
\end{enumerate}

\begin{lemma}\label{UN6.4}
  Suppose $L_1$ is improper $2^{-1}$-modular of dimension $\ge k+1$.

  Then condition $\eqref{UL6.3}\;(a)$ holds.
\end{lemma}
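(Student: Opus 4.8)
The plan is to verify, for an arbitrary dominant $k$-dimensional lattice $\ell=I_{-1}\bot I_0\bot P_0\bot P_1$, the two assertions making up condition \eqref{UL6.3}\,(a): that $FI_{-1}\rep FL_{(0)}$, and that the orthogonal complement $W:=FL_{(0)}/^\bot FI_{-1}$ represents both of the ideals $\Delta_{-2}(\ell)$ and $\Delta_{-2}(L)$.

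For the representation $FI_{-1}\rep FL_{(0)}$: since $L_1=L_{\le -1}\subseteq L_{(0)}$ and $\rep$ is transitive, it suffices to show $FI_{-1}\rep FL_1$. Because $\dim L_1\ge k+1=2m+2$, the structure of Jordan components recorded in \eqref{UL5.3.1} (together with $F(2^{-1}A(0,\,0))\cong\bH$ and $F(2^{-1}A(2,\,2\rho))\cong\dgf{1,\,-\Delta}$, as in the proof of Proposition\;\ref{UL4.7}) shows that $FL_1$ is either $\bH^{(\dim L_1)/2}$ or $\bH^{(\dim L_1)/2-1}\bot\dgf{1,\,-\Delta}$. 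In both cases $FL_1$ represents $U_1=\bH^m$ and $U_2=\bH^{m-1}\bot\dgf{1,\,-\Delta}$, using $\dgf{1,\,-\Delta}\rep\bH^2$ (which follows from $\dgf{1,\,-\Delta}\bot\dgf{-1,\,\Delta}\cong\bH^2$). On the other hand, as observed in \eqref{UL6.3}, $FI_{-1}$ is itself represented by $U_1$ or by $U_2$, so $FI_{-1}\rep FL_1\rep FL_{(0)}$.

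For the complement $W$, I would first reduce to one assertion. We have $\Delta_{-2}(\ell)\in\{0,\cO_F\}$, and also $\Delta_{-2}(L)\in\{0,\cO_F\}$: the only $2^{-1}$-modular Jordan component of $L$ is $L_1$, which is improper, so $L$ has no proper $2^{-1}$-modular component and $\Delta_{-2}(L)\ne 2^{-1}\cO_F$. Thus it suffices to show that $W$ represents $\cO_F$, i.e. represents a unit. Since $I_{-1}$ is even-dimensional and $k$ is odd, $\dim FI_{-1}\le k-1=2m$, whereas $\dim FL_{(0)}\ge\dim L_1\ge 2m+2$, so $\dim W\ge 2$; moreover $\dim W=2$ forces $\dim L_1=2m+2$, $L_{(0)}=L_1$ and $\dim FI_{-1}=2m$. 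If $\dim W\ge 3$, then $W$ represents every fractional ideal by Proposition\;\ref{UL2.2} and we are done.

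The remaining case $\dim W=2$ is where I expect the only real subtlety to lie. Then $FL_1$ is $\bH^{m+1}$ or $\bH^m\bot\dgf{1,\,-\Delta}$, and $FI_{-1}$, being $2m$-dimensional and represented by $U_1$ or $U_2$, is isometric to $U_1$ or to $U_2$; so there are four combinations, and in each Witt cancellation applied to $FL_1\cong FI_{-1}\bot W$ pins $W$ down: one gets $W\cong\bH$ when $(FL_1,FI_{-1})=(\bH^{m+1},U_1)$ or $(\bH^m\bot\dgf{1,\,-\Delta},U_2)$, $W\cong\dgf{1,\,-\Delta}$ when $(FL_1,FI_{-1})=(\bH^m\bot\dgf{1,\,-\Delta},U_1)$, and $W\cong\dgf{-1,\,\Delta}$ when $(FL_1,FI_{-1})=(\bH^{m+1},U_2)$ — the last because then $\dgf{1,\,-\Delta}\bot W\cong\bH^2$ with $W$ necessarily anisotropic, while $\dgf{1,\,-\Delta}\bot\dgf{-1,\,\Delta}\cong\bH^2$. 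Each of $\bH$, $\dgf{1,\,-\Delta}$, $\dgf{-1,\,\Delta}$ represents a unit, so $W$ represents $\cO_F$ as required. The point to watch is that a discriminant count alone is insufficient: $W$ can have signed discriminant $\Delta$, and an anisotropic binary space with signed discriminant $\Delta$ may fail to represent a unit, so Corollary\;\ref{UL2.6} is unavailable; it is precisely the absence of a proper $2^{-1}$-modular component in $L$ — which removes any obligation for $W$ to represent $2^{-1}\cO_F$ — that makes the argument go through.
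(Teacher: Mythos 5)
Your proof is correct and follows essentially the same route as the paper's: reduce via $\Delta_{-2}(\ell),\Delta_{-2}(L)\in\{0,\cO_F\}$ to showing the complement represents a unit, use \eqref{UL5.3.1} to see that $FL_1$ is $\bH^{(\dim L_1)/2}$ or $\bH^{(\dim L_1)/2-1}\bot\dgf{1,-\Delta}$ and hence represents $FI_{-1}$, and observe that the orthogonal complement contains $\bH$ or a copy of $\dgf{1,-\Delta}$ (up to sign), each of which represents $\cO_F$. Your explicit Witt-cancellation case analysis and the separate treatment of $\dim W\ge 3$ just spell out what the paper compresses into one sentence.
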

\begin{proof}
  We may assume $\dim L_1=k+1$. Then $FL_1$ is isomorphic to $\bH^{m+1}$ or $\bH^m\bot \dgf{1,\,-\Delta}$. The space $FI_{-1}$ is always contained in $\bH^m$ or $\bH^{m-1}\bot\dgf{1,\,-\Delta}$. Thus $FL_1$ represents $FI_{-1}$, and the orthogonal complement contains $\bH$ or $\dgf{1,\,-\Delta}$, so that it always represents $\cO_F$. Since $\Delta_{-2}(\ell)$ is  either 0 or $\cO_F$ and similarly for $\Delta_{-2}(L)$, the result follows.
\end{proof}

\begin{prop}\label{UL6.5}
  Suppose $L_1$ is improper $2^{-1}$-modular of dimension $\ge k+1$.

  Then $L$ is $k$-universal if and only if one of the following holds:

  \begin{enumerate}
    \item $\dim L_1\ge k+3$;
    \item $\dim L_1=k+1$, $\dim L_2\ge 2$ and $2\cO_F\subseteq \fn(L_2)$;
    \item $\dim L_1=k+1$, $\dim L_2=1$, $\frs(L_2)=\fn(L_2)=\cO_F$ and $4\cO_F\subseteq \fn(L_3)$;
    \item $\dim L_1=k+1$, $\dim L_2=1$, $\frs(L_2)=\fn(L_2)=2\cO_F$ and $8\cO_F\subseteq \fn(L_3)$.
  \end{enumerate}
\end{prop}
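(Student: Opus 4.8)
The plan is to run the machinery set up in \eqref{UL6.3}. Since $L_1$ is improper $2^{-1}$-modular of dimension $\ge k+1>k-1$, Proposition\;\ref{UL6.1} applies and every dominant $k$-dimensional lattice has a lower type than $L$; hence $L$ is $k$-universal if and only if conditions (a)--(h) of \eqref{UL6.3} hold for all dominant $\ell=I_{-1}\bot I_0\bot P_0\bot P_1$ of dimension $k$. Condition (a) is automatic by Lemma\;\ref{UN6.4}, so everything reduces to (b)--(h). I shall use freely that $FL_1$ (of dimension $k+1=2m+2$) is isometric to $\bH^{m+1}$ or $\bH^m\bot\dgf{1,-\Delta}$, that $FI_{-1}\bot FI_0$ (of dimension $\le k-1=2m$) embeds into one of $\bH^m$, $\bH^{m-1}\bot\dgf{1,-\Delta}$, $\bH^{m-1}\bot\dgf{2,-2\Delta}$, $\bH^{m-2}\bot\dgf{1,-\Delta}\bot\dgf{2,-2\Delta}$, and that as $\ell$ ranges over dominant lattices of dimension $k$ the space $F\ell$ ranges over \emph{all} $k$-dimensional spaces over $F$ (using the proper unimodular and proper $2$-modular summands $P_0,P_1$), while $FI_{-1}\bot FI_0\bot FP_0$ with $P_1=0$ ranges over all $k$-dimensional spaces of unit discriminant.

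The easy half is a dimension count: whenever $\dim L_{(1)}\ge k+3$ one has $\dim L_{(i)}\ge k+3$ for all $i\ge 1$, and then each of (b)--(h) only asks that a space of dimension $\le k$ be represented by one of dimension $\ge k+3$, or that an orthogonal complement of dimension $\ge 3$ represent a prescribed ideal or one of $1,\Delta,2,2\Delta$; such statements follow from \cite[(63:21)]{OMeara00}, Proposition\;\ref{UL2.2} and Lemma\;\ref{UL2.4}\;(3). This proves $k$-universality in case (1) ($\dim L_1\ge k+3$) and in case (2) (where $\dim L_2\ge 2$ and $2\cO_F\subseteq\fn(L_2)$ force $\dim L_{(1)}\ge k+3$), and also handles cases (3) and (4) whenever $\dim L_{(1)}$ or $\dim L_{(2)}$ already reaches $k+3$.

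For the remaining situations, assume $\dim L_1=k+1$. The first step is to show $2\cO_F\subseteq\fn(L_2)$ is necessary: if $L_{(1)}=L_1$, take $\ell$ with $FI_{-1}\bot FI_0=\bH^{m-1}\bot\dgf{2,-2\Delta}$, $\dim P_0=1$, $P_1=0$, so $\Delta_{-1}(\ell)=\cO_F$; a discriminant/Hasse-invariant computation (using $\bH^{m+1}\cong\bH^{m-1}\bot\dgf{2,-2\Delta}\bot\dgf{2,-2\Delta}$, and, in the other shape of $FL_1$, that $\bH^m\bot\dgf{1,-\Delta}$ does not represent $\bH^{m-1}\bot\dgf{2,-2\Delta}$) shows the orthogonal complement of $\bH^{m-1}\bot\dgf{2,-2\Delta}$ in $FL_1$ is either nonexistent or $\dgf{2,-2\Delta}$, which represents no unit since the norm form of the unramified extension $F(\sqrt{\Delta})/F$ takes only even-valuation values; hence (b) fails. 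Combined with the previous paragraph this reduces us to $\dim L_2=1$, with $\frs(L_2)=\fn(L_2)\in\{\cO_F,2\cO_F\}$. If $\frs(L_2)=\fn(L_2)=\cO_F$, then $FL_{(1)}=FL_1\bot\dgf{\eta}$ ($\eta$ a unit) has dimension $k+2$ and unit discriminant; (a)--(c),(e),(h) hold by dimension count, and the only real point is (d) (with (f),(g)): if $4\cO_F\subseteq\fn(L_3)$ then $\dim L_{(2)}\ge k+3$ and (d),(f),(g) hold by dimension count --- case (3) --- whereas if $L_{(2)}=L_1\bot L_2$, pick a dominant $\ell$ with $P_1=0$ and $F\ell$ of discriminant $-d(FL_{(2)})$ (up to squares) and with the Hasse invariant opposite to the one needed for $F\ell\bot\bH\cong FL_{(2)}$ (such $F\ell$ exists, e.g. of the form $\bH^{(k-3)/2}\bot\dgf{\veps_1,\veps_2,\veps_3}$ with $\veps_i$ chosen via Lemma\;\ref{UL2.4}\;(1) and \cite[p.202, Lemma\;3]{Hsia75Pacific}), so that $F\ell\not\rep FL_{(2)}$ and (d) fails; hence $4\cO_F\subseteq\fn(L_3)$ is necessary. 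If $\frs(L_2)=\fn(L_2)=2\cO_F$, so $L_2$ is a proper $2$-modular component and $FL_{(1)}=FL_1\bot\dgf{2\eta}$ has odd-valuation discriminant, then (a)--(e),(h) hold by dimension count and \cite[(63:11)]{OMeara00}; and for (f),(g): if $8\cO_F\subseteq\fn(L_3)$ then $\dim L_{(3)}\ge k+3$ and they hold by dimension count --- case (4) --- while if $L_{(3)}=L_1\bot L_2$ then $\dim FL_{(3)}=k+2$, and a $(k+2)$-dimensional space cannot represent every $k$-dimensional space (take $F\ell$ with discriminant $-d(FL_{(3)})$ and the wrong Hasse invariant), so (f) fails; hence $8\cO_F\subseteq\fn(L_3)$ is necessary.

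The main obstacle is the necessity direction in these three boundary cases: choosing the dominant $\ell$ that witnesses failure, and then carrying out the discriminant and Hasse-invariant bookkeeping for the orthogonal complements --- in particular verifying, for each of the two shapes of $FL_1$, that the complement is forced to be the ``bad'' binary space $\dgf{2,-2\Delta}$ or a hyperbolic plane with mismatched Hasse invariant, and keeping track of which units and uniformizer-multiples are represented by the relevant ternary and binary spaces via Lemmas\;\ref{UL2.3}--\ref{UL2.5} and Corollary\;\ref{UL2.6}.
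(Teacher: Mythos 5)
Your proposal is correct and follows essentially the same route as the paper: reduce via Proposition\;\ref{UL6.1} and \eqref{UL6.3} to conditions (a)--(h), dispose of cases (1)--(2) and all sufficiency claims by dimension counts, force $2\cO_F\subseteq\fn(L_2)$ through condition (b) with the witness $FI_{-1}\bot FI_0=\bH^{m-1}\bot\dgf{2,-2\Delta}$ and $\dim P_0=1$, and then obtain the necessity of $4\cO_F\subseteq\fn(L_3)$ (resp.\ $8\cO_F\subseteq\fn(L_3)$) by exhibiting a $k$-dimensional $F\ell$ whose discriminant forces the codimension-two complement in $FL_{(2)}$ (resp.\ $FL_{(3)}$) to be hyperbolic while the Hasse invariant obstructs this. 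The only divergence is cosmetic: in the $\fn(L_2)=\cO_F$ sub-case you realize the violating space as $\bH^{m-1}$ plus an anisotropic unit-diagonal ternary of prescribed discriminant (a sound choice, obtainable from Lemma\;\ref{UL2.5} by scaling), where the paper writes down explicit diagonal witnesses.
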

\begin{proof}
  If $\dim L_1\ge k+3$, then all the conditions (a)--(h) in \eqref{UL6.3} hold for dimension reasons. So let us assume $\dim L_1=k+1$. Then $FL_1$ is $\bH^{m+1}$ or $\bH^m\bot\dgf{1,\,-\Delta}$.  But $\bH^m\bot\dgf{1,\,-\Delta}$ does not represent $U_3=\bH^{m-1}\bot \dgf{2,\,-2\Delta}$, and $\bH^{m+1}/^\bot U_3\cong \dgf{2,\,-2\Delta}$. Since we may choose $\ell$ such that $FI_{-1}\bot FI_0=U_3$ and $\dim P_0=1$ (so that $\Delta_{-1}(\ell)=\cO_F$), condition \eqref{UL6.3} (b) forces $L_1\subset L_{(1)}$, i.e., $2\cO_F\subseteq \fn(L_2)$. If $\dim L_2\ge 2$, then $\dim L_{(1)}\ge k+3$ and all the conditions (a)--(h) in \eqref{UL6.3} are satisfied.

  In the rest of the proof we further assume that $\dim L_2=1$. We can check  conditions \eqref{UL6.3} (b), (c), (e) and (h)  by a dimension argument.

  First assume $\fn(L_2)=\cO_F$. Then $FL_1\bot FL_2$ has the form $\bH^m\bot\dgf{\eta}$ for some $\eta\in \cO_F^*$. The space $FI_{-1}\bot FI_0\bot FP_0$ can be $\bH^m\bot\dgf{\veps}$ for all $\veps\in \cO_F^*$. Therefore, if \eqref{UL6.3} (d) holds for all $\ell$ we must have $L_1\bot L_2\subset L_{(2)}$. When this condition holds we have $\dim L_{(2)}\ge k+3$ and hence \eqref{UL6.3} (d), (f) and (g) are all verified.

  Finally, suppose $\fn(L_2)=2\cO_F$. Then either $\dim  FL_{(2)}\ge \dim (FI_{-1}\bot FI_0\bot FP_0)+3$, or $FL[\![\le 2]]\!]=FL_1\bot FL_2$, $\dim FI_{-1}\bot FI_0\bot FP_0=k$ and $d( FL_{(2)})/d(FI_{-1}\bot FI_0\bot FP_0)$ is a uniformizer up to squares. Hence \eqref{UL6.3} (d) holds by \cite[(63:11), (63:21)]{OMeara00} and Proposition\;\ref{UL2.2}. If $L_{(2)}=L_1\bot L_2$, then $ FL_{(2)}$ has the form $\bH^{m+1}\bot\dgf{2\eta}$ or $\bH^m\bot \dgf{1,\,-\Delta}\bot \dgf{2\eta}$, where $\eta\in \cO_F^*$. But $F\ell$ can be $\bH^m\bot\dgf{\veps}$ or $\bH^{m-1}\bot \dgf{1,\,-\Delta}\bot\dgf{2\veps}$ for all $\veps\in \cO_F^*$. Since
  $\bH^{m+1}\bot\dgf{2\eta}$ does not represent $\bH^{m-1}\bot\dgf{1,\,-\Delta}\bot \dgf{2\Delta\eta}$ and $\bH^m\bot \dgf{1,\,-\Delta}\bot \dgf{2\eta}$ does not represent $\bH^m\bot\dgf{2\Delta\eta}$, we see that \eqref{UL6.3} (f) holds if and only if $L_1\bot L_2\subset L_{(3)}$, i.e., $8\cO_F\subseteq \fn(L_3)$. When this condition holds, $\dim L_{(3)}\ge k+3$ and thus \eqref{UL6.3} (g) follows for dimension reasons. This completes the proof of the proposition.
\end{proof}

\begin{lemma}\label{UL6.6}
  Suppose that $L_1$ is improper $2^{-1}$-modular of dimension $k-1$ and that $L_2$ is proper unimodular of dimension $\ge 2$.

\begin{enumerate}
  \item Condition $\eqref{UL6.3}\;(a)$ holds.
  \item The following statements are equivalent:

  \begin{enumerate}
    \item Conditions $\eqref{UL6.3}\; (b),\,(c),\,(e)$ and $(h)$ hold for all dominant $k$-dimensional lattices $\ell$.
    \item Conditions $\eqref{UL6.3}\; (b),\,(c)$ and $(e)$ hold for all dominant $k$-dimensional lattices $\ell$.
    \item $\dim L_{(1)}\ge k+2$, i.e., either $\dim L_2\ge 3$, or $\dim L_2=2$ and $\frs(L_3)=\fn(L_3)=2\cO_F$.
  \end{enumerate}
\end{enumerate}
\end{lemma}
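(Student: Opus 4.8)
The plan is to exploit the rigidity of the quadratic spaces that occur. Write $m=(k-1)/2$. By \eqref{UL5.3.1} and \cite[\S\,93]{OMeara00}, $FL_1$ is isometric to $\bH^m$ or to $\bH^{m-1}\bot\dgf{1,-\Delta}$, and for a dominant $\ell=I_{-1}\bot I_0\bot P_0\bot P_1$ the even-dimensional space $FI_{-1}\bot FI_0$ (of dimension $\le 2m$) is, when of the full dimension $2m$, isometric to one of
$U_1=\bH^m$, $U_2=\bH^{m-1}\bot\dgf{1,-\Delta}$, $U_3=\bH^{m-1}\bot\dgf{2,-2\Delta}$, or, if $m\ge 2$, $U_4=\bH^{m-2}\bot\dgf{1,-\Delta}\bot\dgf{2,-2\Delta}$; otherwise it has dimension $\le 2m-2$. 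I will use freely: (i) $\dgf{1,-\Delta}$ is the norm form of the unramified quadratic extension $F(\sqrt{\Delta})/F$, so it represents every unit, whence $\dgf{1,-\Delta}\bot\dgf{u_1,u_2}$ is isotropic for all units $u_1,u_2$; (ii) $A:=\dgf{1,-\Delta,2,-2\Delta}$ is, up to isometry, the unique anisotropic quaternary space over $F$ (\cite[(63:17)]{OMeara00}); (iii) the representability criteria \cite[(63:11) and (63:21)]{OMeara00} together with Proposition~\ref{UL2.2}, Lemma~\ref{UL2.4} and Corollary~\ref{UL2.6}.

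For part (1): $FL_{(0)}=FL_1\bot FL_2$ has dimension $2m+\dim L_2\ge 2m+2$. If $\dim FI_{-1}\le 2m-2$ the codimension is $\ge 4$, so $FI_{-1}\rep FL_{(0)}$ by \cite[(63:21)]{OMeara00} and the orthogonal complement, of dimension $\ge 4$, represents the ideals $\Delta_{-2}(\ell),\Delta_{-2}(L)\in\{0,\cO_F\}$ by Proposition~\ref{UL2.2}. If $\dim FI_{-1}=2m$ then $FI_{-1}$ and $FL_1$ are each $\bH^m$ or $\bH^{m-1}\bot\dgf{1,-\Delta}$; when they are isometric the embedding is immediate and the complement has a unit diagonal entry, and when they are not, fact (i) makes $\dgf{1,-\Delta}\bot FL_2$ isotropic, which yields the embedding $FI_{-1}\rep FL_{(0)}$, while a short discriminant argument shows the complement again represents a unit. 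Since $\Delta_{-2}(\ell),\Delta_{-2}(L)\in\{0,\cO_F\}$, this settles \eqref{UL6.3}\,(a).

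For part (2): the implication (a)$\Rightarrow$(b) of the statement is trivial. For (c)$\Rightarrow$(a): if $\dim L_{(1)}\ge k+2$ then $\dim FL_{(1)}\ge k+2$ and $\dim FL_{(2)}\ge k+2$, whereas $FI_{-1}\bot FI_0$, $FI_{-1}\bot FI_0\bot FP_0$, $F\ell$ have dimensions $\le k-1$, $\le k$, $=k$; hence in each of \eqref{UL6.3}\,(b),(c),(e),(h) the target exceeds the source by at least $3$ in dimension, and \cite[(63:21)]{OMeara00}, Proposition~\ref{UL2.2}, and the remark that a space of dimension $\ge 3$ is not isometric to $\bH$ give all four conditions simultaneously. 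It remains to prove (b)$\Rightarrow$(c). Arguing contrapositively, and using $L_{(1)}\supseteq L_1\bot L_2$ (dimension $\ge k+1$), assume $\dim L_{(1)}=k+1$; then $\dim L_2=2$, $L_{(1)}=L_1\bot L_2$, and $FL_{(1)}=FL_1\bot\dgf{\eta_1,\eta_2}$ with $\eta_i\in\cO_F^{*}$. Now $FL_{(1)}\bot\dgf{1}$ has dimension $k+2$, and after splitting off the common $\bH^{m-1}$ its ``free part'' is a $5$-dimensional space, $\bH\bot\dgf{\eta_1,\eta_2,1}$ or $\dgf{1,-\Delta,\eta_1,\eta_2,1}$; a $5$-dimensional space over $F$ cannot represent every anisotropic ternary form (the ones whose discriminant and Hasse symbol are incompatible with its anisotropic kernel are excluded, by the analysis underlying \cite[(63:21)]{OMeara00}). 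Choosing a dominant $\ell=I_{-1}\bot P_0$ with $FI_{-1}=\bH^{m-1}$ and $P_0$ proper unimodular of dimension $3$ whose space is such a ``bad'' ternary form makes \eqref{UL6.3}\,(e) fail: either the required representation fails outright, or it holds but the binary orthogonal complement is anisotropic with value group equal to the norm group of $F(\sqrt{\Delta})/F$, hence represents neither $1$ nor $\Delta$. Thus (b) fails. When $m=1$ (i.e.\ $k=3$) the form $U_4$ is unavailable, and one argues instead with \eqref{UL6.3}\,(b) for an $\ell$ with $FI_{-1}\bot FI_0\cong\dgf{2,-2\Delta}$ and a $1$-dimensional $P_0$ or $P_1$, using that the $4$-dimensional space $FL_{(1)}$ cannot both represent $\dgf{2,-2\Delta}$ and have its binary complement represent the ideal $\Delta_{-1}(\ell)$ forced by $\ell$.

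The main obstacle is precisely the direction (b)$\Rightarrow$(c): one must pin down a single bad dominant lattice $\ell$ that defeats the tight $(k+1)$-dimensional space $FL_{(1)}$, and this forces a case analysis according to the isometry type of $FL_1$ ($\bH^m$ versus $\bH^{m-1}\bot\dgf{1,-\Delta}$) and to the square class of $d_{\pm}(FL_1)$ and of $d_{\pm}(FL_2)$, each branch requiring Hilbert-symbol bookkeeping (computations with $(2,\Delta)_F$, $(-1,\Delta)_F$, and the like) to certify that the relevant representation or the ``complement represents $1$, $\Delta$, or $2\cO_F$'' condition genuinely fails; the degenerate case $k=3$ needs a separate treatment since $U_4$ does not exist there.
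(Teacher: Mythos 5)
Part (1) and the implications (a)$\Rightarrow$(b), (c)$\Rightarrow$(a) of part (2) are fine and essentially follow the paper (isometry types of $FI_{-1}$ and $FL_1$, plus dimension counts). The problem is the decisive implication (b)$\Rightarrow$(c), which you do not actually prove. You reduce to $FL_{(1)}\bot\dgf{1}\cong \bH^{m-1}\bot\bH\bot\dgf{\eta_1',\eta_2,1}$ and then assert that some ``bad'' proper unimodular ternary $P_0$ defeats condition \eqref{UL6.3}\,(e), deferring the verification to unspecified ``Hilbert-symbol bookkeeping.'' That deferred verification \emph{is} the content of the lemma. Moreover, the claim you lean on --- that a $5$-dimensional space cannot represent every anisotropic ternary form --- is false in the case you need it most: when $T_0=\dgf{\eta_1',\eta_2,1}$ is anisotropic, the space $\bH\bot T_0$ has Witt index $1$ and represents \emph{every} anisotropic ternary form; the unique ternary it misses is the \emph{isotropic} one of determinant $d(T_0)$. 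You also never check that the missed ternary is the quadratic space of a proper unimodular lattice (i.e.\ is diagonalizable by units), which is essential since $\ell$ must be an integral lattice of the prescribed shape; and the closing remark about $U_4$ and $k=3$ is a red herring, since $U_4$ plays no role in the argument you sketch and the case $m=1$ needs no separate treatment there.

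For the record, your route can be completed: in both Witt-index cases the unique ternary $W$ with $W\not\rep\bH\bot T_0$ has unit determinant $d(T_0)=\eta_1'\eta_2$ (it is $\dgf{1,-1,-\eta_1'\eta_2}$ when $T_0$ is anisotropic, and the anisotropic ternary of determinant $\eta_1'\eta_2$ when $T_0$ is isotropic, the latter being unit-diagonalizable because $|\cO_F^*/\cO_F^{*2}|\ge 4$), so condition \eqref{UL6.3}\,(e) already fails outright and the ``complement misses $1$ and $\Delta$'' branch is unnecessary. The paper proceeds differently: it first uses condition \eqref{UL6.3}\,(c) with $FI_{-1}\bot FI_0=\bH^m$, $P_1\neq 0$ and condition \eqref{UL6.3}\,(b) with $FI_{-1}\bot FI_0=\bH^{m-1}\bot\dgf{2,-2\Delta}$ to force $d_\pm(FL_{(1)})\notin F^{*2}\cup\Delta F^{*2}$, and only then defeats \eqref{UL6.3}\,(e) with a \emph{one}-dimensional $P_0$ chosen via Lemmas\;\ref{UL2.4} and \ref{UL2.5}. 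Either way, the case analysis must actually be carried out; as written your argument omits it.
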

\begin{proof}
  (1) The space $ FL_{(0)}=FL_1\bot FL_2$ has the form $\bH^m\bot\dgf{\eta_1,\cdots,\eta_r}$ with $r=\dim L_2\ge 2$ and $\eta_i\in\cO_F^*$. As
  \[
  \bH\bot\dgf{\eta_1}\cong \dgf{1,\,-\Delta}\bot \dgf{\Delta\eta_1}
  \]and $FI_{-1}$ is contained in $U_1= \bH^m$ or $U_2=\bH^{m-1}\bot\dgf{1,\,-\Delta}$, we have always $FI_{-1}\subseteq  FL_{(0)}$ and the orthogonal complement represents $\cO_F$.

  (2) For dimensional reasons we have (c)$\Rightarrow$(a). Let us prove (b)$\Rightarrow$(c) by contradiction. So we assume $\dim L_2=2$ and $L_{(1)}=L_1\bot L_2$. Then
  $ FL_{(1)}=FL_1\bot FL_2=\bH^m\bot \dgf{\eta_1,\,\eta_2}$ with $\eta_1,\,\eta_2\in \cO_F^*$ and $-\eta_1\eta_2=d_{\pm}(FL_1\bot FL_2)$. As $\ell$ varies over all dominant lattices of dimension $k$, it is possible to have $FI_{-1}\bot FI_0=\bH^m$ and $\dim P_1=1$. Therefore, condition \eqref{UL6.3} (c) forces $-\eta_1\eta_2\notin F^{*2}$. On the other hand, in the case with $FI_{-1}\bot FI_0=\bH^{m-1}\bot \dgf{2,\,-2\Delta}$, \eqref{UL6.3} (b) implies that $-\eta_1\eta_2\notin \Delta F^{*2}$.

  If $V:=\dgf{\eta_1,\,\eta_2,\,1}$ is anisotropic, then there exists $\veps\in \cO_F^*$ such that $\veps\notin Q(V)$ by Lemma\;\ref{UL2.4}. But $FI_{-1}\bot FI_0\bot FP_0$ can be $\bH^m\bot\dgf{\veps}$. So \eqref{UL6.3} (e) fails for such an $\ell$. If $V$ is isotropic, then $ FL_{(1)}\bot \dgf{1}\cong \bH^m\bot\bH\bot \dgf{-\eta_1\eta_2}$. Since $-\eta_1\eta_2\in \cO_F^*\setminus (\cO_F^{*2}\cup \Delta\cO_F^{*2})$, by Lemma\;\ref{UL2.5} we can find $\veps\in \cO_F^*$ such that the binary space $\dgf{-\eta_1\eta_2,\,-\veps}$ represents neither 1 nor $\Delta$. Then in the case $FI_{-1}\bot FI_0\bot FP_0=\bH^m\bot\dgf{\veps}$, the orthogonal complement of
  $FI_{-1}\bot FI_0\bot FP_0$ in $ FL_{(1)}\bot \dgf{1}$ is $\dgf{-\eta_1\eta_2,\,-\veps}$. This contradicts \eqref{UL6.3} (e).
\end{proof}

\begin{prop}\label{UL6.7}
  Suppose that $L_1$ is improper $2^{-1}$-modular of dimension $k-1$ and that $L_2$ is proper unimodular of dimension $\ge 2$.

  Then $L$ is $k$-universal if and only if one of the following holds:

  \begin{enumerate}
    \item $\dim L_2\ge 4$;
    \item $\dim L_2=3$ and $4\cO_F\subseteq \fn(L_3)$;
    \item $\dim L_2=2$, $\dim L_3\ge 2$ and $\frs(L_3)=\fn(L_3)=2\cO_F$;
    \item $\dim L_2=2$, $\dim L_3=1$, $\frs(L_3)=\fn(L_3)=2\cO_F$ and $8\cO_F\subseteq \fn(L_4)$.
  \end{enumerate}
\end{prop}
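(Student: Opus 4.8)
The plan is to follow the scheme of the other propositions in this section. Since $L_1$ is improper $2^{-1}$-modular of dimension $k-1$ and $L_2$ is proper unimodular of dimension $\ge 2$, the lattice $L$ satisfies the equivalent conditions of Proposition\;\ref{UL6.1}\,(2), so by the analysis in \eqref{UL6.3} we must examine conditions (a)--(h) there for every dominant $k$-dimensional lattice $\ell$. By Lemma\;\ref{UL6.6}\,(1) condition (a) always holds, and by Lemma\;\ref{UL6.6}\,(2) conditions (b), (c), (e), (h) hold for all $\ell$ exactly when $\dim L_{(1)}\ge k+2$. One checks at once that each of conditions (1)--(4) in the statement forces $\dim L_{(1)}\ge k+2$, whereas if $\dim L_{(1)}<k+2$ (that is, $\dim L_2=2$ but $\frs(L_3)=\fn(L_3)=2\cO_F$ fails) then $L$ is not $k$-universal. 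Hence we may assume $\dim L_{(1)}\ge k+2$ and are reduced to conditions (d), (f), (g) of \eqref{UL6.3}.

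For sufficiency: in cases (1), (2), (3) one has $\dim L_{(2)}\ge k+3$ (so also $\dim L_{(3)}\ge k+3$), and then (d), (f), (g) all follow from Proposition\;\ref{UL2.2}, the relevant orthogonal complements having dimension $\ge 3$. In case (4), $\dim L_{(3)}\ge k+3$ gives (f) and (g); condition (d) is handled as in the proof of Proposition\;\ref{UL6.5}, namely when $\dim P_1\ge 1$ the space $FI_{-1}\bot FI_0\bot FP_0$ has dimension $\le k-1$ and Proposition\;\ref{UL2.2} applies, while when $P_1=0$ it has dimension $k$ and unit discriminant whereas $d(FL_{(2)})$ is a uniformizer up to squares, so \cite[(63:11), (63:21)]{OMeara00} provide both the representation and the required value of the binary orthogonal complement.

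For necessity, two cases remain. \emph{Case (ii): $\dim L_2=2$, $\frs(L_3)=\fn(L_3)=2\cO_F$, $\dim L_3=1$ and $L_{(3)}=L_1\bot L_2\bot L_3$.} Here $FL_{(3)}\cong\bH^m\bot\dgf{\zeta_1,\zeta_2,2\mu}$ with $\zeta_1,\zeta_2,\mu\in\cO_F^*$, and --- exactly as in the proof of Proposition\;\ref{UL6.5}\,(4) --- one produces a dominant $\ell$ with $F\ell=\bH^m\bot\dgf{2\nu}$ or $F\ell=\bH^{m-1}\bot\dgf{1,-\Delta,2\nu}$ (according as $\dgf{\zeta_1,\zeta_2,2\mu}$ is anisotropic or isotropic) violating \eqref{UL6.3}\,(f), the second case resting on Lemma\;\ref{UL2.4}\,(2) and on the anisotropy of a quaternary space of the form $\dgf{1,-\Delta}\bot\dgf{2\eta,-2\Delta\eta}$. \emph{Case (i): $\dim L_2=3$ and $4\cO_F\not\subseteq\fn(L_3)$, so $L_{(2)}=L_1\bot L_2$ has dimension $k+2$.} By \eqref{UL5.3.1} the space $FL_{(2)}$ contains $\bH^m$, hence $FL_{(2)}\cong\bH^m\bot T$ for a ternary space $T$; set $c:=d(T)\in\cO_F^*$. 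Since $d_{\pm}(T\bot\dgf{c\Delta})=c^2\Delta\notin F^{*2}$, the space $T\bot\dgf{c\Delta}$ is isotropic, and as $\dgf{1,-\Delta}$ is the only binary space of discriminant $-\Delta$ we get $T\bot\dgf{c\Delta}\cong\bH\bot\dgf{1,-\Delta}$; therefore
\[
(\bH\bot T)\bot\dgf{-2,\,2\Delta,\,c\Delta}\;\cong\;\bH^2\bot\big(\dgf{1,-\Delta}\bot\dgf{-2,2\Delta}\big),
\]
whose anisotropic kernel $\dgf{1,-\Delta}\bot\dgf{-2,2\Delta}$ is the unique anisotropic quaternary space over $F$, since $\dgf{1,-\Delta}$ represents exactly the elements of even order and $\dgf{-2,2\Delta}$ exactly those of odd order. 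Thus $\dgf{2,-2\Delta,-c\Delta}$ is not represented by $\bH\bot T$, so (restoring $\bH^{m-1}$ by Witt cancellation) $\bH^{m-1}\bot\dgf{2,-2\Delta,-c\Delta}$ is not represented by $FL_{(2)}$; taking for $\ell$ the orthogonal sum of $m-1$ copies of $2^{-1}A(0,0)$, one copy of $A(2,2\rho)$, and $\dgf{-c\Delta}$ (a dominant lattice of dimension $k$), condition \eqref{UL6.3}\,(d) fails, so $L$ is not $k$-universal.

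The hard part will be the last step in case (i): one needs a single test lattice whose non-representability by $FL_{(2)}$ is insensitive to the (unknown) isometry type of $T$, and the point is the uniform isometry $T\bot\dgf{c\Delta}\cong\bH\bot\dgf{1,-\Delta}$ combined with the anisotropy of $\dgf{1,-\Delta}\bot\dgf{-2,2\Delta}$. Beyond that, the proof is bookkeeping with dimensions and discriminants, the cited results of O'Meara, Lemma\;\ref{UL2.4}, and the reductions already carried out in Lemma\;\ref{UL6.6} and in the proof of Proposition\;\ref{UL6.5}.
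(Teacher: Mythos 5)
Your reduction via Lemma\;\ref{UL6.6}, your sufficiency arguments, and your treatment of the necessity case $\dim L_2=2$, $\dim L_3=1$ all coincide with the paper's proof. The genuine gap is in your Case (i) (necessity of $4\cO_F\subseteq\fn(L_3)$ when $\dim L_2=3$). The claim that $\dgf{1,\,-\Delta}$ is the \emph{only} binary space of discriminant $-\Delta$ is false: $\dgf{2,\,-2\Delta}$ also has discriminant $-\Delta$ and is not isometric to $\dgf{1,\,-\Delta}$ (the former represents only elements of odd order, the latter only elements of even order). Hence the asserted isometry $T\bot\dgf{c\Delta}\cong \bH\bot\dgf{1,\,-\Delta}$ is not determined by $c=d(T)$ alone; it depends on the Hasse invariant of $T$, and the alternative $T\bot\dgf{c\Delta}\cong \bH\bot\dgf{2,\,-2\Delta}$ genuinely occurs. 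Your single test lattice then fails to detect non-universality. For instance over $F=\Q_2$, if $T\cong\dgf{1,\,1,\,1}$ (so $c=1$), then $T\bot\dgf{\Delta}\cong\bH\bot\dgf{2,\,-2\Delta}$, and in fact $\dgf{2,\,-2\Delta,\,-\Delta}\cong\dgf{1,\,1,\,1}$ (same dimension, discriminant and Hasse invariant), so $\bH^{m-1}\bot\dgf{2,\,-2\Delta,\,-c\Delta}$ \emph{is} represented by $ FL_{(2)}=\bH^m\bot T$ and your $\ell$ does not violate \eqref{UL6.3}\,(d) --- even though $L$ is indeed not $k$-universal in this situation (take $\veps\in\cO_F^*\setminus Q(T)$, e.g.\ $\veps=-1$ here, and test with $\bH^m\bot\dgf{\veps}$).

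So a uniform test lattice depending only on $d(T)$ cannot work; the test must depend on the isometry class of $T$, which is exactly why the paper splits into cases: if $T$ is anisotropic, choose $\veps\in\cO_F^*$ with $\veps\notin Q(T)$ (Lemma\;\ref{UL2.4}) and take $FI_{-1}\bot FI_0\bot FP_0=\bH^m\bot\dgf{\veps}$; if $T$ is isotropic, then $ FL_{(2)}\cong\bH^{m+1}\bot\dgf{\eta}$ and the test space $\bH^{m-1}\bot\dgf{2,\,-2\Delta}\bot\dgf{\Delta\eta}$ is not represented. (A further minor point: your appeal to \eqref{UL5.3.1} for ``$ FL_{(2)}$ contains $\bH^m$'' also needs the observation $\dgf{1,\,-\Delta}\bot\dgf{\eta}\cong\bH\bot\dgf{\Delta\eta}$, as spelled out in the proof of Lemma\;\ref{UL6.6}; this is easily repaired.) With the case distinction restored, the rest of your argument is the paper's proof.
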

\begin{proof}
  We may assume that $L$ satisfies the equivalent conditions in Lemma\;\ref{UL6.6} (2). We only need to check conditions \eqref{UL6.3} (d), (f) and (g).

  If $L$ satisfies condition (1), (2) or (3) of the proposition, then \eqref{UL6.3} (d), (f) and (g) hold for dimensional reasons. If $L$ satisfies (4), then $d( FL_{(1)})$ is a uniformizer up to squares and we can check \eqref{UL6.3} (d) by using \cite[(63:11) and (63:21)]{OMeara00}. Since $\dim L_{(3)}\ge k+3$, \eqref{UL6.3} (f) and (g) hold for dimensional reasons. We have thus proved the sufficiency part of the proposition.

  For the necessity, let us suppose that \eqref{UL6.3} (d), (f) and (g) hold for all dominant $k$-dimensional lattices $\ell$.

  First assume $\dim L_2=3$. We want to show $L_1\bot L_2\subset L_{(2)}$. Indeed, $FL_1\bot FL_2=\bH^m\bot V$ with $V=\dgf{\eta_1,\eta_2,\,\eta_3}$ for some $\eta_i\in\cO_F^*$. If $V$ is anisotropic, there exists $\veps\in \cO_F^*$ such that $\veps\notin Q(V)$, by Lemma\;\ref{UL2.4}. But it is possible to have $FI_{-1}\bot FI_0\bot FP_0=\bH^m\bot\dgf{\veps}$. So by \eqref{UL6.3} (d), if $L_1\bot L_2=L_{(2)}$, $V$ must be isotropic and hence $ FL_{(2)}=\bH^{m+1}\bot\dgf{\eta}$ with $\eta=-\eta_1\eta_2\eta_3$. But then in the case
  \[
  FI_{-1}\bot FI_0\bot FP_0=\bH^{m-1}\bot \dgf{2,\,-2\Delta}\bot \dgf{\Delta\eta}
  \] the representation relation $FI_{-1}\bot FI_0\bot FP_0\rep  FL_{(2)}$ is not true. So we must have $L_1\bot L_2\subset L_{(2)}$ as desired.

  Now let us suppose $\dim L_2=2$ and $\frs(L_3)=\fn(L_3)=\cO_F$. We want to show that either $\dim L_3\ge 2$ or $8\cO_F\subseteq \fn(L_4)$. Suppose that $\dim L_3=1$. Then $FL_1\bot FL_2\bot FL_3=\bH^m\bot \dgf{\eta_1,\,\eta_2}\bot \dgf{2\eta_3}$ for some $\eta_i\in\cO_F^*$. Put $V=\dgf{\eta_1,\,\eta_2}\bot \dgf{2\eta_3}$. If $V$ is anisotropic, then there exists $\veps\in \cO_F^*$ such that $2\veps\notin Q(V)$. Thus, if $L_1\bot L_2\bot L_3=L_{(3)}$, \eqref{UL6.3} (f) fails when $F\ell=\bH^m\bot\dgf{2\veps}$. If $V$ is isotropic, then $FL_1\bot FL_2\bot FL_3=\bH^{m+1}\bot \dgf{2\eta}$ with $\eta=-\eta_1\eta_2\eta_3$. If $L_1\bot L_2\bot L_3=L_{(3)}$, then \eqref{UL6.3} (f) fails when $F\ell=\bH^{m-1}\bot\dgf{1,\,-\Delta}\bot \dgf{2\Delta\eta}$. So in any case we find that $L_{(3)}$ must be strictly larger than $L_1\bot L_2\bot L_3$, namely, $8\cO_F\subseteq \fn(L_4)$. This completes the proof of the proposition.
\end{proof}

\begin{lemma}\label{UL6.8}
  Suppose that $L_1$ is improper $2^{-1}$-modular of dimension $k-1$, that $L_2$ is unimodular of dimension $1$ and that $L_3$ is proper $2$-modular.

  Then conditions $\eqref{UL6.3}\;(a),\,(b),\,(c)$ and $(e)$ hold.
\end{lemma}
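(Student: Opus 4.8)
The plan is to run through the cases $i=-2,-1,0$ of Theorem\;\ref{UL3.4} in the bookkeeping of \eqref{UL6.3}, reducing in each case to a low-dimensional boundary situation by a dimension count and then settling that situation by an explicit description of the spaces involved. First I would record what the hypotheses force. Since $L_1$ is improper $2^{-1}$-modular, $\fn(L_1)=\cO_F$, so $L_{(0)}=L_1\bot L_2$ has dimension $k=2m+1$ (write $L_2=\dgf{\eta}$ with $\eta\in\cO_F^*$) and $L_{(1)}=L_1\bot L_2\bot L_3$ has dimension $\ge k+1$ (a further component $L_4$, if present, has $\ord(\fn(L_4))\ge 2$ and so lies outside $L_{(1)}$). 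Moreover $FL_1\cong\bH^m$ or $\bH^{m-1}\bot\dgf{1,-\Delta}$ (cf.\ \eqref{UL5.3.1}), and since $\dgf{1,-\Delta}\bot\dgf{\eta}\cong\bH\bot\dgf{\Delta\eta}$ we get $FL_{(0)}\cong\bH^m\bot\dgf{\eta'}$ with $\eta'\in\{\eta,\Delta\eta\}$; similarly, when $\dim L_3=1$, say $L_3=\dgf{2\xi}$ with $\xi\in\cO_F^*$, we get $FL_{(1)}\cong\bH^m\bot\dgf{\eta',2\xi}$, a $(k+1)$-dimensional space whose discriminant has odd valuation. Finally $L$ has no proper $2^{-1}$-modular component but has the proper unimodular component $L_2$, so $\Delta_{-2}(L)=\Delta_{-1}(L)=\cO_F$, while $d(FI_{-1})$ and $d(FI_0)$ are always units (e.g.\ $d(\dgf{2,-2\Delta})\equiv-\Delta$).

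Condition (a) concerns $FL_{(0)}$, of dimension $k$. Here $\dim FI_{-1}$ is even and $\le k-1=2m$. If $\dim FI_{-1}\le 2m-2$, then $\dim FL_{(0)}-\dim FI_{-1}\ge 3$ and (a) follows from \cite[(63:21)]{OMeara00} and Proposition\;\ref{UL2.2}. If $\dim FI_{-1}=2m$, then $FI_{-1}\cong\bH^m$ or $\bH^{m-1}\bot\dgf{1,-\Delta}$, and from $FL_{(0)}\cong\bH^m\bot\dgf{\eta'}\cong\bH^{m-1}\bot\dgf{1,-\Delta}\bot\dgf{\Delta\eta'}$ one reads off by Witt cancellation that $FI_{-1}\rep FL_{(0)}$ with one-dimensional orthogonal complement $\cong\dgf{\delta}$, $\delta\in\cO_F^*$; as $\Delta_{-2}(\ell)\in\{0,\cO_F\}$ and $\Delta_{-2}(L)=\cO_F$, this complement represents both ideals.

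Conditions (b), (c), (e) concern $FL_{(1)}$. If $\dim L_3\ge 2$, then $\dim FL_{(1)}\ge k+2$; the left-hand spaces in (b) and (c) are even-dimensional of dimension $\le k-1$, and that in (e) has dimension $\le k$, so every relevant gap is $\ge 3$, whence (b) and (e) follow from \cite[(63:11),\,(63:21)]{OMeara00}, Proposition\;\ref{UL2.2} and Lemma\;\ref{UL2.4}\,(3), while the complement in (c) has dimension $\ge 3$, is not $\cong\bH$, so (c) holds vacuously. It remains to treat $\dim L_3=1$, so $FL_{(1)}\cong\bH^m\bot\dgf{\eta',2\xi}$. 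In (b), $FI_{-1}\bot FI_0$ has even dimension $\le 2m$ and unit discriminant while $\dim FL_{(1)}=2m+2$ with discriminant of odd valuation; if the gap is $\ge 3$ one argues as before, and if it equals $2$ the forced binary complement has discriminant of odd valuation, hence exists, so $FI_{-1}\bot FI_0\rep FL_{(1)}$ by \cite[(63:21)]{OMeara00}, and that complement --- binary with discriminant of odd valuation, or of dimension $\ge 3$ --- represents a unit and an element of $2\cO_F^*$, hence the ideals $\Delta_{-1}(\ell)$ and $\Delta_{-1}(L)$, by \cite[(63:11)]{OMeara00} or Proposition\;\ref{UL2.2}. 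Condition (e) is the same discriminant count: the left-hand space has dimension $\le k$ and unit discriminant, $FL_{(1)}\bot\dgf{1}$ has dimension $k+2$ and discriminant of odd valuation, so the representation holds and the complement represents $1$ or $\Delta$. For (c) with $\dim L_3=1$: if $P_0=0\ne P_1$ then $\dim FI_{-1}\bot FI_0\le 2m$, so the complement has dimension $\ge 2$, and when it is $2$ its discriminant has odd valuation, making it anisotropic, hence not $\cong\bH$; so (c) is again vacuous.

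I expect the only real obstacle to be the boundary case $\dim L_3=1$ of condition (b): one must be sure that a binary space with discriminant of odd valuation genuinely occurs as the orthogonal complement in the relevant representation and that it represents $\Delta_{-1}(\ell)$ and $\Delta_{-1}(L)$ --- this is where \cite[(63:11)]{OMeara00}, and behind it the structure of $F^*/F^{*2}$, is needed. Everything else is routine tracking of dimensions, parities and discriminants.
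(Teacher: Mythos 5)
Your argument is correct and follows essentially the same route as the paper's (very terse) proof: conditions (b), (c), (e) by dimension counts reduced to the codimension-two boundary case, which is settled by noting that the forced binary complement has discriminant of odd valuation (hence exists, is not $\bH$, and represents both units and prime elements), and condition (a) via the explicit shape $FL_{(0)}\cong\bH^m\bot\dgf{\eta'}\cong\bH^{m-1}\bot\dgf{1,-\Delta}\bot\dgf{\Delta\eta'}$. You have simply written out in full the "discriminant or dimension argument" that the paper leaves implicit.
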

\begin{proof}
  Conditions \eqref{UL6.3} (b), (c) and (e) hold by a discriminant or dimension argument. We have $ FL_{(0)}=FL_1\bot FL_2=\bH^m\bot\dgf{\eta}\cong \bH^{m-1}\bot \dgf{1,\,-\Delta}\bot\dgf{\Delta\eta}$ with $\eta\in \cO_F^*$. The space $FI_{-1}$ is always contained in $U_1=\bH^m$ or $U_2=\bH^{m-1}\bot\dgf{1,\,-\Delta}$. Noticing that $\Delta_{-2}(\ell)$ is 0 or $\cO_F$ and similarly for $\Delta_{-2}(L)$, as in the proof of Lemma\;\ref{UL6.6} (1), we find that \eqref{UL6.3} (a) is satisfied.
\end{proof}

\begin{prop}\label{UL6.9}
 Suppose that $L_1$ is improper $2^{-1}$-modular of dimension $k-1$, that $L_2$ is unimodular of dimension $1$ and that $L_3$ is proper $2$-modular.

  Then $L$ is $k$-universal if and only if one of the following holds:

  \begin{enumerate}
    \item $\dim L_3\ge 3$;
    \item $\dim L_3=2$ and $\frs(L_4)=\fn(L_4)=4\cO_F$;
    \item $\dim L_3=1$, $\dim L_4\ge 2$ and $\frs(L_4)=\fn(L_4)=4\cO_F$;
    \item $\dim L_3=\dim L_4=1$, $\frs(L_4)=\fn(L_4)=4\cO_F$ and $\frs(L_5)=\fn(L_5)=8\cO_F$.
  \end{enumerate}
\end{prop}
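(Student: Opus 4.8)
The plan is to follow the template of the proofs of Propositions~\ref{UL6.5} and~\ref{UL6.7}. By Lemma~\ref{UL6.8}, under the present hypotheses conditions (a), (b), (c) and (e) of \eqref{UL6.3} hold for every dominant $k$-dimensional lattice $\ell$, so $L$ is $k$-universal if and only if conditions (d), (f), (g) and (h) of \eqref{UL6.3} hold for all such $\ell$. Throughout I use that $FL_1\bot FL_2\cong\bH^m\bot\dgf{\eta'}$ with $\eta'$ a unit (Lemma~\ref{UL6.8}), that $FL_3$ is diagonalizable with entries in $2\cO_F^*$, and the standard fact that, as $\ell$ ranges over the dominant $k$-dimensional lattices, $F\ell$ and $FI_{-1}\bot FI_0\bot FP_0$ run over the spaces of the form $\bH^a\bot W$ with $W$ a small orthogonal sum of copies of $\dgf{1,-\Delta}$, $\dgf{2,-2\Delta}$ and one-dimensional unit or $2\cdot(\text{unit})$ spaces (subject to the obvious dimension and type constraints).

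For sufficiency, in cases (1)--(3) a direct computation gives $\dim L_{(2)}\ge k+3$ and $\dim L_{(3)}\ge k+3$, so (d), (f), (g) and (h) follow from dimension arguments using \cite[(63:11), (63:21)]{OMeara00} and Proposition~\ref{UL2.2}. In case (4) one has $\dim L_{(2)}=k+2$ with $d(FL_{(2)})$ a uniformizer up to squares (the unit discriminant of $FL_1\bot FL_2$ times $\dgf{2\cdot(\text{unit})}\bot\dgf{4\cdot(\text{unit})}$), so (d) follows from a discriminant comparison; and $\dim L_{(3)}\ge k+3$ because $L_5$ is proper $8$-modular, so (f), (g), (h) are again dimensional.

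For necessity, assume (d), (f), (g), (h) hold for all dominant $k$-dimensional $\ell$. If $\dim L_3\ge 3$ we are in case (1). If $\dim L_3=2$, I would first show $\frs(L_4)=\fn(L_4)=4\cO_F$, i.e. $L_1\bot L_2\bot L_3\subsetneq L_{(2)}$: otherwise $FL_{(2)}=\bH^m\bot V$ with $V$ a ternary space of unit discriminant, and testing (d) against $\ell$ with $FI_{-1}\bot FI_0\bot FP_0=\bH^m\bot\dgf{\veps}$ forces $\dgf{\veps}\rep V$ for all $\veps\in\cO_F^*$, while in the remaining sub-case (where $V$ does represent every unit, hence $V$ is isotropic) one tests against $\ell$ with $FI_{-1}\bot FI_0\bot FP_0=\bH^{m-1}\bot\dgf{2,-2\Delta}\bot\dgf{\text{unit}}$, obtaining a contradiction via Lemma~\ref{UL2.5}, exactly as in Proposition~\ref{UL6.7}. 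Granting that $L_4$ is proper $4$-modular, if $\dim L_3=1$ and $\dim L_4\ge 2$ we are in case (3); if $\dim L_3=\dim L_4=1$, I would analogously show $\frs(L_5)=\fn(L_5)=8\cO_F$, i.e. $L_1\bot L_2\bot L_3\bot L_4\subsetneq L_{(3)}$, by testing (f) against $\ell$ with $F\ell$ ranging over $\bH^m\bot\dgf{2\veps}$, $\bH^{m-1}\bot\dgf{1,-\Delta}\bot\dgf{2\veps}$ and $\bH^{m-1}\bot\dgf{\veps_1,\veps_2,\veps_3}$, and checking that $FL_{(3)}=\bH^m\bot\dgf{\eta',2\veps_3,4\veps_4}$ (were the inclusion an equality) fails to represent one of them, using Lemmas~\ref{UL2.4} and~\ref{UL2.5} (as in Propositions~\ref{UL4.8} and~\ref{UL6.5}).

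The main obstacle is the bookkeeping in the necessity direction: for each sub-case one must pin down which $L_{(i)}$ equal which Jordan sub-sums and then choose the right test lattices $\ell$. The delicate feature is that the ternary anisotropic spaces arising here --- those with a uniformizer among their diagonal entries --- may represent every unit (cf. Lemma~\ref{UL2.4}(2)) and may even be isotropic, so a test lattice with a one-dimensional unimodular part $P_0$ does not always suffice to force the next modular component to grow; one must also use test lattices with a three-dimensional $P_0$ or with a proper $2$-modular component $P_1$, and invoke the sharper anisotropy statements of Lemma~\ref{UL2.5} and Corollary~\ref{UL2.6}. I expect no new idea beyond those already used for Propositions~\ref{UL6.5} and~\ref{UL6.7}; only a more careful enumeration of cases is needed.
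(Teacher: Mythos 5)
Your proposal follows the paper's proof essentially step for step: the reduction via Lemma\;\ref{UL6.8} to conditions (d), (f), (g), (h) of \eqref{UL6.3}, the dimension/discriminant arguments for sufficiency, and the same necessity tests built from $\bH^m\bot\dgf{\veps}$, $\bH^{m-1}\bot\dgf{2,\,-2\Delta}\bot\dgf{\text{unit}}$, $\bH^m\bot\dgf{2\veps}$ and $\bH^{m-1}\bot\dgf{1,\,-\Delta}\bot\dgf{2\Delta\eta}$. The one step you assert rather than argue --- that $\dim L_3=1$ already forces $L_4$ to be proper $4$-modular --- is obtained in the paper by the same device, testing condition (d) against $\bH^m\bot\dgf{\veps}$ with $\veps\notin Q(\dgf{\eta_1,\,2\veps_1})$ via \cite[(63:11)]{OMeara00}; also, the places where you invoke Lemma\;\ref{UL2.5} really only need Lemma\;\ref{UL2.4} together with \cite[(63:11), (63:21)]{OMeara00}.
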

\begin{proof}
  Thanks to Lemma\;\ref{UL6.8}, it remains to check conditions (d), (f), (g) and (h) in \eqref{UL6.3}. If $\dim L_3\ge 3$, all these conditions hold by a dimension count.

  Let us assume $\dim L_3=2$. If $\frs(L_4)=\fn(L_4)=4\cO_F$, then $\dim L_{(2)}\ge k+3$, so we are done again by a dimension argument. Conversely, we claim that if \eqref{UL6.3} (d) holds for all dominant $k$-dimensional lattices $\ell$, then $L_1\bot L_2\bot L_3\subset L_{(2)}$. Indeed, we have $FL_1\bot FL_2\bot FL_3=\bH^m\bot\dgf{\eta_1}\bot \dgf{2\veps_1,\,2\veps_2}$ for some $\eta,\,\veps_1,\,\veps_2\in \cO_F^*$. Put $V=\dgf{\eta_1,\,2\veps_1,\,2\veps_2}$ and $U=FI_{-1}\bot FI_0\bot FP_0$. If $V$ is isotropic, then $V\cong \bH\bot \dgf{\veps}$ with $\veps=-\eta_1\veps_1\veps_2$. In the case $U=\bH^{m-1}\bot \dgf{2,\,-2\Delta}\bot \dgf{\Delta\veps}$, $FL_1\bot FL_2\bot FL_3$ does not represent $U$. If $V$ is anisotropic, then it does not represent $\veps=-\eta_1\veps_1\veps_2$. Thus, in the case $U=\bH^m\bot \dgf{\veps}$ the space $FL_1\bot FL_2\bot FL_3$ does not represent $U$. Our claim is thus proved.

  Next let us assume $\dim L_3=1$. Then $FL_1\bot FL_2\bot FL_3=\bH^m\bot \dgf{\eta_1}\bot \dgf{2\veps_1}$ for some $\eta_1,\,\veps_1\in \cO_F^*$. By \cite[(63:11)]{OMeara00}, there exists $\veps\in \cO_F^*$ with $\veps\notin Q(\dgf{\eta_1,\,2\veps_1})$. So the space $U=FI_{-1}\bot FI_0\bot FP_0$ is not represented by $FL_1\bot FL_2\bot FL_3$ if $U=\bH^m\bot\dgf{\veps}$. So by \eqref{UL6.3} (d) we may assume $L_1\bot L_2\bot L_3\subset L_{(2)}$, i.e., $\frs(L_4)=\fn(L_4)=4\cO_F$.

  If $\dim L_4\ge 2$, then $\dim L_{(2)}\ge k+3$ and therefore \eqref{UL6.3} (d), (f), (g) and (h) all hold for dimensional reasons. So let us assume $\dim L_4=1$. Thus $ FL_{(2)}=\bH^m\bot \dgf{\eta_1}\bot \dgf{2\veps_1}\bot \dgf{\eta_2}$ with $\eta_1,\,\eta_2,\,\veps_1\in \cO_F^*$. Then \eqref{UL6.3} (d) is verified by a dimension count or a discriminant comparison (Proposition\;\ref{UL2.2} and \cite[(63:11) and (63:21)]{OMeara00}).

  If $8\cO_F\subseteq \fn(L_5)$, then conditions \eqref{UL6.3} (f)--(h) are all verified by a dimension count. Conversely, it remains to prove that if \eqref{UL6.3} (f) holds for all $\ell$, then we must have $8\cO_F\subseteq \fn(L_5)$. Assume the contrary. Then $ FL_{(3)}=FL_1\bot FL_2\bot FL_3\bot FL_4=\bH^m\bot V$ with $V=\dgf{\eta_1,\,\eta_2,\,2\veps_1}$ with $\eta_1,\,\eta_2,\,\veps_1\in \cO_F^*$. If $V$ is isotropic, then $ FL_{(3)}\cong \bH^{m+1}\bot\dgf{2\eta}$ with $\eta=-\eta_1\eta_2\veps_1$. But then $F\ell$ is not represented by $ FL_{(3)}$ when $F\ell=\bH^{m-1}\bot \dgf{1,\,-\Delta}\bot \dgf{2\Delta\eta}$. If $V$ is anisotropic, then there exists $\veps\in \cO_F^*$ such that $2\veps\notin Q(V)$. In this case $F\ell$ is not represented by $ FL_{(3)}$ if $F\ell=\bH^m\bot\dgf{2\veps}$. This completes the proof.
\end{proof}

Summarizing Propositions\;\ref{UL6.5}, \ref{UL6.7} and \ref{UL6.9} yields the following result:

\begin{thm}\label{UL6.10}
  Let $F$ be an unramified extension of $\Q_2$ and let $L=L_1\bot \cdots\bot L_t$ be a Jordan splitting of an $\cO_F$-lattice.
 Let $k\ge 3$ be an odd integer.

  Then $L$ is $k$-universal if and only if $\fn(L_1)=\cO_F$, $\frs(L_1)=2^{-1}\cO_F$  and one of the following conditions holds:

\begin{enumerate}
  \item $\dim L_1\ge k+3$.

  \item $\dim L_1=k+1$ and one of the following cases happens:

   \begin{enumerate}
    \item $\dim L_2\ge 2$ and $2\cO_F\subseteq \fn(L_2)$;
    \item $\dim L_2=1$, $\frs(L_2)=\fn(L_2)=\cO_F$ and $4\cO_F\subseteq \fn(L_3)$;
    \item $\dim L_2=1$, $\frs(L_2)=\fn(L_2)=2\cO_F$ and $8\cO_F\subseteq \fn(L_3)$.
  \end{enumerate}
  \item $\dim L_1=k-1$, $\frs(L_2)=\fn(L_2)=\cO_F$ and one of the following cases happens:

  \begin{enumerate}
    \item $\dim L_2\ge 4$;
    \item $\dim L_2=3$ and $4\cO_F\subseteq \fn(L_3)$;
    \item $\dim L_2=2$, $\dim L_3\ge 2$ and $\frs(L_3)=\fn(L_3)=2\cO_F$;
    \item $\dim L_2=2$, $\dim L_3=1$, $\frs(L_3)=\fn(L_3)=2\cO_F$ and $8\cO_F\subseteq \fn(L_4)$.
  \end{enumerate}

  \item $\dim L_1=k-1$,  $\dim L_2=1$, $\frs(L_2)=\fn(L_2)=\cO_F$, $\frs(L_3)=\fn(L_3)=2\cO_F$ and one of the following cases happens:

  \begin{enumerate}
    \item $\dim L_3\ge 3$;
    \item $\dim L_3=2$ and $\frs(L_4)=\fn(L_4)=4\cO_F$;
    \item $\dim L_3=1$, $\dim L_4\ge 2$ and $\frs(L_4)=\fn(L_4)=4\cO_F$;
    \item $\dim L_3=\dim L_4=1$, $\frs(L_4)=\fn(L_4)=4\cO_F$ and $\frs(L_5)=\fn(L_5)=8\cO_F$.
  \end{enumerate}
\end{enumerate}
\end{thm}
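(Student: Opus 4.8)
The plan is to read off the statement from the lower‑type analysis of Proposition~\ref{UL6.1} together with the three structural classifications in Propositions~\ref{UL6.5}, \ref{UL6.7} and \ref{UL6.9}. Recall from \eqref{UL5.1} that $L$ is $k$-universal if and only if it represents every dominant $k$-dimensional lattice, and from Theorem~\ref{UL3.4} that $\ell\rep L$ forces $\ell$ to have a lower type than $L$. Hence a necessary condition for $k$-universality is that every dominant lattice of dimension $k$ have a lower type than $L$. First I would invoke Proposition~\ref{UL6.1}: this necessary condition is equivalent to ``$L_1$ is improper $2^{-1}$-modular'' — whence $\frs(L_1)=2^{-1}\cO_F$, and since $F/\Q_2$ is unramified also $\fn(L_1)=2\frs(L_1)=\cO_F$ — together with exactly one of the three alternatives
\begin{enumerate}
  \item[(i)] $\dim L_1\ge k+1$;
  \item[(ii)] $\dim L_1=k-1$, $\dim L_2\ge 2$ and $\frs(L_2)=\fn(L_2)=\cO_F$;
  \item[(iii)] $\dim L_1=k-1$, $\dim L_2=1$, $\frs(L_2)=\fn(L_2)=\cO_F$ and $\frs(L_3)=\fn(L_3)=2\cO_F$.
\end{enumerate}
This already produces the two asserted necessary conditions $\frs(L_1)=2^{-1}\cO_F$ and $\fn(L_1)=\cO_F$; and since $\dim L_1$ is even while $k$ is odd, alternative (i) means precisely $\dim L_1=k+1$ or $\dim L_1\ge k+3$ (the value $\dim L_1=k+2$ is excluded by parity, which is exactly why the extra subcase present in the even‑$k$ theorems disappears here).

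Next I would split into the three cases. Under alternative (i), Proposition~\ref{UL6.5} applies verbatim and shows that $L$ is $k$-universal precisely when $\dim L_1\ge k+3$ — this is Condition~(1) of the theorem — or $\dim L_1=k+1$ and one of its three subcases holds, and rewriting those subcases in the language of the theorem gives exactly (2)(a)--(c). Under alternative (ii), whose hypothesis coincides with that of Proposition~\ref{UL6.7}, that proposition says $L$ is $k$-universal precisely when one of its four conditions holds, and these are literally the subcases (3)(a)--(d). Under alternative (iii), whose hypothesis coincides with that of Proposition~\ref{UL6.9}, the same reasoning yields the subcases (4)(a)--(d). Since the alternatives (i)--(iii) are mutually exclusive and, together with ``$L_1$ improper $2^{-1}$-modular'', exhaust every configuration compatible with the lower‑type necessity, Conditions~(1)--(4) of the theorem are exhaustive and pairwise disjoint, which completes the argument.

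The step I expect to demand the most attention is organizational rather than mathematical: all the substance already sits in Proposition~\ref{UL6.1} (the lower‑type bookkeeping against Definition~\ref{UL3.2}) and in Propositions~\ref{UL6.5}, \ref{UL6.7}, \ref{UL6.9} (which themselves rest on the representability criterion of Theorem~\ref{UL3.4} and the quadratic‑space lemmas of Section~\ref{sec2}), so what remains will be to check the transcription between each proposition's phrasing and the theorem's — chiefly the equivalences of the shape $L_{1}\bot\cdots\bot L_{j}\subset L_{(i)}\iff 2^{i+1}\cO_F\subseteq\fn(L_{j+1})$ used throughout Section~\ref{sec6} — and to keep track of the parity constraint on $\dim L_1$ noted above.
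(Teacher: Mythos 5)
Your proposal is correct and follows essentially the same route as the paper, whose proof of Theorem~\ref{UL6.10} is simply to combine Propositions~\ref{UL6.5}, \ref{UL6.7} and \ref{UL6.9} (with Proposition~\ref{UL6.1} supplying, exactly as you note, the necessity of the three structural alternatives and hence the exhaustiveness of the case split). Your added remarks on the parity of $\dim L_1$ and on $\fn(L_1)=2\frs(L_1)$ in the unramified case are accurate and only make explicit what the paper leaves implicit.
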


\begin{prop}\label{UL6.2}
  Suppose $L$ is classic. Then the following are equivalent:

  \begin{enumerate}
    \item Every classic basic lattice of dimension $k$ has a lower type than $L$.
    \item $L_1$ is proper unimodular of dimension $\ge k$, and if $\dim L_1=k$, then $\frs(L_2)=\fn(L_2)=2\cO_F$.
  \end{enumerate}
\end{prop}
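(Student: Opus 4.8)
The plan is to follow the pattern of the proofs of Propositions \ref{UL5.2}, \ref{UL5.10} and \ref{UL6.1}. First I would fix an arbitrary classic basic lattice $\ell=I_0\bot P_0\bot I_1\bot P_1$ of dimension $k$ and record, for each $i\in\Z$, the sublattice $\ell_{\le i}$ together with the parity of $\ord(\fd_i(\ell))$. Since $\ell$ is classic it has no $2^{-1}$-modular part, so $\ell_{\le i}=0$ for $i<0$, $\ell_{\le 0}=I_0\bot P_0$, and $\ell_{\le i}=\ell$ for $i\ge 1$; moreover $\ord(\fd_0(\ell))=0$ when $I_0\bot P_0\neq 0$, and $\ord(\fd_i(\ell))=\dim I_1+\dim P_1\equiv\dim P_1\pmod 2$ for $i\ge 1$ because $I_1$ is even dimensional. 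On the side of $L$: since $L$ is classic we have $L_{\le i}=0$ for $i<0$, while $L_{\le 0}$ equals $L_1$ when $L_1$ is unimodular and $0$ otherwise.

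For the implication (1)$\Rightarrow$(2): applying \eqref{UL3.2}~(1) with $i=0$ to the test lattice $\ell=P_0$ with $\dim P_0=k$ (proper unimodular) yields $\dim L_{\le 0}\ge k$, hence $L_1$ is unimodular of dimension $\ge k$; then \eqref{UL3.2}~(3) at $i=-1$ (where $\dim\ell_{\le -1}=0=\dim L_{\le -1}$) applied to the same $\ell$ forces $L$ to possess a proper unimodular component, i.e. $L_1$ is proper unimodular. Finally, if $\dim L_1=k$, I would take $\ell=P_0\bot P_1$ with $\dim P_0=k-1$ and $\dim P_1=1$: then $0<\dim\ell_{\le 0}=k-1=\dim L_{\le 0}-1$ and $\ord(\fd_0(\ell)\fd_0(L))=0\equiv 0\pmod 2$, so case (b) of \eqref{UL3.2}~(4) with $i=0$ forces $L$ to have a proper $2$-modular component; since $L_1$ is unimodular this component must be $L_2$, whence $\frs(L_2)=\fn(L_2)=2\cO_F$.

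For the converse (2)$\Rightarrow$(1): assuming $L_1$ proper unimodular of dimension $\ge k$ (and $\frs(L_2)=\fn(L_2)=2\cO_F$ when $\dim L_1=k$), I would verify conditions (1)--(4) of Definition \ref{UL3.2} for every classic basic $\ell$ of dimension $k$, organized by the cases $i<-1$, $i=-1$, $i=0$, $i\ge 1$ and, within these, by whether $\dim L_1>k$ or $\dim L_1=k$. Condition (1) is immediate from $\dim\ell_{\le i}\le k\le\dim L_1=\dim L_{\le 0}\le\dim L_{\le i}$. For (2) and (3) the equality $\dim\ell_{\le i}=\dim L_{\le i}$ has content only at $i=0$ and only when $\dim L_1=k$, in which case $\ell=I_0\bot P_0$ with $\dim P_0=k-\dim I_0$ odd, so $P_0\neq 0$ and the second clause of \eqref{UL3.2}~(3) holds, while the remaining clauses are vacuous; at $i\ge 1$ the hypothesis of (2) and (3) never holds since $\frs(L_2)=2\cO_F$ forces $\dim L_{\le i}\ge k+1>k$ whenever $\dim L_1=k$. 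The only delicate instances are in \eqref{UL3.2}~(4) at $i\in\{0,1\}$ when $\dim L_1=k$ (and in particular $\dim L_2=1$), where $\dim\ell_{\le i}=\dim L_{\le i}-1$ can genuinely occur; here one must check that the parities of $\ord(\fd_i(\ell))\equiv\dim P_1\pmod 2$ and of $\ord(\fd_i(L))$ (which equals $0$ for $i=0$ and $1$ for $i=1$ when $\dim L_2=1$) line up so that whenever case (a) (resp. (b)) is triggered and $L$ (resp. $\ell$) has a proper $2^i$- (resp. $2^{i+1}$-) modular component, then $\ell$ (resp. $L$) does too. This last parity bookkeeping is exactly where the hypothesis $\frs(L_2)=\fn(L_2)=2\cO_F$ enters, and it is the main obstacle; all remaining instances of (2)--(4) are vacuous for dimension or support reasons, by the same careful but routine verification as in the proofs of Propositions \ref{UL5.2} and \ref{UL6.1}.
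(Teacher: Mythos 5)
Your proposal is correct and takes essentially the same route as the paper: the same test lattices (a proper unimodular $P_0$ of dimension $k$, and $P_0\bot P_1$ with $\dim P_1=1$ to trigger case (b) of \eqref{UL3.2}\,(4) at $i=0$) establish (1)$\Rightarrow$(2), and the converse reduces to the same case analysis. The one step you leave as ``parity bookkeeping''---condition \eqref{UL3.2}\,(4) at $i=1$ when $\dim L_1=k$ and $\dim L_2=1$---closes exactly as you predict: $\ord(\fd_1(L))\equiv 1$ and $\ord(\fd_1(\ell))\equiv\dim P_1\pmod 2$, so case (a) is triggered only when $\dim P_1$ is odd, in which case $P_1\neq 0$ supplies the required proper $2$-modular component of $\ell$.
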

\begin{proof}
  First, condition \eqref{UL3.2} (1) holds for all $\ell$ if and only if $\dim  L_{\le 0}\ge k$, i.e., $L_1$ is unimodular of dimension $\ge k$. If $i<0$, there is no need to check \eqref{UL3.2} (2) and (4). If $i<-1$, then \eqref{UL3.2} (3) holds automatically. For $i=-1$, \eqref{UL3.2} (3) requires that if $\ell$ has a proper unimodular component, then so does $L$. This being true for all $\ell$, we see that $L$ must have a proper unimodular component. So we may assume $L_1$ is proper unimodular of dimension $\ge k$.

  Next consider the case $i=0$. If $\dim\ell_{\le 0}=\dim  L_{\le 0}\ge k$, then $\ell=\ell_{\le 0}=I_0\bot P_0$ and $\dim  L_{\le 0}=k$. Since $k$ is odd and $\dim I_0$ is even, we have $P_0\neq 0$. So $\ell$ has no proper 2-modular component but has a proper unimodular component. Also, $\ord(\fd_0(\ell)\fd_0(L))$ is always even. So conditions \eqref{UL3.2} (2) and (3) hold for $i=0$. Suppose that $\dim \ell_{\le 0}=\dim  L_{\le 0}-1$. Then either $\dim \ell_{\le 0}=k$ or $\dim \ell_{\le 0}=k-1$. In the former case $I_1=P_1=0$, so \eqref{UL3.2} (4) is satisfied. In the latter case, condition \eqref{UL3.2} (4) means that if $\ell$ has a proper $2$-modular component, then so does $L$. Therefore, \eqref{UL3.2} (4) holds for all $\ell$ if and only if when $\dim L_1=k$ we have $\frs(L_2)=\fn(L_2)=2\cO_F$.

  In the rest of the proof we assume that $L$ satisfies the conditions in (2) and we want to check that conditions \eqref{UL3.2} (2)--(4) are satisfied for all $i\ge 1$.

  We now have $\dim  L_{\le 1}\ge k+1>\dim \ell$. So we only need to consider \eqref{UL3.2} (4). Suppose that for some $i\ge 1$ we have $\dim L_{\le i}=k+1$. Then we get $L_{\le i}= L_{\le 1}$. So if $i>1$, then $L$ has no $2^i$-modular component. In this case \eqref{UL3.2} (4) clearly holds. For $i=1$, we conclude that $\dim  L_{\le 1}=k+1$. If $\dim L_1=k+1$, this implies that $L$ has no 2-modular component, and hence \eqref{UL3.2} (4) is verified. Otherwise we have $\dim L_1=k$ and $L_2$ is 2-modular of dimension 1. In this case $\ord(\fd_1(L))\equiv 1\pmod{2}$. On the other hand, $\ord(\fd_1(\ell))\equiv \dim P_1\pmod{2}$. Therefore, if $\ord(\fd_1(\ell)\fd_1(L))\equiv 2\pmod{2}$, we have $P_1\neq 0$ and thus \eqref{UL3.2} (4) holds. This completes the proof.
\end{proof}

\newpara\label{UL6.11} Suppose that $L$ satisfies the equivalent conditions in Proposition\;\ref{UL6.2}, i.e., $L_1$ is proper unimodular of dimension $\ge k$ and if $\dim L_1=k$, then $L_2$ is proper $2$-modular. In particular, we have $\dim L_{(1)}\ge k+1$ and $\dim  L_{\le 1}\ge k+1$.

Let $\ell=I_0\bot P_0\bot I_1\bot P_1$ be an arbitrary classic basic lattice of dimension $k=2m+1\ge 3$.

If $i<-2$, then $\ell_{[i]}=\ell_{\le i}=0$, $\Delta_i(\ell)=\Delta_i(L)=0$. In this case all the conditions in Theorem\;\ref{UL3.4} are trivially verified.

For $i=-2$, we have $\ell_{[-2]}=\ell_{\le -2}=0$, $\Delta_{-2}(\ell)$ is 0 or $\cO_F$ depending on whether $P_0=0$ or not, and $\Delta_{-2}(L)=\cO_F$. It is easily seen that all the conditions in Theorem\;\ref{UL3.4} hold in this case.

For $i=-1$, we have $\ell_{[-1]}=I_0$, $\ell_{\le -1}=0$,
\[
 \Delta_{-1}(L)=\cO_F\quad\text{and}\quad \Delta_{-1}(\ell)=\begin{cases}
  \cO_F\quad & \text{ if } P_0\neq 0\,,\\
  2\cO_F\quad & \text{ if } P_0=0\neq P_1\,,\\
  0 \quad & \text{ otherwise}\,.
\end{cases}
\]Condition \eqref{UL3.4} (1) means that
\begin{enumerate}
  \item[(a)] $FI_0\rep  FL_{(1)}$ and the orthogonal complement represents $\Delta_{-1}(\ell)$ and $\Delta_{-1}(L)$.
\end{enumerate}
Condition \eqref{UL3.4} (2) is equivalent to the following:
\begin{enumerate}
  \item[(b)] If $ FL_{(1)}/^\bot FI_0\cong \bH$, then either $P_0\neq 0$ or $P_0=P_1=0$.
\end{enumerate}
Trivially \eqref{UL3.4} (3) holds. Condition \eqref{UL3.4} (4) requires that $FI_0\rep FL_1\bot \dgf{2}$ and that the orthogonal complement represents 2 or $2\Delta$. This condition always holds for dimension or discriminant reasons.

Next consider the case $i=0$. We have $\ell_{[0]}=I_0\bot P_0\bot I_1$, $\ell_{\le 0}=I_0\bot P_0$, $\Delta_0(\ell)$ is 0 or $2\cO_F$ depending on whether $P_1=0$ or not, and $\Delta_0(L)\subseteq 2\cO_F$. Thus \eqref{UL3.4} (2) holds automatically, \eqref{UL3.4} (1) says:
\begin{enumerate}
  \item[(c)] $FI_0\bot FP_0\bot FI_1\rep  FL_{(2)}$ and the orthogonal complement represents $\Delta_{0}(\ell)$ and $\Delta_{0}(L)$.
\end{enumerate}Conditions \eqref{UL3.4} (3) and (4) are translated into the following two conditions:
\begin{enumerate}
  \item[(d)] $FI_0\bot FP_0\rep  FL_{(1)}\bot \dgf{1}$ and that the orthogonal complement represents 1 or $\Delta$.
  \item[(e)] $FI_0\bot FP_0\bot FI_1\rep  FL_{\le 1}\bot \dgf{1}$ and that the orthogonal complement represents 1 or $\Delta$.
\end{enumerate}

For $i\ge 1$ we have $\ell_{[i]}=\ell_{\le i}=\ell$ and $\Delta_i(\ell)=0$. So \eqref{UL3.4} (2) holds trivially and \eqref{UL3.4} (4)  is a consequence of \eqref{UL3.4} (3). We may restate conditions \eqref{UL3.4} (1) and (3) as the following conditions:
\begin{enumerate}
  \item[(f)] $F\ell\rep  FL_{(3)}$;
  \item[(g)] for every $i\ge 1$, $ FL_{(i+2)}/^\bot F\ell$ represents $\Delta_i(L)$;
  \item[(h)] $F\ell\rep  FL_{(2)}\bot \dgf{2}$ and the orthogonal complement represents 2 or $2\Delta$.
\end{enumerate}

\begin{prop}\label{UL6.12}
Suppose $L_1$ is proper unimodular of dimension $\ge k+2$.

Then $L$ is classic $k$-universal if and only if either $\dim L_1\ge k+3$, or $\dim L_1=k+2$ and $4\cO_F\subseteq \fn(L_2)$.
\end{prop}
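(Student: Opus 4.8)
The plan is to apply Theorem~\ref{UL3.4} via the reformulation in \eqref{UL6.11}: under the standing hypotheses of that paragraph (in particular $L$ satisfies the equivalent conditions of Proposition~\ref{UL6.2}), a classic basic lattice $\ell$ of dimension $k$ is represented by $L$ exactly when conditions (a)--(h) of \eqref{UL6.11} hold, so by \eqref{UL5.1} the lattice $L$ is classic $k$-universal if and only if (a)--(h) hold for every classic basic $\ell$ of dimension $k$. I treat sufficiency and necessity separately.

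For sufficiency, the point is that the stated hypotheses force every codimension occurring in (a)--(h) to be at least $3$. Each $\ell$-space appearing there is $FI_0$, $F\ell_{\le 0}$, $F\ell_{[0]}$ or $F\ell$, so has dimension $\le k$, and moreover $\dim FI_0\le k-1$ since $I_0$ is even-dimensional while $k$ is odd. If $\dim L_1\ge k+3$, then $\dim FL_{\le 1}=\dim FL_{(1)}\ge k+3$, so every ambient space in (a)--(h) has dimension $\ge k+3$. If $\dim L_1=k+2$ and $4\cO_F\subseteq\fn(L_2)$, then $L_2\ne 0$ with $L_2\subseteq L_{(2)}$ and $L_2\subseteq L_{(3)}$, whence $\dim FL_{(2)},\dim FL_{(3)}\ge k+3$; the ambient spaces in (a)--(b) are then $FL_{(1)}\supseteq FL_1$ of dimension $\ge k+2$ paired against $FI_0$ of dimension $\le k-1$, and in (d)--(e), (h) the ambient space is $FL_{(1)}\bot\dgf{1}$, $FL_{\le 1}\bot\dgf{1}$ or $FL_{(2)}\bot\dgf{2}$, of dimension $\ge k+3$, paired against $\ell$-spaces of dimension $\le k$. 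In all cases the codimension is $\ge 3$, so the $\ell$-spaces are represented by \cite[(63:21)]{OMeara00}, and the orthogonal complements, having dimension $\ge 3$, represent every fractional ideal by Proposition~\ref{UL2.2} and represent $1$ or $\Delta$ (resp.\ $2$ or $2\Delta$) by Lemma~\ref{UL2.4}\,(3). Hence (a)--(h) hold for all $\ell$, and $L$ is classic $k$-universal.

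For necessity, suppose $\dim L_1=k+2$ and $L$ is classic $k$-universal but $4\cO_F\not\subseteq\fn(L_2)$. Since norms of lattices are totally ordered ideals, this means $\fn(L_2)\subseteq 8\cO_F$, which forces $\frs(L_2)\subseteq 4\cO_F$ and then $\fn(L_r)\subseteq 8\cO_F$ for every $r\ge 2$; consequently $L_{(2)}=L_1$, a space of dimension $k+2=2m+3$ with $d(FL_1)\in\cO_F^*$. I will produce a proper unimodular lattice $\ell$ of dimension $k$ with $I_0=I_1=P_1=0$ (so that $F\ell_{[0]}=F\ell$ and $\Delta_0(\ell)=0$) whose space is not represented by $FL_1$; then $F\ell\not\rep FL_{(2)}=FL_1$ violates \eqref{UL6.11}~(c), contradicting $\ell\rep L$. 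Because $\dim FL_1-\dim F\ell=2$, any binary space $Z$ with $F\ell\bot Z\cong FL_1$ must have discriminant $\equiv d(FL_1)\,d(F\ell)\pmod{F^{*2}}$; if this class is $-1$ then $Z\cong\bH$ is forced, and $F\ell\rep FL_1$ then amounts to a single equality of Hasse symbols. So it suffices to choose $\ell$ proper unimodular of dimension $k\ge 3$ with $d(F\ell)\equiv -d(FL_1)\pmod{F^{*2}}$ and with the Hasse symbol of $F\ell$ chosen to make that equality fail. This is possible: any ternary space with unit discriminant is isometric to a diagonal proper unimodular lattice $\dgf{\veps_1,\veps_2,\veps_3}$, and for each fixed unit discriminant both Hasse symbols arise this way; adjoining $\dgf{1,-1}^{\bot(m-1)}$ then yields a $k$-dimensional proper unimodular lattice realizing any prescribed unit discriminant and either Hasse symbol. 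The resulting contradiction shows $4\cO_F\subseteq\fn(L_2)$.

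The main obstacle is the realizability input used in the necessity part --- that over the unramified dyadic field $F$, proper unimodular lattices of dimension $k$ realize all $k$-dimensional quadratic spaces of unit discriminant (equivalently both Hasse symbols for each unit discriminant) --- together with the short discriminant/Hasse bookkeeping for the codimension-$2$ representation $F\ell\rep FL_1$; the remainder is routine dimension counting using \cite[(63:11), (63:21)]{OMeara00}, Proposition~\ref{UL2.2} and Lemma~\ref{UL2.4}.
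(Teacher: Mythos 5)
Your proof is correct in outline and reaches the right conclusion, but the necessity half takes a genuinely different route from the paper's. The sufficiency direction is the same dimension-count argument the paper uses. For necessity, the paper writes $FL_1=\bH^m\bot V$ with $V=\dgf{\eta_1,\eta_2,\eta_3}$ unit-diagonal and splits into two cases: if $V$ is isotropic it exhibits the non-represented space $\bH^{m-1}\bot\dgf{2,-2\Delta}\bot\dgf{\Delta\eta}$ (so the witness $\ell$ has an improper $2$-modular component $I_1$), and if $V$ is anisotropic it uses Lemma\;\ref{UL2.4}\,(1) to find a unit $\veps\notin Q(V)$ and takes $\bH^m\bot\dgf{\veps}$. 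Your argument instead uses a single, purely proper-unimodular witness chosen so that $d(F\ell)\equiv -d(FL_1)$ and the Hasse symbol is the one for which $F\ell\bot\bH\not\cong FL_1$; this avoids the case split and the improper component, at the cost of an extra realizability input. That input --- that for every unit discriminant \emph{both} ternary spaces (both Hasse symbols) are isometric to unit-diagonal forms $\dgf{\veps_1,\veps_2,\veps_3}$, equivalently that the anisotropic ternary space of any unit discriminant is unit-diagonalizable --- is true over the unramified dyadic $F$, but you assert it without proof, and it is the one place where your argument is not self-contained. It does not hold for trivial reasons (it uses $|\cO_F^*/\cO_F^{*2}|\ge 4$), but it follows quickly from the paper's own Lemma\;\ref{UL2.5}: that lemma produces an anisotropic $\dgf{1,-c,-\eta}$ with unit entries, and scaling by an arbitrary unit $\veps$ gives anisotropic unit-diagonal ternary forms of discriminant $c\eta\veps$, hence of every unit discriminant class; the isotropic case is $\dgf{1,-1,e}$. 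With that one sentence added, your proof is complete. A minor stylistic point: the detour through $\frs(L_2)\subseteq 4\cO_F$ is unnecessary --- $\fn(L_r)\subseteq\fn(L_2)\subseteq 8\cO_F$ for $r\ge 2$ already gives $L_{(2)}=L_1$, which is all you use.
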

\begin{proof}
  Under the assumption $\dim L_1\ge k+2$, conditions \eqref{UL6.11} (a), (b), (d), (e) and (h) follow from a dimension count. It remains to show that conditions \eqref{UL6.11} (c), (f) and (g) hold for all classic basic lattices $\ell$ of dimension $k$ if and only if $L$ has the property described in the proposition. The ``if'' part is immediate for dimensional reasons.

  Now let us suppose $\dim L_1=k+2$. Then $FL_1=\bH^m\bot V$ with $V=\dgf{\eta_1,\,\eta_2,\,\eta_3}$ for some $\eta_i\in\cO_F^*$. It is sufficient to show that if \eqref{UL6.11} (c) holds for all $\ell$, then we must have $L_1\subset L_{(2)}$.

  Assume the contrary. If $V$ is isotropic, then $ FL_{(2)}=\bH^{m+1}\bot \dgf{\eta}$ with $\eta=-\eta_1\eta_2\eta_3\in \cO_F^*$. In this case \eqref{UL6.11} (c) fails when $FI_0\bot FP_0\bot FI_1=\bH^{m-1}\bot\dgf{2,\,-2\Delta}\bot \dgf{\Delta\eta}$. If $V$ is anisotropic, then by Lemma\;\ref{UL2.4} there exists $\veps\in \cO_F^*$ such that $\veps\notin Q(V)$. Then \eqref{UL6.11} (c) fails when $FI_0\bot FP_0\bot FI_1=\bH^m\bot\dgf{\veps}$.
\end{proof}

\begin{lemma}\label{UL6.13}
Suppose $L_1$ is proper unimodular of dimension $k+1$. Then the following are equivalent:

\begin{enumerate}
  \item Conditions $\eqref{UL6.11}\,(a),\,(b),\,(d),\,(e)$ and $(h)$ hold for all classic basic lattices $\ell$ of dimension $k$.
  \item Conditions $\eqref{UL6.11}\,(a),\,(b)$ and $(d)$ hold for all classic basic lattices $\ell$ of dimension $k$.
  \item $L_1\subset L_{(1)}$, i.e., $\frs(L_2)=\fn(L_2)=2\cO_F$.
\end{enumerate}
\end{lemma}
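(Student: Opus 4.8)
The plan is to prove the equivalences by establishing the cycle $(1)\Rightarrow(2)\Rightarrow(3)\Rightarrow(1)$. The implication $(1)\Rightarrow(2)$ is trivial. For $(3)\Rightarrow(1)$, I argue by a dimension count: if $\frs(L_2)=\fn(L_2)=2\cO_F$, then $L_2$ lies in $(1)_L$, in $[\le 1]_L$ and in $(2)_L$, so $\dim FL_{(1)}$, $\dim FL_{\le 1}$ and $\dim FL_{(2)}$ are all at least $\dim L_1+\dim L_2\ge k+2$. Since the spaces $FI_0$, $FI_0\bot FP_0$, $FI_0\bot FP_0\bot FI_1$ and $F\ell$ appearing in \eqref{UL6.11} (a), (b), (d), (e) and (h) have dimension $\le k$, the relevant codimensions are always $\ge 3$; then \cite[(63:21)]{OMeara00} supplies the representations and Proposition\;\ref{UL2.2} shows the orthogonal complements (of dimension $\ge 3$) represent the required ideals and the elements $1,\Delta,2,2\Delta$. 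In particular $FL_{(1)}/^\bot FI_0$ has dimension $\ge 3$, hence is never isometric to $\bH$, so \eqref{UL6.11} (b) is vacuous.

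The core of the argument is $(2)\Rightarrow(3)$, proved in contrapositive form: assuming $L_{(1)}=L_1$, I exhibit a classic basic lattice $\ell$ of dimension $k$ for which one of \eqref{UL6.11} (a), (b), (d) fails. We have $FL_{(1)}=FL_1$ with $d(FL_1)\in\cO_F^*$, and for a classic basic $\ell=I_0\bot P_0\bot I_1\bot P_1$ the space $FI_0$ is always contained in $\bH^m$ or in $\bH^{m-1}\bot\dgf{2,-2\Delta}$, while $\Delta_{-1}(L)=\cO_F$. Testing \eqref{UL6.11} (a) with an $\ell$ having $FI_0=\bH^m$ and $\dim P_0=1$ (so $\Delta_{-1}(\ell)=\cO_F$) forces $FL_1$ to represent $\bH^m$ with a binary orthogonal complement representing a unit; if this fails we are done, so we may assume $FL_1\cong\bH^m\bot\dgf{\eta_1,\eta_2}$ with $\eta_1,\eta_2\in\cO_F^*$. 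From here I split into cases according to the class of $-\eta_1\eta_2=-d(FL_1)$ in $F^*/F^{*2}$, always testing with $\ell$ whose $FI_0=\bH^{m-1}\bot\dgf{2,-2\Delta}$ together with a one–dimensional proper modular summand.

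If $-\eta_1\eta_2\in F^{*2}$, then $FL_1\cong\bH^{m+1}$ and the complement of $\bH^{m-1}\bot\dgf{2,-2\Delta}$ in $FL_1$ is isometric to $\dgf{-2,2\Delta}$, an anisotropic space representing only elements of odd valuation, hence not $\Delta_{-1}(L)=\cO_F$; so \eqref{UL6.11} (a) fails for the $\ell$ with this $FI_0$ and $\dim P_0=1$. If $-\eta_1\eta_2\in\Delta F^{*2}$, then $\dgf{\eta_1,\eta_2}$ is (up to a unit scalar) the norm form of the unramified quadratic extension, which represents no element of odd valuation, whereas $\dgf{2,-2\Delta}$ represents only such elements; the two binary spaces share no value, so $\bH^{m-1}\bot\dgf{2,-2\Delta}\not\rep FL_1$ and \eqref{UL6.11} (a) fails again. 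If $-\eta_1\eta_2\notin F^{*2}\cup\Delta F^{*2}$, then $F(\sqrt{-\eta_1\eta_2})/F$ is ramified, $\dgf{\eta_1,\eta_2}$ and $\dgf{2,-2\Delta}$ do share a value, so $\bH^{m-1}\bot\dgf{2,-2\Delta}\rep FL_1$; the binary complement $W:=FL_1/^\bot(\bH^{m-1}\bot\dgf{2,-2\Delta})$ has $d(W)\equiv\eta_1\eta_2\Delta$, and since $\eta_1\eta_2\Delta\not\equiv-1$ here, $W\not\cong\bH$ and $W$ is anisotropic, so \eqref{UL6.11} (b) holds. If $W$ represents no unit, \eqref{UL6.11} (a) fails; otherwise $W\cong\dgf{u,u\eta_1\eta_2\Delta}$ for a unit $u$, one checks that \eqref{UL6.11} (a), (b) hold for all $\ell$, and I turn to \eqref{UL6.11} (d). With $FI_0=\bH^{m-1}\bot\dgf{2,-2\Delta}$ and $P_0=\dgf{\veps}$, cancelling $\bH^{m-1}$ and then (by Witt cancellation) the block $\dgf{2,-2\Delta}$ reduces \eqref{UL6.11} (d) to: for every $\veps\in\cO_F^*$, $\dgf{\veps}\rep T$ and $T/^\bot\dgf{\veps}$ represents $1$ or $\Delta$, where $T:=\dgf{1,u,u\eta_1\eta_2\Delta}$, a ternary space with $d(T)\in\cO_F^*$. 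One produces a unit $\veps$ violating this: if $T$ is anisotropic, $Q(T)\cap F^*$ has index $2$ in $F^*$ by \cite[(63:15)]{OMeara00}, so some unit $\veps$ is not represented; if $T$ is isotropic, then $T\cong\bH\bot\dgf{-\eta_1\eta_2\Delta}$, hence $T/^\bot\dgf{\veps}\cong\dgf{-\veps,-\eta_1\eta_2\Delta}$, and since $\eta_1\eta_2\Delta\in\cO_F^*$ a Hilbert–symbol computation in the spirit of Lemma\;\ref{UL2.5} and Corollary\;\ref{UL2.6} (using \cite[p.202, Lemma\;3]{Hsia75Pacific}) gives $\veps\in\cO_F^*$ for which $\dgf{-\veps,-\eta_1\eta_2\Delta}$ represents neither $1$ nor $\Delta$.

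The main obstacle is the last case: there \eqref{UL6.11} (a) and (b) by themselves do not detect $L_{(1)}=L_1$, and the contradiction must come from \eqref{UL6.11} (d); the delicate points are the Witt–cancellation step reducing the five–dimensional representation to the statement about $T$, and the Hilbert–symbol bookkeeping required to choose a unit $\veps$ that defeats both alternatives $1$ and $\Delta$. A secondary, purely organizational, matter is to record once and for all the admissible isometry types of $FI_0$ for a classic basic $\ell$ and the values of $\Delta_{-1}(\ell)$, $\Delta_{-1}(L)=\cO_F$, $\Delta_0(\ell)$ and $\Delta_0(L)$.
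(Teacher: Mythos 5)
Your proposal is correct and follows essentially the paper's own route: (1)$\Rightarrow$(2) is trivial, (3)$\Rightarrow$(1) is the same codimension-$\ge 3$ count, and (2)$\Rightarrow$(3) is proved by testing the same kind of lattices ($FI_0=\bH^m$ or $\bH^{m-1}\bot\dgf{2,\,-2\Delta}$ together with a one-dimensional proper block) against conditions (a), (b), (d) and invoking Lemma~\ref{UL2.4} and the Hilbert-symbol computation behind Lemma~\ref{UL2.5}; your normalization $FL_1\cong\bH^m\bot\dgf{\eta_1,\,\eta_2}$ and the subsequent split on $d_{\pm}(FL_1)$ merely redistribute the paper's cases among these conditions (the paper handles the square case via (b) and the $\dgf{2,\,-2\Delta}$ case via (d), you handle both via (a), and in the ramified case your ternary space $T=\dgf{1,\,u,\,u\eta_1\eta_2\Delta}$ plays the role of the paper's $U\bot\dgf{1}$). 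The only slip is the appeal to \cite[(63:15)]{OMeara00} in the anisotropic-$T$ subcase: that result concerns binary spaces, and an index-two value set would not by itself have to miss a unit; the correct justification is Lemma~\ref{UL2.4}\,(1), which shows that the anisotropic ternary $T$, having an all-unit diagonalization, omits exactly the unit class of $-\eta_1\eta_2\Delta$ --- which is the argument the paper itself uses at the corresponding step.
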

\begin{proof}
  By dimension count we have (3)$\Rightarrow$(1). Let us prove (2)$\Rightarrow$(3) by contradiction.

  Suppose that $L_1=L_{(1)}$. Then $ FL_{(1)}=FL_1=\bH^{m-1}\bot V$ with $V=\dgf{\eta_1,\cdots, \eta_4}$ for some $\eta_i\in \cO_F^*$. If \eqref{UL6.11} (a) holds for all $\ell$, $V$ must be isotropic and represent $\dgf{2,\,-2\Delta}$. Thus $V\cong \bH\bot U$ with $U\cong \dgf{2,\,-2\Delta}$ or $-d(U)\notin \Delta F^{*2}$. If $U\cong \dgf{2,\,-2\Delta}$, then in the case $FI_0\bot FP_0=\bH^m\bot\dgf{1}$ condition \eqref{UL6.11} (d) fails. If $-d(U)\in F^{*2}$, then $ FL_{(1)}=\bH^{m+1}$ and in the case with $FI_0=\bH^m$ and $\dim P_1=1$, condition \eqref{UL6.11} (b) fails. If $-d(U)\notin F^{*2}\cup \Delta F^{*2}$, then $c:=-d(U)\in \cO_F^*\setminus (\cO_F^{*2}\cup \Delta \cO_F^{*2})$, and by Corollary\;\ref{UL2.6} we may write $U=\dgf{\eta,\,-\eta c}$ with $\eta\in \cO_F^*$. If $U\bot \dgf{1}$ is isotropic, then $ FL_{(1)}\bot \dgf{1}=\bH^{m+1}\bot\dgf{c}$. Choosing $\veps\in \cO_F^*$ such that $\dgf{c,\,-\veps}$ represents neither 1 nor $\Delta$ (which is possible by Lemma\;\ref{UL2.5}), we find that \eqref{UL6.11} (d) fails when $FI_0\bot FP_0=\bH^m\bot \dgf{\veps}$, because in that case $(FL_{(1)}\bot \dgf{1})/^\bot (FI_0\bot FP_0)$ is isomorphic to $\dgf{c,\,-\veps}$. If $U\bot \dgf{1}$ is anisotropic, then there exists $\veps\in \cO_F^*$ such that $\veps\notin Q(U\bot \dgf{1})$ by Lemma\;\ref{UL2.4}. Then \eqref{UL6.11} (d) fails when $FI_0\bot FP_0=\bH^m\bot\dgf{\veps}$.

  The above discussions together prove (2)$\Rightarrow$(3), so the lemma is proved.
  \end{proof}

\begin{prop}\label{UL6.14}
Suppose $L_1$ is proper unimodular of dimension $k+1$.

Then $L$ is classic $k$-universal if and only if $\frs(L_2)=\fn(L_2)=2\cO_F$ and one of the following holds:

\begin{enumerate}
  \item $\dim L_2\ge 2$;
  \item $\dim L_2=1$ and $8\cO_F\subseteq \fn(L_3)$.
\end{enumerate}
\end{prop}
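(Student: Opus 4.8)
The plan is to invoke the analysis set up in \eqref{UL6.11}, which reduces the representability of every classic basic $k$-dimensional lattice $\ell=I_0\bot P_0\bot I_1\bot P_1$ by $L$ to the conditions \eqref{UL6.11}\,(a)--(h). By Lemma\;\ref{UL6.13}, conditions \eqref{UL6.11}\,(a), (b), (d), (e), (h) hold for all such $\ell$ if and only if $L_1\subset L_{(1)}$, i.e. $\frs(L_2)=\fn(L_2)=2\cO_F$; hence this equality is necessary for $L$ to be classic $k$-universal, and I would assume it throughout. It then remains to decide when \eqref{UL6.11}\,(c), (f) and (g) hold for all $\ell$.

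First, if $\dim L_2\ge 2$, then $\dim L_{(1)}\ge k+3$, hence also $\dim L_{(2)},\dim L_{(3)}\ge k+3$, and \eqref{UL6.11}\,(c), (f), (g) hold by dimension counts, Proposition\;\ref{UL2.2} being applied to the orthogonal complements that arise. So $L$ is classic $k$-universal whenever $\dim L_2\ge 2$, which gives the ``if'' part of case (1). From now I assume $\dim L_2=1$, say $L_2=\dgf{2\eta}$ with $\eta\in\cO_F^*$; then $L_{(1)}=L_1\bot L_2=L_{\le 1}$ has dimension $k+2$ and $d(FL_{(1)})$ is a uniformizer up to squares. Condition \eqref{UL6.11}\,(c) I would dispatch directly: the relevant orthogonal complement is either at least ternary, handled by Proposition\;\ref{UL2.2}, or binary with determinant a uniformizer up to squares, in which case \cite[(63:11), (63:21)]{OMeara00} applies; here $\Delta_0(L)=2\cO_F$ since $L_2$ is proper $2$-modular, $\Delta_0(\ell)\in\{0,2\cO_F\}$, and such a binary space does represent $2\cO_F$, so (c) holds unconditionally.

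The crux is to show that, under these assumptions, \eqref{UL6.11}\,(f) and (g) hold for all $\ell$ if and only if $8\cO_F\subseteq\fn(L_3)$, i.e. $L_1\bot L_2\subset L_{(3)}$. If $8\cO_F\subseteq\fn(L_3)$ then $\dim L_{(3)}\ge k+3$ and (f), (g) follow by a dimension count. For the converse, suppose $L_{(3)}=L_1\bot L_2$, so that $FL_{(3)}=FL_1\bot\dgf{2\eta}$ is a space of dimension $k+2$ whose determinant is a uniformizer up to squares; writing $FL_{(3)}\cong \bH^r\bot V$ with $V$ its anisotropic kernel, we get $\dim V\in\{1,3\}$ and $d(V)$ a uniformizer up to squares. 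I would then argue, in a case analysis exactly parallel to the proofs of Propositions\;\ref{UL6.5}, \ref{UL6.7} and \ref{UL6.9}, that in either case there is a unit $\veps$ for which a space of the shape $\bH^{m-1}\bot\dgf{1,-\Delta}\bot\dgf{2\veps}$ or $\bH^m\bot\dgf{2\veps}$ fails to be represented by $FL_{(3)}$; such a space is $F\ell$ for the genuinely classic basic lattice $\ell=I_0\bot P_1$ with $FI_0$ the appropriate improper unimodular space and $P_1=\dgf{2\veps}$ (no improper $2^{-1}$-modular summand is needed, so $\ell$ really is classic). The non-representation is obtained by comparing discriminants and Hasse invariants (equivalently, anisotropic kernels), the inputs being that the relevant small anisotropic forms — e.g. $\dgf{1,-\Delta,2\veps}$, which is anisotropic because $-2\veps$ has odd valuation and so is not a norm from the unramified quadratic extension $F(\sqrt{\Delta})$ — remain anisotropic, together with Lemma\;\ref{UL2.5} and Corollary\;\ref{UL2.6} to choose $\veps$ when $V$ is ternary. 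Consequently \eqref{UL6.11}\,(f) fails whenever $L_{(3)}=L_1\bot L_2$, and combining with the previous cases completes the proof.

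The step I expect to be the main obstacle is this last one: running through the possible isometry types of $FL_1\bot\dgf{2\eta}$ (equivalently, of the $1$- or $3$-dimensional anisotropic kernel $V$) and, for each, exhibiting an explicitly realizable $F\ell$ that $FL_{(3)}$ does not represent. It is routine only in that it closely mirrors the corresponding steps in Propositions\;\ref{UL6.5}--\ref{UL6.9}; the bookkeeping of discriminants, Hasse invariants and Witt indices is where the real work lies.
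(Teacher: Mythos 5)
Your proposal is correct and takes essentially the same route as the paper's proof: reduce via Lemma~\ref{UL6.13} to $\frs(L_2)=\fn(L_2)=2\cO_F$, dispatch $\dim L_2\ge 2$ and condition \eqref{UL6.11}\,(c) by dimension/discriminant arguments, and when $L_{(3)}=L_1\bot L_2$ exhibit a non-represented $F\ell$ of the shape $\bH^{m-1}\bot\dgf{1,\,-\Delta}\bot\dgf{2\Delta\eta}$ (isotropic kernel case) or $\bH^m\bot\dgf{2\veps}$ (anisotropic ternary kernel case), exactly as in Propositions~\ref{UL6.5}--\ref{UL6.9}. The only small correction: in the anisotropic ternary case the unit $\veps$ with $2\veps\notin Q(V)$ is supplied by Lemma~\ref{UL2.4}\,(2) (after scaling), not by Lemma~\ref{UL2.5} or Corollary~\ref{UL2.6}; this is a miscitation rather than a gap.
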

\begin{proof}
  By Lemma\;\ref{UL6.13}, we may assume that $\frs(L_2)=\fn(L_2)=2\cO_F$. We only need to check that conditions \eqref{UL6.11} (c), (f) and (g) hold for all classic basic lattices $\ell$ of dimension $k$ if and only if one of the two conditions in the proposition is satisfied.

  If $\dim L_2\ge 2$, then the result is immediate by a dimension count. Let us suppose $\dim L_2=1$. Then $FL_1\bot FL_2$ is a $(k+2)$-dimensional space whose discriminant is a uniformizer up to squares. Thus, \eqref{UL6.11} (c) is verified by a discriminant comparison. We will prove that \eqref{UL6.11} (f) and (g) hold for all $\ell$ if and only if $L_1\bot L_2\subset L_{(3)}$.

  The ``if'' part is easily seen by a dimension argument. Let us assume $L_1\bot L_2=L_{(3)}$ and show that in this case \eqref{UL6.11} (f) does not hold for all $\ell$. In fact we can write $ FL_{(3)}=FL_1\bot FL_2$ as $\bH^m\bot V$, where $V$ is a ternary space with $d(V)$ a uniformizer up to squares. If $V$ is isotropic, then $ FL_{(3)}=\bH^{m+1}\bot\dgf{2\eta}$ for some $\eta\in \cO_F^*$. Then \eqref{UL6.11} (f) fails when $F\ell=\bH^{m-1}\bot\dgf{1,\,-\Delta}\bot \dgf{2\Delta\eta}$. If $V$ is anisotropic we can find $\veps\in \cO_F^*$ such that $2\veps\notin Q(V)$ by Lemma\;\ref{UL2.4}. Thus \eqref{UL6.11} (f) fails when $F\ell=\bH^m\bot\dgf{2\veps}$. This completes the proof.
\end{proof}

\begin{prop}\label{UL6.15}
Suppose that $L_1$ is proper unimodular of dimension $k$ and that $L_2$ is proper $2$-modular.

Then $L$ is classic $k$-universal if and only if one of the following conditions holds:

\begin{enumerate}
  \item $\dim L_2\ge 3$;
  \item $\dim L_2=2$ and $\frs(L_3)=\fn(L_3)=4\cO_F$;
  \item $\dim L_2=1$, $\dim L_3\ge 2$ and $\frs(L_3)=\fn(L_3)=4\cO_F$;
  \item $\dim L_2=\dim L_3=1$, $\frs(L_3)=\fn(L_3)=4\cO_F$ and $\frs(L_4)=\fn(L_4)=8\cO_F$.
\end{enumerate}
\end{prop}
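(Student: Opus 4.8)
The plan is to follow the pattern already used for Propositions~\ref{UL6.5}, \ref{UL6.7} and \ref{UL6.9}. Since $\dim L_1=k$ and $L_2$ is proper $2$-modular, $L$ satisfies condition~(2) of Proposition~\ref{UL6.2}, so we are in the situation of \eqref{UL6.11}: $L$ is classic $k$-universal if and only if conditions (a)--(h) there hold for every classic basic lattice $\ell$ of dimension $k=2m+1$. Here $L_{(0)}=L_{\le 0}=L_1$ has dimension $k$, while $L_{\le 1}\supseteq L_{(1)}\supseteq L_1\bot L_2$, so $\dim L_{(1)}\ge k+1$ and $\dim FL_{\le 1}\ge k+1$. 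Since the spaces attached to $\ell$ in these conditions (namely $FI_0$, $FI_0\bot FP_0$, or $FI_0\bot FP_0\bot FI_1$) all have dimension at most $2m+1$, conditions (a), (b), (d), (e), (h) reduce to dimension counts, supplemented where the relevant dimension gap is exactly $2$ by the discriminant arguments of \cite[(63:11), (63:21)]{OMeara00} together with Proposition~\ref{UL2.2} and Corollary~\ref{UL2.6}; I would dispose of these first. What then remains are conditions (c), (f), (g), which involve $FL_{(2)}$ and $FL_{(3)}$.

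For sufficiency, I would verify that in each of the four listed cases the relevant sublattices are large enough: in case~(1), $\dim L_{(1)}=k+\dim L_2\ge k+3$; in cases (2) and (3) the proper $4$-modular component $L_3$ lies in $L_{(2)}$, whence $\dim FL_{(2)}\ge k+3$ and $\dim FL_{(3)}\ge k+3$; in case~(4), $\dim FL_{(2)}=k+2$ with discriminant a uniformizer (so (c) follows by a discriminant comparison) and $\dim FL_{(3)}=k+3$ thanks to the proper $8$-modular $L_4$. In every case conditions (c), (f), (g) then follow from the same machinery.

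The heart of the argument --- and the step I expect to be the main obstacle --- is the necessity direction. Assume $L$ is classic $k$-universal. The first claim is that $L$ has a component of norm $\supseteq 4\cO_F$ besides $L_1$ and $L_2$; since a component of scale $2^r\cO_F$ has norm $2^r\cO_F$ or $2^{r+1}\cO_F$, such a component is forced to be proper $4$-modular, i.e.\ $\frs(L_3)=\fn(L_3)=4\cO_F$. To prove the claim I would assume $L_{(2)}=L_1\bot L_2$ and derive a contradiction from \eqref{UL6.11}~(c). Then $FL_{(2)}=FL_1\bot FL_2$ has dimension $k+1$ or $k+2$ and discriminant a uniformizer, resp.\ a unit, up to squares; since anisotropic spaces over $F$ are at most $4$-dimensional, a parity/discriminant check gives $FL_{(2)}\cong\bH^m\bot B$ with $B$ an \emph{anisotropic} binary space of uniformizer discriminant (case $\dim L_2=1$), resp.\ $FL_{(2)}\cong\bH^m\bot V$ with $V$ ternary of unit discriminant (case $\dim L_2=2$). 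In the binary case $B$ represents only half of the unit square classes (\cite[(63:15)]{OMeara00}), so some unit $\veps\notin Q(B)$, and the classic basic lattice $\ell$ that is an orthogonal sum of $m$ copies of $A(0,0)$ with $\dgf\veps$ has $FI_0\bot FP_0=\bH^m\bot\dgf\veps\not\rep FL_{(2)}$, contradicting (c). In the ternary case, if $V$ is anisotropic the analogous choice of $\veps\notin Q(V)$ (Lemma~\ref{UL2.4}(1)) works; if $V\cong\bH\bot\dgf\eta$, then taking $\ell$ to be a sum of $m$ copies of $A(0,0)$ with $\dgf{\eta\Delta}$ makes the orthogonal complement $FL_{(2)}/^\bot(FI_0\bot FP_0)$ isometric to $\dgf{1,-\Delta}$ --- the norm form of the unramified quadratic extension, which represents only elements of even valuation --- so it fails to represent $\Delta_0(L)=2\cO_F$, again contradicting (c).

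This settles cases (1)--(3). For the remaining case $\dim L_2=\dim L_3=1$, I would show by a completely parallel argument that $L$ must also possess a proper $8$-modular component, i.e.\ $\frs(L_4)=\fn(L_4)=8\cO_F$: assuming $L_{(3)}=L_1\bot L_2\bot L_3$, one has $FL_{(3)}\cong\bH^m\bot V'$ with $V'$ ternary of uniformizer discriminant, and one picks a classic basic $\ell$ carrying a proper $2$-modular line $\dgf{2\veps}$ so that either $\bH^m\bot\dgf{2\veps}\not\rep FL_{(3)}$ (when $V'$ is anisotropic, using Lemma~\ref{UL2.4}(2)), violating (f), or the complement $FL_{(3)}/^\bot F\ell$ is isometric to a scaling $\dgf{2c,-2c\Delta}$ of the unramified norm form, which represents no element of even valuation and hence not $\Delta_1(L)=4\cO_F$, violating (g) for $i=1$. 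The fiddly points throughout are the Witt-decomposition bookkeeping for the spaces $FL_1\bot\dgf{2\eta}$, $FL_1\bot FL_2$, etc., the observation that there are exactly two anisotropic binary spaces of discriminant $-\Delta$ (distinguished by which parity of valuation they represent), and the verification that each witness space I write down is genuinely the space of a classic basic lattice of dimension $k$; these are routine but need care.
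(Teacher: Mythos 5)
Your proposal follows essentially the same route as the paper's proof: reduce via Proposition~\ref{UL6.2} to conditions (a)--(h) of \eqref{UL6.11}, get sufficiency from dimension counts plus discriminant comparisons, and get necessity by exhibiting classic basic witnesses built from copies of $A(0,\,0)$, $A(2,\,2\rho)$, unit lines and $2\cdot$unit lines, split according to whether the residual binary/ternary piece of $FL_{(2)}$ or $FL_{(3)}$ is isotropic; your variants in which the orthogonal complement fails to represent $\Delta_0(L)=2\cO_F$ (via a complement $\cong\dgf{1,\,-\Delta}$) or $\Delta_1(L)=4\cO_F$ in (g) (via a complement $\cong\dgf{2,\,-2\Delta}$) are correct alternatives to the paper's witnesses, which instead make the representation in (c), resp.\ (f), itself fail. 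One caveat: your opening claim that (h) can be disposed of up front is not accurate in the regime $\dim L_2=1$ and $L_{(2)}=L_1\bot L_2$, where the dimension gap is $2$ but the discriminant ratio is a unit class and (h) can genuinely fail (e.g.\ $F\ell=\bH^m\bot\dgf{c}$ with $c\equiv -2d(V)$ when $V\bot\dgf{2}$ is anisotropic, $V$ the binary part of $FL_1\bot FL_2$); this does not affect the stated equivalence, since in that regime (c) already fails and in each of the four sufficient cases $\dim L_{(2)}\ge k+2$ makes the gap for (h) at least $3$, but in the write-up you should defer the verification of (h), as the paper does, rather than assert it unconditionally.
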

\begin{proof}
  If $\dim L_2\ge 3$, then $L_{(1)}$ has dimension $\ge k+3$ and all the conditions (a)--(h) in \eqref{UL6.11} are verified for dimension reasons. It suffices to discuss separately the two cases with $\dim L_2=2$ and $\dim L_2=1$ respectively.

\medskip

\noindent {\bf Case 1.} $\dim L_2=2$.

\medskip

In this case $\dim L_{(1)}=k+2$. Conditions \eqref{UL6.11} (a), (b), (d), (e) and (h) follow by a dimension count. It remains to check conditions \eqref{UL6.11} (c), (f) and (g).

We have $ FL_{(1)}=\bH^m\bot V$ with $\dim V=3$ and $d(V)\in \cO_F^*$. If $V$ is isotropic, then $V=\bH\bot\dgf{\eta}$ for some $\eta\in \cO_F^*$. In this event \eqref{UL6.11} (c) fails when $FI_0\bot FP_0\bot FI_1=\bH^{m-1}\bot \dgf{2,\,-2\Delta}\bot \dgf{\Delta\eta}$. If $V$ is anisotropic, there exists $\veps\in \cO_F^*$ such that $\veps\notin Q(V)$ by Lemma\;\ref{UL2.4}. Thus \eqref{UL6.11} (c) fails when $FI_0\bot FP_0\bot FI_1=\bH^m\bot\dgf{\veps}$.

So we see that if \eqref{UL6.11} (c) holds for all $k$-dimensional classic basic lattices $\ell$, we must have $L_1\bot L_2\subset L_{(2)}$, i.e., $\frs(L_3)=\fn(L_3)=4\cO_F$. Conversely, when $L_1\bot L_2\subset L_{(2)}$ we can check \eqref{UL6.11} (c), (f) and (g) by a dimension argument.

\medskip

\noindent {\bf Case 2.} $\dim L_2=1$.

\medskip

Now $L_{(1)}$ has dimension $k+1$ and $ FL_{(1)}=\bH^m\bot V$ for some binary space $V$ with $d(V)$ a uniformizer up to squares. Thus, conditions \eqref{UL6.11} (a), (b), (d) and (e) follow by a discriminant comparison. It remains to check conditions \eqref{UL6.11} (c), (f), (g) and (h).

By \cite[(63:11)]{OMeara00}, there exists $\veps\in \cO_F^*$ such that $\veps\notin Q(V)$. If $L_1\bot L_2=L_{(2)}$, then \eqref{UL6.11} (c) fails when $FI_0\bot FP_0\bot FI_1=\bH^m\bot \dgf{\veps}$. So we have $L_1\bot L_2\subset L_{(2)}$, i.e., $\frs(L_3)=\fn(L_3)=4\cO_F$.

If $\dim L_3\ge 2$, then $\dim L_{(2)}\ge k+3$. In this case \eqref{UL6.11} (c), (f), (g) and (h) are all satisfied for dimension reasons.

Let us assume $\dim L_3=1$. Then $ FL_{(2)}=FL_1\bot FL_2\bot FL_3=\bH^m\bot V\bot \dgf{\eta}$ for some $\eta\in \cO_F^*$. Condition \eqref{UL6.11} (c) follows from a discriminant comparison and \eqref{UL6.11} (h) holds for dimension reasons. If $L_1\bot L_2\bot L_3\subset L_{(3)}$, then \eqref{UL6.11} (f) and (g) also hold for dimension reasons.

Let us finally prove that if \eqref{UL6.11} (f) holds for all $\ell$, then $L_1\bot L_2\bot L_3\subset L_{(3)}$. Assume the contrary. If $V\bot \dgf{\eta}$ is isotropic, then $ FL_{(3)}=FL_1\bot FL_2\bot FL_3=\bH^{m+1}\bot \dgf{2\eta'}$ with $\eta'\in \cO_F^*$. Then \eqref{UL6.11} (f) fails when $F\ell=\bH^{m-1}\bot\dgf{1,\,-\Delta}\bot \dgf{2\Delta\eta'}$. If $V\bot \dgf{\eta}$ is anisotropic, there exists $\veps\in \cO_F^*$ such that $2\veps\notin Q(V\bot\dgf{\eta})$. Then \eqref{UL6.11} (f) fails when $F\ell=\bH^m\bot\dgf{2\veps}$. This proves the desired result.
\end{proof}

Summarizing Propositions\;\ref{UL6.12}, \ref{UL6.14} and \ref{UL6.15} we obtain the following theorem.

\begin{thm}\label{UL6.16}
 Let $F$ be an unramified extension of $\Q_2$ and let  $L=L_1\bot \cdots\bot L_t$ be a Jordan splitting of an $\cO_F$-lattice.
Let $k$ be an odd integer $\ge 3$.

  Then $L$ is classic $k$-universal if and only if $\frs(L_1)=\fn(L_1)=\cO_F$ and one of the following conditions holds:

\begin{enumerate}
  \item $\dim L_1\ge k+3$.
  \item $\dim L_1=k+2$,  and $4\cO_F\subseteq \fn(L_2)$.
  \item $\dim L_1=k+1$, $\frs(L_2)=\fn(L_2)=2\cO_F$ and one of the following cases happens:

\begin{enumerate}
  \item $\dim L_2\ge 2$;
  \item $\dim L_2=1$ and $8\cO_F\subseteq \fn(L_3)$.
\end{enumerate}
  \item $\dim L_1=k$, $\frs(L_2)=\fn(L_2)=2\cO_F$ and one of the following cases happens:

\begin{enumerate}
  \item $\dim L_2\ge 3$;
  \item $\dim L_2=2$ and $\frs(L_3)=\fn(L_3)=4\cO_F$;
  \item $\dim L_2=1$, $\dim L_3\ge 2$ and $\frs(L_3)=\fn(L_3)=4\cO_F$;
  \item $\dim L_2=\dim L_3=1$, $\frs(L_3)=\fn(L_3)=4\cO_F$ and $\frs(L_4)=\fn(L_4)=8\cO_F$.
\end{enumerate}
\end{enumerate}
\end{thm}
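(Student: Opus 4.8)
The plan is to deduce Theorem~\ref{UL6.16} from Propositions~\ref{UL6.12}, \ref{UL6.14} and \ref{UL6.15} by a short case analysis on $\dim L_1$.

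First I would invoke the reduction recorded in \eqref{UL5.1}: since $L$ is classic, it is classic $k$-universal if and only if it represents every classic basic lattice of dimension $k$. Because having a lower type than $L$ is a necessary condition for being represented by $L$ (Theorem~\ref{UL3.4}), Proposition~\ref{UL6.2} applies, and it tells us that classic $k$-universality of $L$ forces $L_1$ to be proper unimodular of dimension $\ge k$, and moreover that $\frs(L_2)=\fn(L_2)=2\cO_F$ whenever $\dim L_1=k$. In particular $\frs(L_1)=\fn(L_1)=\cO_F$, which is the standing assumption in the statement, and $\dim L_1\ge k$. Conversely, each of the conditions $(1)$--$(4)$ in the statement, together with $\frs(L_1)=\fn(L_1)=\cO_F$, entails that $L_1$ is proper unimodular and that we are in one of the structural situations $\dim L_1\ge k+2$, $\dim L_1=k+1$, or ($\dim L_1=k$ and $L_2$ proper $2$-modular). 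So it suffices to verify the theorem separately in these three exhaustive cases, in each of which the hypothesis of the corresponding proposition is met precisely when either $L$ is classic $k$-universal or one of $(1)$--$(4)$ holds.

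If $\dim L_1\ge k+2$, Proposition~\ref{UL6.12} states that $L$ is classic $k$-universal precisely when $\dim L_1\ge k+3$ or ($\dim L_1=k+2$ and $4\cO_F\subseteq\fn(L_2)$), which is exactly items $(1)$ and $(2)$. If $\dim L_1=k+1$, Proposition~\ref{UL6.14} gives the criterion $\frs(L_2)=\fn(L_2)=2\cO_F$ together with $\dim L_2\ge 2$ or ($\dim L_2=1$ and $8\cO_F\subseteq\fn(L_3)$), which is item $(3)$. If $\dim L_1=k$, then by Proposition~\ref{UL6.2} either $L_2$ is proper $2$-modular, in which case Proposition~\ref{UL6.15} applies and yields item $(4)$, or $L_2$ is not proper $2$-modular, in which case $L$ is not classic $k$-universal and none of $(1)$--$(4)$ holds. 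Assembling the three cases proves the theorem.

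In this format the real content lies entirely in Propositions~\ref{UL6.12}, \ref{UL6.14} and \ref{UL6.15}, and the main obstacle is there rather than in the assembly above. In those proofs one works with the case-by-case reformulation of Theorem~\ref{UL3.4} set up in \eqref{UL6.11}: the subcases in which $\dim L_{(1)}$ or $\dim L_{(2)}$ is large enough are settled by dimension counting using \cite[(63:21)]{OMeara00} and Proposition~\ref{UL2.2}, while the delicate points are the tight subcases ($\dim L_1=k+2$; $\dim L_1=k+1$ with $\dim L_2=1$; $\dim L_1=k$ with $\dim L_2,\dim L_3$ small), where one must either exhibit obstructions to representing a carefully chosen space $FI_0\bot FP_0\bot FI_1$ or $F\ell$, or else verify representability by discriminant and anisotropy arguments based on Lemmas~\ref{UL2.4} and \ref{UL2.5} and Corollary~\ref{UL2.6}.
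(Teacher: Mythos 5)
Your proposal is correct and matches the paper's own (one-line) proof, which simply combines Propositions\;\ref{UL6.12}, \ref{UL6.14} and \ref{UL6.15}; your extra step of invoking Proposition\;\ref{UL6.2} to justify that the three structural cases are exhaustive is exactly the implicit reduction the paper relies on.
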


\noindent \emph{Acknowledgements.} We thank Prof.\,Fei Xu for helpful discussions. The authors were supported by a grant from the National Natural Science Foundation of China (no.\,12171223) and the Guangdong Basic and Applied Basic Research Foundation (no.\,2021A1515010396).

\addcontentsline{toc}{section}{\textbf{References}}

\bibliographystyle{alpha}

\bibliography{Univ2adic}



Contact information of the authors:

\

Zilong HE

\medskip

Department of Mathematics

Southern University of Science and Technology


Shenzhen 518055, China


Email: hezl6@sustech.edu.cn

\

Yong HU

\medskip

Department of Mathematics

Southern University of Science and Technology


Shenzhen 518055, China


Email: huy@sustech.edu.cn

\

\end{document}